\newtheorem{theorem}{Theorem}
\newtheorem{lemma}{Lemma}
\newtheorem{proposition}{Proposition}
\newtheorem{corollary}{Corollary}
\newtheorem{assumption}{Assumption}
\newtheorem{remark}{Remark}
\newtheorem{claim}{Claim}
\newtheorem{condition}{Condition}
\newenvironment{customthm}[1]
{\innercustomthm}
{\endinnercustomthm}
\def\rank{\text{rank}}
\def\wt{\widetilde}
\newcommand\fro[1]{\| #1 \|_{\rm{F}}}
\newcommand\fror[1]{\| #1 \|_{\rm{F},r}}
\newcommand\op[1]{\| #1 \|}
\newcommand\lone[1]{\| #1 \|_{1}}
\newcommand\ltwo[1]{\| #1 \|_{2}}
\newcommand{\inp}[2]{\langle #1,#2\rangle}
\def\calH{{\mathcal H}}
\def\calI{{\mathcal I}}
\def\calM{{\mathcal M}}
\def\calN{{\mathcal N}}
\def\calO{{\mathcal O}}
\def\calP{{\mathcal P}}
\def\calS{{\mathcal S}}
\def\bSigma{{\boldsymbol{\Sigma}}}
\def\bcalE{{\boldsymbol{\mathcal E}}}
\def\BB{{\mathbb B}}
\def\EE{{\mathbb E}}
\def\MM{{\mathbb M}}
\def\PP{{\mathbb P}}
\def\RR{{\mathbb R}}
\def\SS{{\mathbb S}}
\def\TT{{\mathbb T}}
\def\VV{{\mathbb V}}
\def\b{{\mathbf b}}
\def\s{{\mathbf s}}
\def\u{{\mathbf u}}
\def\v{{\mathbf v}}
\def\x{{\mathbf x}}
\def\A{{\mathbf A}}
\def\B{{\mathbf B}}
\def\C{{\mathbf C}}
\def\D{{\mathbf D}}
\def\G{{\mathbf G}}
\def\I{{\mathbf I}}
\def\L{{\mathbf L}}
\def\M{{\mathbf M}}
\def\Q{{\mathbf Q}}
\def\R{{\mathbf R}}
\def\T{{\mathbf T}}
\def\U{{\mathbf U}}
\def\V{{\mathbf V}}
\def\X{{\mathbf X}}
\def\Y{{\mathbf Y}}
\def\Z{{\mathbf Z}}
\def\frakP{{\mathfrak P}}
\def\lmax{{l_{\textsf{max}}}}
\def\tauc{\tau_{\textsf{\tiny comp}}}
\def\taus{\tau_{\textsf{\tiny stat}}}
\def\muc{\mu_{\textsf{\tiny comp}}}
\def\mus{\mu_{\textsf{\tiny stat}}}
\def\Lc{L_{\textsf{\tiny comp}}}
\def\Ls{L_{\textsf{\tiny stat}}}
\def\eps{\varepsilon}
\def\bSigma{{\boldsymbol{\Sigma}}}
\def\bLambda{{\boldsymbol{\Lambda}}}
\def\Bbeta{{\boldsymbol{\beta}}}
\def\bxi{{\boldsymbol{\xi}}}
\def\ku{C_{{\tiny u}}}
\def\kl{C_{{\tiny l}}}
\begin{document}
	\pagenumbering{arabic}
	
	\title{Computationally Efficient and Statistically Optimal Robust High-Dimensional Linear Regression}
	
	\author{Yinan Shen, Jingyang Li, Jian-Feng Cai\footnote{Jian-Feng Cai’s research was partially supported by Hong Kong RGC Grant GRF 16310620 and GRF 16309219.}  and Dong Xia\footnote{Dong Xia's research was partially supported by Hong Kong RGC Grant ECS 26302019, GRF 16303320 and GRF 16300121.}\\
		{\small Hong Kong University of Science and Technology}}

	\date{(\today)}
	
	\maketitle
	\begin{abstract}
		High-dimensional linear regression under heavy-tailed noise or outlier corruption is challenging,  both computationally and statistically.  Convex approaches have been proven statistically optimal but suffer from high computational costs,  especially since the robust loss functions are usually non-smooth.   More recently,  computationally fast non-convex approaches via sub-gradient descent are proposed,  which,  unfortunately,  fail to deliver a statistically consistent estimator even under sub-Gaussian noise.  In this paper,  we introduce a projected sub-gradient descent algorithm for both the sparse linear regression and low-rank linear regression problems. The algorithm is not only computationally efficient with linear convergence but also statistically optimal, be the noise Gaussian or heavy-tailed with a finite $1+\eps$ moment.  The convergence theory is established for a general framework and its specific applications to absolute loss,  Huber loss and quantile loss are investigated.  Compared with existing non-convex methods,  ours reveals a surprising phenomenon of {\it two-phase convergence}.  In phase one,  the algorithm behaves  as in typical non-smooth optimization that requires gradually decaying stepsizes.  However,  phase one only delivers a statistically sub-optimal estimator, which is already observed in the existing literature.  Interestingly,   during phase two,  the algorithm converges linearly {\it as if} minimizing a smooth and strongly convex objective function, and thus a constant stepsize suffices.  Underlying the phase-two convergence is the {\it smoothing effect} of random noise to the non-smooth robust losses in an area {\it close but not too close} to the truth.  Numerical simulations confirm our theoretical discovery and showcase the superiority of our algorithm over prior methods. 
	\end{abstract}

\section{Introduction}\label{sec:intro}
Let $\{(\X_i, Y_i)\}_{i=1}^n$ be a collection of independent observations satisfying 
\begin{equation}\label{eq:tr_model}
	Y_i=\langle \X_i, \T^{\ast}\rangle+\xi_i
\end{equation}
with $\X_i$ and $Y_i$ being the {\it covariate} and {\it response}, respectively. Here, $\langle \cdot, \cdot\rangle$ denotes the inner product in Euclidean space. The latent noise $\xi_i$ is usually assumed {\it random} for ease of analysis. We assume that $\T^{\ast}$ resides in a high-dimensional ambient space but has a low-dimensional structure. We focus on two high-dimensional linear regression problems: the {\it sparse linear regression} where $\T^{\ast}\in \RR^{d}$ is sparse and {\it low-rank linear regression} where $\T^{\ast}\in \RR^{d\times d}$ is low-rank. For both problems, it is of great interest to effectively,  both computationally and statistically, recover $\T^{\ast}$ using as few observations as possible. There is a vast literature discussing the motivations and applications, e.g., \cite{tibshirani1996regression,needell2009cosamp,blumensath2009iterative,blumensath2010normalized,loh2011high,
	shen2017tight,zhu2020polynomial,zhao2022high, gross2010quantum,xia2016estimation, chen2011integrating, negahban2011estimation, chiu2021low, siddiqi2010reduced}.


The methods of high-dimensional linear regression can be broadly classified into two categories: the {\it convex} and {\it non-convex} approaches. 
The typical convex approaches introduce an additional penalization such as the matrix nuclear norm to promote low-rank solutions and the $\ell_1$-norm to promote sparsity. 
The convexity renders well-understood algorithms for convex programming immediately applicable. Therefore statisticians  can just focus on studying the statistical performances without paying too much attention to the computational implementations.  
For instance, by assuming the i.i.d.  (sub-){\it Gaussian} noise with a variance $\sigma^2$ and the so-called {\it restricted isometry property} (RIP) or {\it restricted strong convexity} (RSC)  in low-rank linear regression for estimating a  $d\times d$ matrix,  \cite{candes2011tight, rohde2011estimation,negahban2011estimation,cai2015rop,chen2015fast,davenport2016overview,chandrasekaran2012convex} proved that the nuclear-norm penalized least square estimator attains the error rate $\tilde{O}_p(\sigma^2rdn^{-1})$\footnote{Here $\tilde{O}_p(\cdot)$ stands for the typical big-O notation up to logarithmic factors and holds with high probability.} in the squared Frobenius norm where $r$ is the rank.  This rate is shown to be optimal in the minimax sense. See,  e.g.,  \cite{xia2014optimal,ma2015volume} and references therein.  
Despite the appealing theoretical performances,  convex approaches have two major drawbacks.  First,  convex methods operate directly in the high-dimensional ambient space making them run slowly and are unscalable to ultra-high dimensional problems. Secondly,  while the established error rate is theoretically minimax optimal,  the implicit constant factor seems large and the resultant error rate is,  {\it in practice},  often inferior to that by non-convex approaches. These issues can be nicely addressed by non-convex approaches. In low-rank linear regression, the non-convex methods could operate on the basis of matrix factorization and resort to the gradient descent type algorithms \citep{chen2015fast, burer2003nonlinear,zheng2016convergent, zhao2015nonconvex,xia2021statistical, wei2016guarantees}; in sparse linear regression, the typical non-convex approaches include the least angel regression \citep{efron2004least}, iterative hard thresholding \citep{blumensath2009iterative}, and best subset selection \citep{zhu2020polynomial}, etc. Under similar conditions,  it has been demonstrated that these non-convex algorithms converge fast (more exactly,  linearly) and also deliver minimax optimal estimators under (sub-)Gaussian noise. We remark that non-convex methods often require more tuning parameters. For instance, the matrix factorization algorithms often require knowing the true rank \citep{chandrasekaran2012convex,chen2015fast} and the iterative hard thresholding algorithms need the information of the underlying sparsity \citep{blumensath2009iterative,efron2004least}. 


The recent boom of data technology poses new challenges to high-dimensional linear regression,  among which the heavy-tailed noise and outliers appear routinely in numerous applications such as diffusion-weighted imaging \citep{chang2005restore},  on-line advertising \citep{sun2017provable},  and gene-expression data analysis \citep{sun2020adaptive}.  The aforementioned approaches minimizing a square loss become vulnerable or even completely useless when the noise has a heavy tail or when data are partially corrupted.  To mitigate the influence of heavy-tailed noise and corruptions,   a natural solution is to replace the square loss by more robust ones.  Notable examples include the absolute loss \citep{candes2011robust, cambier2016robust},  the renowned Huber loss \citep{huber1965robust}, and quantile loss \citep{koenker2001quantile},  all of which are convex but non-smooth. \cite{elsener2018robust} proposed a convex approach based on nuclear-norm penalized absolute or Huber loss for low-rank linear regression,  and proved that the estimator attains the error rate $\tilde{O}_p(rd_1n^{-1})$ in squared Frobenius norm as long as the noise has a non-zero density in a neighbourhood of origin.  Interestingly, \cite{elsener2018robust} doesn't need to estimate the noise level for choosing the tuning parameters.  A more general framework requiring only Lipschitz and convex loss but imposing a so-called {\it Bernstein condition} on the noise is investigated by \cite{alquier2019estimation}.  See also \cite{klopp2017robust}.  The foregoing approaches still rely on convex programming  and suffer from the issue of computational inefficiency explained above.  In fact, the computational issue is severer here with non-smooth objective functions \citep{boyd2004convex}.


The sub-gradient descent algorithms based on  matrix factorization were studied recently by  \cite{charisopoulos2021low,li2020nonconvex,tong2021low} in low-rank linear regression. These algorithms converge linearly if equipped with a good initialization and with properly chosen stepsizes,  but the statistical performances of these non-convex approaches under heavy-tailed noise are either largely missing or, generally,  sub-optimal.  To be more specific,    \cite{li2020nonconvex} only proved the exact recovery under sparse outliers {\it without noise}; the error rates in squared Frobenius norm established in \cite{charisopoulos2021low} and \cite{tong2021low} turn into $\tilde{O}_p(\sigma^2)$ even when the noise is Gaussian with variance $\sigma^2$, which is not statistically consistent. 
There is also a vast literature of sparse linear regression under heavy-tailed noise or partial corruption, e.g., \cite{sun2020adaptive,zheng2015globally,belloni2011square,alquier2019estimation,lecue2017sparse,
	lugosi2019regularization,pan2021iteratively,wang2021new,wang2020tuning,thompson2020outlier}. The adpative Huber loss was investigated by \cite{sun2020adaptive} under a finite $1+\eps$ moment condition of noise. \cite{depersin2020robust} studied the median-of-means approach and derived sharp error rates by VC-dimension, achieving the first sub-Gaussian sparse estimators. \cite{belloni2011square} and \cite{wang2020tuning} proposed the square-root LASSO and the absolute loss, respectively. While being statistically robust to heavy tails, these prior works didn't provide provably fast converging algorithms. In \cite{foucart2017iht}, an iterative hard thresholding algorithm was proposed for sparse linear regression by minimizing the absolute loss, which converges fast but a sub-optimal error rate was derived. 

In this paper,  we propose a computationally efficient non-convex algorithm based on the projected sub-gradient descent for high-dimensional linear regression using robust loss functions. It is essentially the absolute loss hard thresholding algorithm ({\it IHT}-$\ell_1$,\cite{foucart2017iht}) for sparse linear regression and the Riemannian sub-gradient descent algorithm ({\it RsGrad}, \cite{vandereycken2013low,cambier2016robust,wei2016guarantees,cai2021generalized}) for low-rank linear regression. In both problems, the sub-gradient descent algorithm admits fast computation and linear convergence. Unlike \cite{foucart2017iht} and \cite{tong2021low}, we discover that the sub-gradient descent algorithm exhibits an intriguing phenomenon referred to as the {\it two-phase convergence}. In phase one,  the algorithm behaves like typical {\it non-smooth} optimization requiring gradually decaying stepsizes through iterations.  For example, in low-rank linear regression, if the noise is Gaussian with a variance $\sigma^2$ and the absolute loss is equipped,  the phase-one iterations converge linearly and reach an estimator whose squared Frobenius norm error is $O_p(\sigma^2)$.  This rate has been achieved by \cite{tong2021low,charisopoulos2021low} whereas theirs is based on a scaled sub-gradient (ScaledSM) descent algorithm.  In phase two,   the algorithm (RsGrad) behaves like smooth optimization in that a constant stepsize guarantees a linear convergence.   As a result,  after phase-two iterations,  RsGrad can output an estimator which is statistically optimal under mild conditions,  e.g.  it achieves the rate $O_p(\sigma^2rdn^{-1})$ for low-rank linear regression.  The two-phase convergence phenomenon also occurs for sparse linear regression. For both problems, minimax optimal rates can be achieved under heavy-tailed noise.

Our contributions are multi-fold.   First,  we propose a projected sub-gradient descent algorithm for robust high-dimensional linear regression that is applicable to a wide class of non-smooth loss functions for both the sparse linear regression and low-rank linear regression problems.  While the Riemannian sub-gradient descent (RsGrad) has been introduced by \cite{cambier2016robust} for minimizing the absolute loss,  to our best knowledge,  there exists no theoretical guarantees for its convergence.  Under mild conditions,  we prove that the algorithm converges linearly.  Secondly,  we demonstrate the statistical optimality of the final estimator delivered by our algorithm for both problems,  be the noise Gaussian or heavy-tailed.   For instance, the specific applications of RsGrad to low-rank linear regression with the absolute loss,  Huber loss and quantile loss confirm that a rate $O_p(rdn^{-1})$ is attainable as long as  the noise has a fine $1+\eps$ moment and its density satisfies mild regularity conditions.  Similar conditions have appeared in \cite{elsener2018robust}.  Unlike \cite{fan2021shrinkage} and \cite{minsker2018sub},  our rate is proportional to the noise size. Thirdly,  our analysis reveals a new phenomenon of {\it two-phase convergence} that enables us to achieve the statistically optimal error rates,  which is a significant improvement over existing works  \citep{foucart2017iht,  li2020nonconvex, tong2021low,charisopoulos2021low}.  While the phase-one convergence is typical for non-smooth optimization,  the phase-two convergence,  interestingly,  behaves like smooth optimization.  It seems that the random noise has an effect of smoothing in the phase-two convergence. 

The rest of the paper is organized as follows.  In Section~\ref{sec:loss},  we review three typical examples of non-smooth robust loss functions.  We provide a warm-up example of robust sparse linear regression in Section~\ref{sec:vector} and investigate the two-phase convergence of the iterative hard thresholding algorithm. A general framework of studying the two-phase convergence performance of the Riemannian sub-gradient descent algorithm for robust low-rank linear regression is presented in Section~\ref{sec:matrix}. Non-convex algorithms inevitably involve important tuning parameters, which will be discussed in Section~\ref{sec:discussion}. In Section~\ref{sec:huber}, we further investigate our methods under the Huber's contamination model. We showcase the results of numerical experiments and comparison with prior methods in Section~\ref{sec:simulation}. All the proofs are relegated to the Appendix.


\section{Robust Loss Functions}
\label{sec:loss}
Denote $\MM$ the feasible set of the optimization program. It can be the set of matrices with rank bounded by $r$ in low-rank regression (Section~\ref{sec:matrix}) and the set of vectors with support size bounded by $\tilde{s}$ in sparse regression (Section~\ref{sec:vector}).  The goal of linear regression is to solve 
\begin{align}\label{eq:loss}
	\hat\T:=\underset{\T\in\MM}{\arg\min}\ f(\T)  \  \textrm{ where } f(\T) := \sum_{i=1}^n\rho\big(\langle \T,  \X_i\rangle-Y_i\big).
\end{align}
Here $\rho(\cdot):\RR\mapsto\RR_+$ is a properly chosen loss function. 
The statistical property of $\hat\T$ crucially relies on the loss function.  For instance,  $\hat\T$ attained by using the $\ell_2$-loss ({\it square loss}),  i.e.,  $\rho(x):=x^2$,  has been proven effective \citep{chen2015fast,wei2016guarantees,xia2021statistical,jain2014iterative,agarwal2010fast} in dealing with sub-Gaussian noise but is well-recognized sensitive to outliers and heavy-tailed noise \citep{huber1965robust}.  There are the so-called {\it robust} loss functions which are relatively immune to outliers and heavy-tailed noise and capable to deliver a more reliable estimate $\hat\T$. The examples of robust loss functions include:
{\it
	\begin{enumerate}[1.]
		\item {\it Absolute loss ($\ell_1$-loss):}  $\rho(x):=|x|$ for any $x\in\RR$;
		\item {\it Huber loss:} $\rho_{H,\delta}(x):=x^2\mathbbm{1}(|x|\leq \delta)+(2\delta|x|-\delta^2)\mathbbm{1}(|x|>\delta)$ for any $x\in\RR$ where $\delta>0$ is a tuning parameter;
		\item {\it Quantile loss:} $\rho_{Q,\delta}(x):=\delta x\mathbbm{1}(x\geq 0)+(\delta-1)x\mathbbm{1}(x<0)$ for any $x\in \RR$ with $\delta:=\PP(\xi\leq 0)$.
	\end{enumerate}
}

The absolute loss,  Huber loss and quantile loss are all convex functions.  They have appeared in the literature of sparse regression including the LASSO-type convex approaches \citep{sun2020adaptive,fan2021shrinkage} and the non-convex approaches by iterative algorithms \citep{foucart2017iht}. These robust losses have also been investigated for low-rank regression, e.g.,  the convex approach based on nuclear norm penalization \citep{elsener2018robust,klopp2017robust,candes2011robust} and the non-convex approach based on gradient-descent-type algorithms \citep{li2020nonconvex,tong2021accelerating}.  Compared with the square loss, the aforementioned robust loss functions are non-smooth,  i.e.,  their derivatives are dis-continuous,  which brings new challenge to its computation. 
Indeed,  the optimizing program \eqref{eq:loss} usually exploits the sub-gradient of robust loss functions, which is written as $\partial f(\T)$.  See,  for instance,  \cite{charisopoulos2021low} and references therein. Recently, \cite{thompson2020outlier} showed that regularization using the sorted Huber loss could significantly improve the constant in the error rate than that using the standard Huber loss. However, since the computation of sorted Huber loss is challenging in our framework, we lave it for future work.

The existing statistics literature have proved that $\hat\T$ attained by using robust loss functions enjoys the statistical robustness to heavy-tailed noise. However, the optimal solution $\hat\T$ is often solved by heuristic algorithms without convergence guarantees \citep{sun2020adaptive} or by non-smooth optimization algorithms \citep{elsener2018robust} which may converge slowly in general and the computational cost can be considerably high, rendering the applicability of aforementioned robust loss functions questionable for real-world high-dimensional problems. More recently, \cite{charisopoulos2021low,tong2021low,foucart2017iht} investigated sub-gradient descent algorithms for minimizing the robust loss functions showing that these algorithms can converge fast if the stepsizes are properly scheduled along the iterations. {Unfortunately, their analysis frameworks all failed to demonstrate that the statistical benefits of robust loss functions claimed by prior statistics literature can be achieved by the sub-gradient descent algorithms.  There exists a gap between the statistical and computational efficiencies. 
	
	In the subsequent Sections~\ref{sec:vector} and \ref{sec:matrix}, we will show that a revised sub-gradient descent algorithm equipped with a novel stepsize schedule converges fast and achieves statistically optimal rates in various settings. Besides the algorithmic design, a novel analysis framework also plays a critical role in developing our  statistical theories. 
	
	\section{Warm-up: Sparse Linear Regression}
	\label{sec:vector}
	We begin with a warm-up example of high-dimensional sparse linear regression using the absolute loss. Let $\Bbeta^*\in\RR^{d}$ be sparse with a support size $|\text{supp}(\Bbeta^*)|= s\ll d$. We observe a collection of independent random pairs $\{(\X_i,Y_i)\}_{i=1}^n$ satisfying $Y_i=\langle \X_i, \Bbeta^*\rangle+\xi_{i}$  where the predictor vector $\X_i\in\RR^d$ is assumed randomly sampled. Our goal is to estimate $\Bbeta^{\ast}$ when the noise $\xi_i$ may have heavy tails. For ease of exposition, we only study the absolute loss \citep{foucart2017iht} but parallel results can be derived for other robust loss functions (see Section~\ref{sec:matrix}). Denote
	\begin{align}
		\hat{\Bbeta}:=\underset{|\text{supp}(\Bbeta)|\leq\tilde{s}}{\arg\min}f(\Bbeta),\quad \textrm{where } f(\Bbeta):=\lone{\X\Bbeta-\Y}:=\sum_{i=1}^n |Y_i-\langle \Bbeta, \X_i\rangle|.
		\label{eq:vec-absloss:sparse}
	\end{align}
	Here the chosen sparsity level $\tilde{s}\in[s, d]$, $\X=(\X_1,\dots,\X_n)^{\top}$ and $\Y=(Y_1,\dots,Y_n)^{\top}$. Let $\|\cdot\|_{p}$ denote the $\ell_p$-norm of vectors for $1\leq p\leq 2$. 
	
	\subsection{Algorithm: Iterative Hard Thresholding} Our method for optimizing the program (\ref{eq:vec-absloss:sparse}) is essentially the {\it projected sub-gradient descent} algorithm. The projected gradient descent algorithm has been explored in minimizing the square loss \citep{agarwal2010fast,chen2015fast}, which is easy to implement and fast computable. We resort to the sub-gradient because the absolute loss is non-smooth.  
	
	Since projecting a vector onto the set of sparse vectors can be realized by hard thresholding, our algorithm is often referred to as the {\it Iterative Hard Thresholding} \citep{blumensath2009iterative,blumensath2010normalized}. The pseudocodes are presented in Algorithm~\ref{alg:IHTl1}. The operator $\calH_{\tilde{s}}(\V)$ keeps $\V$'s $\tilde{s}$ entries with the largest absolute values and zeros out the others. In practice, we choose the sub-gradient $\G_l=\sum_{i=1}^n sign(\langle \Bbeta_l, \X_i\rangle-Y_i)\X_i$ and set $sign(0)=0$. Note that Algorithm~\ref{alg:IHTl1} allows arbitrary initializations.

	\begin{algorithm}
		\caption{Iterative Hard Thresholding ($\ell_1$-loss)}\label{alg:IHTl1}
		\begin{algorithmic}
			\STATE{\textbf{Input}: observations $\{(\X_i, Y_i)\}_{i=1}^n$,  max iterations $\lmax$,  step sizes $\{\eta_l\}_{l=0}^{\lmax}$ and sparsity parameter $\tilde{s}$.}
			\STATE{Initialization: {\it arbitrary} $\Bbeta_0$ with a support size $|\text{supp}(\Bbeta_0)|\leq\tilde{s}$}
			\FOR{$l = 0,\ldots,\lmax$}
			\STATE{Choose a sub-gradient:  $\G_l\in\partial f(\Bbeta_{l})$}
			\STATE{Truncation: $\Bbeta_{l+1} = \calH_{\tilde{s}}(\Bbeta_l - \eta_{l}\G_l)$}
			\ENDFOR
			\STATE{\textbf{Output}: $\hat\Bbeta=\Bbeta_{\lmax}$}
		\end{algorithmic}
	\end{algorithm}
	
	
	\subsection{Theory: two-phase convergence}
	The convergence performance of Algorithm~\ref{alg:IHTl1} and statistical property of $\hat\Bbeta$ rely on the following two assumptions. 
	
	\begin{assumption}	\label{assump:sensing operators:vec}
		The covariate vectors $\X_1,\dots,\X_n$ are independent Gaussian $\X_i\sim N(\boldsymbol{0},\bSigma_i)$ where $\bSigma_i$'s are symmetric and positive definite. There exist absolute constants $\kl,\ku>0$ such that 
		\begin{align*}
			\kl\leq\lambda_{\min}(\bSigma_i)\leq\lambda_{\max}(\bSigma_i)\leq\ku.
		\end{align*}
	\end{assumption}
	
	\begin{assumption}(Heavy-tailed noise \Romannum{1}) \label{assump:heavy-tailed}
		The noise $\xi_1,\cdots,\xi_n$ are i.i.d. with the density $h_{\xi}(\cdot)$ and distribution function $H_{\xi}(\cdot)$,  respectively. 
		There exists an $\varepsilon>0$ such that $\EE |\xi|^{1+\varepsilon}<+\infty$.  The noise has median zero,  i.e.,  $H_{\xi}(0)=1/2$.  Denote $\gamma=\EE|\xi|$.  There exist constants $b_0, b_1>0$ (may be dependent on $\gamma$) such that \footnote{The lower bound condition can be slightly relaxed to $|H_{\xi}(x)-H_{\xi}(0)|\geq |x|/b_0$ when $|x|\leq 8(\ku/\kl)^{1/2}\gamma$.}
		\begin{align*}
			h_{\xi}(x)\geq b_0^{-1}, &\ \ \  \textrm{ for all } |x|\leq 8(\ku/\kl)^{1/2}\gamma;\\
			h_{\xi}(x)\leq b_1^{-1}, &\ \ \ \forall x\in \RR.
		\end{align*}
	\end{assumption}
	
	By Assumption~\ref{assump:heavy-tailed},  a simple fact is $b_0\geq 8(\ku/\kl)^{1/2}\gamma$ and $b_0\geq b_1$.  We require the noise has a finite $1+\eps$ moment,  which is essential to guarantee $\lim_{x\to+\infty}x(1-H_{\xi}(x))=0$.  This moment condition is fairly weak compared with the existing literature.  For instance,  \cite{minsker2018sub,depersin2020robust} require a finite second-order moment condition;  \cite{fan2021shrinkage} imposes a $2+\eps$ moment condition.  The lower bound on the density function also appeared in \cite{elsener2018robust} and is a special case of the (local) Bernstein condition \citep{alquier2019estimation,chinot2020robust}.  The upper bound condition on the density function is mild, e.g.,  a Lipschitz distribution function ensures such a uniform upper bound.  
	
	\begin{theorem}
		Suppose Assumptions~\ref{assump:sensing operators:vec} and \ref{assump:heavy-tailed} hold. There exist absolute constants $C_1,\dots,C_6>0$, $c_2^*\in(0,1)$ such that if $n\geq C_1\ku\kl^{-1}\tilde{s}\log(2d/\tilde{s})$, $\tilde{s}\geq C_2 (\ku/\kl)^8(b_0/b_1)^8\cdot s$ and the initial stepsize $\eta_{0}\in(n\ku)^{-1}\kl^{1/2}\ltwo{\Bbeta_0-\Bbeta^*}\cdot\left[1/8,\ 3/8\right]$, then with probability over $1-\exp(-C_3\tilde{s}\log (2d/\tilde{s}))-3\exp(-(n\log(2d/\tilde{s}))^{1/2}\log^{-1} n)$, the sequence $\{\Bbeta_l\}_{l\leq \lmax}$ of Algorithm~\ref{alg:IHTl1} has the following dynamics: 
		\begin{enumerate}[(1)]
			\item in phase one, i.e., when $\ltwo{\Bbeta_l-\Bbeta^*}\geq 8\kl^{-1/2}\gamma$, by taking the stepsize $\eta_{l}=(1-c_1)^{l}\eta_0$ with any fixed constant $c_1\leq\kl\ku^{-1}/64$,  then $\ltwo{\Bbeta_{l+1}-\Bbeta^*}\leq(1-c_1)^{l+1}\ltwo{\Bbeta_0-\Bbeta^*}$;
			\item in phase two, i.e., when $ C_4b_0\kl^{-1}n^{-1/2}\big(\ku\tilde{s}\log(2d/\tilde{s})\big)^{1/2}\leq\ltwo{\Bbeta_l-\Bbeta^*}\leq8\kl^{-1/2}\gamma$, by taking a constant stepsize $\eta_{l}=\eta\in \kl^{1/2}b_1^2(nb_0\ku)^{-1}\cdot\left[C_5, C_6\right]$,  then $\ltwo{\Bbeta_{l+1}-\Bbeta^*}\leq(1-c_2^*)\ltwo{\Bbeta_l-\Bbeta^*}$
		\end{enumerate}
		Therefore, after at most $O\big(\log(\ltwo{\Bbeta_0-\Bbeta^*}/\gamma)+\log(n\gamma b_0^{-1}\log(2d/\tilde{s}))\big)$ iterations, Algorithm~\ref{alg:IHTl1} outputs an estimator with the error rate 
		$$
		\|\hat\Bbeta-\Bbeta^*\|_2^2\leq \frac{C_4\ku}{\kl^2}  \cdot \frac{b_0^2\tilde{s}\log(2d/\tilde{s})}{n}.
		$$ 
		\label{thm:vec:sparse}
	\end{theorem}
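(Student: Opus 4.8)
The plan is to reduce the whole statement to a one-step contraction for the error $\bd_l:=\Bbeta_l-\Bbeta^*$ and then track this contraction separately in the two regimes. Writing $\u_l=\Bbeta_l-\eta_l\G_l$ so that $\Bbeta_{l+1}=\calH_{\tilde s}(\u_l)$, I would first invoke the standard hard-thresholding estimate: since $\Bbeta^*$ is $s$-sparse, $\tilde s\geq s$, and $\Bbeta_{l+1}$ is the best $\tilde s$-term approximation of $\u_l$, one has $\ltwo{\Bbeta_{l+1}-\Bbeta^*}\leq\alpha\,\ltwo{(\u_l-\Bbeta^*)_T}$, where $T=\mathrm{supp}(\Bbeta_l)\cup\mathrm{supp}(\Bbeta_{l+1})\cup\mathrm{supp}(\Bbeta^*)$ has size at most $3\tilde s$ and $\alpha=1+O(\sqrt{s/\tilde s})$. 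Restricting to $T$ is precisely what lets the sub-gradient enter only through its restriction $(\G_l)_T$; and the hypothesis $\tilde s\geq C_2(\ku/\kl)^8(b_0/b_1)^8 s$ is exactly the inflation that forces $\alpha$ close enough to $1$ not to destroy the contraction. Since $\bd_l$ is supported on $T$, expanding the square yields the master inequality
$$
\ltwo{\Bbeta_{l+1}-\Bbeta^*}^2\;\leq\;\alpha^2\Big(\ltwo{\bd_l}^2-2\eta_l\,\inp{\G_l}{\bd_l}+\eta_l^2\,\ltwo{(\G_l)_T}^2\Big),
$$
so everything rests on a lower bound for the descent term $\inp{\G_l}{\bd_l}$ and an upper bound for the restricted gradient norm $\ltwo{(\G_l)_T}$.

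For the population analysis I would use $\inp{\X_i}{\Bbeta_l}-Y_i=\inp{\X_i}{\bd_l}-\xi_i$, so that conditionally on the design the sign has mean $\EE_\xi[\mathrm{sign}(\inp{\X_i}{\bd_l}-\xi_i)]=2H_\xi(\inp{\X_i}{\bd_l})-1$ and the conditional mean of the descent term equals $\sum_i\phi(\inp{\X_i}{\bd_l})$ with $\phi(t):=t\,(2H_\xi(t)-1)\geq 0$ (using median zero). This function is the analytic engine of the two-phase phenomenon. When $|t|\leq 8(\ku/\kl)^{1/2}\gamma$ the density lower bound $h_\xi\geq b_0^{-1}$ gives $|2H_\xi(t)-1|\geq 2|t|/b_0$, hence $\phi(t)\geq 2t^2/b_0$ — the quadratic, ``smoothed'' behaviour; for larger $|t|$ the factor $2H_\xi(t)-1$ saturates to a positive constant (lower-bounded using median zero together with the density floor over the window) and $\phi(t)\gtrsim|t|$ — the linear behaviour of genuine non-smooth optimization. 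Because a typical projection has size $|\inp{\X_i}{\bd_l}|\asymp\sqrt{\ku}\,\ltwo{\bd_l}$, the cross-over between these regimes occurs exactly when this typical size enters the density window, i.e.\ at $\ltwo{\bd_l}\asymp\kl^{-1/2}\gamma$, which is the phase boundary in the statement. Summing over $i$ and invoking a restricted-eigenvalue/RIP bound for the Gaussian design (Assumption~\ref{assump:sensing operators:vec}), phase two yields $\sum_i\phi(\inp{\X_i}{\bd_l})\gtrsim(n\kl/b_0)\ltwo{\bd_l}^2$ while phase one yields $\sum_i\phi(\inp{\X_i}{\bd_l})\gtrsim n\sqrt{\kl}\,\ltwo{\bd_l}$.

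The chief difficulty is that the signs depend on both the design and the heavy-tailed noise, and that $\bd_l$ is itself an algorithm-generated, data-dependent direction, so I cannot simply condition; both bounds must hold uniformly over all $\tilde s$-sparse error directions. The saving grace is that the sign factors are bounded by one, so the heavy tail enters only through a bounded multiplier and $\inp{\G_l}{\bd_l}$, $\ltwo{(\G_l)_T}$ remain sub-Gaussian in the design randomness, concentrating without any moment assumption beyond what controls $\phi$. Uniformity over sparse directions I would obtain by a covering/VC argument over supports of size at most $3\tilde s$ with an $\eps$-net on each, which is the origin of the factor $\tilde s\log(2d/\tilde s)$ and of the stated failure probability. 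The key quantitative refinement is a bias–variance split of $\ltwo{(\G_l)_T}$: near the truth the sign is essentially $\mathrm{sign}(-\xi_i)$, which is independent of $\X_i$, so $(\G_l)_T$ is a genuinely mean-zero Gaussian sum of size $\ltwo{(\G_l)_T}\lesssim\sqrt{n\ku\,\tilde s\log(2d/\tilde s)}$ plus a bias again governed by $\phi$; far from the truth only the crude deterministic bound $\ltwo{(\G_l)_T}\lesssim n\sqrt{\ku}$, coming from $\sum_i|\inp{\X_i}{w}|\leq\sqrt{n}\,(\sum_i\inp{\X_i}{w}^2)^{1/2}$ and RIP, is needed.

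With these ingredients the two recursions follow by tuning the stepsize to balance descent against the curvature/noise term. In phase one, $\eta_l=(1-c_1)^l\eta_0$ with $\eta_0\asymp(n\ku)^{-1}\kl^{1/2}\ltwo{\Bbeta_0-\Bbeta^*}$ makes the effective stepsize track $\ltwo{\bd_l}$, so the linear descent $\eta_l\cdot n\sqrt{\kl}\,\ltwo{\bd_l}$ dominates the error $\eta_l^2 n^2\ku$ and gives $\ltwo{\bd_{l+1}}\leq(1-c_1)\ltwo{\bd_l}$ with $c_1\asymp\kl/\ku$; the decaying schedule is forced by the linear growth of $\phi$. In phase two the quadratic lower bound makes the objective behave like a smooth, strongly convex one, so a constant $\eta\asymp\kl^{1/2}b_1^2(nb_0\ku)^{-1}$ gives $\ltwo{\bd_{l+1}}\leq(1-c_2^*)\ltwo{\bd_l}$ as long as $\ltwo{\bd_l}$ stays above the floor where the statistical term $\eta^2\cdot n\ku\,\tilde s\log(2d/\tilde s)$ overtakes the descent — that floor is exactly $\ltwo{\bd_l}^2\asymp\ku\kl^{-2}b_0^2\tilde s\log(2d/\tilde s)/n$, i.e.\ the claimed final rate. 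Summing the two geometric phases produces the stated $O\big(\log(\ltwo{\Bbeta_0-\Bbeta^*}/\gamma)+\log(\cdots)\big)$ iteration count. I expect the main obstacle to be the third step: proving the uniform, two-regime control of the data-dependent sub-gradient while keeping the phase-two bias term strictly below the statistical floor, so that the noise variance rather than the bias pins down the final rate. This is where the interplay among the density regularity (through $b_0,b_1$), the covering number, and the sparsity inflation $\tilde s/s$ is most delicate, and is presumably the source of the large exponents in the hypothesis on $\tilde s$.
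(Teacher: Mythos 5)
Your high-level architecture agrees with the paper's: the same master inequality obtained by expanding the square over a union-of-supports set of size at most $3\tilde s$, the tight hard-thresholding bound (Theorem~\ref{tecthm:vec-perturb}) whose factor $1+O(\sqrt{s/\tilde s})$ is what forces the inflation $\tilde s\gtrsim(\ku/\kl)^8(b_0/b_1)^8\,s$, the two-regime behaviour of the noise-averaged descent term with crossover at $\ltwo{\Bbeta_l-\Bbeta^*}\asymp\kl^{-1/2}\gamma$, the stepsize tuning in each phase, and the iteration count. The divergence, and the genuine gap, is in the probabilistic core --- exactly the step you yourself flag as ``the main obstacle'' but do not close. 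The paper never analyzes the sign process $\bd\mapsto\sum_i \mathrm{sign}(\inp{\X_i}{\bd}-\xi_i)\inp{\X_i}{\bd}$ (write $\bd_l=\Bbeta_l-\Bbeta^*$) that your argument requires. Instead it makes two convexity reductions: $\inp{\G_l}{\bd_l}\geq f(\Bbeta_l)-f(\Bbeta^*)$ for the descent term, and $\ltwo{\calP_{\Omega\cup\Pi\cup\Omega^*}(\G)}^2=\inp{\G}{\calP_{\Omega\cup\Pi\cup\Omega^*}(\G)}\leq f\big(\Bbeta+\calP_{\Omega\cup\Pi\cup\Omega^*}(\G)\big)-f(\Bbeta)$ for the restricted gradient norm, the latter resolved as a quadratic inequality (Lemma~\ref{lem:vec:sparse}). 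After these reductions the only object needing uniform concentration is the loss difference $f(\Bbeta+\Delta\Bbeta)-f(\Bbeta)$ over $3\tilde s$-sparse increments; this difference is $1$-Lipschitz through $\sum_i|\inp{\X_i}{\Delta\Bbeta}|$ and the heavy-tailed $\xi_i$ cancels inside it, so symmetrization, the contraction principle, and Adamczak's bound (Proposition~\ref{prop:emp:vec}, Theorem~\ref{tecthm:orlicz norm empirical}) yield the stated exponential probability under a bare $1+\eps$ moment --- this is precisely the paper's remark about avoiding $\sum_i|\xi_i|$, whose control would only give probability $1-O(n^{-\eps})$.

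Bypassing these reductions, as you propose, creates two concrete problems. First, your claim that $\inp{\G_l}{\bd_l}$ and the restricted gradient norm ``remain sub-Gaussian in the design randomness'' is only a pointwise statement (fixed direction, conditionally on the noise). Uniformity over the algorithm-generated sparse directions cannot be obtained by a routine net-plus-continuity argument, because the summand $\mathrm{sign}(\inp{\X_i}{\bd}-\xi_i)\inp{\X_i}{\bd}$ jumps by $2|\xi_i|$ whenever $\inp{\X_i}{\bd}$ crosses $\xi_i$; controlling the number and magnitude of crossings between net points is a genuinely delicate step (and contraction-type arguments do not apply to a sign multiplier), which your sketch asserts rather than supplies. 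Second, and quantitatively fatal as written, your bias--variance split of the restricted gradient does not deliver the phase-two bound $\ltwo{\calP_T(\G_l)}\lesssim (n\ku/b_1)\ltwo{\bd_l}$ demanded by Lemma~\ref{lem:vec:sparse}(2): decomposing $\G_l=\sum_i\mathrm{sign}(-\xi_i)\X_i+\sum_i\big[\mathrm{sign}(\inp{\X_i}{\bd_l}-\xi_i)-\mathrm{sign}(-\xi_i)\big]\X_i$, the second sum runs over the roughly $n b_1^{-1}\ku^{1/2}\ltwo{\bd_l}$ indices where the sign flips, each contributing a vector of norm about $(\ku\tilde s)^{1/2}$ on $T$, so the term-by-term bound gives $\approx n b_1^{-1}\ku\sqrt{\tilde s}\,\ltwo{\bd_l}$ --- an extra $\sqrt{\tilde s}$. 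With the gradient bound inflated by $\sqrt{\tilde s}$, the admissible constant stepsize shrinks by a factor $\tilde s$, the contraction factor degrades to $1-O(1/\tilde s)$, and both the stated stepsize window $\eta\in\kl^{1/2}b_1^2(nb_0\ku)^{-1}[C_5,C_6]$ and the claimed $O(\log(\cdot))$ iteration complexity fail. Recovering the linear-in-$\ltwo{\bd_l}$ bound forces you to treat the flip term as a supremum process via its conditional mean $2\big(H_\xi(\inp{\X_i}{\bd_l})-H_\xi(0)\big)$ and Cauchy--Schwarz --- at which point you are back to the unsolved uniform concentration of a discontinuous indicator process. The paper's convexity trick is the missing idea that makes both difficulties disappear simultaneously.
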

	
	Note that in Theorem~\ref{thm:vec:sparse} if a constant depends on $b_0$ or/and $b_1$, a star sign is placed on its top right corner. 
	The phase one convergence is typical in non-smooth optimization which requires decaying stepsizes; see also \cite{foucart2017iht}.  Surprisingly, the phase two convergence requires a constant stepsize as if minimizing a smooth objective function \citep{blumensath2009iterative,chen2015fast}. 

	\paragraph*{Gaussian noise} If the noise is Gaussian $N(0,\sigma^2)$, we have $b_0\asymp b_1\asymp \sigma$. If further $\ku\asymp\kl\asymp 1$, we can choose a sparsity level $\tilde{s}\gtrsim s$ so that the output of Algorithm~\ref{alg:IHTl1} attains the rate  $\|\hat{\Bbeta}-\Bbeta^*\|_2^2=O_p\big(\sigma^2\cdot sn^{-1}\log(d/s)\big)$, which is minimax optimal \citep{rigollet2011exponential,ye2010rate,raskutti2011minimax,lounici2011oracle}.

	\paragraph*{Heavy-tailed noise} Under a finite second moment condition, \cite{sun2020adaptive} proved that a Huber-loss-based estimator can achieve the error rate $O\big(s/n\cdot \log d\big)$ with probability exceeding $1-d^{-\Omega(1)}$. See also related works in \cite{pan2021iteratively,wang2021new,wang2020tuning}. A median-of-means estimator was studied in \cite{depersin2020robust} showing that, under a finite second moment condition, the rate $O\big(s/n\cdot \log (d/s)\big)$ can be attained with probability exceeding $1-\Omega(1)\cdot\exp(-s\log(ed/s))$. Our Theorem~\ref{thm:vec:sparse} only requires a finite $1+\eps$ moment condition, but the attained error rate is comparable to \cite{depersin2020robust}. Though our estimator is not sub-Gaussian, most existing sub-Gaussian estimators require a finite second moment condition \citep{catoni2012challenging,lugosi2019risk,lugosi2019mean,hopkins2020mean,depersin2022robust}. \cite{chinot2020robust} relaxed the noise moment condition but it is unclear how their estimator can be efficiently computed with theoretical guarantees. More importantly, unlike the aforementioned works, our estimator is attained by a {\it provably} fast converging algorithm.
	
	\begin{proof}[Proof sketch of Theorem~\ref{thm:vec:sparse}]
		Note that not all the entries of the sub-gradient $\G_l$ affect $\Bbeta_{l+1}$. Denote 
		$$
		\Omega_{l+1}:=\text{supp}(\Bbeta_{l+1}),\quad\Omega_{l}:=\text{supp}(\Bbeta_{l}),\quad \Pi_l:=\text{supp}\big(\calH_{\tilde{s}}(\calP_{\Omega_l^{\rm c}}(\G_l))\big), \quad \Omega^*:=\text{supp}(\Bbeta^*),
		$$
		where the operator $\calP_{\Omega}(\G)$ keeps the entries $\G$ in $\Omega$ unchanged and zeros out the others and $\Omega^{c}$ is complement of $\Omega$.  It is clear that $\Omega_{l+1}\subseteq\Omega_{l}\cup\Pi_l\subseteq\Omega_{l}\cup\Pi_l\cup\Omega^*$. We can re-write the update rule as
		\begin{align*}
			\Bbeta_{l+1}=\calH_{\tilde{s}}\left(\Bbeta_l-\eta_{l}\G_l\right)=\calH_{\tilde{s}}\left(\Bbeta_l-\eta_{l}\calP_{\Omega_{l}\cup\Pi_l\cup\Omega^*}(\G_l)\right).
		\end{align*}
		It thus suffices to focus on the sparse vector $\calP_{\Omega_{l}\cup\Pi_l\cup\Omega^*}(\G_l)$. The following lemma depicts the two-phase regularity properties of the loss $f(\Bbeta)$  around the oracle $\Bbeta^*$.
		
		\begin{lemma}
			\label{lem:vec:sparse}
			Suppose Assumptions~\ref{assump:sensing operators:vec} and \ref{assump:heavy-tailed} hold. There exist absolute constants $C_1,C_2,C_3,c_0>0$ such that if $n\geq C_1\ku\kl^{-1}\tilde{s}\log(2d/\tilde{s})$, then with probability over $1-\exp\big(-c_0\tilde{s}\log \big(2d/\tilde{s})\big)-3\exp\big(-(n\log(2d/\tilde{s}))^{1/2}\log^{-1} n\big)$, we have
			\begin{enumerate}[(1)]
				\item for all $\Bbeta\in\left\{\Bbeta\in\RR^{d}:\; \ltwo{\Bbeta-\Bbeta^*}\geq 8\kl^{-1/2}\gamma,\ |\text{supp}(\Bbeta)|\leq\tilde{s}\right\}$ and for all sub-gradient $\G\in\partial f(\Bbeta)$, 
				\begin{align*}
					f(\Bbeta)-f(\Bbeta^*)\geq \frac{n}{4}\kl^{1/2}\ltwo{\Bbeta-\Bbeta^*}\quad {\rm and}\quad \ltwo{\calP_{\Omega\cup\Pi\cup\Omega^*}(\G)}\leq n \ku^{1/2},
				\end{align*}
				where $\Omega:=\text{supp}(\Bbeta)$ and $\Pi:=\text{supp}\big(\calH_{\tilde{s}}(\calP_{\Omega^{\rm c}}(\G))\big)$;
				
				\item for all $\Bbeta\in\left\{\Bbeta\in\RR^{d}:\ C_2\ku^{1/2}\kl^{-1}\cdot b_0\big(\tilde{s}/n\cdot
				\log(2d/\tilde{s})\big)^{1/2}\leq\ltwo{\Bbeta-\Bbeta^*}\leq8\kl^{-1/2}\gamma,\ |\text{supp}(\Bbeta)|\leq\tilde{s}\right\}$ and for all sub-gradient $\G\in\partial f(\Bbeta)$, 
				\begin{align*}
					f(\Bbeta)-f(\Bbeta^*)\geq \frac{n\kl}{12b_0}\ltwo{\Bbeta-\Bbeta^*}^2\quad {\rm and}\quad \ltwo{\calP_{\Omega\cup\Pi\cup\Omega^*}(\G)}\leq C_3\frac{n\ku}{b_1}\ltwo{\Bbeta-\Bbeta^*},
				\end{align*}
				where $\Omega:=\text{supp}(\Bbeta)$ and $\Pi:=\text{supp}\big(\calH_{\tilde{s}}(\calP_{\Omega^{\rm c}}(\G))\big)$.
			\end{enumerate}
		\end{lemma}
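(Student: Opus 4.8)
The plan is to reduce the whole statement to the scalar population function $g(u):=\EE_\xi[|\xi-u|-|\xi|]$ evaluated at the random slopes $u_i:=\langle\X_i,\Delta\rangle$, where $\Delta:=\Bbeta-\Bbeta^*$. Using $Y_i=\langle\X_i,\Bbeta^*\rangle+\xi_i$ gives $f(\Bbeta)-f(\Bbeta^*)=\sum_i(|\xi_i-u_i|-|\xi_i|)$, whose conditional mean given $\X$ is exactly $\sum_i g(u_i)$. Because $\xi$ has median zero, $g(0)=0$ and $g'(u)=2H_\xi(u)-1$ with $g'(0)=0$, hence $g(u)=\int_0^u 2(H_\xi(s)-H_\xi(0))\,ds$. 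The two density bounds of Assumption~\ref{assump:heavy-tailed} translate into the two phases: the lower bound $h_\xi\ge b_0^{-1}$ on $|s|\le 8(\ku/\kl)^{1/2}\gamma=:R$ yields the quadratic growth $g(u)\ge u^2/b_0$ for $|u|\le R$, while the elementary reverse-triangle bound $g(u)\ge|u|-2\gamma$ holds for all $u$. Since $\X_i\sim N(0,\bSigma_i)$, the slope is Gaussian with variance $\tau_i^2:=\Delta^\top\bSigma_i\Delta\in[\kl\ltwo{\Delta}^2,\ku\ltwo{\Delta}^2]$, so $\ltwo{\Delta}\ge 8\kl^{-1/2}\gamma$ forces every $\tau_i\ge 8\gamma$ (phase one), while $\ltwo{\Delta}\le 8\kl^{-1/2}\gamma$ keeps the typical $|u_i|$ below $R$ (phase two).

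For the function-value bounds I would use the decomposition $f(\Bbeta)-f(\Bbeta^*)=\sum_i g(u_i)+\sum_i[(|\xi_i-u_i|-|\xi_i|)-g(u_i)]$ into conditional mean plus mean-zero fluctuation. In phase one, $\sum_i g(u_i)\ge\sum_i|u_i|-2n\gamma$; concentration of $\sum_i|u_i|$ around $\sum_i\tau_i\sqrt{2/\pi}\gtrsim n\kl^{1/2}\ltwo{\Delta}$ together with $2n\gamma\le\tfrac14 n\kl^{1/2}\ltwo{\Delta}$ yields $\sum_i g(u_i)\gtrsim n\kl^{1/2}\ltwo{\Delta}$. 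In phase two, $\sum_i g(u_i)\ge b_0^{-1}\sum_i u_i^2\mathbbm{1}(|u_i|\le R)$, and a truncated-second-moment computation using $\tau_i\le R$ gives $\EE\sum_i u_i^2\mathbbm{1}(|u_i|\le R)\gtrsim n\kl\ltwo{\Delta}^2$, so $\sum_i g(u_i)\gtrsim n\kl\ltwo{\Delta}^2/b_0$. Each fluctuation summand is mean-zero and bounded by $2|u_i|$; a Bernstein inequality combined with an $\epsilon$-net over the $2\tilde s$-sparse sphere — the union over $\binom{d}{2\tilde s}$ supports producing the $\log(2d/\tilde s)$ factor — bounds the fluctuation uniformly by $\ltwo{\Delta}\sqrt{n\ku\tilde s\log(2d/\tilde s)}$. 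Balancing this linear-in-$\ltwo{\Delta}$ error against the quadratic phase-two main term pins down exactly the radius $\ltwo{\Delta}\gtrsim\ku^{1/2}\kl^{-1}b_0\sqrt{\tilde s\log(2d/\tilde s)/n}$, which is the source of the constant $C_2$.

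For the sub-gradient bounds, write $\G=\sum_i\mathrm{sign}(u_i-\xi_i)\X_i$ and split the sign as $\mathrm{sign}(u_i-\xi_i)=(2H_\xi(u_i)-1)+w_i$ with $w_i$ mean-zero given $\X$. Since the data-dependent set $\Pi$ is awkward to track, I would pass to $\|\calP_{\Omega\cup\Pi\cup\Omega^*}(\G)\|_2\le\sup_{|S|\le 3\tilde s}\|\calP_S(\G)\|_2$ and bound this supremum uniformly. The signal part satisfies $|2H_\xi(u_i)-1|\le 2|u_i|/b_1$ by the upper density bound, so by Cauchy--Schwarz and the standard restricted-isometry estimates $\sum_i\langle\X_i,v\rangle^2\lesssim n\ku$ (uniform over sparse unit $v$) and $\sum_i\langle\X_i,\Delta\rangle^2\lesssim n\ku\ltwo{\Delta}^2$ it is at most $2b_1^{-1}(n\ku\ltwo{\Delta}^2)^{1/2}(n\ku)^{1/2}=2n\ku b_1^{-1}\ltwo{\Delta}$, the phase-two target. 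In phase one the crude bound $|\mathrm{sign}(\cdot)|\le 1$ with the same restricted-isometry estimate gives $\|\calP_S(\G)\|_2\le n\ku^{1/2}$ directly. The mean-zero part $\sum_i w_i\calP_S(\X_i)$ is a sum of independent bounded vectors whose supremum over sparse $v$ and over $S$ is of order $\sqrt{n\ku\tilde s\log(2d/\tilde s)}$, and the phase-two radius again forces this to be dominated by the signal part.

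The main obstacle is the uniform control of the \emph{discontinuous} process $\Delta\mapsto\sum_i w_i\calP_S(\X_i)$: the sign is not Lipschitz in $\Delta$, so plain chaining over the $\epsilon$-net does not apply to the noise part. I would resolve this by observing that the indicators $\mathbbm{1}(\langle\X_i,\Delta\rangle>\xi_i)$ belong to the class of half-spaces in the $2\tilde s$ active coordinates, of VC dimension $O(\tilde s)$, and invoke a symmetrization plus VC-type uniform deviation bound, which delivers the required supremum with the same $\tilde s\log(2d/\tilde s)$ complexity; the conditional-mean signal part, by contrast, is genuinely Lipschitz in $\Delta$ through the bounded-density map $H_\xi$ and is handled by the net. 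Intersecting the resulting high-probability events and collecting the signal, noise, expectation and fluctuation estimates yields both assertions of the lemma.
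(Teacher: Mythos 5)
Your treatment of the two sharpness bounds is sound and essentially reproduces the paper's own argument: the integral representation $g(u)=2\int_0^u\big(H_{\xi}(s)-H_{\xi}(0)\big)\,ds$ with its linear and quadratic growth regimes is exactly the paper's expectation lemma (Lemma~\ref{teclem:vec:l1exp}), and the uniform fluctuation control over sparse increments is the paper's Proposition~\ref{prop:emp:vec} (proved there by symmetrization, Ledoux--Talagrand contraction and Adamczak's inequality rather than Bernstein plus a support union bound, but with the same content; contraction is available precisely because the loss increments are $1$-Lipschitz in the linear process $\langle\X_i,\Delta\rangle$).

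The genuine gap is in the sub-gradient bounds. The paper never analyzes the sign process at all: for any $\G\in\partial f(\Bbeta)$ and $S=\Omega\cup\Pi\cup\Omega^*$, convexity gives $\ltwo{\calP_{S}(\G)}^2=\inp{\G}{\calP_{S}(\G)}\leq f(\Bbeta+\calP_{S}(\G))-f(\Bbeta)$, and since $\calP_{S}(\G)$ is a $3\tilde{s}$-sparse increment the \emph{same} function-value empirical process bound controls the right-hand side; phase one then yields $\ltwo{\calP_S(\G)}\leq n\ku^{1/2}$ at once, and phase two follows by taking the increment $\frac{b_1}{2n\ku}\calP_S(\G)$, upper-bounding via Lemma~\ref{teclem:vec:l1exp}, and solving the resulting quadratic inequality in $\ltwo{\calP_S(\G)}$. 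This requires no new probabilistic input and covers every sub-gradient automatically. Your route instead demands uniform control of the discontinuous process $\sum_i w_i\langle\X_i,v\rangle$, and the step ``symmetrization plus VC-type uniform deviation bound'' does not go through as stated: standard VC deviation bounds require a uniformly bounded class, whereas your class has the unbounded Gaussian envelope $2|\langle\X_i,v\rangle|$; the natural repair (truncation plus Sauer--Shelah counting of realizable sign patterns on each support) pays entropy $\tilde{s}\log n$ rather than $\tilde{s}\log(2d/\tilde{s})$, and then the noise part is no longer dominated by the signal part on the phase-two annulus whenever $\log n\gg\log(2d/\tilde{s})$ (e.g.\ $\tilde{s}\asymp d$ and $n\gg d$), since the phase-two radius only guarantees $\ltwo{\Delta}\gtrsim b_1\big(\tilde{s}\log(2d/\tilde{s})/(n\ku)\big)^{1/2}$. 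Closing this via chaining with VC uniform entropy and an unbounded envelope is possible in principle but is a substantial piece of empirical-process theory your sketch does not supply. A smaller but real issue: the formula $\G=\sum_i \mathrm{sign}(\langle\X_i,\Bbeta\rangle-Y_i)\X_i$ describes only one selection from $\partial f(\Bbeta)$, while the lemma quantifies over all sub-gradients; in a statement uniform over an uncountable set of $\Bbeta$, ties $\xi_i=\langle\X_i,\Bbeta-\Bbeta^*\rangle$ do occur, so you must extend your bound to convex hulls of the extreme selections, which the paper's sub-gradient-inequality argument handles for free.
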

		
		Based on Lemma~\ref{lem:vec:sparse}, we show that during phase one
		$$
		\|\Bbeta_l-\eta_l\calP_{\Omega_l\cup \Pi_l\cup \Omega^{\ast}}(\G_l)-\Bbeta^{\ast}\|_2^2\leq \|\Bbeta_l-\Bbeta^{\ast}\|_2^2-\frac{n\eta_l\kl^{1/2}}{2}\|\Bbeta_l-\Bbeta^{\ast}\|_2+n^2\eta_l^2 \ku.
		$$
		The selected stepsize guarantees $\eta_l\asymp \kl^{1/2}\ku^{-1}\cdot n^{-1}(1-c_1)^l\|\Bbeta_0-\Bbeta^{\ast}\|_2$, by which the contraction of $\|\Bbeta_{l+1}-\Bbeta^{\ast}\|_2$ is derived using the fact $\Bbeta_{l+1}=\calH_{\tilde{s}}\left(\Bbeta_l-\eta_{l}\calP_{\Omega_{l}\cup\Pi_l\cup\Omega^*}(\G_l)\right)$. During phase two, by Lemma~\ref{lem:vec:sparse}, we get
		$$
		\|\Bbeta_l-\eta_l\calP_{\Omega_l\cup \Pi_l\cup \Omega^{\ast}}(\G_l)-\Bbeta^{\ast}\|_2^2\leq \bigg(1-\frac{\eta_l n \kl}{6b_0}+\frac{\eta_l^2C_3^2n^2\ku^2}{b_1^2}\bigg)\|\Bbeta_l-\Bbeta^{\ast}\|_2^2.
		$$
		Similarly, the selected constant stepsize guarantees the contraction of $\|\Bbeta_{l+1}-\Bbeta^{\ast}\|_2$. 
	\end{proof}
	
	\begin{remark}[{Smoothing effect of random noise}]\label{rmk:smootheffect}
		By Theorem~\ref{thm:vec:sparse}, the phase two convergence of Algorithm~\ref{alg:IHTl1} requires a constant stepsize as if minimizing a smooth function. Lemma~\ref{lem:vec:sparse} shows that, in the small region around $\Bbeta^{\ast}$ where the phase two convergence occurs, the loss function $f(\Bbeta)$ indeed possesses the regularity conditions which usually appear in smooth optimization,  e.g.,  the square loss \citep{cai2021generalized,zhao2015nonconvex} and logistic loss \citep{lyu2021latent}.  The second phase occurs when the individual random noise starts to (stochastically) dominate $\|\Bbeta_l-\Bbeta^{\ast}\|_2$.  This suggests that, when $\Bbeta^{\ast}$ is close enough to $\Bbeta^{\ast}$,  the random noise has the effect of smoothing the objective function. In Figure~\ref{fig:smooth}, we present the shape of the stochastic function $g(t):=n^{-1}\sum_{i=1}^n |\xi_i-t|$ and also its sub-differential where $\xi_i$'s are i.i.d. $N(0, \tau^2)$. 
		The function is clearly non-smooth when $\tau=0$ and becomes smoother as $\tau$ increases.  Randomized smoothing has been observed in optimization and statistics literature.   For instance, \cite{zhang2020edgeworth} proved that the random edge-wise observational error can naturally smooth the Edgeworth expansion of the network moment statistics, whose empirical c.d.f. can be a step function without the smoothing effect. In order to optimize a non-smooth convex function $h(\Bbeta)$, e.g. $\|\Bbeta\|_1$,  with optimal convergence rates, \cite{duchi2012randomized} proposed to optimize the randomly-smoothed version $\EE h(\Bbeta+Z)$ where $Z$ is some random variable with a known distribution. 
		
		\begin{figure}[t]
			\centering
			\begin{subfigure}[b]{0.45\textwidth}
				\centering
				\includegraphics[width=\textwidth]{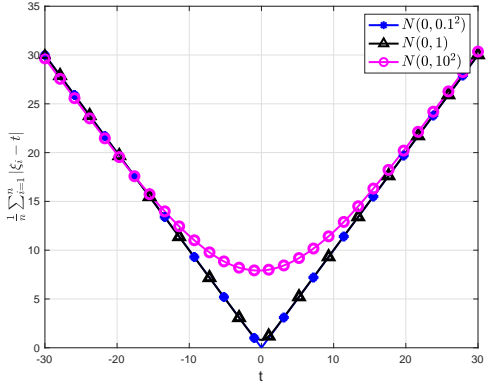}
				\caption{Value of $\frac{1}{n}\sum_{i=1}^{n}|\xi_i-t| $ against $t$ }
				\label{fig:f_value_smooth}
			\end{subfigure}
			\hfill
			\begin{subfigure}[b]{0.45\textwidth}
				\centering
				\includegraphics[width=\textwidth]{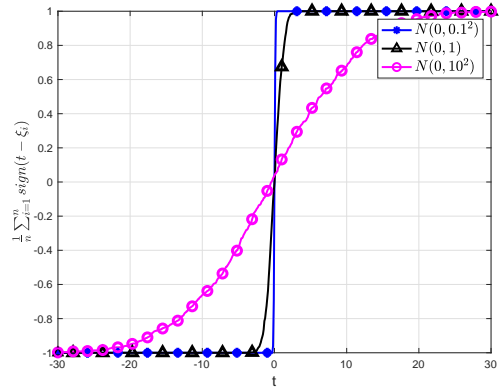}
				\caption{Value of $\frac{1}{n}\sum_{i=1}^{n}\text{sign}(t-\xi_i) $ against $t$}
				\label{fig:df_value_smooth}
			\end{subfigure}
			\caption{Sample size $n=1000$ and $\{\xi_i\}_{i=1}^n$ follow $N(0,0.1^2)$, $N(0,1)$, $N(0,10^2)$ respectively.}
			\label{fig:smooth}
		\end{figure}   
	\end{remark}
	
\section{Low-rank Linear Regression}\label{sec:matrix}

Let $\M^{\ast}$ be a $d_1\times d_2$ matrix with rank $r\ll d_2$. Without loss of generality, we assume $d_1\geq d_2$. Denote $\{(\X_i, Y_i)\}_{i=1}^n$ the collection of i.i.d.  observations satisfying $Y_i=\langle \X_i, \M^{\ast}\rangle+\xi_i:={\rm tr}(\X_i^{\top}\M^{\ast})+\xi_i$ 
with $\X_i\in\RR^{d_1\times d_2}$ known as the {\it measurement matrix}. Our goal is to estimate $\M^{\ast}$ by solving the optimization program
\begin{align}\label{eq:loss:matrix}
	\hat\M:=\underset{\M\in\MM_r}{\arg\min}\ f(\M)  \quad \textrm{ where } f(\M) := \sum_{i=1}^n\rho\big(\langle \M,  \X_i\rangle-Y_i\big),
\end{align}
where $\MM_r:=\{\M\in\RR^{d_1\times d_2}:\; \text{rank}(\M)\leq r\}$. Here $\rho(\cdot)$ is a general robust loss function including the absolute loss \citep{tong2021low, charisopoulos2021low}, Huber loss \citep{elsener2018robust,sun2020adaptive} and quantile loss \citep{alquier2019estimation,chen2022robust}.

\subsection{Algorithm: Riemannian optimization} Our algorithm is the projected sub-gradient descent on the Riemannian manifold (RsGrad). At the $l$-th iteration with a current low-rank estimate $\M_l$,  the algorithm consists of two major steps.  It begins with computing the Riemannian sub-gradient,  which is the projection of a {\it vanilla} sub-gradient $\G_l\in\partial f(\M_l)\subset \RR^{d_1\times d_2}$ onto the tangent space of $\MM_r$ at the point $\M_l$,  denoted by $\TT_l$. The second step is to  update the low-rank estimate along the direction of negative Riemannian sub-gradient and then retract it back to the manifold $\MM_r$,  for which,  it suffices to take the SVD.  Here ${\rm SVD}_r(\cdot)$ returns the best rank-$r$ approximation by SVD. The details can be found in Algorithm~\ref{alg:RsGrad}. 

We note that the Riemannian sub-gradient is used only for the benefit of computational efficiency. The theoretical results all hold if taking the vanilla sub-gradient. The detailed computation of Riemannian sub-gradient can be found in the supplementary file. RsGrad has its origin in \cite{cambier2016robust} for minimizing the absolute loss without convergence and statistical analysis.  Our framework covers general robust loss functions,  and  we prove its computational efficiency and statistical optimality in several important applications.

\paragraph*{Comparison with existing algorithms} Sub-gradient descent algorithms have been investigated by \cite{tong2021low,charisopoulos2021low} for low-rank linear regression with the absolute loss. Our algorithm is different in two aspects: we take Riemannian sub-gradient and have two phases of stepsize schedule.  Factorization-based optimizations \citep{zheng2016convergent,zhao2015nonconvex} reparametrize the program \eqref{eq:loss:matrix} by $\M=\U\V^{\top}$.  Then it suffices to update $(\U,  \V)$ sequentially to minimize (\ref{eq:loss}) by sub-gradient descent algorithms. These algorithms can suffer a great loss of computational efficiency if $\M^{\ast}$ is {\it ill-conditioned}. Though the issue can be theoretically remedied by a proper inverse scaling \citep{tong2021accelerating},  it can cause a potential computational instability especially when $\M^{\ast}$ has small non-zero singular values. Recently,  it is discovered that optimizing (\ref{eq:loss}) directly on the low-rank manifold $\MM_r$,   called {\it Riemannian},  enjoys the fast computational speed of factorization-based approaches and,  meanwhile,  converges linearly regardless of the condition number of $\M^{\ast}$. See,  e.g.,  \cite{cai2021generalized} and \cite{cai2021provable},  for the convergence of Riemannian gradient descent (RGrad) algorithms in minimizing {\it strongly convex and smooth} functions with tensor-related applications. We note that RGrad is similar to the projected gradient descent (PGD, \cite{chen2015fast}) except that RGrad utilizes the Riemannian gradient while PGD takes the vanilla one. 

\begin{algorithm}
	\caption{Riemannian Sub-gradient Descent (RsGrad)}\label{alg:RsGrad}
	\begin{algorithmic}
		\STATE{\textbf{Input}: observations $\{(\X_i, Y_i)\}_{i=1}^n$,  max iterations $\lmax$,  step sizes $\{\eta_l\}_{l=0}^{\lmax}$.}
		\STATE{Initialization: $\M_0\in\MM_r$}
		\FOR{$l = 0,\ldots,\lmax$}
		\STATE{Choose a vanilla subgradient:  $\G_l\in\partial f(\M_l)$}
		\STATE{Compute the Riemannian sub-gradient: $\wt\G_l = \calP_{\TT_l}(\G_l)$}
		\STATE{Retraction to $\MM_r$: $\M_{l+1} = \text{SVD}_r(\M_l - \eta_{l}\wt\G_l)$}
		\ENDFOR
		\STATE{\textbf{Output}: $\hat\M=\M_{\lmax}$}
	\end{algorithmic}
\end{algorithm}

\subsection{General Convergence Performance}
We now present the general convergence performance of RsGrad Algorithm~\ref{alg:RsGrad},  which essentially relies on the regularity conditions of the objective function.   These conditions,  in spirit,  largely inherit those from existing literature \citep{charisopoulos2021low,elsener2018robust,alquier2019estimation,tong2021accelerating}.  However,  as explained in Section~\ref{sec:intro},  these prior works only delivered statistically sub-optimal estimates.  It turns out that more delicate characterizations of these conditions are necessary for our purpose.  More exactly,  we discover that the aforementioned robust functions exhibit strikingly different regularity conditions near and   far away from $\M^{\ast}$,  referred to as the {\it two-phase} regularity conditions. Here $\|\cdot\|_{\rm F}$ represents the Frobenius norm of a matrix. 

\begin{condition}\label{assump:two-phase}(Two-phase regularity conditions)
	Define the two regions around $\M^{\ast}$ with $\tauc>\taus>0$ by
	$$
	\BB_1:=\left\{\M\in\MM_r: \|\M-\M^{\ast}\|_{\rm F}\geq \tauc\right\}\quad {\rm and}\quad \BB_2:=\left\{\M\in\MM_r: \tauc>\|\M-\M^{\ast}\|_{\rm F}\geq \taus\right\}.
	$$
	The function $f(\cdot): \RR^{d_1\times d_2}\mapsto \RR_+$ satisfies
	\begin{enumerate}[(1)]
		\item the rank-$r$ restricted $(\tauc,\taus,\muc,\mus)$ {\bf two-phase sharpness} with respect to $\M^{\ast}$,
		$$
		f(\M)-f(\M^{\ast})\geq 
		\begin{cases}
			\muc \|\M-\M^{\ast}\|_{\rm F}, & \textrm{ for }\ \  \M\in\BB_1; \\
			\mus \|\M-\M^{\ast}\|_{\rm F}^2, & \textrm{ for }\ \  \M\in\BB_2;
		\end{cases}
		$$
		\item the rank-r restricted $(\tauc,\taus,\Lc,\Ls)$ {\bf two-phase sub-gradient bound} with respect to $\M^{\ast}$,
		$$
		\|\G\|_{\rm F,  r}\leq 
		\begin{cases}
			\Lc, & \textrm{ for }\ \ \M\in\BB_1;\\
			\Ls\|\M-\M^{\ast}\|_{\rm F},& \textrm{ for }\ \ \M\in\BB_2,
		\end{cases}
		$$
		where $\G\in\partial f(\M)$ and the truncated Frobenius norm $\|\G\|_{\rm F, r}:=\|{\rm SVD}_r(\G)\|_{\rm F}$.
	\end{enumerate}
\end{condition}
Basically,  Condition~\ref{assump:two-phase} dictates the distinct behaviors of  $f(\cdot)$ in two neighbourhoods of $\M^{\ast}$.  The quantity $\taus$ reflects the statistical limit while $\tauc$ is usually the rate achieved by the computational analysis in existing literature \citep{tong2021accelerating,charisopoulos2021low} without assuming noise distributions.  These prior works only reveal the first phase regularity conditions,  i.e.,  on $\BB_1$.  Oftentimes, we have $\tauc\gg \taus$,  in which case  these prior works only deliver statistically sub-optimal estimates.  
Deriving the second phase regularity condition is challenging where more precise calculations are necessary.  We remark that,  in statistics literature,  the sub-gradient bound is related to the Lipschitz continuity \citep{alquier2019estimation} and the  sharpness condition is called the {\it one-point-margin condition} \citep{elsener2018robust}.  However,  their estimators are built upon convex program so that the analysis is only made in a small neighbour of $\M^{\ast}$ where the two-phase regularity conditions are not pivotal.  
We summarize the two-phase regularity conditions for various applications in Table~\ref{table:two-phase}. Proposition~\ref{prop:main} characterizes the general dynamic of Algorithm~\ref{alg:RsGrad}. 

\begin{table}
	\begin{center}
		\begin{tabular}{||l|l| c c c c c c ||}
			\hline
			Loss function&Noise&$\tauc$&$\taus$&$\muc$&$\Lc$&$\mus$&$\Ls$\\
			\hline
			$\ell_1$-Loss&$N(0,\sigma^2)$&$\sigma$&$O\left(\sigma\left(\frac{rd_1}{n}\right)^{1/2}\right)$&$\frac{n}{12}$&$2n$&$\frac{n}{12\sigma}$&$O\left(\frac{n}{\sigma}\right)$\\
			$\ell_1$-Loss&Heavy-tailed&$8\gamma$&$O\left(b_0\left(\frac{rd_1}{n}\right)^{1/2}\right)$&$\frac{n}{4}$&$2n$&$\frac{n}{12b_0}$&$O\left(\frac{n}{b_1}\right)$\\
			Huber $\rho_{H,\delta}$&Heavy-tailed&$8\gamma+2\delta$&$O\left(b_0\left(\frac{rd_1}{n}\right)^{1/2}\right)$&$\frac{\delta n}{2}$&$4\delta n$&$\frac{\delta n}{3b_0}$&$O\left(\frac{\delta n}{b_1}\right)$\\
			Quantile $\rho_{Q,\delta}$ &Heavy-tailed&$8\gamma$&$O\left(b_0\left(\frac{rd_1}{n}\right)^{1/2}\right)$&$\frac{n}{8}$&$ n$&$\frac{ n}{24b_0}$&$O\left(\frac{n}{b_1}\right)$\\
			\hline
		\end{tabular}
	\end{center}
	\caption{Summary of the two phase regularity conditions. In fact, $\tauc$ is the error rate achieved after the phase one iterations and $\taus$ is the error rate achieved after phase two iterations, i.e., the ultimate statistical accuracy. Here $\gamma, b_0$ and $b_1$ are as described in Assumption~\ref{assump:heavy-tailed}. Note that $b_0\asymp \gamma$ in many cases such as the Gaussian noise, zero symmetric Pareto noise, and Student's t noise, etc. 
		By viewing $b_0/\gamma$ as a constant,  we have $\taus\ll \tauc$ if the sample size is large.}
	\label{table:two-phase}
\end{table}


\begin{proposition}\label{prop:main}
	Suppose Condition~\ref{assump:two-phase} holds,  the initialization $\M_0\in\MM_r$ satisfies $\fro{\M_0-\M^*}\leq c_0\sigma_r(\M^{\ast})\cdot\min\{\mus^2\Ls^{-2},  \muc^{2}\Lc^{-2}\}$ for a small but absolute constant $c_0>0$,  and the initial stepsize $\eta_0\in\left[0.2\fro{\M_0-\M^*}\muc\Lc^{-2}, \ 0.3\fro{\M_0-\M^*}\muc\Lc^{-2}\right] $.   At the $l$-th iteration of Algorithm~\ref{alg:RsGrad},  
	\begin{enumerate}[(1)]
		\item when $\fro{\M_{l}-\M^*}\geq\tauc $,  namely in phase one,  take the stepsize $\eta_{l}=\big(1-c\muc^2\Lc^{-2} \big)^{l}\eta_{0}$ with any fixed constant $c\leq0.04$,  then we have
		\begin{align*}
			\Vert \M_{l+1}-\M^{*}\Vert_{\mathrm{F}}\leq \left(1-c\frac{\muc^2}{\Lc^2}\right)^{l+1}\cdot \| \M_{0}-\M^{*}\|_{\rm F};
		\end{align*}
		\item when $\tauc>\fro{\M_l-\M^*}\geq\taus$,  namely in phase two,  take the stepsize $\eta_l=\eta\in \mus\Ls^{-2}\cdot\big[0.125,\ 0.75\big]$, then we have 
		\begin{align*}
			\fro{\M_{l+1} -\M^{*}} \leq \left(1-\frac{\mus^2}{32\Ls^2}\right)\cdot \fro{\M_{l}-\M^{*}}.
		\end{align*}
	\end{enumerate}
\end{proposition}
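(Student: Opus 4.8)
The plan is to control one iteration through the pre-retraction iterate $\W_l:=\M_l-\eta_l\wt\G_l$, which lies in the tangent space $\TT_l$ because $\M_l\in\TT_l$ and $\wt\G_l=\calP_{\TT_l}(\G_l)\in\TT_l$. Starting from the exact expansion
\[
\fro{\W_l-\M^{\ast}}^2=\fro{\M_l-\M^{\ast}}^2-2\eta_l\inp{\wt\G_l}{\M_l-\M^{\ast}}+\eta_l^2\fro{\wt\G_l}^2,
\]
the whole argument reduces to three estimates: a lower bound on the cross term from two-phase sharpness, an upper bound on $\fro{\wt\G_l}$ from the two-phase sub-gradient bound, and a curvature-type control converting $\fro{\W_l-\M^{\ast}}$ into $\fro{\M_{l+1}-\M^{\ast}}$ through the retraction $\M_{l+1}=\mathrm{SVD}_r(\W_l)$. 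Since Condition~\ref{assump:two-phase} is phrased through the vanilla sub-gradient $\G$ and the truncated norm $\fror{\cdot}$, the first two estimates first require passing from the Riemannian quantity $\wt\G_l$ back to $\G_l$.

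These two passages are exactly why the truncated norm appears in Condition~\ref{assump:two-phase}. Writing $\P_{\U}=\U_l\U_l^{\top}$ and $\P_{\V}=\V_l\V_l^{\top}$ for the projectors onto the column and row spaces of $\M_l$, the orthogonal splitting $\wt\G_l=\P_{\U}\G_l(\I-\P_{\V})+\G_l\P_{\V}$ gives $\fro{\wt\G_l}^2\le\fro{\P_{\U}\G_l}^2+\fro{\G_l\P_{\V}}^2$, and since $\P_{\U},\P_{\V}$ are rank-$r$ orthogonal projectors one has $\fro{\P_{\U}\G_l}\le\fror{\G_l}$ and $\fro{\G_l\P_{\V}}\le\fror{\G_l}$; hence $\fro{\wt\G_l}\le\sqrt{2}\,\fror{\G_l}$. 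For the cross term, since $\M_l\in\TT_l$ one has $\calP_{\TT_l}(\M_l-\M^{\ast})=(\M_l-\M^{\ast})+(\I-\P_{\U})\M^{\ast}(\I-\P_{\V})$, so by self-adjointness of $\calP_{\TT_l}$,
\[
\inp{\wt\G_l}{\M_l-\M^{\ast}}=\inp{\G_l}{\M_l-\M^{\ast}}+\inp{\G_l}{(\I-\P_{\U})\M^{\ast}(\I-\P_{\V})}.
\]
Convexity of $\rho$ bounds the first term below by $f(\M_l)-f(\M^{\ast})$, which is where sharpness enters, while the second is a second-order remainder: the factor $(\I-\P_{\U})\M^{\ast}(\I-\P_{\V})$ has rank at most $r$, so a rank-$r$ Cauchy--Schwarz inequality gives $|\inp{\G_l}{(\I-\P_{\U})\M^{\ast}(\I-\P_{\V})}|\le\fror{\G_l}\cdot\fro{(\I-\P_{\U})\M^{\ast}(\I-\P_{\V})}$, and the subspace-perturbation estimate $\fro{(\I-\P_{\U})\M^{\ast}(\I-\P_{\V})}\le\fro{\M_l-\M^{\ast}}^2/\sigma_r(\M^{\ast})$ makes it genuinely quadratic.

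With these in hand I would treat the two phases in parallel. In phase two, inserting the sharpness $\mus\fro{\M_l-\M^{\ast}}^2$ and the bound $\fror{\G_l}\le\Ls\fro{\M_l-\M^{\ast}}$ gives
\[
\fro{\W_l-\M^{\ast}}^2\le\big(1-2\eta\mus+2\eta^2\Ls^2\big)\fro{\M_l-\M^{\ast}}^2,
\]
where the second-order remainder from the cross term is swallowed into an arbitrarily small fraction of $\mus\fro{\M_l-\M^{\ast}}^2$ because the initialization forces $\Ls\fro{\M_l-\M^{\ast}}/\sigma_r(\M^{\ast})\le c_0\mus$ throughout; the constant stepsize $\eta\in\mus\Ls^{-2}\cdot[0.125,0.75]$ keeps the bracket below $1-2\alpha(1-\alpha)\mus^2\Ls^{-2}$ with $\alpha\in[0.125,0.75]$, and after the retraction error and a square root this yields the stated factor $1-\mus^2/(32\Ls^2)$. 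In phase one I would instead use $\muc\fro{\M_l-\M^{\ast}}$ and $\fror{\G_l}\le\Lc$ to reach the Polyak-type recursion $\fro{\W_l-\M^{\ast}}^2\le\fro{\M_l-\M^{\ast}}^2-2\eta_l\muc\fro{\M_l-\M^{\ast}}+2\eta_l^2\Lc^2$ and run an induction on $\fro{\M_l-\M^{\ast}}\le(1-c\muc^2\Lc^{-2})^l\fro{\M_0-\M^{\ast}}$: the right-hand side is convex in $\fro{\M_l-\M^{\ast}}$, so it suffices to verify the envelope bound at the endpoints of the admissible interval, and the geometrically decaying stepsize $\eta_l=(1-c\muc^2\Lc^{-2})^l\eta_0$ tied to the envelope radius does so for any $c\le0.04$.

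The main obstacle is the retraction control. The trivial estimate $\fro{\M_{l+1}-\M^{\ast}}\le2\fro{\W_l-\M^{\ast}}$, obtained from $\M_{l+1}$ being the best rank-$r$ approximation of $\W_l$, squares to a factor $4$ and annihilates both contractions; I must instead invoke the sharper curvature lemma for the SVD retraction of a tangent vector (as developed for Riemannian gradient descent in \citep{wei2016guarantees,cai2021generalized,cai2021provable}), which replaces the prefactor $2$ by $1$ plus a second-order error of order $\fro{\M_l-\M^{\ast}}^2/\sigma_r(\M^{\ast})$. Keeping this error, together with the second-order remainder in the cross term, below the contraction margin in every iteration is exactly what the initialization radius $\fro{\M_0-\M^{\ast}}\le c_0\sigma_r(\M^{\ast})\min\{\mus^2\Ls^{-2},\muc^2\Lc^{-2}\}$ secures, since once a contraction holds the distance only shrinks and the radius condition is self-maintaining. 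A secondary subtlety, handled by the envelope induction, is that in phase one $\fro{\M_l-\M^{\ast}}$ may sit far below the geometric envelope while the stepsize is calibrated to the envelope, so one must check that the step cannot overshoot $\M^{\ast}$; the convexity of the one-step bound reduces this to the harmless endpoint $\fro{\M_l-\M^{\ast}}=\tauc$.
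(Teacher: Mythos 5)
Your proposal is correct and follows essentially the same route as the paper's own proof: the same expansion of the pre-retraction iterate $\M_l-\eta_l\calP_{\TT_l}(\G_l)$, the same tangent-space identities with rank-$r$ (partial Frobenius norm) Cauchy--Schwarz and a second-order bound on $\calP_{\TT_l}^{\perp}(\M^{\ast})$, the same sharp SVD-retraction perturbation lemma replacing the lossy factor-$2$ best-approximation bound, and the same geometric-envelope induction with the quadratic endpoint argument in phase one together with the constant-stepsize contraction in phase two. The only differences are cosmetic constants (the paper's perturbation lemmas carry factors $8$ and $40$ where you quote cleaner Wei-et-al.-type bounds), which do not affect the structure or validity of the argument.
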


By Proposition~\ref{prop:main},  the RsGrad Algorithm~\ref{alg:RsGrad} converges linearly using the proposed stepsizes and outputs an estimate within an $O(\taus)$ distance in Frobenius norm from the oracle.  As discussed above,  the phase one convergence is typical in non-smooth optimization,  for which the decaying stepsizes are needed.  For simplicity,  we apply the geometrically decaying stepsizes \citep{tong2021accelerating,  goffin1977convergence}.  The phase two convergence behaves like smooth optimization so that a fixed stepsize suffices to yield the linear convergence.  Note that the convergence dynamic is free of the matrix condition number, which is a benefit of Riemannian-type algorithms.   

\paragraph*{Comparison with prior works} The algorithmic dynamic of RsGrad in phase one is similar to that in \cite{charisopoulos2021low} and \cite{tong2021low},  where the geometrically decaying stepsizes in phase one will eventually bring the stepsize into the level $\Omega(\mus\Ls^{-2})$ which is desired by the phase-two convergence.   However,  \cite{charisopoulos2021low} and \cite{tong2021low} will continue to shrink the stepsize geometrically and fail to deliver a statistically optimal estimator.   In contrast, we set a constant stepsize in phase two and the ultimate estimator is statistically optimal.  The difference is observed in numerical experiments in Section~\ref{sec:simulation}.  

\subsection{Applications}
\label{sec:app}

This section presents the statistical performances of RsGrad in some concrete examples. Due to page limit, some examples are deferred to the supplementary file.  We note that,  compared with the square loss \citep{xia2021statistically,zhao2015nonconvex,chen2015fast},  low-rank regression using the robust loss functions requires no signal-to-noise ratio (SNR) conditions (except for initialization). 

\subsubsection{Absolute loss with Gaussian noise}
Though motivated by heavy-tailed noise, to start with, we demonstrate that the absolute loss can yield a statistically optimal estimator under Gaussian noise. The objective function is $f(\M)=\sum_{i=1}^n |Y_i-\langle \M,  \X_i\rangle |$.

\begin{theorem}\label{thm:Gaussian-l1}
	Assume $\xi_1,\dots, \xi_n \stackrel{i.i.d.}{\sim} N(0,\sigma^2)$ and $\{{\rm vec}(\X_i)\}_{i=1}^n$ satisfy Assumption~\ref{assump:sensing operators:vec}.  There exist absolute constants $c_0,c_1,  c_2\in(0,1), C_1, C_2>0$ such that if the sample size $n\geq C_1\ku\kl^{-1}d_1r$,  the initialization $\fro{\M_0-\M^*}\leq c_0\sigma_r$ for $\M_0\in\MM_r$, the initial stepsize $\eta_{0}\in \kl^{1/2}(48n\ku)^{-1}\fro{\M_0-\M^*}\cdot\left[0.2,0.3\right]$, the phase one stepsize $\eta_l=(1-c_3)^{l}\eta_0$ with any fixed constant $c_3\leq c_1\kl\ku^{-1}$, and the phase two stepsize $\eta_{l}=\eta\asymp \kl\ku^{-2}\sigma/n$, then with probability at least  $1-\exp(-c_2rd_1)-3\exp(-\sqrt{n}/\log n)$, after at most $O\big(\log(\sigma_{r}/\sigma)+\log(n/d_1)\big)$ iterations, the Algorithm~\ref{alg:RsGrad} outputs an estimator with the error rate
	$$
	\|\hat \M -\M^{\ast}\|_{\rm F}^2\leq C_2\frac{\ku}{\kl^2}\cdot\frac{\sigma^2d_1r}{n}.
	$$
\end{theorem}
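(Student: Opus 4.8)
The plan is to obtain Theorem~\ref{thm:Gaussian-l1} as a direct specialization of the general Proposition~\ref{prop:main}: I would verify that the absolute loss $f(\M)=\sum_{i=1}^n|Y_i-\langle\M,\X_i\rangle|$ under Gaussian noise satisfies the two-phase regularity conditions (Condition~\ref{assump:two-phase}) with the parameter values recorded in the first row of Table~\ref{table:two-phase}, namely $\tauc\asymp\sigma$, $\taus\asymp\sigma(rd_1/n)^{1/2}$, $\muc\asymp n$, $\Lc\asymp n$, $\mus\asymp n/\sigma$ and $\Ls\asymp n/\sigma$. Writing $\bd:=\M-\M^*$ and using $Y_i=\langle\X_i,\M^*\rangle+\xi_i$, the two quantities to control are $f(\M)-f(\M^*)=\sum_{i=1}^n\big(|\xi_i-\langle\bd,\X_i\rangle|-|\xi_i|\big)$ and the canonical sub-gradient $\G=\sum_{i=1}^n{\rm sign}(\langle\bd,\X_i\rangle-\xi_i)\X_i$, both to be bounded uniformly over $\M\in\MM_r$. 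Once the regularity conditions hold on the stated high-probability event, Proposition~\ref{prop:main} yields linear contraction in both phases and the error bound $\fro{\hat\M-\M^*}^2\lesssim\taus^2\asymp\sigma^2rd_1/n$.

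For the sharpness condition I would begin from the population quantity $g(a):=\EE_\xi[|\xi-a|-|\xi|]=\int_0^a(2H_{\xi}(t)-1)\,dt$, whose curvature is $g''(a)=2h_{\xi}(a)$. Since $H_{\xi}(0)=1/2$ and the Gaussian density obeys $h_{\xi}(0)\asymp\sigma^{-1}$, this gives a quadratic lower bound $g(a)\gtrsim\sigma^{-1}a^2$ for $|a|\lesssim\sigma$ and, because $g(a)\to|a|-\gamma$ as $|a|\to\infty$, a linear lower bound $g(a)\gtrsim|a|-\gamma$ once $|a|\gtrsim\sigma$. Summing over $i$ and invoking the design lower bounds of Assumption~\ref{assump:sensing operators:vec}, which control $\sum_i\langle\bd,\X_i\rangle^2$ and $\sum_i|\langle\bd,\X_i\rangle|$ from below by $n\kl\fro{\bd}^2$ and $n\kl^{1/2}\fro{\bd}$ respectively, produces the population sharpness with the two-phase transition located at $\fro{\bd}\asymp\sigma=\tauc$. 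The remaining work is to upgrade these expectations to bounds holding simultaneously over all rank-$r$ matrices; I would do this through a covering-number and chaining argument over $\MM_r$, whose metric entropy is $O(rd_1)$, so that $n\gtrsim\ku\kl^{-1}rd_1$ forces uniform concentration and pins down the statistical floor $\taus\asymp\sigma(rd_1/n)^{1/2}$ below which fluctuation overwhelms signal.

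The two-phase sub-gradient bound I would handle by splitting $\G$ into its conditional mean and fluctuation. In the far region $\BB_1$, boundedness of ${\rm sign}(\cdot)$ together with a uniform spectral bound on the sign-weighted Gaussian sum (via a union bound over an $\eps$-net of rank-$r$ directions, again using $n\gtrsim rd_1$) gives $\fror{\G}\lesssim n=\Lc$, independent of $\fro{\bd}$. In the near region $\BB_2$ the signal part $\EE[\G\,|\,\{\X_i\}]=\sum_i(2H_{\xi}(\langle\bd,\X_i\rangle)-1)\X_i\approx 2h_{\xi}(0)\sum_i\langle\bd,\X_i\rangle\X_i$ concentrates to a matrix of Frobenius norm $\asymp(n/\sigma)\fro{\bd}$, delivering $\Ls\asymp n/\sigma$, while the mean-zero fluctuation, after truncation to rank $r$, is of order $\taus$ and hence negligible throughout $\BB_2$. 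This second-phase analysis, showing that near $\M^*$ the loss acquires the curvature and Lipschitz profile of a smooth strongly convex function, is the genuinely new and most delicate ingredient; it is the quantitative form of the smoothing effect of random noise highlighted in Remark~\ref{rmk:smootheffect}, and I expect it to be the main obstacle.

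Finally I would check that the parameters feed consistently into Proposition~\ref{prop:main}. Since $\muc/\Lc\asymp1$ and $\mus/\Ls\asymp1$, the admissible initialization radius $c_0\sigma_r(\M^*)\min\{\mus^2\Ls^{-2},\muc^2\Lc^{-2}\}$ reduces to $c_0\sigma_r(\M^*)$, matching the hypothesis $\fro{\M_0-\M^*}\leq c_0\sigma_r$; the stated $\eta_0$ and the phase-two stepsize $\eta\asymp\kl\ku^{-2}\sigma/n\asymp\mus\Ls^{-2}$ land in the prescribed windows. The contraction factors then translate into $O(\log(\sigma_r(\M^*)/\sigma))$ phase-one iterations to reach $\tauc\asymp\sigma$, followed by $O(\log(\tauc/\taus))=O(\log(n/d_1))$ phase-two iterations to reach $\taus$, whence the claimed iteration count and the squared error rate $\fro{\hat\M-\M^*}^2\lesssim\sigma^2rd_1/n$ both follow.
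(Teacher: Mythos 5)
Your overall architecture is exactly the paper's: verify the two-phase regularity conditions (Condition~\ref{assump:two-phase}) for the absolute loss under Gaussian noise and then invoke Proposition~\ref{prop:main}; your bookkeeping of $\tauc,\taus,\muc,\Lc,\mus,\Ls$, the initialization radius, the stepsize windows, and the iteration counts all match Lemma~\ref{lem:Gaussian-l1} and the paper's proof. Your sharpness argument via $g(a)=\int_0^a(2H_{\xi}(t)-1)\,dt$ and its curvature $2h_{\xi}$ is sound (it is essentially the paper's computation in the heavy-tailed case, Lemma~\ref{lem:heavytail-l1}; for Gaussian noise the paper instead uses the closed-form folded-normal mean, Lemma~\ref{teclem:l1expectation}, but both work), and upgrading the population bound to a uniform one over $\MM_r$ by entropy/chaining is legitimate because the loss increment is Lipschitz in $\langle\bd,\X_i\rangle$.

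The genuine gap is in your phase-two sub-gradient bound, which you yourself flag as the delicate step. You propose to write $\G=\sum_i{\rm sign}(\langle\bd,\X_i\rangle-\xi_i)\X_i$ as conditional mean plus fluctuation and to control the fluctuation \emph{uniformly over} $\BB_2$ by a net argument. But the summands are discontinuous in $\bd$ through the sign, so the process $\bd\mapsto\G$ is not even continuous in its index: a covering argument cannot pass from net points to arbitrary $\bd$, and the symmetrization-plus-contraction route (which handles function values) is unavailable because ${\rm sign}(\cdot)$ is not Lipschitz. Making your decomposition rigorous would require VC-subgraph or bracketing machinery for the sign class indexed by rank-$r$ matrices, none of which appears in your sketch. (In phase one you escape this because the crude bound $\fror{\G}\le\sup_{\U}\sum_i|\langle\X_i,\U\rangle|$ discards the signs; in phase two you need the cancellation coming from the signs, and that is exactly where the discontinuity bites.) The paper sidesteps the issue entirely with a convexity trick (Lemma~\ref{lemma:upperboundsubgradient:gaussian}): for any sub-gradient $\G$ at $\M$, the sub-gradient inequality gives
\begin{align*}
f\Big(\M+\tfrac{\sigma}{2n\ku}\,{\rm SVD}_r(\G)\Big)-f(\M)\;\geq\;\tfrac{\sigma}{2n\ku}\,\fror{\G}^2,
\end{align*}
while the same increment is bounded \emph{above} by its expectation (controlled via the density bound $h_{\xi}\lesssim\sigma^{-1}$) plus the uniform Lipschitz empirical-process deviation $C\sqrt{nrd_1\ku}\cdot\tfrac{\sigma}{2n\ku}\fror{\G}$ from the event $\bcalE$; solving the resulting quadratic inequality in $\fror{\G}$ yields $\fror{\G}\lesssim n\sigma^{-1}\ku\fro{\M-\M^*}$. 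This converts sub-gradient control into function-value control, where Lipschitzness makes symmetrization, contraction, and Adamczak's inequality applicable — the missing idea your proposal needs to be completed as stated, or to be replaced by.
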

Note that the initial stepsize $\eta_0$ can be as large as $O(\sigma_rn^{-1})$ depending on the initialization.   The attained error rate is minimax optimal \citep{ma2015volume,xia2014optimal}.  In comparison,  the algorithms proposed in \cite{tong2021low,charisopoulos2021low} and their analysis only achieved the sub-optimal rate $O_p(\sigma^2)$.  The proof of Theorem~\ref{thm:Gaussian-l1} is a combination of the Proposition~\ref{prop:main} and the two-phase regularity conditions of $f(\M)$. The two phase convergence dynamics are observed in numerical experiments in Section~\ref{sec:simulation}.

\subsubsection{Absolute loss with heavy-tailed noise}
We now demonstrate the effectiveness of the absolute loss in handling heavy-tailed noise. Note that in Theorem~\ref{thm:heavytail-l1} if a constant factor depends on $b_0$ or/and $b_1$,    a star sign is placed on top right of it.

\begin{theorem}\label{thm:heavytail-l1}
	Assume $\{{\rm vec}(\X_i)\}_{i=1}^n$ and  $\{\xi_i\}_{i=1}^n$ satisfy Assumptions~\ref{assump:sensing operators:vec} and \ref{assump:heavy-tailed}, respectively. There exist constants $c_0^{\ast},  c_1,c_2\in(0,1 ), C_1,  C_2>0$ such that if the sample size $n\geq C_1\ku\kl^{-1}d_1r$,  the initialization $\fro{\M_0-\M^*}\leq c_0^{\ast}\sigma_{r}$ for $\M_0\in\MM_r$, the initial stepsize $\eta_{0}\in \kl^{1/2}(16n\ku)^{-1}\fro{\M_0-\M^*}\cdot\left[0.2, 0.3\right]$, the phase one stepsize $\eta_{l}=(1-c_3)^{l}\eta_{0}$ with any fixed constant $c_3\leq c_1$, and the phase two stepsize $\eta_{l}=\eta\asymp \kl \ku^{-2}b_1^2(nb_0)^{-1}$, then with probability at least $1-\exp(-c_2rd_1)-3\exp(-\sqrt{n}/\log n)$, after at most $O\big(\log(\sigma_r/\gamma)+\log\big(\gamma n/(d_1rb_0)\big)\big)$ iterations, the Algorithm~\ref{alg:RsGrad} outputs an estimator with the error rate 
	$$
	\|\hat \M-\M^{\ast}\|_{\rm F}^2\leq \frac{C_2\ku}{\kl^2}\cdot \frac{b_0^2 d_1 r}{n}.
	$$
\end{theorem}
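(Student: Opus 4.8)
The plan is to obtain Theorem~\ref{thm:heavytail-l1} as a direct specialization of the general convergence Proposition~\ref{prop:main}, in exactly the way Theorem~\ref{thm:Gaussian-l1} is derived in the Gaussian case. The whole problem therefore reduces to verifying that the absolute loss $f(\M)=\sum_{i=1}^n|\langle\X_i,\M\rangle-Y_i|$ obeys the two-phase regularity Condition~\ref{assump:two-phase} with the constants in the heavy-tailed $\ell_1$ row of Table~\ref{table:two-phase}: $\tauc\asymp\gamma$, $\taus\asymp b_0(rd_1/n)^{1/2}$, $\muc\asymp n$, $\Lc\asymp n$, $\mus\asymp n/b_0$ and $\Ls\asymp n/b_1$ (with the covariance spectrum entering through $\kl,\ku$). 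Granting these, Proposition~\ref{prop:main} contracts $\fro{\M_l-\M^*}$ geometrically from the initialization radius $c_0^\ast\sigma_r$ down to $\tauc\asymp\gamma$ in phase one, and then linearly down to $\taus$ in phase two; counting the iterations in each regime gives the stated $O\big(\log(\sigma_r/\gamma)+\log(\gamma n/(d_1 r b_0))\big)$ bound, while the terminal radius $\taus$ produces the error $\fro{\hat\M-\M^*}^2\lesssim(\ku/\kl^2)\,b_0^2 d_1 r/n$.

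To establish the two-phase sharpness I would set $\bd=\M-\M^*$, which has rank at most $2r$, and write $f(\M)-f(\M^*)=\sum_{i=1}^n\big(|\langle\X_i,\bd\rangle-\xi_i|-|\xi_i|\big)$. Passing first to the population level, for a fixed increment $t=\langle\X_i,\bd\rangle$ the function $g(t):=\EE_\xi\big[|t-\xi|-|\xi|\big]$ satisfies $g(0)=0$, $g'(t)=2H_\xi(t)-1$ (hence $g'(0)=0$ by median-zero) and $g''(t)=2h_\xi(t)$. The density lower bound $h_\xi\geq b_0^{-1}$ on $|x|\leq 8(\ku/\kl)^{1/2}\gamma$ then forces the quadratic growth $g(t)\gtrsim t^2/b_0$ at the phase-two scale, whereas for $|t|\gtrsim\gamma$ median-zero plus the linear tail of $|t-\xi|-|\xi|$ yields $g(t)\gtrsim|t|$ for phase one. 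Averaging over the Gaussian covariate and using $\lambda_{\min}(\bSigma_i)\geq\kl$ turns these into $\EE[f(\M)-f(\M^*)]\gtrsim(n/b_0)\|\bd\|_{\rm F}^2$ on $\BB_2$ and $\gtrsim n\kl^{1/2}\|\bd\|_{\rm F}$ on $\BB_1$. The point that defuses the heavy tail is that each summand is $1$-Lipschitz in $t$ with $\big||t-\xi|-|\xi|\big|\leq|t|$ uniformly in $\xi$; the empirical process thus concentrates around its mean with no dependence on the moments of $\xi$, and an $\eps$-net over the rank-$\le 2r$ set (metric entropy $\lesssim rd_1$) upgrades the population bounds to the uniform Condition~\ref{assump:two-phase}(1), valid down to $\|\bd\|_{\rm F}\gtrsim\taus$.

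For the sub-gradient bound the relevant object is $\G=\sum_{i=1}^n\text{sign}(\langle\X_i,\bd\rangle-\xi_i)\X_i$. In phase one the signs are bounded by one, so $\fror{\G}\leq\sqrt r\,\op{\G}\lesssim n$ by Gaussian matrix concentration, giving $\Lc\asymp n$. The phase-two bound requires centering at the truth: decompose $\G=\G^\ast+(\G-\G^\ast)$ with $\G^\ast=\sum_i\text{sign}(-\xi_i)\X_i$. Because $\EE\,\text{sign}(-\xi_i)=0$ and $\xi_i$ is independent of $\X_i$, $\G^\ast$ is a sum of independent mean-zero Gaussian matrices with $\fror{\G^\ast}\lesssim\sqrt{rnd_1\ku}$, and this fluctuation is exactly what sets the statistical floor. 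The correction $\G-\G^\ast$ is supported on the indices where $\xi_i$ lies between $0$ and $\langle\X_i,\bd\rangle$, an event of probability $\lesssim|\langle\X_i,\bd\rangle|/b_1$ by $h_\xi\leq b_1^{-1}$; summation and concentration then give $\fror{\G-\G^\ast}\lesssim(n/b_1)\|\bd\|_{\rm F}$, i.e. $\Ls\asymp n/b_1$, and balancing $\fror{\G^\ast}$ against $\Ls\|\bd\|_{\rm F}$ fixes the floor at $\taus\asymp b_0(rd_1/n)^{1/2}$.

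The hard part will be the phase-two analysis under only a finite $1+\eps$ moment. Unlike the convex analyses of \cite{elsener2018robust,alquier2019estimation}, which need the local quadratic behaviour only at the single point $\M^*$, I must make the density-driven quadratic sharpness and the centered sub-gradient bound hold uniformly over the non-convex rank-$\le 2r$ manifold; the $\eps$-net together with the $1$-Lipschitz and magnitude bounds above is what yields the $\exp(-c_2 rd_1)$ term in the probability. The genuinely delicate step is handling the heavy tail itself: controlling the bias of $\G-\G^\ast$ and invoking $\lim_{x\to\infty}x(1-H_\xi(x))=0$ forces a truncation of $\xi_i$ at a slowly growing level, whose residual tail mass—estimated through the $1+\eps$ moment—produces the additional $3\exp(-\sqrt n/\log n)$ term. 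Once this truncation is made compatible with the uniform control over the manifold, the remaining steps follow the Gaussian proof of Theorem~\ref{thm:Gaussian-l1} verbatim, with $\sigma$ replaced by the density constants $b_0,b_1$ and the noise scale $\gamma$.
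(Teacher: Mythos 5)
Your overall architecture---specializing Proposition~\ref{prop:main} after verifying Condition~\ref{assump:two-phase}, with the second-derivative computation $g''(t)=2h_{\xi}(t)\geq 2b_0^{-1}$ driving phase-two sharpness, the triangle inequality driving phase-one sharpness, and the observation that the increments $|f(\M+\Delta\M)-f(\M)|\leq\sum_i|\langle\X_i,\Delta\M\rangle|$ are noise-free---matches the paper's proof (Lemma~\ref{lem:heavytail-l1} combined with Proposition~\ref{prop:main}). However, there is a genuine gap in the concentration step, which is exactly the step that makes the heavy-tailed result work. You claim that invoking $\lim_{x\to\infty}x(1-H_{\xi}(x))=0$ ``forces a truncation of $\xi_i$ at a slowly growing level, whose residual tail mass \dots produces the additional $3\exp(-\sqrt n/\log n)$ term.'' Both halves of this are wrong. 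That limit is used only in the population-level integration by parts for $\EE|\xi-t|$ and costs nothing probabilistically; and the paper never truncates the noise anywhere---the stated purpose of its empirical-process argument (Proposition~\ref{thm:empirical process}) is precisely to avoid any bound involving $\sum_i|\xi_i|$, since under a finite $1+\eps$ moment such a bound would only yield probability $1-O(n^{-\eps})$. The term $3\exp(-\sqrt n/\log n)$ instead comes from Adamczak's tail inequality (Theorem~\ref{tecthm:orlicz norm empirical}) applied to the supremum of the noise-free increment process: the envelope $\max_i\sup|f_i|$ has $\Psi_1$-norm of order $\sqrt{d_1r\ku}\,\log n$, and plugging $t\asymp\sqrt{nd_1r\ku}$ into the sub-exponential tail gives $\exp(-\sqrt n/\log n)$. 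Your substitute sketch (``$1$-Lipschitz plus $\eps$-net'') cannot deliver this either: the envelopes $|\langle\X_i,\Delta\M\rangle|$ are unbounded Gaussian variables, so bounded-difference concentration does not apply, and one still needs symmetrization, the contraction principle, and a sub-exponential-envelope tail bound---the machinery whose output you mis-attributed to a truncation that does not exist.

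Separately, your phase-two sub-gradient bound takes a genuinely different route that is not carried to completion. You decompose $\G=\G^{*}+(\G-\G^{*})$ with $\G^{*}=\sum_i\mathrm{sign}(-\xi_i)\X_i$ and count sign flips, whereas the paper (Lemma~\ref{lemma:upperboundsubgradient:heavytail}) never touches signs: it sets $\M_1=\M+\frac{b_1}{2n\ku}\mathrm{SVD}_r(\G)$, bounds $f(\M_1)-f(\M)$ from above via the density upper bound $h_{\xi}\leq b_1^{-1}$ together with the event $\bcalE$, bounds it from below by $\frac{b_1}{2n\ku}\fror{\G}^2$ via the definition of sub-gradient, and solves the resulting quadratic inequality in $\fror{\G}$. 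Your decomposition leaves real obstacles unresolved: the flip indicators $\mathbbm{1}\{\xi_i\text{ between }0\text{ and }\langle\X_i,\bd\rangle\}$ are not independent of the matrices $\X_i$ they select, and the bound must hold uniformly over the rank-constrained set; moreover, your balancing of $\fror{\G^{*}}$ against $\Ls\fro{\bd}$ locates the statistical floor at $b_1\sqrt{rd_1/n}$, whereas the binding constraint is the sharpness-versus-deviation comparison, which puts $\taus\asymp b_0\sqrt{rd_1\ku/(n\kl^{2})}$ (and $b_0\geq b_1$, so these differ). Adopting the paper's convexity trick for $\Ls$, and Adamczak-type empirical process control in place of truncation, would close these gaps while keeping the rest of your outline intact.
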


The proof of Theorem~\ref{thm:heavytail-l1} is an immediate consequence of Proposition~\ref{prop:main} if we have the two-phase regularity conditions on $f(\M)$ under heavy-tailed noise, for which we borrowed a powerful tool of bounding an empirical process \citep{adamczak2008tail,van2000empirical}. This allows us to derive a stronger tail probability bound by avoiding bounding the term $\sum_{i=1}^n |\xi_i|$. Indeed, since $\xi$ has a finite $1+\eps$ moment, bounding the sum of $|\xi_i|$ results into a tail probability $1-O(n^{-\eps})$. This is not a desirable tail probability for a small $\eps$. 
Note that the Gaussian noise also satisfies Assumption~\ref{assump:heavy-tailed},  in which case the attained rate in Theorem~\ref{thm:heavytail-l1} has a matching minimax lower bound. The rate is minimax optimal w.r.t. the dimension and sample size.  Note that $b_0\asymp \gamma=\EE |\xi|$ in many cases such as the Gaussian, zero symmetric Pareto, and Student's t noise. 

The density requirement in Assumption~\ref{assump:heavy-tailed} is similar to the one in \cite{elsener2018robust} whose estimator is based on the nuclear-norm penalized convex program without considering the computational efficiency. The same error rate $O(b_0^2d_1r/n)$ was also derived there. But our approach computes fast and requires only a logarithmic factor of iterations. The analysis frameworks in \cite{tong2021low,charisopoulos2021low} would produce the sub-optimal rate $O(\gamma^2)$ under the heavy-tailed noise, which won't decrease as the sample size $n$ gets larger. The error rates derived by \cite{fan2021shrinkage,minsker2015geometric} and \cite{minsker2018sub} were not proportional to the noise level and were not zero even if the observations are noiseless. In contrast, our algorithm can exactly recover $\M^{\ast}$ when there is no noise. 

\subsubsection{Huber loss with heavy-tailed noise}
Huber loss is prevalent in robust statistics \citep{huber1965robust,sun2020adaptive,elsener2018robust} and defined by $\rho_{H,\delta}(x):=x^2\mathbbm{1}(|x|\leq \delta)+(2\delta|x|-\delta^2)\mathbbm{1}(|x|>\delta)$ where $\delta>0$ is referred to as the robustification parameter.   The function $\rho_{H,\delta}(\cdot)$ is Lipschitz with a constant $2\delta$.  The loss function is given by $	f(\M):=\sum_{i=1}^n \rho_{H,\delta}(Y_i-\langle \M,  \X_i\rangle)$. Due to technical issues, we need a slightly different assumption of the heavy-tailed noise. 

\begin{assumption}\label{assump:heavytail-huber}(Heavy-tailed noise \Romannum{2}) 
	There exists an $\eps>0$ such that $\EE |\xi|^{1+\eps}<+\infty$.  The noise distribution function $H_{\xi}(\cdot)$ is symmetric\footnote{The symmetric condition can also be replaced by $\int_{-\delta}^{\delta} H_\xi(x)\, dx=\delta$.} in that  $H_{\xi}(x)=1-H_{\xi}(-x)$. Denote $\gamma=\EE|\xi|$. There exist constants $b_0, b_1$ (may be dependent on $\gamma$ and $\delta$) such that 
	\begin{align*}
		H_{\xi}(x+\delta)-H_{\xi}(x-\delta)\geq 2\delta b_0^{-1}, &\ \ \  \textrm{ for all } |x|\leq 8(\ku/\kl)^{1/2}\gamma+2(\ku/\kl)^{1/2}\delta;\\
		H_{\xi}(x+\delta)-H_{\xi}(x-\delta)\leq 2\delta b_1^{-1}, &\ \ \ \forall x\in \RR,
	\end{align*}
	where $\delta$ is the Huber loss parameter.   
\end{assumption}

Theorem~\ref{thm:heavytail-huber} describes the convergence and statistical performance of RsGrad Algorithm~\ref{alg:RsGrad} for Huber loss.  Similarly,  the constants dependent on $b_0$ or/and $b_1$ are marked with a star.  

\begin{theorem}\label{thm:heavytail-huber}
	Assume $\{{\rm vec}(\X_i)\}_{i=1}^n$ and $\{\xi_i\}_{i=1}^n$ satisfy Assumptions~ \ref{assump:sensing operators:vec} and \ref{assump:heavytail-huber}, respectively. There exist constants $c_0^{\ast}, c_1,c_2\in(0,1 ), C_{1},  C_{2}>0$ such that if the sample size $n\geq C_1\ku\kl^{-1}d_1r$,  the initialization $\fro{\M_0-\M^*}\leq c_0^{\ast}\sigma_r$ for $\M_0\in\MM_r$, the initial stepsize $\eta_{0}\in\kl^{1/2}(32n\delta\ku)^{-1}\fro{\M_0-\M^*}\cdot\left[0.2,0.3\right]$,  the phase one stepsize $\eta_{l}=(1-c_3)^{l}\eta_{0}$ with any fixed constant $c_3\leq c_1$, and the phase two stepsize $\eta_{l}=\eta\asymp \kl \ku^{-2}b_1^2(n\delta b_0)^{-1}$, then with probability at least $1-\exp(-c_2rd_1)-3\exp(-\sqrt{n}/\log n)$, after at most $O\big(\log(\sigma_{r}/\gamma)+\log\big(\gamma n/(b_0d_1r)\big)\big)$ iterations, the Algorithm~\ref{alg:RsGrad} outputs an estimator with the error rate 
	$$
	\|\hat \M-\M^{\ast}\|_{\rm F}^2\leq \frac{C_2\ku}{\kl^2}\cdot \frac{b_0^2 d_1 r}{n}.
	$$
\end{theorem}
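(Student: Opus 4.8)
The plan is to reduce Theorem~\ref{thm:heavytail-huber} to a direct application of Proposition~\ref{prop:main}. Since Proposition~\ref{prop:main} already delivers the two-phase linear convergence of RsGrad given Condition~\ref{assump:two-phase}, the entire burden is to verify that the Huber loss $f(\M)=\sum_{i=1}^n\rho_{H,\delta}(Y_i-\langle\M,\X_i\rangle)$ satisfies the two-phase regularity conditions with the parameters listed in Table~\ref{table:two-phase}, namely $\tauc\asymp\gamma+\delta$, $\taus\asymp b_0(rd_1/n)^{1/2}$, $\muc\asymp\delta n$, $\Lc\asymp\delta n$, $\mus\asymp\delta n/b_0$, and $\Ls\asymp\delta n/b_1$. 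Once these four quantities are certified on the two shells $\BB_1$ and $\BB_2$ with the claimed probability, plugging them into Proposition~\ref{prop:main} immediately gives the linear contraction in both phases, the iteration count $O(\log(\sigma_r/\gamma)+\log(\gamma n/(b_0 d_1 r)))$, and the final rate $\|\hat\M-\M^*\|_{\rm F}^2\lesssim (\ku/\kl^2)\cdot b_0^2 d_1 r/n$, which is exactly $O(\taus^2)$.

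First I would write the subgradient of the Huber loss explicitly: $\rho_{H,\delta}'(x)=2x\,\mathbbm{1}(|x|\le\delta)+2\delta\,\text{sign}(x)\,\mathbbm{1}(|x|>\delta)$, which is bounded in magnitude by $2\delta$ and globally Lipschitz. The sub-gradient bound (part (2) of Condition~\ref{assump:two-phase}) is the easier half. On $\BB_1$, I bound $\|\G\|_{\rm F,r}=\|{\rm SVD}_r(\G)\|_{\rm F}$ by controlling the rank-$r$ operator norm of $\sum_i \rho_{H,\delta}'(\cdot)\X_i$; since each scalar multiplier is at most $2\delta$ in absolute value, a standard covering-net argument over rank-$r$ matrices together with Assumption~\ref{assump:sensing operators:vec} yields $\Lc\asymp\delta n$. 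On $\BB_2$, closer to $\M^*$, the key is that the multiplier $\rho_{H,\delta}'(Y_i-\langle\M,\X_i\rangle)-\rho_{H,\delta}'(\xi_i)$ is itself controlled by the residual, giving the linear-in-distance bound $\Ls\|\M-\M^*\|_{\rm F}$; here the symmetry assumption in Assumption~\ref{assump:heavytail-huber} is what makes the population gradient vanish at $\M^*$, so only fluctuations survive and contribute the $1/b_1$ factor.

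The sharpness condition (part (1)) is the crux and where the heavy-tailed machinery is essential. The two-phase sharpness follows from a one-point-margin/lower-bound computation on the population level: I would show that $\EE[\rho_{H,\delta}(\xi-t)-\rho_{H,\delta}(\xi)]$ grows like $|t|$ for large $|t|$ (yielding $\muc$) and like $t^2$ near the origin with curvature governed by $H_\xi(x+\delta)-H_\xi(x-\delta)\ge 2\delta/b_0$ (yielding $\mus\asymp\delta n/b_0$), which is precisely why Assumption~\ref{assump:heavytail-huber} is phrased through the increment of $H_\xi$ over a window of width $2\delta$ rather than through the density itself. The hard part will be transferring this population sharpness to the empirical loss uniformly over all rank-$r$ matrices in $\BB_2$ with a tail probability of the form $1-3\exp(-\sqrt{n}/\log n)$ rather than the weak $1-O(n^{-\eps})$ one would get by naively bounding $\sum_i|\xi_i|$. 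As noted in the discussion after Theorem~\ref{thm:heavytail-l1}, I would invoke the empirical-process tail bound of \cite{adamczak2008tail} to control the deviation of the centered loss increments while completely avoiding any moment bound on $\sum_i|\xi_i|$ beyond the finite $1+\eps$ moment; the Lipschitz constant $2\delta$ of $\rho_{H,\delta}$ keeps the relevant increments bounded, which is the technical advantage of Huber over the absolute loss. Assembling the population sharpness, the uniform concentration, and the sub-gradient bounds then verifies Condition~\ref{assump:two-phase} with the tabulated constants, and Proposition~\ref{prop:main} finishes the proof.
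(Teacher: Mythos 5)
Your high-level plan coincides with the paper's proof: Theorem~\ref{thm:heavytail-huber} is obtained there by verifying Condition~\ref{assump:two-phase} for the Huber loss (Lemma~\ref{lem:heavytail-huber}) and then invoking Proposition~\ref{prop:main}, and your sharpness argument is exactly the paper's --- Taylor expansion of $z\mapsto\EE\,\rho_{H,\delta}(\xi-z)$ whose linear term vanishes by the symmetry in Assumption~\ref{assump:heavytail-huber}, curvature $2(H_\xi(z+\delta)-H_\xi(z-\delta))\ge 4\delta b_0^{-1}$ on the relevant window, integration against the Gaussian law of $\langle\X_i,\M-\M^{\ast}\rangle$, and a uniform deviation bound of order $\delta\sqrt{nd_1r\ku}$ proved by symmetrization, contraction with Lipschitz constant $2\delta$, and the Adamczak tail inequality (the paper's Proposition~\ref{thm:empirical process}). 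Your phase-one bound $\Lc\asymp\delta n\sqrt{\ku}$ also goes through, although the paper derives it differently: not by a net over the gradient, but from the sub-gradient inequality $f(\M+{\rm SVD}_r(\G))-f(\M)\ge\fror{\G}^2$ combined with the same concentration event.

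The genuine gap is your phase-two sub-gradient bound $\Ls$. The mechanism you describe --- the multiplier increment $\rho_{H,\delta}'(Y_i-\langle\M,\X_i\rangle)-\rho_{H,\delta}'(\xi_i)$ is ``controlled by the residual'' --- only uses that $\rho_{H,\delta}'$ is $2$-Lipschitz, i.e.\ the increment is at most $2|\langle\X_i,\M-\M^{\ast}\rangle|$, and pushing this through a uniform bound yields $\fror{\G}\lesssim n\ku\fro{\M-\M^{\ast}}$, i.e.\ $\Ls\asymp n\ku$. The target in Table~\ref{table:two-phase} is $\Ls\asymp \delta n b_1^{-1}\ku$, smaller by the factor $\delta/b_1$; with your $\Ls$, Proposition~\ref{prop:main} prescribes the phase-two stepsize $\mus\Ls^{-2}\asymp\delta\kl(b_0n\ku^2)^{-1}$, which is off from the stepsize stated in the theorem, $\eta\asymp\kl\ku^{-2}b_1^2(n\delta b_0)^{-1}$, by the factor $(\delta/b_1)^2$, and the contraction factor $1-\mus^2/(32\Ls^2)$ degrades accordingly, so the theorem as stated does not follow from your verification. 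Moreover, the $1/b_1$ factor is not a fluctuation effect as you suggest: it is the population-smoothness mirror of your $b_0$ bound, namely $|\EE\rho_{H,\delta}'(\xi-u)-\EE\rho_{H,\delta}'(\xi)|=2\bigl|\int_0^u(H_\xi(y+\delta)-H_\xi(y-\delta))\,dy\bigr|\le 4\delta b_1^{-1}|u|$, and exploiting it at the empirical-gradient level would require an additional uniform concentration result for the gradient process that your sketch does not supply. The paper sidesteps this entirely (Lemma~\ref{lemma:upperboundsubgradient:huber}): set $\M_1=\M+\frac{b_1}{4n\delta\ku}{\rm SVD}_r(\G)$, upper-bound $\EE f(\M_1)-\EE f(\M)$ using the $b_1$ CDF-increment bound, lower-bound $f(\M_1)-f(\M)\ge\frac{b_1}{4n\delta\ku}\fror{\G}^2$ by the sub-gradient inequality, apply the same concentration event $\bcalE$, and solve the resulting quadratic inequality in $\fror{\G}$ to get $\Ls\asymp\delta nb_1^{-1}\ku$ with no extra probabilistic work. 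You should replace your gradient-decomposition step with this argument (or add the missing gradient-level concentration); the rest of your outline then matches the paper.
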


Note that Assumption~\ref{assump:heavytail-huber} implies that $b_0$ is lower  bounded by $\Omega(\gamma+\delta)$ and thus the error rate is lower bounded by $\Omega\big((\delta^2+\gamma^2) rd_1/n\big)$. This suggests an interesting transition with respect to the robustification parameter: when $\delta\lesssim \gamma$,  increasing $\delta$ does not affect the error rate; when $\delta\gtrsim \gamma$,  the error rate appreciates if $\delta$ becomes larger.  
\cite{sun2020adaptive} derived the statistical performance of Huber loss for linear regression without imposing regularity conditions on the noise density resulting into 
an error rate that is slower than $O(n^{-1})$.

\subsubsection{Quantile loss with heavy-tailed noise}
Quantile loss was initially proposed by \cite{koenker1978regression} and has been a popular loss function for robust statistics  \citep{welsh1989m,koenker2001quantile,wang2012quantile,tan2022high}.  Denote the quantile loss by $\rho_{Q,\delta}:=\delta x\mathbbm{1}(x\geq 0)+(\delta-1)x\mathbbm{1}(x<0)$ for any $x\in \RR$ with $\delta:=\PP(\xi\leq 0)$. Notice that the function $\rho_{Q,\delta}(\cdot)$ is Lipschitz with a constant $\max\{\delta,1-\delta\}$. The loss function is given by $f(\M):=\sum_{i=1}^n \rho_{Q,\delta}(Y_i-\langle \M,  \X_i\rangle)$. Similar to the case of Huber loss, due to technical reasons, our analysis framework needs a slightly different assumption on the heavy-tailed noise.

\begin{assumption}(Heavy-tailed noise \Romannum{3}) \label{assump:heavy-tailed quantile}
	There exists an $\varepsilon>0$ such that $\EE |\xi|^{1+\varepsilon}<+\infty$.  Suppose $\delta:=H_{\xi}(0)$ lies in $(0,1)$. Denote $\gamma=\EE|\xi|$.  There exist constants $b_0, b_1>0$ (may be dependent on $\gamma$) such that
	\begin{align*}
		h_{\xi}(x)\geq b_0^{-1}, &\ \ \  \textrm{ for all } |x|\leq 8(\ku/\kl)^{1/2}\gamma;\\
		h_{\xi}(x)\leq b_1^{-1}, &\ \ \ \forall x\in \RR.
	\end{align*}
\end{assumption}

When the noise has a zero median,  namely $\delta=1/2$, Assumption~\ref{assump:heavy-tailed quantile} becomes identical to the Assumption~\ref{assump:heavy-tailed}. 

\begin{theorem}\label{thm:heavytail-quantile}
	Assume $\{\text{vec}(\X_i)\}_{i=1}^n$ and  $\{\xi_i\}_{i=1}^n$ satisfy Assumptions~\ref{assump:sensing operators:vec} and \ref{assump:heavy-tailed quantile}, respectively. There exist constants $c_0^{\ast},c_1, c_2\in(0,1 ), C_1,  C_2>0$ such that if sample size $n\geq C_1\ku\kl^{-1}d_1r$, the initialization $\fro{\M_0-\M^*}\leq c_0^{\ast}\sigma_r$, the initial stepsize $\eta_{0}\in \kl^{1/2}(8n\ku)^{-1} \fro{\M_0-\M^*}\cdot[0.2,0.3]$, the phase one stepsize $\eta_l=(1-c_3)^l\eta_0$ with any fixed constant $c_3\leq c_1$, and the phase two stepsize $\eta_l=\eta\asymp \kl\ku^{-2}b_1^2(nb_0)^{-1}$, then with probability over $1-\exp(-c_2rd_1)-3\exp(-\sqrt{n}/\log n)$, after at most $O\big(\log(\sigma_{r}/\gamma)+\log\big(\max\{\delta^{-1}, (1-\delta)^{-1}\}\cdot\gamma n/(d_1rb_0)\big)\big)$ iterations, the Algorithm~\ref{alg:RsGrad} outputs an estimator with the error rate
	$$
	\|\hat \M - \M^{\ast}\|_{\rm F}^2\leq \frac{C_2\ku\cdot \max\{\delta^2,\ (1-\delta)^2\}}{\kl^2}\cdot \frac{b_0^2 rd_1}{n}. 
	$$
\end{theorem}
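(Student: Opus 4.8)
The plan is to follow the two-step template shared by all applications in this section (see the remarks following \refpro{prop:main} and the proof of Theorem~\ref{thm:heavytail-l1}): verify that $f(\M)=\sum_{i=1}^n\rho_{Q,\delta}(\langle\M,\X_i\rangle-Y_i)$ obeys the two-phase regularity conditions of Condition~\ref{assump:two-phase} with constants of the orders listed in Table~\ref{table:two-phase}, and then invoke \refpro{prop:main}, which already encapsulates the Riemannian retraction, the SVD truncation and the two-phase step-size schedule, so that nothing loss-specific remains beyond the regularity conditions. Because $\delta=H_\xi(0)$, the origin is the population $\delta$-quantile of $\xi$, hence $\M^{\ast}$ minimizes the population quantile risk, and the entire argument specializes, at $\delta=1/2$, to the absolute-loss analysis of Theorem~\ref{thm:heavytail-l1}; the only genuinely new bookkeeping is to carry the quantile level through the Lipschitz constant $\max\{\delta,1-\delta\}$ of $\rho_{Q,\delta}$ and through the asymptotic slopes $\delta$ and $1-\delta$ of its integrated influence function. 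Writing $\bd=\M-\M^{\ast}$, \refpro{prop:main} then outputs an estimator with $\fro{\hat\M-\M^{\ast}}=O(\taus)$, and substituting $\taus\asymp\max\{\delta,1-\delta\}\,b_0(rd_1/n)^{1/2}$ gives the stated squared rate.

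For the two-phase sharpness I would use Knight's identity to decompose, for each $i$, the increment $\rho_{Q,\delta}(\langle\bd,\X_i\rangle-\xi_i)-\rho_{Q,\delta}(-\xi_i)$ into a term linear in $\langle\bd,\X_i\rangle$ carrying the mean-zero multiplier $\delta-\mathbbm{1}(\xi_i<0)$, plus a remainder whose conditional expectation over $\xi_i$ equals $\Psi(\langle\bd,\X_i\rangle)$, where $\Psi(u):=\int_0^u\big(H_\xi(s)-\delta\big)\,ds\geq0$. The density lower bound $h_\xi\geq b_0^{-1}$ on $|x|\leq 8(\ku/\kl)^{1/2}\gamma$ forces $\Psi(u)\gtrsim u^2/b_0$ for small $|u|$ and $\Psi(u)\gtrsim\min\{\delta,1-\delta\}\,|u|$ once $|u|$ saturates, which is exactly what produces two regimes. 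Summing over $i$ and invoking the restricted isometry of the Gaussian design (Assumption~\ref{assump:sensing operators:vec}) to pass from $\sum_i\langle\bd,\X_i\rangle^2$ to $n\fro{\bd}^2$ and from $\sum_i|\langle\bd,\X_i\rangle|$ to $n\fro{\bd}$ then yields the quadratic sharpness $\mus\asymp n/b_0$ on $\BB_2$ and the linear sharpness $\muc\asymp n$ on $\BB_1$, once the mean-zero linear fluctuation is controlled uniformly over rank-$r$ directions $\bd$.

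For the two-phase sub-gradient bound, write a generic sub-gradient as $\G=\sum_i g_i\X_i$ with $|g_i|\leq\max\{\delta,1-\delta\}$. On $\BB_1$ the crude estimate $\fror{\G}=\sup_{{\rm rank}(W)\leq r,\,\fro{W}\leq1}\sum_i g_i\langle\X_i,W\rangle\leq\max\{\delta,1-\delta\}\sup_W\sum_i|\langle\X_i,W\rangle|$, together with Cauchy--Schwarz and the restricted isometry, gives $\Lc\asymp n$. On $\BB_2$ I would split $\G=\G^{\ast}+(\G-\G^{\ast})$ around the sub-gradient $\G^{\ast}=\sum_i\big(\delta-\mathbbm{1}(\xi_i>0)\big)\X_i$ evaluated at $\M^{\ast}$: the statistical-noise part is mean zero with $\fror{\G^{\ast}}\leq\sqrt{r}\,\op{\G^{\ast}}\asymp\max\{\delta,1-\delta\}\,(nrd_1)^{1/2}$, while the increment $\G-\G^{\ast}$ has multipliers of size at most one supported on the event $\{\xi_i\text{ lies between }0\text{ and }\langle\bd,\X_i\rangle\}$, whose conditional probability is at most $|\langle\bd,\X_i\rangle|/b_1$ by the density upper bound $h_\xi\leq b_1^{-1}$; this delivers $\fror{\G-\G^{\ast}}\lesssim(n/b_1)\fro{\bd}$ and hence $\Ls\asymp n/b_1$. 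The statistical threshold $\taus$ is dictated precisely by $\Ls\taus\gtrsim\fror{\G^{\ast}}$, i.e.\ the noise floor is absorbed into the signal bound throughout $\BB_2$, and this is where the factor $\max\{\delta,1-\delta\}$ enters $\taus$ and therefore $\max\{\delta^2,(1-\delta)^2\}$ enters the final rate.

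The main obstacle is the phase-two analysis, carried out in the thin shell $\BB_2$: both the linear fluctuation entering the sharpness and the increment $\G-\G^{\ast}$ entering the sub-gradient bound must be controlled \emph{uniformly} over all rank-$r$ directions, and the target failure probability $1-\exp(-c_2rd_1)-3\exp(-\sqrt{n}/\log n)$ must be attained under only a finite $1+\eps$ moment on the noise. A naive reduction through $\sum_{i=1}^n|\xi_i|$ yields only a $1-O(n^{-\eps})$ tail, which is far too weak for small $\eps$; as in Theorem~\ref{thm:heavytail-l1}, I would instead bound the relevant empirical processes with the chaining/truncation machinery of \cite{adamczak2008tail}, which produces sub-exponential tails while bypassing $\sum_i|\xi_i|$ entirely. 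Feeding the verified constants into \refpro{prop:main} then yields the geometric two-phase decay with the advertised iteration count $O\big(\log(\sigma_r/\gamma)+\log(\max\{\delta^{-1},(1-\delta)^{-1}\}\gamma n/(d_1rb_0))\big)$ and the final error $\fro{\hat\M-\M^{\ast}}^2=O(\taus^2)$, completing the proof of Theorem~\ref{thm:heavytail-quantile}.
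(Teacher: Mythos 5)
Your skeleton matches the paper's: verify Condition~\ref{assump:two-phase} for $\rho_{Q,\delta}$ and feed the constants into Proposition~\ref{prop:main}, and your phase-two sharpness via Knight's identity is exactly the paper's computation (Lemma~\ref{lem:heavytail-quantile} derives $\EE[\rho_{Q,\delta}(\xi-z)-\rho_{Q,\delta}(\xi)\mid z]=\int_0^z(H_\xi(s)-\delta)\,ds$ and lower-bounds it through $h_\xi\geq b_0^{-1}$, giving $\mus\asymp n\kl/b_0$). Where you diverge on phase one, though, your route is shakier than the paper's. The paper gets $\muc\asymp n\sqrt{\kl}$ on $\BB_1=\{\fro{\M-\M^{\ast}}\geq 8\kl^{-1/2}\gamma\}$ with no tail analysis at all, using subadditivity $\rho_{Q,\delta}(x_1+x_2)\leq\rho_{Q,\delta}(x_1)+\rho_{Q,\delta}(x_2)$ together with $\rho_{Q,\delta}(\xi)+\rho_{Q,\delta}(-\xi)=|\xi|$ and $\EE\rho_{Q,\delta}(-z)=\tfrac12\EE|z|$ for symmetric $z$. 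Your saturation claim $\Psi(u)\gtrsim\min\{\delta,1-\delta\}\,|u|$ is the wrong side: by Markov, $H_\xi(s)-\delta\geq 1-\delta-\gamma/s$, so the tail that saturates cheaply is the one with slope $\max\{\delta,1-\delta\}\geq 1/2$ (threshold $\lesssim\gamma$), and by symmetry of the Gaussian design that single tail already yields $\muc\asymp n\sqrt{\kl}$; with the $\min$ side, the saturation threshold $\gamma/\min\{\delta,1-\delta\}$ blows up and opens a gap between the end of your phase-one region and the region $\fro{\M-\M^{\ast}}\leq 8\kl^{-1/2}\gamma$ on which the density lower bound (hence quadratic sharpness) is available.

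The genuine gap is your phase-two sub-gradient bound. The decomposition $\G=\G^{\ast}+(\G-\G^{\ast})$ leaves you with multipliers $\pm\mathbbm{1}\{\xi_i\text{ between }0\text{ and }\langle\bd,\X_i\rangle\}$, and you must bound $\sup\sum_i\mathbbm{1}_i|\langle\X_i,W\rangle|$ uniformly over rank-$r$ test directions $W$ \emph{and} over $\bd$ ranging through the shell $\BB_2$. This is a multiplier empirical process with non-Lipschitz weights depending jointly on $(\xi_i,\bd)$; it is not covered by the paper's Proposition~\ref{thm:empirical process}, whose proof goes through symmetrization plus the contraction theorem and therefore needs the Lipschitz property of the loss, and you do not say what replaces it (Adamczak's inequality alone does not: one still needs an expectation bound for the supremum, which is where contraction fails for indicators). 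The paper sidesteps this entirely in Lemma~\ref{lemma:upperboundsubgradient:quantile}: take the test point $\M_1=\M+\tfrac{b_1}{2n\ku}{\rm SVD}_r(\G)$, upper-bound $f(\M_1)-f(\M)$ by the population bound from $h_\xi\leq b_1^{-1}$ plus the same Lipschitz empirical process, lower-bound it by $\tfrac{b_1}{2n\ku}\fror{\G}^2$ via the sub-gradient inequality, and solve the quadratic inequality for $\fror{\G}$. Either adopt that trick or supply a separate chaining/VC argument for the indicator class. Two smaller corrections: the multipliers of $\G^{\ast}$ are $\delta-\mathbbm{1}(\xi_i<0)$ (mean zero because $H_\xi(0)=\delta$), not $\delta-\mathbbm{1}(\xi_i>0)$; and $\taus$ is dictated by sharpness versus empirical fluctuation, $\mus\taus\gtrsim\max\{\delta,1-\delta\}\sqrt{n r d_1\ku}$, which is what produces the $b_0$ in the final rate — your criterion $\Ls\taus\gtrsim\fror{\G^{\ast}}$ produces $b_1$ instead, and the two agree only when $b_0\asymp b_1$.
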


\section{Initialization and Algorithmic Parameters Selection}\label{sec:discussion}
\paragraph*{Initialization} 
The sparse linear regression Algorithm~\ref{alg:IHTl1} allows arbitrary initialization. We only discuss the initialization for low-rank linear regression. 
The convergence of Algorithm~\ref{alg:RsGrad} crucially relies on the warm initialization $\M_0$.  One can simply apply the shrinkage low-rank approximation \citep{fan2021shrinkage} for initialization but a finite $2+\eps$ moment condition on noise is required.  For simplicity,  we investigate the performance of the  vanilla low-rank approximation. Here $vec(\cdot)$ and $mat(\cdot)$ denote the vectorization and matricization operators, respectively. 

\begin{theorem}(Initialization Guarantees)\label{thm:init-general-known} Suppose  $\gamma_1:=\EE|\xi|^{1+\varepsilon}<+\infty$ for some $\varepsilon\in (0,1]$ and $\{vec(\X_i)\}_{i=1}^n$ satisfy Assumption~\ref{assump:sensing operators:vec} with known covariances $\{\bSigma_i\}_{i=1}^n$. 
	Initialize by $$\M_0:=\operatorname{SVD}_r(n^{-1}\sum_{i=1}^{n}\text{mat}(\bSigma_i^{-1}\text{vec}(\X_{i}))Y_i).$$ For any small $c_0>0$,  there exist constants $C, c_1,c_2,c_3>0$ depending only on $c_0$ such that if $n>C\max\big\{\ku\kl^{-1}\kappa^2d_1r^2\log d_1, \allowbreak (\kl^{-1}d_1r)^{(1+\eps)/(2\eps)}\sigma_r^{-(1+\eps)/\eps}(\gamma_1\log d_1)^{1/\eps}\big\}$, then with probability over $1-c_1d_1^{-1}-c_2\log^{-1} d_1\allowbreak-c_3e^{-d_1}$, the initialization satisfies $\fro{\M_0-\M^*}\leq c_0\sigma_{r}$. Here $\kappa:=\sigma_1(\M^{\ast})\sigma_r^{-1}(\M^{\ast})$ is the condition number. 
\end{theorem}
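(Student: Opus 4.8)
The plan is to bound the operator-norm error of the pre-projection matrix and then invoke a best-rank-$r$ perturbation inequality. Write $\widehat\M:=n^{-1}\sum_{i=1}^n\text{mat}(\bSigma_i^{-1}\text{vec}(\X_i))Y_i$, so $\M_0=\operatorname{SVD}_r(\widehat\M)$. Since $\M^{\ast}$ has rank at most $r$ and $\operatorname{SVD}_r(\widehat\M)$ is the best rank-$r$ approximation of $\widehat\M$ in operator norm, we have $\op{\M_0-\widehat\M}\leq\op{\M^{\ast}-\widehat\M}$, whence $\op{\M_0-\M^{\ast}}\leq 2\op{\widehat\M-\M^{\ast}}$; because $\M_0-\M^{\ast}$ has rank at most $2r$ this gives $\fro{\M_0-\M^{\ast}}\leq 2\sqrt{2r}\,\op{\widehat\M-\M^{\ast}}$. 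It therefore suffices to show $\op{\widehat\M-\M^{\ast}}\leq c_0\sigma_r/(2\sqrt{2r})=:\delta_{\ast}$ under the stated conditions.

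Next I would split the error into a signal and a noise part. Writing $\x_i=\text{vec}(\X_i)\sim N(\boldsymbol 0,\bSigma_i)$ and using $Y_i=\inp{\X_i}{\M^{\ast}}+\xi_i$, in vectorized form $\widehat\M-\M^{\ast}=\E_{\rm sig}+\E_{\rm noise}$ with $\E_{\rm sig}=n^{-1}\sum_i(\bSigma_i^{-1}\x_i\x_i^{\top}-\I)\text{vec}(\M^{\ast})$ and $\E_{\rm noise}=n^{-1}\sum_i\bSigma_i^{-1}\x_i\,\xi_i$. The crucial debiasing identity $\EE[\bSigma_i^{-1}\x_i\x_i^{\top}]=\I$ holds precisely because the $\bSigma_i$ are known, so both parts are mean-zero (the noise part because $\x_i$ is independent of $\xi_i$ and $\EE|\xi|<\infty$, which follows from $1+\eps>1$). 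I would then control each operator norm through $\op{\A}=\sup_{\u\in\SS^{d_1-1},\,\v\in\SS^{d_2-1}}\inp{\A}{\u\v^{\top}}$ together with an $\eps$-net of $\SS^{d_1-1}\times\SS^{d_2-1}$, whose log-cardinality is $O(d_1)$ since $d_1\geq d_2$.

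For $\E_{\rm sig}$, for each fixed $(\u,\v)$ the summand is a centered product of two jointly Gaussian linear forms, hence sub-exponential with $\psi_1$-parameter of order $\sqrt{\ku/\kl}\,\sigma_1\sqrt r$ (using $\fro{\M^{\ast}}\leq\sqrt r\,\sigma_1$ and $\op{\bSigma_i^{-1}}\leq\kl^{-1}$). A Bernstein bound followed by a union bound over the net yields $\op{\E_{\rm sig}}\lesssim\sqrt{\ku/\kl}\,\sigma_1\sqrt{d_1 r\log d_1/n}$ with probability $1-c_3e^{-d_1}$. Requiring this to be at most $\delta_{\ast}/2=c_0\sigma_r/(4\sqrt{2r})$ and substituting $\sigma_1=\kappa\sigma_r$ produces exactly the first sample-size condition $n\gtrsim\ku\kl^{-1}\kappa^2 d_1 r^2\log d_1$.

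The main obstacle is $\E_{\rm noise}$, where the heavy tail of $\xi$ precludes exponential concentration. For fixed $(\u,\v)$ the summand is $\xi_i g_i$ with $g_i=\x_i^{\top}\bSigma_i^{-1}\text{vec}(\u\v^{\top})\sim N(0,O(\kl^{-1}))$ independent of $\xi_i$, so it is mean-zero even after truncation. I would truncate at a level $\tau$, treating $\xi_i\mathbbm{1}(|\xi_i|\leq\tau)$ by Bernstein over the net---using $\EE[\xi^2\mathbbm{1}(|\xi|\leq\tau)]\leq\tau^{1-\eps}\gamma_1$ to control the variance proxy---and discarding the event $\{\exists i:|\xi_i|>\tau\}$, whose probability is at most $n\gamma_1\tau^{-(1+\eps)}$ by Markov's inequality. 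Optimizing $\tau$ to balance the net-Bernstein deviation against this discard probability is the delicate step: it is what converts a mere $(1+\eps)$-moment into a usable operator-norm bound, forces the unusual exponents $(1+\eps)/(2\eps)$, $(1+\eps)/\eps$ and $1/\eps$ in the second sample-size term, and yields only polynomial confidence $1-c_1d_1^{-1}-c_2\log^{-1}d_1$ rather than exponential. Intersecting the two good events gives $\op{\widehat\M-\M^{\ast}}\leq\delta_{\ast}$ and hence $\fro{\M_0-\M^{\ast}}\leq c_0\sigma_r$, completing the argument.
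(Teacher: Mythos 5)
Your overall architecture matches the paper's: split $\widehat\M-\M^{\ast}$ into a signal part and a noise part, bound each in operator norm, and convert to Frobenius norm via low-rankness. Your signal-term analysis is sound and yields the first sample-size condition exactly as in the paper, and your projection step ($\fro{\M_0-\M^{\ast}}\leq 2\sqrt{2r}\,\op{\widehat\M-\M^{\ast}}$ via best rank-$r$ approximation) is in fact more elementary than the paper's perturbation lemma. The gap is in the noise term, and it is quantitative but real: the truncate-then-Bernstein-over-a-net scheme cannot reach the theorem's stated sample complexity. The union bound over a net of $\SS^{d_1-1}\times\SS^{d_2-1}$ forces each pointwise Bernstein bound to hold at confidence $e^{-\Theta(d_1)}$, and at that level Bernstein's deviation is $\max\{\sqrt{Vd_1/n},\,Bd_1/n\}$ with per-summand variance $V\asymp\tau^{1-\eps}\gamma_1\kl^{-1}$ and scale $B\asymp\tau\kl^{-1/2}$. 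Since the discard event forces $\tau\gtrsim(n\gamma_1\log d_1)^{1/(1+\eps)}$, the linear term gives $\op{\E_{\rm noise}}\lesssim\kl^{-1/2}\,d_1\,(\gamma_1\log d_1)^{1/(1+\eps)}n^{-\eps/(1+\eps)}$, and one checks that this linear term always dominates the variance term for this $\tau$ (the squared ratio is $\tau^{1+\eps}d_1/(\gamma_1 n)=d_1\log d_1\geq 1$). This is a factor $\sqrt{d_1}$ worse than what the proof needs, namely the paper's bound $B_2\lesssim\kl^{-1/2}\sqrt{d_1}\,(\gamma_1\log d_1)^{1/(1+\eps)}n^{-\eps/(1+\eps)}$. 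Tracing it through, your argument requires $n\gtrsim\kl^{-(1+\eps)/(2\eps)}d_1^{(1+\eps)/\eps}r^{(1+\eps)/(2\eps)}\sigma_r^{-(1+\eps)/\eps}(\gamma_1\log d_1)^{1/\eps}$, which exceeds the theorem's second condition by the factor $d_1^{(1+\eps)/(2\eps)}$ (a full factor $d_1$ when $\eps=1$). So as written you prove a strictly weaker theorem.

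The paper's way around this is to exploit exact conditional Gaussianity rather than generic sub-exponentiality: condition on $\{\xi_i\}_{i=1}^n$. Given the noise values, $\sum_{i=1}^n\xi_i\,\text{mat}(\bSigma_i^{-1}\text{vec}(\X_i))$ is a Gaussian matrix whose vectorized covariance has operator norm at most $\kl^{-1}\sum_i\xi_i^2$, so a net argument with purely quadratic Gaussian tails (no Bernstein linear correction) gives an operator-norm bound of order $\sqrt{\kl^{-1}\sum_i\xi_i^2}\cdot\sqrt{d_1}$ with conditional probability $1-ce^{-d_1}$. The heavy tail of $\xi$ then enters only through the scalar $\sum_i\xi_i^2$, which the paper bounds by $(2n\gamma_1\log d_1)^{2/(1+\eps)}$ with probability $1-O(\log^{-1}d_1)$ via a scalar truncation/Markov lemma. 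Substituting recovers exactly the $\sqrt{d_1}$-scaling bound and the stated sample complexity. If you replace your net-plus-scalar-Bernstein step by this conditioning argument (your matrix-level truncation then becomes unnecessary; truncation survives only inside the scalar lemma for $\sum_i\xi_i^2$), the rest of your proposal goes through.
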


Compared with \cite{fan2021shrinkage},  Theorem~\ref{thm:init-general-known} only requires a finite $1+\eps$ moment condition. 
We note that if the noise term $\xi_i$ has a finite $2+\eps$ moment, the probability term $O(\log^{-1} d_1)$ can be replaced by $O(n^{-\min\{1,\eps/2\}})$. Moreover, with a finite $2+\eps$ moment, one can even obtain a sub-Gaussian initial estimator by the median-of-means \citep{minsker2015geometric,lugosi2019risk,lugosi2019regularization} or the Catoni's estimator \citep{catoni2012challenging,minsker2018sub}. Since the shrinkage-based approaches \citep{fan2021shrinkage,minsker2018sub} are already effective to provide warm initializations,  we spare no additional efforts to improve Theorem~\ref{thm:init-general-known}. If the covariances $\bSigma_i$'s are unknown, we suggest to apply \cite{fan2021shrinkage} or \cite{minsker2018sub} for obtaining the initialization.

\paragraph*{Selection of initial stepsize}	Though the main theorems in Sections~\ref{sec:vector} and \ref{sec:matrix}  require the initial stepsize $\eta_0\asymp n^{-1}\|\Bbeta_0-\Bbeta^{\ast}\|_2$ and $\eta_0\asymp n^{-1}\|\M_0-\M^{\ast}\|_{\rm F}$,  respectively,  the following theorems show that similar results can be attained even if we only have an upper bound of $\|\Bbeta_0-\Bbeta^{\ast}\|_2$ and $\|\M_0-\M^{\ast}\|_{\rm F}$. 
\begin{customthm}{\ref{thm:vec:sparse}*}[alternative of Theorem~\ref{thm:vec:sparse}]
	
	Suppose Assumptions~\ref{assump:sensing operators:vec} and \ref{assump:heavy-tailed} hold. There exist absolute constants $C_1,\dots,C_6>0$, $c_2^*\in(0,1)$ such that if $n\geq C_1\ku\kl^{-1}\tilde{s}\log(2d/\tilde{s})$, $\tilde{s}\geq C_2 (\ku/\kl)^8(b_0/b_1)^8\cdot s$,  the initial stepsize $\eta_{0}\in(n\ku)^{-1}\kl^{1/2}D_0\cdot\left[1/8,\ 3/8\right]$ for any $D_0\geq \|\Bbeta_0-\Bbeta^{\ast}\|_2$,  the phase-one stepsize $\eta_l=(1-c_1)^l\eta_0$ for any $c_1\leq \kl\ku^{-1}/64$,  and the phase-two stepsize $\eta_l=\eta\in \kl^{1/2}b_1^2(nb_0\ku)^{-1}\cdot [C_3,\  C_4]$,  then with probability  exceeding $1-\exp\big(-C_5\tilde{s}\log(2d/\tilde{s})\big)-3\big(-(n\log(2d/\tilde{s})^{1/2}\log^{-1} n\big)$,  after at most $O\big(\log(D_0/\gamma)+\log(n\gamma b_0^{-1}\log(2d/\tilde{s}))\big)$ iterations,  the Algorithm~\ref{alg:IHTl1} outputs an estimator with the error rate
	$$
	\|\hat\Bbeta-\Bbeta^{\ast}\|_2^2\leq \frac{C_6\ku}{\kl^2}\cdot \frac{b_0^2\tilde{s}\log(2d/\tilde{s})}{n}.
	$$
\end{customthm}


\begin{customthm}{\ref{thm:heavytail-l1}*}[alternative of Theorem~\ref{thm:heavytail-l1}]\label{thm:heavytail-l1-alt}
	Assume $\{\xi_i\}_{i=1}^n$ and $\{{\rm vec}(\X_i)\}_{i=1}^n$ satisfy Assumptions~\ref{assump:sensing operators:vec} and \ref{assump:heavy-tailed}, respectively. There exist constants $c_0^{\ast},  c_1, c_2\in(0,1 ), C_1,  C_2>0$ such that if $n\geq C_1\ku\kl^{-1}d_1r$, 
	the initialization $\fro{\M_0-\M^*}\leq D_0$ for any $D_0\leq c_0^{\ast}\sigma_{r}$, the initial stepsize $\eta_{0}\in \kl^{1/2}(16n\ku)^{-1}D_0\cdot\left[0.2, 0.3\right]$, the phase one stepsize $\eta_{l}=(1-c_3)^{l}\eta_{0}$ with any fixed constant $c_3\leq c_1$, and the phase two stepsize $\eta_{l}=\eta\asymp \kl \ku^{-2}b_1^2(nb_0)^{-1}$, then with probability at least $1-\exp(-c_2rd_1)-3\exp(-n^{1/2}/\log n)$, after at most $O\big(\log(D_0/\gamma)+\log\big(\gamma n/(d_1rb_0)\big)\big)$ iterations, the Algorithm~\ref{alg:RsGrad} outputs an estimator with the error rate 
	$$
	\|\hat \M-\M^{\ast}\|_{\rm F}^2\leq \frac{C_2\ku}{\kl^2}\cdot \frac{b_0^2 d_1 r}{n}.
	$$
\end{customthm}

By Theorem~\ref{thm:heavytail-l1-alt},  it thus suffices to obtain an estimate of $\|\M_0-\M^{\ast}\|_{\rm F}$.  Towards that end,  one appropriate method is to take the average $n^{-1}\sum_{i=1}^n |Y_i-\langle \M_0, \X_i\rangle|$,  which,  under mild conditions,  is of the same order of $\|\M_0-\M^{\ast}\|_{\rm F}$ with high probability.  The same approach is also applicable in sparse vector regression.  The general quantities $\muc$ and $\Lc$ involved in Algorithm~\ref{alg:RsGrad} are often $\asymp n$ up to a factor of loss-related parameters,  which can be easily tuned in practice.   

\paragraph*{Tuning Parameters in RsGrad}
Our algorithm assumes knowing the rank of $\M^{\ast}$,  which is typical in non-convex methods\citep{tong2021accelerating,chen2015fast,cai2021generalized}.  One may apply the BIC or AIC for the selection of the rank \citep{cai2021generalized}.  While we believe that our algorithm still works for the over-parameterized case  \citep{zhuo2021computational,ma2022global,luo2022tensor} where the algorithmic rank $r$ is larger than the true rank of $\M^{\ast}$,  a thorough theoretical investigation is beyond the scope of this paper.  Convex methods such as \cite{elsener2018robust} and \cite{belloni2011square} need no initialization or the pre-requisite of rank.  But our primary interest is to study both the computational and statistical issues in robust estimation.  Our algorithms (i.e., the phase two stepsize) also need the noise level $\gamma$, which is not required by the convex approaches based on the square-root LASSO or the nuclear-norm penalized absolute loss \citep{elsener2018robust, belloni2011square,minsker2022robust}. Fortunately, $\gamma$ can be reliably estimated by our methods. The phase one iterations terminate until $\sum_{i=1}^{n}|Y_i-\inp{\M_{l_{1}}}{\X_i}|\asymp\sum_{i=1}^{n}|\xi_i|\asymp n\gamma$ where $l_1$ is the number of iterations in phase one.  Then the phase two stepsize can be taken as  $\eta=c\cdot f(\M_{l_{1}})/n^2$ for some tuning parameter $c>0$.  In simulation experiments,  we observe that the algorithm is very tolerant with the selection of $c$.  


\paragraph*{Determine the phase} Due to the geometrically decay of stepsizes during phase one,  after some iterations,  the stepsize will enter the level $O(\mus\Ls^{-2})$ desired by the phase two convergence.  On the other hand,  if the phase one iterations continue,  the stepsize will diminish fast and the value of objective function becomes stable.  This is indeed observed in numerical experiments.  See Figure~\ref{fig:conv-Gaussian} and Figure~\ref{fig:conv-t2} in Section~\ref{sec:simulation}.  Therefore,  once the stepsize falls below a pre-chosen small threshold,  the phase two iterations can be initiated and the stepsizes are fixed afterwards until convergence.

\section{Huber's Contamination Model}\label{sec:huber}
The robustness against outliers is often studied for the Huber's $\eps$-contamination model \citep{huber2009robust,chen2016general,chen2018robust, diakonikolas2019recent},  which assumes the data are  i.i.d. with a common distribution $d\,P=(1-\epsilon)\, d\,P_i+\epsilon\, d\,P_o$ where $P_i$ denotes the distribution of inliers,  $P_o$ is an arbitrary distribution of outliers,  and $\epsilon\in[0,1)$.  We now demonstrate the robustness of our algorithms against outliers.  Without loss of generality,  throughout this section,  we assume $\epsilon$-fraction of the responses $\{Y_i\}_{i=1}^n$ are corrupted with outliers.  Denote the indices of outliers as $\calO$ and inliers as $\calI$ where $\calO\cup\calI=\{1,\dots,n\}$ and $|\calO|=\lceil \epsilon n\rceil$.  Basically,  $(\X_i, Y_i)$ satisfies the linear model (\ref{eq:tr_model}) for $i\in\calI$.  For outliers $i\in\calO$,  the response $Y_i$ is arbitrarily corrupted.  Note that we still assume all the covariates $\X_i$'s or $vec(\X_i)$'s satisfy the Assumption~\ref{assump:sensing operators:vec}.  Interested readers are suggested to refer to \cite{depersin2020robust,lecue2019learning,chinot2020robust,thompson2020outlier,diakonikolas2019robust} for more general settings of Huber's $\epsilon$-contamination model.  Due to page limit,  we only consider the absolute loss. 

\subsection{Sparse linear regression}
Using the same notations from Section~\ref{sec:vector}, the following theorem describes the statistical guarantees of the IHT-$\ell_1$ algorithm under Huber's $\epsilon$-contamination model. 

\begin{theorem}
	Suppose Assumption~\ref{assump:sensing operators:vec} holds and $\{\xi_i\}_{i\in\calI}$ satisfy Assumption~\ref{assump:heavy-tailed}. There exist absolute constants $C_1,\dots,C_6>0$, $c_2^*\in(0,1)$ such that if $n\geq C_1\ku\kl^{-1}\tilde{s}\log(2d/\tilde{s})$, $\tilde{s}\geq C_2 (\ku/\kl)^8(b_0/b_1)^8\cdot s$, the corruption rate $\epsilon\leq \kl^{1/2}(4\kl^{1/2}+\ku^{1/2})^{-1}$, the initial stepsize $\eta_{0}\in(n\ku)^{-1}\kl^{1/2}\cdot\allowbreak\ltwo{\Bbeta_0-\Bbeta^*}\cdot\left[1/8,\ 3/8\right]$, the phase one stepsize $\eta_l=(1-c_1)^l\eta_0$ with any fixed constant $c_1\leq \kl\ku^{-1}/64$, and the phase two stepsize $\eta_{l}=\eta\in \kl^{1/2}b_1^2(nb_0\ku)^{-1}\cdot\left[C_5, C_6\right]$, then with probability over $1-2\exp(-C_3\tilde{s}\log (2d/\tilde{s}))-3\exp(-(n\log(2d/\tilde{s}))^{1/2}\log^{-1} n)$, after at most  $O\big(\log(\ltwo{\Bbeta_0-\Bbeta^*}/\gamma)+\log(n\gamma b_0^{-1}\log(2d/\tilde{s}))\wedge\log(\gamma/b_0\epsilon)\big)$ iterations, Algorithm~\ref{alg:IHTl1} outputs an estimator with the error rate 
	$$
	\|\hat\Bbeta-\Bbeta^*\|_2\leq \frac{C_4\ku^{1/2}}{\kl}  \cdot b_0\cdot\max\left\{\Big(\frac{\tilde{s}\log(2d/\tilde{s})}{n}\Big)^{1/2},\epsilon\right\}.
	$$ 
	%
	\label{thm:vec:sparse:outlier}
\end{theorem}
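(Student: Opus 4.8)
The plan is to reuse the proof architecture of \refthm{thm:vec:sparse} almost verbatim, treating the outliers as a controlled additive perturbation of the two-phase regularity conditions established in \reflm{lem:vec:sparse}. Decompose the loss as $f(\Bbeta)=f_{\calI}(\Bbeta)+f_{\calO}(\Bbeta)$, where $f_{\calI}(\Bbeta):=\sum_{i\in\calI}|Y_i-\langle\Bbeta,\X_i\rangle|$ gathers the inlier terms and $f_{\calO}$ the outlier terms, and split any subgradient as $\G=\G_{\calI}+\G_{\calO}$ with $\G_{\calO}=\sum_{i\in\calO}\mathrm{sign}(\langle\Bbeta,\X_i\rangle-Y_i)\X_i$. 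Since the inliers obey the clean model with noise satisfying Assumption~\ref{assump:heavy-tailed} and $|\calI|=n-\lceil\epsilon n\rceil\asymp n$, \reflm{lem:vec:sparse} applies directly to $f_{\calI}$ (with $n$ there replaced by $|\calI|$), supplying both the two-phase sharpness lower bound and the truncated-subgradient upper bound for the inlier contribution.

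The whole effect of contamination then reduces to bounding the outlier contribution uniformly over the active sparse directions. Because $\rho(x)=|x|$ is $1$-Lipschitz and each outlier sign is arbitrary, it suffices to control the worst-case quantity $\sum_{i\in\calO}|\langle\u,\X_i\rangle|$ over unit vectors $\u$ supported on any index set of size at most $2\tilde{s}+s$; this single quantity simultaneously dominates $|f_{\calO}(\Bbeta)-f_{\calO}(\Bbeta^*)|$ and $\ltwo{\calP_{\Omega\cup\Pi\cup\Omega^*}(\G_{\calO})}$. By Cauchy--Schwarz and a restricted-isometry bound for $\{\X_i\}_{i\in\calO}$ (Gaussian under Assumption~\ref{assump:sensing operators:vec}), one gets $\sum_{i\in\calO}|\langle\u,\X_i\rangle|\lesssim |\calO|\,\ku^{1/2}\ltwo{\u}\asymp\epsilon n\,\ku^{1/2}\ltwo{\u}$ with probability $1-\exp(-C_3\tilde{s}\log(2d/\tilde{s}))$, which is the source of the extra such factor (hence the leading $2$) in the stated probability. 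When $\epsilon n\lesssim\tilde{s}\log(2d/\tilde{s})$ the noise floor dominates and a crude bound suffices, so no sharpness is lost.

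Combining the inlier and outlier pieces yields a contaminated analogue of \reflm{lem:vec:sparse}. In phase one the sharpness constant is reduced from $\tfrac{|\calI|}{4}\kl^{1/2}$ by a term of order $\epsilon n\,\ku^{1/2}$, and the assumed bound $\epsilon\leq\kl^{1/2}(4\kl^{1/2}+\ku^{1/2})^{-1}$ is exactly what keeps the net coefficient a positive multiple of $n\kl^{1/2}$; the phase-one linear contraction with decaying stepsizes $\eta_l=(1-c_1)^l\eta_0$ then goes through unchanged. In phase two the quadratic sharpness $\tfrac{|\calI|\kl}{12b_0}\ltwo{\Bbeta-\Bbeta^*}^2$ competes against the \emph{linear} outlier term $\asymp\epsilon n\,\ku^{1/2}\ltwo{\Bbeta-\Bbeta^*}$, so the quadratic regularity---and thus the constant-stepsize linear contraction from the proof of \refthm{thm:vec:sparse}---survives only down to an enlarged statistical floor $\ltwo{\Bbeta-\Bbeta^*}\gtrsim\kl^{-1}\ku^{1/2}b_0\,\epsilon$. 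Taking the maximum of this floor with the noise-only floor $b_0(\tilde{s}\log(2d/\tilde{s})/n)^{1/2}$ produces precisely the claimed $\max\{(\tilde{s}\log(2d/\tilde{s})/n)^{1/2},\epsilon\}$ rate, while the $\log(\gamma/(b_0\epsilon))$ entry in the iteration count reflects that phase two now terminates once this larger floor is attained.

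The main obstacle is the uniform outlier control of the second step: the active support $\Omega\cup\Pi\cup\Omega^*$ changes with the iterate $\Bbeta$, and since the corrupted responses $\{Y_i\}_{i\in\calO}$ are adversarial the outlier signs may be worst-case, so one genuinely needs a supremum of $\sum_{i\in\calO}|\langle\u,\X_i\rangle|$ over the entire sparse sphere rather than a single-direction bound. Securing the sharp scaling $\epsilon n\,\ku^{1/2}$ together with the stated exponential probability requires a covering/chaining argument over all supports of size $O(\tilde{s})$ combined with the sub-Gaussianity of $\langle\u,\X_i\rangle$; once this restricted-isometry-type bound over $\calO$ is in hand, the remaining contraction bookkeeping is identical to that in the proof of \refthm{thm:vec:sparse}.
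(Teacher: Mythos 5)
Your proposal is correct and follows essentially the same route as the paper's own proof: the paper likewise splits $f=f_i+f_o$, applies the clean two-phase regularity analysis (with sample size $(1-\epsilon)n$) to the inliers, and controls the outlier part uniformly over sparse directions via an $\varepsilon$-net/Gaussian-process bound of order $\epsilon n\,\ku^{1/2}\ltwo{\Bbeta-\Bbeta^*}$ plus a fluctuation term (its event $\bcalE_o$), which degrades the phase-one sharpness by an amount kept positive under the stated constraint on $\epsilon$ and enlarges the phase-two floor to $\max\{(\tilde{s}\log(2d/\tilde{s})/n)^{1/2},\epsilon\}$. Your covering-argument control of $\sup_{\u}\sum_{i\in\calO}|\langle\u,\X_i\rangle|$, the treatment of the regime $\epsilon n\lesssim\tilde{s}\log(2d/\tilde{s})$ where the noise floor dominates, the source of the leading $2$ in the probability, and the contraction bookkeeping all coincide with Lemma~\ref{lem:vec:sparse:outlier} and its proof.
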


Our theorem allows $\epsilon$ to be any value smaller than $\kl^{1/2}(4\kl^{1/2}+\ku^{1/2})^{-1}$.
When the noise of inliers is Gaussian, the error rate matches with the minimax lower bound \citep{chen2016general,chen2018robust}. The rates derived by  \cite{chinot2020robust,depersin2020robust} have an $O(\epsilon^{1/2})$ term and the rate by \cite{thompson2020outlier} has an $O(\epsilon\log(1/\epsilon))$ term. 

\subsection{Low-rank linear regression} Using the same notations from Section~\ref{sec:matrix}, the following theorem describes the statistical guarantees of the RsGrad algorithm under Huber's $\epsilon$-contamination model.

\begin{theorem}\label{thm:heavytail-l1:outlier}
	Suppose that $\{{\rm vec}(\X_i)\}_{i=1}^n$ and  $\{\xi_i\}_{i\in\calI}$ satisfy Assumptions~\ref{assump:sensing operators:vec} and \ref{assump:heavy-tailed}, respectively. There exist constants $c_0^{\ast},  c_1, c_2^{\ast}\in(0,1 ), C_1^{\ast},  C_2, C_3>0$ such that if the sample size $n\geq C_3\ku\kl^{-1}d_1r$,  the corruption rate $\epsilon\leq \kl^{1/2}(4\kl^{1/2}+\ku^{1/2})^{-1}$, the initialization $\fro{\M_0-\M^*}\leq c_0^{\ast}\sigma_{r}$, the initial stepsize $\eta_{0}\in \kl^{1/2}(16n\ku)^{-1}\fro{\M_0-\M^*}\cdot\left[0.2, 0.3\right]$, the phase one stepsize $\eta_{l}=(1-c_3)^{l}\eta_{0}$ with any fixed constant $c_3\leq c_1$, and the phase two stepsize $\eta_{l}=\eta\asymp \kl \ku^{-2}b_1^2(nb_0)^{-1}$, then with probability at least $1-\exp(-c_1rd_1)-3\exp(-\sqrt{n}/\log n)$, after at most $O\big(\log(\sigma_r/\gamma)+C_1^{\ast}\log\big(\gamma n/(d_1rb_0)\big)\wedge\log(\gamma/b_0\epsilon)\big)$ iterations, the Algorithm~\ref{alg:RsGrad} outputs an estimator with the error rate 
	$$
	\|\hat \M-\M^{\ast}\|_{\rm F}^2\leq \frac{C_2\ku}{\kl^2}\cdot b_0^2\cdot\max\left\{\frac{ d_1 r}{n},\epsilon^2\right\}.
	$$
\end{theorem}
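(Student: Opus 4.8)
The plan is to reduce Theorem~\ref{thm:heavytail-l1:outlier} to the general dynamic in Proposition~\ref{prop:main} by re-establishing the two-phase regularity conditions (Condition~\ref{assump:two-phase}) for the contaminated absolute loss $f(\M)=\sum_{i=1}^n|Y_i-\langle\M,\X_i\rangle|$, keeping $\tauc$ essentially unchanged but inflating the statistical radius $\taus$. The starting point is the decomposition $f=f_{\calI}+f_{\calO}$, where $f_{\calI}(\M):=\sum_{i\in\calI}|\xi_i-\langle\M-\M^{\ast},\X_i\rangle|$ carries the genuine signal-plus-noise structure on the $(1-\epsilon)n$ inliers, and $f_{\calO}(\M):=\sum_{i\in\calO}|Y_i-\langle\M,\X_i\rangle|$ collects the adversarially corrupted terms. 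Writing $\bd:=\M-\M^{\ast}$ (of rank at most $2r$), I would show that the inlier part alone satisfies a two-phase sharpness and sub-gradient bound, and that the outlier part perturbs both by at most a term of order $\epsilon n\ku^{1/2}\fro{\bd}$.

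For the inlier piece, since $|\calI|=(1-\epsilon)n\geq cn$ with $\epsilon$ bounded by an absolute constant, I would invoke verbatim the low-rank two-phase regularity analysis already used to prove Theorem~\ref{thm:heavytail-l1} (the matrix analog of Lemma~\ref{lem:vec:sparse}), applied over the index set $\calI$; this yields $f_{\calI}(\M)-f_{\calI}(\M^{\ast})\geq\muc\fro{\bd}$ on $\BB_1$ and $\geq\mus\fro{\bd}^2$ on $\BB_2$ with the orders of Table~\ref{table:two-phase} up to the harmless factor $(1-\epsilon)$, together with the matching sub-gradient bounds. For the outlier piece, the $1$-Lipschitz property of $|\cdot|$ gives $|f_{\calO}(\M)-f_{\calO}(\M^{\ast})|\leq\sum_{i\in\calO}|\langle\bd,\X_i\rangle|$, and the outliers' contribution to any sub-gradient is likewise controlled by $\sum_{i\in\calO}|\langle\bd,\X_i\rangle|$ after truncation to rank $r$. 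The crux is therefore a uniform bound of the form
$$
\sup_{|\calO|=\lceil\epsilon n\rceil}\ \sup_{\rank(\bd)\leq 2r}\ \frac{1}{\fro{\bd}}\sum_{i\in\calO}|\langle\bd,\X_i\rangle|\ \leq\ C\epsilon n\ku^{1/2},
$$
which I would establish by recognizing $\sum_{i\in\calO}|\langle\bd,\X_i\rangle|$ as at most the sum of the $\lceil\epsilon n\rceil$ largest order statistics of the sub-Gaussian linear forms $\{|\langle\bd,\X_i\rangle|\}_{i=1}^n$, and then controlling this truncated sum uniformly over the rank-$2r$ set by an $\epsilon$-net/peeling argument combined with standard sub-Gaussian maximal inequalities.

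Granting this, the combined sharpness reads $f(\M)-f(\M^{\ast})\geq\muc\fro{\bd}-C\epsilon n\ku^{1/2}\fro{\bd}$ on $\BB_1$ and $\geq\mus\fro{\bd}^2-C\epsilon n\ku^{1/2}\fro{\bd}$ on $\BB_2$. Since $\muc\asymp(1-\epsilon)n\kl^{1/2}$, the phase-one sharpness survives precisely when $\epsilon\ku^{1/2}\lesssim\kl^{1/2}$, which is the origin of the admissibility threshold $\epsilon\leq\kl^{1/2}(4\kl^{1/2}+\ku^{1/2})^{-1}$. In phase two the linear outlier correction competes with the quadratic inlier term, so quadratic sharpness persists only down to $\fro{\bd}\gtrsim\epsilon n\ku^{1/2}/\mus\asymp\epsilon b_0\ku^{1/2}\kl^{-1}$; consequently the effective statistical radius inflates, its square becoming $\taus^2\asymp\ku\kl^{-2}b_0^2\max\{rd_1/n,\ \epsilon^2\}$, while $\tauc$ and the sub-gradient constants $\Lc,\Ls$ retain their orders from Table~\ref{table:two-phase}. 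Feeding these modified parameters into Proposition~\ref{prop:main} yields linear convergence of Algorithm~\ref{alg:RsGrad} to an $O(\taus)$ Frobenius-norm ball after the stated number of iterations (the $\log(\gamma/b_0\epsilon)$ term reflecting the earlier termination of phase two when $\epsilon$ dominates), so that $\fro{\hat\M-\M^{\ast}}^2\lesssim\ku\kl^{-2}b_0^2\max\{rd_1/n,\epsilon^2\}$. I expect the uniform subset bound displayed above to be the main obstacle, since it must simultaneously handle the adversarial choice of $\calO$ and the low-rank constraint; the sample-size condition $n\gtrsim\ku\kl^{-1}d_1r$ is what guarantees that the covering and order-statistic logarithmic factors remain lower order and do not contaminate the leading rate.
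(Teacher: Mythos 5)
Your overall architecture --- splitting $f=f_{\calI}+f_{\calO}$, recycling the two-phase regularity analysis of Theorem~\ref{thm:heavytail-l1} on the $(1-\epsilon)n$ inliers, bounding the outlier perturbation by $\sum_{i\in\calO}|\langle\bd,\X_i\rangle|$ via the $1$-Lipschitz property, deriving the admissible threshold on $\epsilon$ from the phase-one competition between $\muc\asymp n\kl^{1/2}$ and $\epsilon n\ku^{1/2}$, inflating $\taus$ to $\asymp b_0\ku^{1/2}\kl^{-1}\max\{(d_1r/n)^{1/2},\epsilon\}$, and feeding the modified constants into Proposition~\ref{prop:main} --- is exactly the paper's route (Lemma~\ref{lem:heavytail-l1:outlier}, proved by combining Lemma~\ref{lem:heavytail-l1} with the outlier treatment in Lemma~\ref{lem:vec:sparse:outlier}). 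The gap is in the one step you yourself flag as the crux. The uniform bound
\[
\sup_{|\calO|=\lceil\epsilon n\rceil}\ \sup_{\text{rank}(\bd)\leq 2r}\ \frac{1}{\fro{\bd}}\sum_{i\in\calO}|\langle\bd,\X_i\rangle|\ \leq\ C\epsilon n\ku^{1/2}
\]
is false. Even for a single fixed $\bd$, the supremum over subsets of size $\lceil\epsilon n\rceil$ equals the sum of the $\lceil\epsilon n\rceil$ largest order statistics of $n$ i.i.d.\ centered Gaussians with standard deviation at most $\ku^{1/2}\fro{\bd}$, and that sum is of order $\epsilon n\sqrt{\log(1/\epsilon)}\,\ku^{1/2}\fro{\bd}$ with high probability. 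Since the theorem allows $\epsilon\to 0$ (e.g.\ $\epsilon\asymp(d_1r/n)^{1/2}$, the crossover regime where the two terms in the rate balance), no absolute constant $C$ can absorb the $\sqrt{\log(1/\epsilon)}$ factor. Carrying the correct order-statistics bound through your argument inflates the statistical radius to $\taus^2\asymp\ku\kl^{-2}b_0^2\max\{d_1r/n,\ \epsilon^2\log(1/\epsilon)\}$; in other words, you would recover only an $O(\epsilon\sqrt{\log(1/\epsilon)})$-type guarantee akin to \cite{thompson2020outlier}, which is precisely the suboptimal dependence this theorem is designed to remove.

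The fix is to notice that the uniform-over-subsets bound is not needed in the paper's contamination model: the outlier index set $\calO$ is fixed (only the responses $Y_i$, $i\in\calO$, are adversarially corrupted, and the bound $\sum_{i\in\calO}|\langle\bd,\X_i\rangle|$ does not involve the $Y_i$ at all), while the covariates $\X_i$ satisfy Assumption~\ref{assump:sensing operators:vec} for every index. Hence it suffices to control $\sum_{i\in\calO}|\langle\bd,\X_i\rangle|$ for that one set $\calO$, uniformly over low-rank $\bd$ only. This is the paper's event $\bcalE_o$: the sum has mean at most $\epsilon n\sqrt{2\ku/\pi}\,\fro{\bd}$ (folded Gaussian), and an $\eps$-net/Gaussian-process argument over the rank-constrained set gives a deviation term $C(\epsilon n\, d_1 r\,\ku)^{1/2}\fro{\bd}$ with probability $1-\exp(-c\,rd_1)$. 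Both terms are then absorbed exactly the way the inlier empirical-process deviation $C(n d_1 r\ku)^{1/2}\fro{\bd}$ is absorbed, which yields the clean $\max\{d_1r/n,\epsilon^2\}$ rate and, in the phase-one region, the stated threshold $\epsilon\leq\kl^{1/2}(4\kl^{1/2}+\ku^{1/2})^{-1}$. If one genuinely wants an adversary who chooses \emph{which} samples to corrupt after seeing the covariates, the $\log(1/\epsilon)$ loss appears intrinsic to that stronger model, and the theorem as stated would have to be weakened accordingly.
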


Exact recovery under sparse corruptions has been studied by \cite{li2020nonconvex}. \cite{tong2021low} derived sub-optimal rates under sub-Gaussian noise and sparse corruptions. Our method can deal with heavy-tailed noise and is outlier-robust and minimax optimal. 


\section{Numerical Experiments}\label{sec:simulation}
This section presents the numerical simulation results to showcase the two-phase convergence phenomena and compare the attained accuracy with the existing algorithms. Due to page limit, some results are relegated to the supplement.

\subsection{Sparse linear regression}

Our algorithm is referred to as the  IHT-$\ell_1$. We compare with  with the {\it adaptive Huber} method from \cite{sun2020adaptive} and the  {\it truncation} method from \cite{fan2021shrinkage}.  The algorithmic parameters of adaptive Huber and truncation methods are fine-tuned for their best performances. Note that our theoretical analysis of IHT-$\ell_1$ can only treat the case $\tilde s>s$ because of technical difficulties. The simulation results show that ITH-$\ell_1$ still performs well when setting $\tilde s =s$. The two-phase convergence dynamic of ITH-$\ell_1$ is similar to that of the low-rank linear regression and is  thus skipped. 


\paragraph*{Accuracy} We set $\Bbeta^*=(16,4,1,0,\dots,0)^{\top}$ where $d=50$ and $s=3$.  The covariate vectors $\{\X_i\}_{i=1}^n$ have i.i.d. $N(0,1)$ entries. 
The sample sizes are chosen from $n\in\{50, 300\}$. The noise can be Gaussian or have a Student's t distribution with d.f. $\nu=2$.  Figure~\ref{fig:vec:accu-t2} presents the box-plot of $\|\hat\Bbeta-\Bbeta^{\ast}\|_2$ based on 50 simulations. 
\begin{figure}[t]
	\centering
	\begin{subfigure}[b]{0.45\textwidth}
		\centering
		\includegraphics[width=\textwidth]{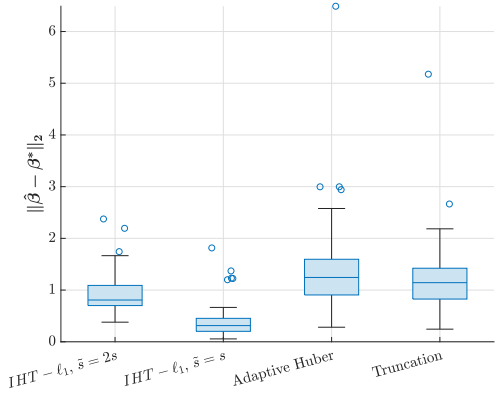}
		\caption{Student's t noise with d.f. $\nu=2$, $n=50$}
		\label{fig:111}
	\end{subfigure}
	\hfill
	\begin{subfigure}[b]{0.45\textwidth}
		\centering
		\includegraphics[width=\textwidth]{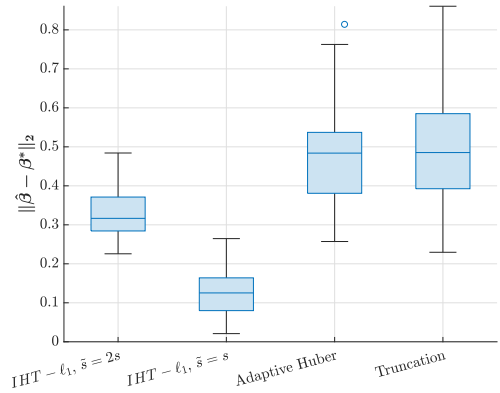}
		\caption{Student's t noise with d.f. $\nu=2$, $n=300$}
		\label{fig:112}
	\end{subfigure}
	
	\begin{subfigure}[b]{0.45\textwidth}
		\centering
		\includegraphics[width=\textwidth]{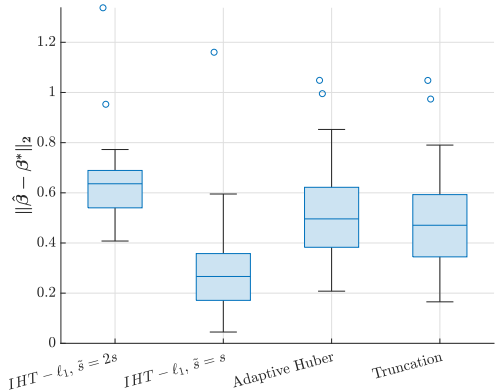}
		\caption{Gaussian noise, $n=50$}
		\label{fig:121}
	\end{subfigure}
	\hfill
	\begin{subfigure}[b]{0.45\textwidth}
		\centering
		\includegraphics[width=\textwidth]{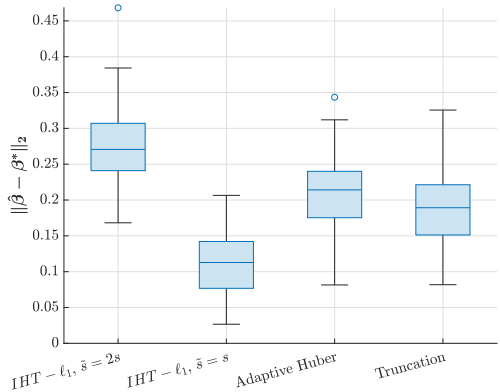}
		\caption{Gaussian noise, $n=300$}
		\label{fig:122}
	\end{subfigure}
	
	\caption{Accuracy comparisons of IHT-$\ell_1$, the adaptive Huber \citep{sun2020adaptive} and truncation method \citep{fan2021shrinkage} for sparse linear regression.} 
	\label{fig:vec:accu-t2}
\end{figure}

\paragraph*{Real data}
We apply IHT-$\ell_1$ to the NCI-60 cancer cell line dataset. Note that since the dataset has been updated recently,  our experiment results may slightly differ from \cite{sun2020adaptive}. The dataset has two parts:  \textit{Protein: Antibody Array DBT} and \textit{RNA: Affy HG-U133(A-B) - GCRMA}, both of which may be found in website \url{https://discover.nci.nih.gov/cellminer/loadDownload.do}. The first part \textit{Protein: Antibody Array DBT} contains the levels of $162$ antibodies in $60$ cell lines \citep{nishizuka2003proteomic}. We focus on the antibody ``KRT19", which is the most used marker for the tumor cells disseminated in lymph nodes, peripheral blood, and bone marrow of breast cancer
patients \citep{nakata2004serum}. The second part \textit{RNA: Affy HG-U133(A-B) - GCRMA} reflects the gene transcripts in $60$ cell lines \citep{shankavaram2007transcript}. Our goal is to select the significant genes in generating the protein ``KRT19". By combining overlapped genes and multiple tested genes and removing one missing data cell line, we obtain a design matrix $\X$ with $n=59$ and $d=14,342$. See \cite{sun2020adaptive,shankavaram2007transcript} for more details about the data set.

\begin{table}
	\begin{center}
		\begin{tabular}{|m{7em}|m{3em}|m{3em}|m{20em}|}
			\hline
			Method&MAE&Size&Selected Genes\\
			\hline
			Adaptive Huber&2.13&11&MALL, TM4SF4, ANXA3, ADRB2, NRN1, AUTS2, CA2, BAMBI, EPS8L2, CEMIP, KRT19\\
			\hline
			Truncation&2.01&10&MALL, ANXA3, ADRB2, NRN1, CA2, BAMBI, COCH, CEMIP, KRT19, EMP3\\
			\hline
			IHT-$\ell_1$ ($\tilde s = 7$)&1.91&7& MALL, TGFBI, NRN1, DSP, EPS8L2, KRT19, LPXN\\
			\hline
			IHT-$\ell_1$ ($\tilde s = 12$)&2.12&12&S100A6, EPCAM, MALL, ZIC1, TGFBI, NRN1, DSP, DKK1, EPS8L2, MYO5C, KRT19, LPXN\\
			\hline
		\end{tabular}
	\end{center}
	\caption{Prediction performance on NCI-60 data. The mean absolute error of test data, final model size and selected genes are reported for three methods: IHT-$\ell_1$, the adaptive Huber \citep{sun2020adaptive} and truncation method \citep{fan2021shrinkage}.}
	\label{table:realdata}
\end{table}

We test the prediction performance and report the selected genes.We split the data into a training set and a test set.  The design matrix is $\log_2$ transformed and later centered.  The algorithmic parameters of adaptive Huber, truncation method, and IHT-$\ell_1$ are fine-tuned via 10-fold cross-validations. The mean absolute error on test data, the final model size, and the selected genes are reported in Table~\ref{table:realdata}. Note that the selected genes are obtained by applying the algorithms to the whole dataset. Here we report the performance of IHT-$\ell_1$ only for the two cases $\tilde{s}=7$ and $\tilde{s}=12$. Table~\ref{table:realdata} shows that the gene ``KRT19'' is significant, which has been selected by all the algorithms.  The genes ``NRN1'' and ``EPS8L2'' are selected by all three algorithms, whose effect to breast cancer has been verified by \cite{kim2011differential,colas2011molecular}. The
gene ``MALL'' is also selected by all algorithms, which is known to be related with some other cancers so far. The genes ``TGFBI'', ``DSP'', ``DKK1'', ``S100A6'', ``EPCAM'', ``ZIC1'', ``MYO5C'', ``LPXN'' selected by IHT-$\ell_1$ are also important to breast cancer \citep{li2012role,yang2012desmoplakin,niu2019dkk1,zhang2017distinct,osta2004epcam,han2018zic1,ye2022e2f1,abe2016etv6}. Besides, we note that the genes ``ANXA3'', ``ADRB2'', ``CA2'', ``MAMBI'', ``CEMIP'' selected by the adaptive Huber and truncation methods are significant by existing literature  \citep{zhou2017silencing,xie2019beta,annan2019carbonic,yu2017inhibition,xue2022cemip}.


\subsection{Low-rank linear regression} We now showcase the two-phase convergence of RsGrad and compare the achieved accuracy with the {\it convex-huber}, {\it convex-$\ell_1$} \citep{elsener2018robust}, {\it convex-$\ell_2$} \citep{candes2011tight}, and the (non-convex) ScaledSM methods \citep{tong2021low}. The accuracy comparison is deferred to the supplement. For ScaledSM and the phase one iterations of RsGrad, the stepsizes are set as $\eta_0 = c_1 \|\M_0\|n^{-1}$ and $\eta_l=q\cdot \eta_{l-1}$ where $c_1>0$, $0<q<1$ are parameters to be tuned and $\|\cdot\|$ denotes the matrix operator norm. As \cite{tong2021low} suggested,  we choose $q=0.91$.  For the phase two iterations of RsGrad,  we set the constant stepsize at $\eta_l=c_2\EE|\xi| n^{-1}$ where $c_2>0$ is a tuning parameter. Here we assume $\EE|\xi|$ is known for simplicity.  In practice,  $\EE|\xi|$  can be estimated by $n^{-1}\sum_{i=1}^n|Y_i-\langle \M_{l_1},\X_i\rangle|$ where $\M_{l_1}$ is the output after phase one iterations.  Following \cite{tong2021low}, denote the signal-to-noise ratio $\text{SNR}:=20\log_{10}\big(\fro{\M^*}/ \EE\vert\xi\vert\big)$. The relative error is calculated by $\|\M_l-\M^{\ast}\|_{\rm F}\|\M^{\ast}\|_{\rm F}^{-1}$. 

\paragraph*{Two-phase convergence}
We compare the convergence dynamics between RsGrad and ScaledSM.  We experiment RsGrad with the absolute loss ({\it RsGrad-$\ell_1$}) and Huber loss ({\it RsGrad-Huber}), but ScaledSM was proposed only for the absolute loss.  The dimensions and ranks are fixed at $d_1=d_2=80$ and rank $r=5$, respectively. The phase one iterations are terminated when the stepsize becomes rather small, e.g., smaller than $10^{-10}$, and then the phase two iterations are initiated. 

The first set of simulations is to showcase the convergence performance under Gaussian noise.  The SNR is set at $\{40,  80\}$ and the sample sizes are varied among $\{2.5,  5\}\times rd_1$.  The convergence dynamics are displayed in Figure~\ref{fig:conv-Gaussian},  which clearly show a two-phase convergence of RsGrad.  We note that ScaledSM performs poorly when sample size is small ($n=2.5rd_1$).  It is possibly due to the instability of inverse scaling under a small sample size.  The bottom two plots in Figure~\ref{fig:conv-Gaussian}  show that,  after the phase one iterations,  RsGrad achieves a similar perfomrance as ScaledSM.  However,  with a second phase convergence,  RsGrad (using either Huber or absolute loss) eventually delivers a more accurate estimate than ScaledSM.

\begin{figure}[t]
	\centering
	\begin{subfigure}[b]{0.45\textwidth}
		\centering
		\includegraphics[width=\textwidth]{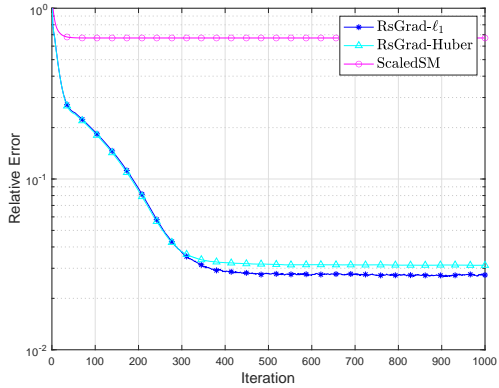}
		\caption{SNR=40}
		\label{fig:11}
	\end{subfigure}
	\hfill
	\begin{subfigure}[b]{0.45\textwidth}
		\centering
		\includegraphics[width=\textwidth]{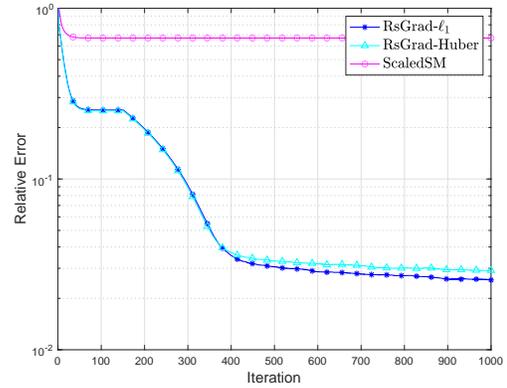}
		\caption{SNR=80}
		\label{fig:12}
	\end{subfigure}

	\begin{subfigure}[b]{0.45\textwidth}
		\centering
		\includegraphics[width=\textwidth]{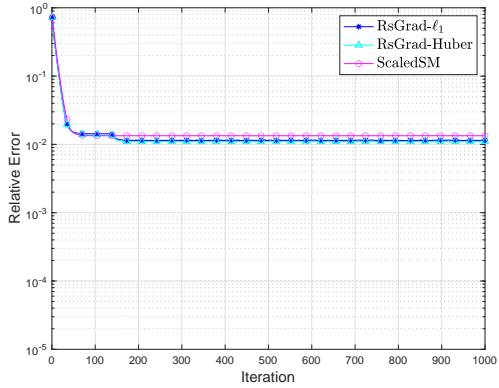}
		\caption{SNR=40}
		\label{fig:31}
	\end{subfigure}
	\hfill
	\begin{subfigure}[b]{0.45\textwidth}
		\centering
		\includegraphics[width=\textwidth]{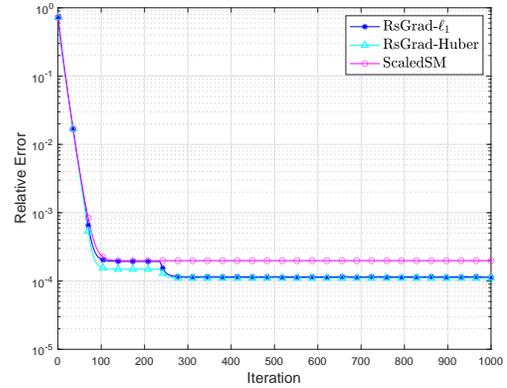}
		\caption{SNR=80}
		\label{fig:32}
	\end{subfigure}
	
	\caption{Convergence dynamics of RsGrad and ScaledSM \citep{tong2021low} under Gaussian noise.  Top two figures:  $n=2.5rd_1$;  bottom two figures: $n=5rd_1$.}
	\label{fig:conv-Gaussian}
\end{figure}

The second set of simulation is to test the convergence performance under heavy-tailed noise.  For simplicity,  the noise is sampled independently from a Student's $t$-distribution with ${\rm d.f.}$ $\nu=2$,  which has a finite first (absolute) moment and an infinite second moment.  Other parameters are selected the same as in the Gaussian case.   Figure~\ref{fig:conv-t2} presents the convergence performance showing that both RsGrad (either absolute loss or Huber loss)  and ScaledSM are robust to heavy-tailed noise.  
Similarly,  we can observe the two-phase convergence of RsGrad whose second phase iterations deliver a more accurate estimate than ScaledSM,  especially when SNR is large.  

\begin{figure}[t]
	\centering
	\begin{subfigure}[b]{0.45\textwidth}
		\centering
		\includegraphics[width=\textwidth]{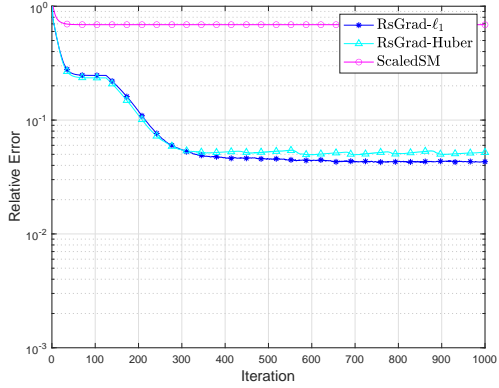}
		\caption{SNR=40}
		\label{fig:21}
	\end{subfigure}
	\hfill
	\begin{subfigure}[b]{0.45\textwidth}
		\centering
		\includegraphics[width=\textwidth]{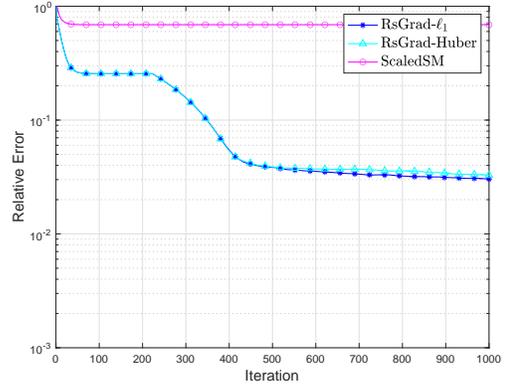}
		\caption{SNR=80}
		\label{fig:22}
	\end{subfigure}
	
	\begin{subfigure}[b]{0.45\textwidth}
		\centering
		\includegraphics[width=\textwidth]{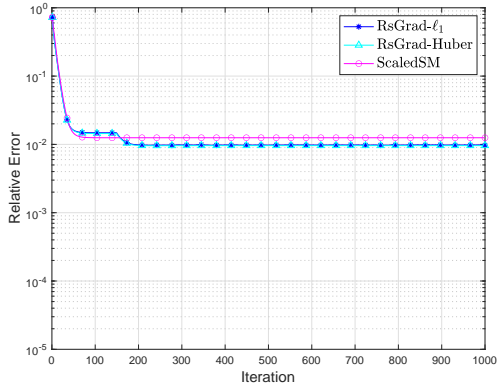}
		\caption{SNR=40}
		\label{fig:41}
	\end{subfigure}
	\hfill
	\begin{subfigure}[b]{0.45\textwidth}
		\centering
		\includegraphics[width=\textwidth]{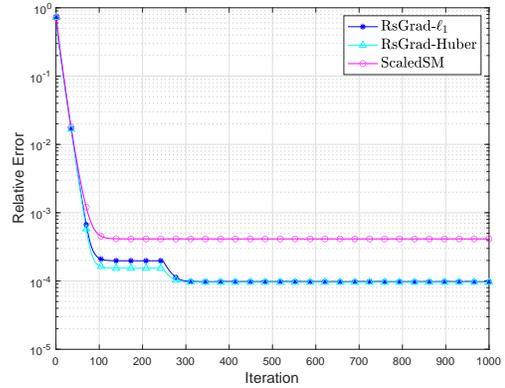}
		\caption{SNR=80}
		\label{fig:42}
	\end{subfigure}

	\caption{Convergence dynamics of RsGrad and ScaledSM \citep{tong2021low} under Student's $t$-distribution with d.f.  $\nu=2$.  Top two figures:  $n=2.5rd_1$;  bottom two figures: $n=5rd_1$.}
	\label{fig:conv-t2}
\end{figure}


\newpage

\bibliographystyle{plainnat}
\bibliography{reference}       

\begin{thebibliography}{120}
\providecommand{\natexlab}[1]{#1}
\providecommand{\url}[1]{\texttt{#1}}
\expandafter\ifx\csname urlstyle\endcsname\relax
  \providecommand{\doi}[1]{doi: #1}\else
  \providecommand{\doi}{doi: \begingroup \urlstyle{rm}\Url}\fi

\bibitem[Abe et~al.(2016)Abe, Yamamoto, Iba, Kanie, Okamoto, Tokuda, Inaguma,
  Yanada, Morishima, Mizuta, et~al.]{abe2016etv6}
Akihiro Abe, Yukiya Yamamoto, Sachiko Iba, Tadaharu Kanie, Akinao Okamoto,
  Masutaka Tokuda, Yoko Inaguma, Masamitsu Yanada, Satoko Morishima, Shuichi
  Mizuta, et~al.
\newblock Etv6-lpxn fusion transcript generated by t (11; 12)(q12. 1; p13) in a
  patient with relapsing acute myeloid leukemia with nup98-hoxa9.
\newblock \emph{Genes, Chromosomes and Cancer}, 55\penalty0 (3):\penalty0
  242--250, 2016.

\bibitem[Absil et~al.(2009)Absil, Mahony, and Sepulchre]{absil2009optimization}
P-A Absil, Robert Mahony, and Rodolphe Sepulchre.
\newblock \emph{Optimization algorithms on matrix manifolds}.
\newblock Princeton University Press, 2009.

\bibitem[Adamczak(2008)]{adamczak2008tail}
Radoslaw Adamczak.
\newblock A tail inequality for suprema of unbounded empirical processes with
  applications to markov chains.
\newblock \emph{Electronic Journal of Probability}, 13:\penalty0 1000--1034,
  2008.

\bibitem[Agarwal et~al.(2010)Agarwal, Negahban, and
  Wainwright]{agarwal2010fast}
Alekh Agarwal, Sahand Negahban, and Martin~J Wainwright.
\newblock Fast global convergence rates of gradient methods for
  high-dimensional statistical recovery.
\newblock \emph{Advances in Neural Information Processing Systems}, 23, 2010.

\bibitem[Alquier et~al.(2019)Alquier, Cottet, and
  Lecu{\'e}]{alquier2019estimation}
Pierre Alquier, Vincent Cottet, and Guillaume Lecu{\'e}.
\newblock Estimation bounds and sharp oracle inequalities of regularized
  procedures with lipschitz loss functions.
\newblock \emph{The Annals of Statistics}, 47\penalty0 (4):\penalty0
  2117--2144, 2019.

\bibitem[Annan et~al.(2019)Annan, Maishi, Soga, Dawood, Li, Kikuchi, Hojo,
  Morimoto, Kitamura, Alam, et~al.]{annan2019carbonic}
Dorcas~A Annan, Nako Maishi, Tomoyoshi Soga, Randa Dawood, Cong Li, Hiroshi
  Kikuchi, Takayuki Hojo, Masahiro Morimoto, Tetsuya Kitamura, Mohammad~Towfik
  Alam, et~al.
\newblock Carbonic anhydrase 2 (caii) supports tumor blood endothelial cell
  survival under lactic acidosis in the tumor microenvironment.
\newblock \emph{Cell Communication and Signaling}, 17\penalty0 (1):\penalty0
  1--15, 2019.

\bibitem[Bellec et~al.(2018)Bellec, Lecu{\'e}, and Tsybakov]{bellec2018slope}
Pierre~C Bellec, Guillaume Lecu{\'e}, and Alexandre~B Tsybakov.
\newblock Slope meets lasso: improved oracle bounds and optimality.
\newblock \emph{The Annals of Statistics}, 46\penalty0 (6B):\penalty0
  3603--3642, 2018.

\bibitem[Belloni et~al.(2011)Belloni, Chernozhukov, and
  Wang]{belloni2011square}
Alexandre Belloni, Victor Chernozhukov, and Lie Wang.
\newblock Square-root lasso: pivotal recovery of sparse signals via conic
  programming.
\newblock \emph{Biometrika}, 98\penalty0 (4):\penalty0 791--806, 2011.

\bibitem[Blumensath and Davies(2009)]{blumensath2009iterative}
Thomas Blumensath and Mike~E Davies.
\newblock Iterative hard thresholding for compressed sensing.
\newblock \emph{Applied and computational harmonic analysis}, 27\penalty0
  (3):\penalty0 265--274, 2009.

\bibitem[Blumensath and Davies(2010)]{blumensath2010normalized}
Thomas Blumensath and Mike~E Davies.
\newblock Normalized iterative hard thresholding: Guaranteed stability and
  performance.
\newblock \emph{IEEE Journal of selected topics in signal processing},
  4\penalty0 (2):\penalty0 298--309, 2010.

\bibitem[Boyd et~al.(2004)Boyd, Boyd, and Vandenberghe]{boyd2004convex}
Stephen Boyd, Stephen~P Boyd, and Lieven Vandenberghe.
\newblock \emph{Convex optimization}.
\newblock Cambridge university press, 2004.

\bibitem[Burer and Monteiro(2003)]{burer2003nonlinear}
Samuel Burer and Renato~DC Monteiro.
\newblock A nonlinear programming algorithm for solving semidefinite programs
  via low-rank factorization.
\newblock \emph{Mathematical Programming}, 95\penalty0 (2):\penalty0 329--357,
  2003.

\bibitem[Cai et~al.(2021{\natexlab{a}})Cai, Li, and Xia]{cai2021generalized}
Jian-Feng Cai, Jingyang Li, and Dong Xia.
\newblock Generalized low-rank plus sparse tensor estimation by fast riemannian
  optimization.
\newblock \emph{arXiv preprint arXiv:2103.08895}, 2021{\natexlab{a}}.

\bibitem[Cai et~al.(2021{\natexlab{b}})Cai, Li, and Xia]{cai2021provable}
Jian-Feng Cai, Jingyang Li, and Dong Xia.
\newblock Provable tensor-train format tensor completion by riemannian
  optimization.
\newblock \emph{arXiv preprint arXiv:2108.12163}, 2021{\natexlab{b}}.

\bibitem[Cai and Zhang(2015)]{cai2015rop}
T~Tony Cai and Anru Zhang.
\newblock Rop: Matrix recovery via rank-one projections.
\newblock \emph{The Annals of Statistics}, 43\penalty0 (1):\penalty0 102--138,
  2015.

\bibitem[Cambier and Absil(2016)]{cambier2016robust}
L{\'e}opold Cambier and P-A Absil.
\newblock Robust low-rank matrix completion by riemannian optimization.
\newblock \emph{SIAM Journal on Scientific Computing}, 38\penalty0
  (5):\penalty0 S440--S460, 2016.

\bibitem[Candes and Plan(2011)]{candes2011tight}
Emmanuel~J Candes and Yaniv Plan.
\newblock Tight oracle inequalities for low-rank matrix recovery from a minimal
  number of noisy random measurements.
\newblock \emph{IEEE Transactions on Information Theory}, 57\penalty0
  (4):\penalty0 2342--2359, 2011.

\bibitem[Cand{\`e}s et~al.(2011)Cand{\`e}s, Li, Ma, and
  Wright]{candes2011robust}
Emmanuel~J Cand{\`e}s, Xiaodong Li, Yi~Ma, and John Wright.
\newblock Robust principal component analysis?
\newblock \emph{Journal of the ACM (JACM)}, 58\penalty0 (3):\penalty0 1--37,
  2011.

\bibitem[Catoni(2012)]{catoni2012challenging}
Olivier Catoni.
\newblock Challenging the empirical mean and empirical variance: a deviation
  study.
\newblock In \emph{Annales de l'IHP Probabilit{\'e}s et statistiques},
  volume~48, pages 1148--1185, 2012.

\bibitem[Chandrasekaran et~al.(2012)Chandrasekaran, Recht, Parrilo, and
  Willsky]{chandrasekaran2012convex}
Venkat Chandrasekaran, Benjamin Recht, Pablo~A Parrilo, and Alan~S Willsky.
\newblock The convex geometry of linear inverse problems.
\newblock \emph{Foundations of Computational mathematics}, 12\penalty0
  (6):\penalty0 805--849, 2012.

\bibitem[Chang et~al.(2005)Chang, Jones, and Pierpaoli]{chang2005restore}
Lin-Ching Chang, Derek~K Jones, and Carlo Pierpaoli.
\newblock Restore: robust estimation of tensors by outlier rejection.
\newblock \emph{Magnetic Resonance in Medicine: An Official Journal of the
  International Society for Magnetic Resonance in Medicine}, 53\penalty0
  (5):\penalty0 1088--1095, 2005.

\bibitem[Charisopoulos et~al.(2021)Charisopoulos, Chen, Davis, D$\acute{i}$az,
  Ding, and Drusvyatskiy]{charisopoulos2021low}
Vasileios Charisopoulos, Yudong Chen, Damek Davis, Mateo D$\acute{i}$az, Lijun
  Ding, and Dmitriy Drusvyatskiy.
\newblock Low-rank matrix recovery with composite optimization: good
  conditioning and rapid convergence.
\newblock \emph{Foundations of Computational Mathematics}, 21\penalty0
  (6):\penalty0 1505--1593, 2021.

\bibitem[Chen et~al.(2011)Chen, Zhou, and Ye]{chen2011integrating}
Jianhui Chen, Jiayu Zhou, and Jieping Ye.
\newblock Integrating low-rank and group-sparse structures for robust
  multi-task learning.
\newblock In \emph{Proceedings of the 17th ACM SIGKDD international conference
  on Knowledge discovery and data mining}, pages 42--50, 2011.

\bibitem[Chen et~al.(2016)Chen, Gao, and Ren]{chen2016general}
Mengjie Chen, Chao Gao, and Zhao Ren.
\newblock A general decision theory for huber’s $\epsilon $-contamination
  model.
\newblock \emph{Electronic Journal of Statistics}, 10\penalty0 (2):\penalty0
  3752--3774, 2016.

\bibitem[Chen et~al.(2018)Chen, Gao, and Ren]{chen2018robust}
Mengjie Chen, Chao Gao, and Zhao Ren.
\newblock Robust covariance and scatter matrix estimation under huber’s
  contamination model.
\newblock \emph{The Annals of Statistics}, 46\penalty0 (5):\penalty0
  1932--1960, 2018.

\bibitem[Chen et~al.(2022)Chen, Liu, and Mao]{chen2022robust}
Xi~Chen, Weidong Liu, and Xiaojun Mao.
\newblock Robust reduced rank regression in a distributed setting.
\newblock \emph{Science China Mathematics}, 65\penalty0 (8):\penalty0
  1707--1730, 2022.

\bibitem[Chen and Wainwright(2015)]{chen2015fast}
Yudong Chen and Martin~J. Wainwright.
\newblock Fast low-rank estimation by projected gradient descent: General
  statistical and algorithmic guarantees, 2015.

\bibitem[Chinot et~al.(2020)Chinot, Lecu{\'e}, and Lerasle]{chinot2020robust}
Geoffrey Chinot, Guillaume Lecu{\'e}, and Matthieu Lerasle.
\newblock Robust statistical learning with lipschitz and convex loss functions.
\newblock \emph{Probability Theory and related fields}, 176\penalty0
  (3-4):\penalty0 897--940, 2020.

\bibitem[Chiu et~al.(2021)Chiu, Deng, and Rush]{chiu2021low}
Justin Chiu, Yuntian Deng, and Alexander Rush.
\newblock Low-rank constraints for fast inference in structured models.
\newblock \emph{Advances in Neural Information Processing Systems}, 34, 2021.

\bibitem[Colas et~al.(2011)Colas, Perez, Cabrera, Pedrola, Monge, Castellvi,
  Eyzaguirre, Gregorio, Ruiz, Llaurado, et~al.]{colas2011molecular}
Eva Colas, Cristina Perez, Silvia Cabrera, Nuria Pedrola, Marta Monge, Josep
  Castellvi, Fernando Eyzaguirre, Jesus Gregorio, Anna Ruiz, Marta Llaurado,
  et~al.
\newblock Molecular markers of endometrial carcinoma detected in uterine
  aspirates.
\newblock \emph{International journal of cancer}, 129\penalty0 (10):\penalty0
  2435--2444, 2011.

\bibitem[Davenport and Romberg(2016)]{davenport2016overview}
Mark~A Davenport and Justin Romberg.
\newblock An overview of low-rank matrix recovery from incomplete observations.
\newblock \emph{IEEE Journal of Selected Topics in Signal Processing},
  10\penalty0 (4):\penalty0 608--622, 2016.

\bibitem[Depersin(2020)]{depersin2020robust}
Jules Depersin.
\newblock Robust subgaussian estimation with vc-dimension.
\newblock \emph{arXiv preprint arXiv:2004.11734}, 2020.

\bibitem[Depersin and Lecu{\'e}(2022)]{depersin2022robust}
Jules Depersin and Guillaume Lecu{\'e}.
\newblock Robust sub-gaussian estimation of a mean vector in nearly linear
  time.
\newblock \emph{The Annals of Statistics}, 50\penalty0 (1):\penalty0 511--536,
  2022.

\bibitem[Diakonikolas and Kane(2019)]{diakonikolas2019recent}
Ilias Diakonikolas and Daniel~M Kane.
\newblock Recent advances in algorithmic high-dimensional robust statistics.
\newblock \emph{arXiv preprint arXiv:1911.05911}, 2019.

\bibitem[Diakonikolas et~al.(2019)Diakonikolas, Kamath, Kane, Li, Moitra, and
  Stewart]{diakonikolas2019robust}
Ilias Diakonikolas, Gautam Kamath, Daniel Kane, Jerry Li, Ankur Moitra, and
  Alistair Stewart.
\newblock Robust estimators in high-dimensions without the computational
  intractability.
\newblock \emph{SIAM Journal on Computing}, 48\penalty0 (2):\penalty0 742--864,
  2019.

\bibitem[Duchi et~al.(2012)Duchi, Bartlett, and
  Wainwright]{duchi2012randomized}
John~C Duchi, Peter~L Bartlett, and Martin~J Wainwright.
\newblock Randomized smoothing for stochastic optimization.
\newblock \emph{SIAM Journal on Optimization}, 22\penalty0 (2):\penalty0
  674--701, 2012.

\bibitem[Efron et~al.(2004)Efron, Hastie, Johnstone, and
  Tibshirani]{efron2004least}
Bradley Efron, Trevor Hastie, Iain Johnstone, and Robert Tibshirani.
\newblock Least angle regression.
\newblock \emph{The Annals of statistics}, 32\penalty0 (2):\penalty0 407--499,
  2004.

\bibitem[Elsener and van~de Geer(2018)]{elsener2018robust}
Andreas Elsener and Sara van~de Geer.
\newblock Robust low-rank matrix estimation.
\newblock \emph{The Annals of Statistics}, 46\penalty0 (6B):\penalty0
  3481--3509, 2018.

\bibitem[Fan et~al.(2021)Fan, Wang, and Zhu]{fan2021shrinkage}
Jianqing Fan, Weichen Wang, and Ziwei Zhu.
\newblock A shrinkage principle for heavy-tailed data: High-dimensional robust
  low-rank matrix recovery.
\newblock \emph{Annals of statistics}, 49\penalty0 (3):\penalty0 1239, 2021.

\bibitem[Foucart and Lecu{\'e}(2017)]{foucart2017iht}
Simon Foucart and Guillaume Lecu{\'e}.
\newblock An iht algorithm for sparse recovery from subexponential
  measurements.
\newblock \emph{IEEE Signal Processing Letters}, 24\penalty0 (9):\penalty0
  1280--1283, 2017.

\bibitem[Goffin(1977)]{goffin1977convergence}
Jean-Louis Goffin.
\newblock On convergence rates of subgradient optimization methods.
\newblock \emph{Mathematical programming}, 13\penalty0 (1):\penalty0 329--347,
  1977.

\bibitem[Grant and Boyd(2014)]{grant2014cvx}
Michael Grant and Stephen Boyd.
\newblock Cvx: Matlab software for disciplined convex programming, version 2.1,
  2014.

\bibitem[Gross et~al.(2010)Gross, Liu, Flammia, Becker, and
  Eisert]{gross2010quantum}
David Gross, Yi-Kai Liu, Steven~T Flammia, Stephen Becker, and Jens Eisert.
\newblock Quantum state tomography via compressed sensing.
\newblock \emph{Physical review letters}, 105\penalty0 (15):\penalty0 150401,
  2010.

\bibitem[Han et~al.(2018)Han, Cao, Gao, Wang, Chen, Cai, Zhang, Hu, Ma, Gu,
  et~al.]{han2018zic1}
Wei Han, Fang Cao, Xiao-Jiao Gao, Hua-Bing Wang, Fang Chen, Si-Jian Cai, Cong
  Zhang, Yong-Wei Hu, Jun Ma, Xing Gu, et~al.
\newblock Zic1 acts a tumor suppressor in breast cancer by targeting survivin.
\newblock \emph{International journal of oncology}, 53\penalty0 (3):\penalty0
  937--948, 2018.

\bibitem[Hopkins(2020)]{hopkins2020mean}
Samuel~B Hopkins.
\newblock Mean estimation with sub-gaussian rates in polynomial time.
\newblock 2020.

\bibitem[Huber(1965)]{huber1965robust}
Peter~J Huber.
\newblock A robust version of the probability ratio test.
\newblock \emph{The Annals of Mathematical Statistics}, pages 1753--1758, 1965.

\bibitem[Huber and Ronchetti(2009)]{huber2009robust}
Peter~J Huber and Elvezio~M Ronchetti.
\newblock Robust statistics. hoboken.
\newblock \emph{NJ: Wiley. doi}, 10\penalty0 (1002):\penalty0 9780470434697,
  2009.

\bibitem[Jain et~al.(2014)Jain, Tewari, and Kar]{jain2014iterative}
Prateek Jain, Ambuj Tewari, and Purushottam Kar.
\newblock On iterative hard thresholding methods for high-dimensional
  m-estimation.
\newblock \emph{Advances in neural information processing systems}, 27, 2014.

\bibitem[Kim et~al.(2011)Kim, Kang, Kim, and Kim]{kim2011differential}
Ju~Hee Kim, Han-Sung Kang, Tae~Woo Kim, and Sun~Jung Kim.
\newblock Differential methylation hybridization profiling identifies
  involvement of stat1-mediated pathways in breast cancer.
\newblock \emph{International journal of oncology}, 39\penalty0 (4):\penalty0
  955--963, 2011.

\bibitem[Klopp et~al.(2017)Klopp, Lounici, and Tsybakov]{klopp2017robust}
Olga Klopp, Karim Lounici, and Alexandre~B Tsybakov.
\newblock Robust matrix completion.
\newblock \emph{Probability Theory and Related Fields}, 169\penalty0
  (1):\penalty0 523--564, 2017.

\bibitem[Koenker and Bassett~Jr(1978)]{koenker1978regression}
Roger Koenker and Gilbert Bassett~Jr.
\newblock Regression quantiles.
\newblock \emph{Econometrica: journal of the Econometric Society}, pages
  33--50, 1978.

\bibitem[Koenker and Hallock(2001)]{koenker2001quantile}
Roger Koenker and Kevin~F Hallock.
\newblock Quantile regression.
\newblock \emph{Journal of economic perspectives}, 15\penalty0 (4):\penalty0
  143--156, 2001.

\bibitem[Koltchinskii et~al.(2011)Koltchinskii, Lounici, and
  Tsybakov]{koltchinskii2011nuclear}
Vladimir Koltchinskii, Karim Lounici, and Alexandre~B Tsybakov.
\newblock Nuclear-norm penalization and optimal rates for noisy low-rank matrix
  completion.
\newblock \emph{The Annals of Statistics}, 39\penalty0 (5):\penalty0
  2302--2329, 2011.

\bibitem[Lecu{\'e} and Lerasle(2019)]{lecue2019learning}
Guillaume Lecu{\'e} and Matthieu Lerasle.
\newblock Learning from mom’s principles: Le cam’s approach.
\newblock \emph{Stochastic Processes and their applications}, 129\penalty0
  (11):\penalty0 4385--4410, 2019.

\bibitem[Lecu{\'e} and Mendelson(2017)]{lecue2017sparse}
Guillaume Lecu{\'e} and Shahar Mendelson.
\newblock Sparse recovery under weak moment assumptions.
\newblock \emph{Journal of the European Mathematical Society}, 19\penalty0
  (3):\penalty0 881--904, 2017.

\bibitem[Li et~al.(2012)Li, Wen, Zhao, Tong, and Hei]{li2012role}
Bingyan Li, Gengyun Wen, Yongliang Zhao, Jian Tong, and Tom~K Hei.
\newblock The role of tgfbi in mesothelioma and breast cancer: association with
  tumor suppression.
\newblock \emph{BMC cancer}, 12\penalty0 (1):\penalty0 1--12, 2012.

\bibitem[Li et~al.(2020)Li, Zhu, Man-Cho~So, and Vidal]{li2020nonconvex}
Xiao Li, Zhihui Zhu, Anthony Man-Cho~So, and Rene Vidal.
\newblock Nonconvex robust low-rank matrix recovery.
\newblock \emph{SIAM Journal on Optimization}, 30\penalty0 (1):\penalty0
  660--686, 2020.

\bibitem[Loh and Wainwright(2011)]{loh2011high}
Po-Ling Loh and Martin~J Wainwright.
\newblock High-dimensional regression with noisy and missing data: Provable
  guarantees with non-convexity.
\newblock \emph{Advances in neural information processing systems}, 24, 2011.

\bibitem[Lounici et~al.(2011)Lounici, Pontil, Van De~Geer, and
  Tsybakov]{lounici2011oracle}
Karim Lounici, Massimiliano Pontil, Sara Van De~Geer, and Alexandre~B Tsybakov.
\newblock Oracle inequalities and optimal inference under group sparsity.
\newblock 2011.

\bibitem[Ludoux and Talagrand(1991)]{ludoux1991probability}
M~Ludoux and M~Talagrand.
\newblock Probability in banach spaces: isoperimetry and processes, 1991.

\bibitem[Lugosi and Mendelson(2019{\natexlab{a}})]{lugosi2019mean}
G{\'a}bor Lugosi and Shahar Mendelson.
\newblock Mean estimation and regression under heavy-tailed distributions: A
  survey.
\newblock \emph{Foundations of Computational Mathematics}, 19\penalty0
  (5):\penalty0 1145--1190, 2019{\natexlab{a}}.

\bibitem[Lugosi and Mendelson(2019{\natexlab{b}})]{lugosi2019regularization}
G{\'a}bor Lugosi and Shahar Mendelson.
\newblock Regularization, sparse recovery, and median-of-means tournaments.
\newblock \emph{Bernoulli}, 25\penalty0 (3):\penalty0 2075--2106,
  2019{\natexlab{b}}.

\bibitem[Lugosi and Mendelson(2019{\natexlab{c}})]{lugosi2019risk}
Gabor Lugosi and Shahar Mendelson.
\newblock Risk minimization by median-of-means tournaments.
\newblock \emph{Journal of the European Mathematical Society}, 22\penalty0
  (3):\penalty0 925--965, 2019{\natexlab{c}}.

\bibitem[Luo and Zhang(2022)]{luo2022tensor}
Yuetian Luo and Anru~R Zhang.
\newblock Tensor-on-tensor regression: Riemannian optimization,
  over-parameterization, statistical-computational gap, and their interplay.
\newblock \emph{arXiv preprint arXiv:2206.08756}, 2022.

\bibitem[Lyu et~al.(2021)Lyu, Xia, and Zhang]{lyu2021latent}
Zhongyuan Lyu, Dong Xia, and Yuan Zhang.
\newblock Latent space model for higher-order networks and generalized tensor
  decomposition.
\newblock \emph{arXiv preprint arXiv:2106.16042}, 2021.

\bibitem[Ma and Fattahi(2022)]{ma2022global}
Jianhao Ma and Salar Fattahi.
\newblock Global convergence of sub-gradient method for robust matrix recovery:
  Small initialization, noisy measurements, and over-parameterization.
\newblock \emph{arXiv preprint arXiv:2202.08788}, 2022.

\bibitem[Ma and Wu(2015)]{ma2015volume}
Zongming Ma and Yihong Wu.
\newblock Volume ratio, sparsity, and minimaxity under unitarily invariant
  norms.
\newblock \emph{IEEE Transactions on Information Theory}, 61\penalty0
  (12):\penalty0 6939--6956, 2015.

\bibitem[Minsker(2015)]{minsker2015geometric}
Stanislav Minsker.
\newblock Geometric median and robust estimation in banach spaces.
\newblock \emph{Bernoulli}, 21\penalty0 (4):\penalty0 2308--2335, 2015.

\bibitem[Minsker(2018)]{minsker2018sub}
Stanislav Minsker.
\newblock Sub-gaussian estimators of the mean of a random matrix with
  heavy-tailed entries.
\newblock \emph{The Annals of Statistics}, 46\penalty0 (6A):\penalty0
  2871--2903, 2018.

\bibitem[Minsker et~al.(2022)Minsker, Ndaoud, and Wang]{minsker2022robust}
Stanislav Minsker, Mohamed Ndaoud, and Lang Wang.
\newblock Robust and tuning-free sparse linear regression via square-root
  slope.
\newblock \emph{arXiv preprint arXiv:2210.16808}, 2022.

\bibitem[Mishra et~al.(2014)Mishra, Meyer, Bonnabel, and
  Sepulchre]{mishra2014fixed}
Bamdev Mishra, Gilles Meyer, Silvere Bonnabel, and Rodolphe Sepulchre.
\newblock Fixed-rank matrix factorizations and riemannian low-rank
  optimization.
\newblock \emph{Computational Statistics}, 29\penalty0 (3):\penalty0 591--621,
  2014.

\bibitem[Nakata et~al.(2004)Nakata, Takashima, Ogawa, Ishikawa, and
  Hirakawa]{nakata2004serum}
B~Nakata, T~Takashima, Y~Ogawa, T~Ishikawa, and K~Hirakawa.
\newblock Serum cyfra 21-1 (cytokeratin-19 fragments) is a useful tumour marker
  for detecting disease relapse and assessing treatment efficacy in breast
  cancer.
\newblock \emph{British journal of cancer}, 91\penalty0 (5):\penalty0 873--878,
  2004.

\bibitem[Needell and Tropp(2009)]{needell2009cosamp}
Deanna Needell and Joel~A Tropp.
\newblock Cosamp: Iterative signal recovery from incomplete and inaccurate
  samples.
\newblock \emph{Applied and computational harmonic analysis}, 26\penalty0
  (3):\penalty0 301--321, 2009.

\bibitem[Negahban and Wainwright(2011)]{negahban2011estimation}
Sahand Negahban and Martin~J Wainwright.
\newblock Estimation of (near) low-rank matrices with noise and
  high-dimensional scaling.
\newblock \emph{The Annals of Statistics}, pages 1069--1097, 2011.

\bibitem[Nishizuka et~al.(2003)Nishizuka, Charboneau, Young, Major, Reinhold,
  Waltham, Kouros-Mehr, Bussey, Lee, Espina, et~al.]{nishizuka2003proteomic}
Satoshi Nishizuka, Lu~Charboneau, Lynn Young, Sylvia Major, William~C Reinhold,
  Mark Waltham, Hosein Kouros-Mehr, Kimberly~J Bussey, Jae~K Lee, Virginia
  Espina, et~al.
\newblock Proteomic profiling of the nci-60 cancer cell lines using new
  high-density reverse-phase lysate microarrays.
\newblock \emph{Proceedings of the National Academy of Sciences}, 100\penalty0
  (24):\penalty0 14229--14234, 2003.

\bibitem[Niu et~al.(2019)Niu, Li, Wang, Liang, Zhang, Li, Liu, Sun, Xie, and
  Fang]{niu2019dkk1}
Jie Niu, Xiao-Meng Li, Xiao Wang, Chao Liang, Yi-Dan Zhang, Hai-Ying Li, Fan-Ye
  Liu, Hua Sun, Song-Qiang Xie, and Dong Fang.
\newblock Dkk1 inhibits breast cancer cell migration and invasion through
  suppression of $\beta$-catenin/mmp7 signaling pathway.
\newblock \emph{Cancer cell international}, 19\penalty0 (1):\penalty0 1--13,
  2019.

\bibitem[Osta et~al.(2004)Osta, Chen, Mikhitarian, Mitas, Salem, Hannun, Cole,
  and Gillanders]{osta2004epcam}
Walid~A Osta, Yian Chen, Kaidi Mikhitarian, Michael Mitas, Mohamed Salem,
  Yusuf~A Hannun, David~J Cole, and William~E Gillanders.
\newblock Epcam is overexpressed in breast cancer and is a potential target for
  breast cancer gene therapy.
\newblock \emph{Cancer research}, 64\penalty0 (16):\penalty0 5818--5824, 2004.

\bibitem[Pan et~al.(2021)Pan, Sun, and Zhou]{pan2021iteratively}
Xiaoou Pan, Qiang Sun, and Wen-Xin Zhou.
\newblock Iteratively reweighted $\ell_1$-penalized robust regression.
\newblock \emph{Electronic Journal of Statistics}, 15\penalty0 (1):\penalty0
  3287--3348, 2021.

\bibitem[Raskutti et~al.(2011)Raskutti, Wainwright, and
  Yu]{raskutti2011minimax}
Garvesh Raskutti, Martin~J Wainwright, and Bin Yu.
\newblock Minimax rates of estimation for high-dimensional linear regression
  over $\ell_q$-balls.
\newblock \emph{IEEE transactions on information theory}, 57\penalty0
  (10):\penalty0 6976--6994, 2011.

\bibitem[Rigollet and Tsybakov(2011)]{rigollet2011exponential}
Philippe Rigollet and Alexandre Tsybakov.
\newblock Exponential screening and optimal rates of sparse estimation.
\newblock 2011.

\bibitem[Rohde and Tsybakov(2011)]{rohde2011estimation}
Angelika Rohde and Alexandre~B Tsybakov.
\newblock Estimation of high-dimensional low-rank matrices.
\newblock \emph{The Annals of Statistics}, 39\penalty0 (2):\penalty0 887--930,
  2011.

\bibitem[Shankavaram et~al.(2007)Shankavaram, Reinhold, Nishizuka, Major,
  Morita, Chary, Reimers, Scherf, Kahn, Dolginow,
  et~al.]{shankavaram2007transcript}
Uma~T Shankavaram, William~C Reinhold, Satoshi Nishizuka, Sylvia Major, Daisaku
  Morita, Krishna~K Chary, Mark~A Reimers, Uwe Scherf, Ari Kahn, Douglas
  Dolginow, et~al.
\newblock Transcript and protein expression profiles of the nci-60 cancer cell
  panel: an integromic microarray study.
\newblock \emph{Molecular cancer therapeutics}, 6\penalty0 (3):\penalty0
  820--832, 2007.

\bibitem[Shen and Li(2017)]{shen2017tight}
Jie Shen and Ping Li.
\newblock A tight bound of hard thresholding.
\newblock \emph{The Journal of Machine Learning Research}, 18\penalty0
  (1):\penalty0 7650--7691, 2017.

\bibitem[Siddiqi et~al.(2010)Siddiqi, Boots, and Gordon]{siddiqi2010reduced}
Sajid Siddiqi, Byron Boots, and Geoffrey Gordon.
\newblock Reduced-rank hidden markov models.
\newblock In \emph{Proceedings of the Thirteenth International Conference on
  Artificial Intelligence and Statistics}, pages 741--748. JMLR Workshop and
  Conference Proceedings, 2010.

\bibitem[Sun et~al.(2020)Sun, Zhou, and Fan]{sun2020adaptive}
Qiang Sun, Wen-Xin Zhou, and Jianqing Fan.
\newblock Adaptive huber regression.
\newblock \emph{Journal of the American Statistical Association}, 115\penalty0
  (529):\penalty0 254--265, 2020.

\bibitem[Sun et~al.(2017)Sun, Lu, Liu, and Cheng]{sun2017provable}
Will~Wei Sun, Junwei Lu, Han Liu, and Guang Cheng.
\newblock Provable sparse tensor decomposition.
\newblock \emph{Journal of the Royal Statistical Society: Series B (Statistical
  Methodology)}, 79\penalty0 (3):\penalty0 899--916, 2017.

\bibitem[Tan et~al.(2022)Tan, Wang, and Zhou]{tan2022high}
Kean~Ming Tan, Lan Wang, and Wen-Xin Zhou.
\newblock High-dimensional quantile regression: Convolution smoothing and
  concave regularization.
\newblock \emph{Journal of the Royal Statistical Society Series B: Statistical
  Methodology}, 84\penalty0 (1):\penalty0 205--233, 2022.

\bibitem[Thompson(2020)]{thompson2020outlier}
Philip Thompson.
\newblock Outlier-robust sparse/low-rank least-squares regression and robust
  matrix completion.
\newblock \emph{arXiv preprint arXiv:2012.06750}, 2020.

\bibitem[Tibshirani(1996)]{tibshirani1996regression}
Robert Tibshirani.
\newblock Regression shrinkage and selection via the lasso.
\newblock \emph{Journal of the Royal Statistical Society: Series B
  (Methodological)}, 58\penalty0 (1):\penalty0 267--288, 1996.

\bibitem[Tong et~al.(2021{\natexlab{a}})Tong, Ma, and
  Chi]{tong2021accelerating}
Tian Tong, Cong Ma, and Yuejie Chi.
\newblock Accelerating ill-conditioned low-rank matrix estimation via scaled
  gradient descent, 2021{\natexlab{a}}.

\bibitem[Tong et~al.(2021{\natexlab{b}})Tong, Ma, and Chi]{tong2021low}
Tian Tong, Cong Ma, and Yuejie Chi.
\newblock Low-rank matrix recovery with scaled subgradient methods: Fast and
  robust convergence without the condition number.
\newblock \emph{IEEE Transactions on Signal Processing}, 69:\penalty0
  2396--2409, 2021{\natexlab{b}}.

\bibitem[Van~de Geer and van~de Geer(2000)]{van2000empirical}
Sara~A Van~de Geer and Sara van~de Geer.
\newblock \emph{Empirical Processes in M-estimation}, volume~6.
\newblock Cambridge university press, 2000.

\bibitem[Van Der~Vaart et~al.(1996)Van Der~Vaart, van~der Vaart, van~der Vaart,
  and Wellner]{van1996weak}
Aad~W Van Der~Vaart, Aad van~der Vaart, Adrianus~Willem van~der Vaart, and Jon
  Wellner.
\newblock \emph{Weak convergence and empirical processes: with applications to
  statistics}.
\newblock Springer Science \& Business Media, 1996.

\bibitem[Vandereycken(2013)]{vandereycken2013low}
Bart Vandereycken.
\newblock Low-rank matrix completion by riemannian optimization.
\newblock \emph{SIAM Journal on Optimization}, 23\penalty0 (2):\penalty0
  1214--1236, 2013.

\bibitem[Vershynin(2018)]{vershynin2018high}
Roman Vershynin.
\newblock \emph{High-dimensional probability: An introduction with applications
  in data science}, volume~47.
\newblock Cambridge university press, 2018.

\bibitem[Wang et~al.(2012)Wang, Wu, and Li]{wang2012quantile}
Lan Wang, Yichao Wu, and Runze Li.
\newblock Quantile regression for analyzing heterogeneity in ultra-high
  dimension.
\newblock \emph{Journal of the American Statistical Association}, 107\penalty0
  (497):\penalty0 214--222, 2012.

\bibitem[Wang et~al.(2020)Wang, Peng, Bradic, Li, and Wu]{wang2020tuning}
Lan Wang, Bo~Peng, Jelena Bradic, Runze Li, and Yunan Wu.
\newblock A tuning-free robust and efficient approach to high-dimensional
  regression.
\newblock \emph{Journal of the American Statistical Association}, 115\penalty0
  (532):\penalty0 1700--1714, 2020.

\bibitem[Wang et~al.(2021)Wang, Zheng, Zhou, and Zhou]{wang2021new}
Lili Wang, Chao Zheng, Wen Zhou, and Wen-Xin Zhou.
\newblock A new principle for tuning-free huber regression.
\newblock \emph{Statistica Sinica}, 31\penalty0 (4):\penalty0 2153--2177, 2021.

\bibitem[Wei et~al.(2016)Wei, Cai, Chan, and Leung]{wei2016guarantees}
Ke~Wei, Jian-Feng Cai, Tony~F Chan, and Shingyu Leung.
\newblock Guarantees of riemannian optimization for low rank matrix recovery.
\newblock \emph{SIAM Journal on Matrix Analysis and Applications}, 37\penalty0
  (3):\penalty0 1198--1222, 2016.

\bibitem[Welsh(1989)]{welsh1989m}
AH~Welsh.
\newblock On m-processes and m-estimation.
\newblock \emph{The Annals of Statistics}, pages 337--361, 1989.

\bibitem[Xia(2014)]{xia2014optimal}
Dong Xia.
\newblock Optimal schatten-q and ky-fan-k norm rate of low rank matrix
  estimation.
\newblock \emph{arXiv preprint arXiv:1403.6499}, 2014.

\bibitem[Xia(2021)]{xia2021normal}
Dong Xia.
\newblock Normal approximation and confidence region of singular subspaces.
\newblock \emph{Electronic Journal of Statistics}, 15\penalty0 (2):\penalty0
  3798--3851, 2021.

\bibitem[Xia and Koltchinskii(2016)]{xia2016estimation}
Dong Xia and Vladimir Koltchinskii.
\newblock Estimation of low rank density matrices: bounds in schatten norms and
  other distances.
\newblock \emph{Electronic Journal of Statistics}, 10\penalty0 (2):\penalty0
  2717--2745, 2016.

\bibitem[Xia and Yuan(2021)]{xia2021statistical}
Dong Xia and Ming Yuan.
\newblock Statistical inferences of linear forms for noisy matrix completion.
\newblock \emph{Journal of the Royal Statistical Society: Series B (Statistical
  Methodology)}, 83\penalty0 (1):\penalty0 58--77, 2021.

\bibitem[Xia et~al.(2021)Xia, Yuan, and Zhang]{xia2021statistically}
Dong Xia, Ming Yuan, and Cun-Hui Zhang.
\newblock Statistically optimal and computationally efficient low rank tensor
  completion from noisy entries.
\newblock \emph{The Annals of Statistics}, 49\penalty0 (1):\penalty0 76--99,
  2021.

\bibitem[Xie et~al.(2019)Xie, He, Zhang, He, Wan, Zhou, Tang, Li, Mcleod, and
  Liu]{xie2019beta}
Wan-Ying Xie, Ruo-Hui He, Jun Zhang, Yi-Jing He, Zan Wan, Cheng-Fang Zhou,
  Yong-Jun Tang, Zhi Li, Howard~L Mcleod, and Jie Liu.
\newblock $\beta$-blockers inhibit the viability of breast cancer cells by
  regulating the erk/cox-2 signaling pathway and the drug response is affected
  by adrb2 single-nucleotide polymorphisms.
\newblock \emph{Oncology Reports}, 41\penalty0 (1):\penalty0 341--350, 2019.

\bibitem[Xue et~al.(2022)Xue, Zhu, Qiao, Wang, Bu, Zhang, Ma, Liang, Sun, and
  Liu]{xue2022cemip}
Jinqi Xue, Xudong Zhu, Xinbo Qiao, Yulun Wang, Jiawen Bu, Xiaoying Zhang,
  Qingtian Ma, Lu~Liang, Lisha Sun, and Caigang Liu.
\newblock Cemip as a potential biomarker and therapeutic target for breast
  cancer patients.
\newblock \emph{International Journal of Medical Sciences}, 19\penalty0
  (3):\penalty0 434, 2022.

\bibitem[Yang et~al.(2012)Yang, Chen, Cui, Kn{\"o}sel, Zhang, Albring, Huber,
  and Petersen]{yang2012desmoplakin}
Linlin Yang, Yuan Chen, Tiantian Cui, Thomas Kn{\"o}sel, Qing Zhang,
  Kai~Frederik Albring, Otmar Huber, and Iver Petersen.
\newblock Desmoplakin acts as a tumor suppressor by inhibition of the
  wnt/$\beta$-catenin signaling pathway in human lung cancer.
\newblock \emph{Carcinogenesis}, 33\penalty0 (10):\penalty0 1863--1870, 2012.

\bibitem[Ye and Zhang(2010)]{ye2010rate}
Fei Ye and Cun-Hui Zhang.
\newblock Rate minimaxity of the lasso and dantzig selector for the lq loss in
  lr balls.
\newblock \emph{The Journal of Machine Learning Research}, 11:\penalty0
  3519--3540, 2010.

\bibitem[Ye et~al.(2022)Ye, Zhou, Tong, Wang, Wang, Guo, and Kang]{ye2022e2f1}
Xinxing Ye, Jie Zhou, Dandan Tong, Dandan Wang, Hui Wang, Jixue Guo, and Xinmei
  Kang.
\newblock E2f1 affects the therapeutic response to neoadjuvant therapy in
  breast cancer.
\newblock \emph{Disease Markers}, 2022, 2022.

\bibitem[Yu and Chai(2017)]{yu2017inhibition}
Wenzhen Yu and Hong Chai.
\newblock Inhibition of bambi reduces the viability and motility of colon
  cancer via activating tgf-$\beta$/smad pathway in vitro and in vivo
  retraction in/10.3892/ol. 2021.12505.
\newblock \emph{Oncology Letters}, 14\penalty0 (4):\penalty0 4793--4799, 2017.

\bibitem[Zhang et~al.(2017)Zhang, Wang, Liu, Lei, Shan, Li, and
  Wang]{zhang2017distinct}
Shizhen Zhang, Zhen Wang, Weiwei Liu, Rui Lei, Jinlan Shan, Ling Li, and
  Xiaochen Wang.
\newblock Distinct prognostic values of s100 mrna expression in breast cancer.
\newblock \emph{Scientific reports}, 7\penalty0 (1):\penalty0 1--11, 2017.

\bibitem[Zhang and Xia(2020)]{zhang2020edgeworth}
Yuan Zhang and Dong Xia.
\newblock Edgeworth expansions for network moments.
\newblock \emph{arXiv preprint arXiv:2004.06615}, 2020.

\bibitem[Zhao et~al.(2022)Zhao, Yang, and He]{zhao2022high}
Peng Zhao, Yun Yang, and Qiao-Chu He.
\newblock High-dimensional linear regression via implicit regularization.
\newblock \emph{Biometrika}, 2022.

\bibitem[Zhao et~al.(2015)Zhao, Wang, and Liu]{zhao2015nonconvex}
Tuo Zhao, Zhaoran Wang, and Han Liu.
\newblock A nonconvex optimization framework for low rank matrix estimation.
\newblock \emph{Advances in Neural Information Processing Systems}, 28, 2015.

\bibitem[Zheng et~al.(2015)Zheng, Peng, and He]{zheng2015globally}
Qi~Zheng, Limin Peng, and Xuming He.
\newblock Globally adaptive quantile regression with ultra-high dimensional
  data.
\newblock \emph{Annals of statistics}, 43\penalty0 (5):\penalty0 2225, 2015.

\bibitem[Zheng and Lafferty(2016)]{zheng2016convergent}
Qinqing Zheng and John Lafferty.
\newblock A convergent gradient descent algorithm for rank minimization and
  semidefinite programming from random linear measurements, 2016.

\bibitem[Zhou et~al.(2017)Zhou, Li, Yang, Liu, Ju, and Li]{zhou2017silencing}
Tao Zhou, Yong Li, Li~Yang, Liang Liu, Yingchao Ju, and Chunxiao Li.
\newblock Silencing of anxa3 expression by rna interference inhibits the
  proliferation and invasion of breast cancer cells.
\newblock \emph{Oncology Reports}, 37\penalty0 (1):\penalty0 388--398, 2017.

\bibitem[Zhu et~al.(2020)Zhu, Wen, Zhu, Zhang, and Wang]{zhu2020polynomial}
Junxian Zhu, Canhong Wen, Jin Zhu, Heping Zhang, and Xueqin Wang.
\newblock A polynomial algorithm for best-subset selection problem.
\newblock \emph{Proceedings of the National Academy of Sciences}, 117\penalty0
  (52):\penalty0 33117--33123, 2020.

\bibitem[Zhuo et~al.(2021)Zhuo, Kwon, Ho, and Caramanis]{zhuo2021computational}
Jiacheng Zhuo, Jeongyeol Kwon, Nhat Ho, and Constantine Caramanis.
\newblock On the computational and statistical complexity of over-parameterized
  matrix sensing.
\newblock \emph{arXiv preprint arXiv:2102.02756}, 2021.

\end{thebibliography}

\appendix
\newpage

\begin{center}
	{\LARGE Appendix of ``Computationally Efficient and Statistically Optimal Robust High-Dimensional Linear Regression''}
	
	\bigskip
	
	{Yinan Shen, Jingyang Li, Jian-Feng Cai and Dong Xia\\
		{\small Hong Kong University of Science and Technology}}
	
\end{center}

	Section~\ref{sec:add-result} presents materials that are not shown in the main text due to limited length, including Riemannian optimization details and simulation results. In Section \ref{proof:tec:vec}, we present proofs of sparse vector recovery. Section~\ref{proof:prop:main} proves Proposition~\ref{prop:main}, convergence dynamics of low-rank regression under two-phase regularity conditions. Section~\ref{sec:proof-app} contains four sub-sections, \ref{proof:lem:Gaussian-l1}-\ref{proof:lem:heavytail-quantile}, which verify the regularity conditions in certain different cases, absolute loss, Huber loss and quantile loss. The initialization analysis is provided in Section~\ref{proof:initialization}. Then Section~\ref{proof:teclem} exhibits the technical lemmas. Finally, lengthy proofs of two significant lemmas \ref{teclem:perturbation} matrix perturbation theories and Proposition \ref{thm:empirical process} empirical processes for low-rank regression are put in Section~\ref{proof:teclem:perturbation} and Section~\ref{proof:thm:empirical} respectively. In the last section~\ref{proof:outlier}, we analyze outlier robustness. 

The following key ingredients act as a guidance for the proof of low-rank linear regression.
\begin{enumerate}[1.]
	\item \textit{Tangent Space over $\MM_r$.}  In the algorithm, it updates in the way of $\T_{l+1}=\text{SVD}_r(\T_l-\eta_{l}\calP_{\TT_l}(\G_l))$. Then in the convergence analyses, it involves the upper bound of $\fro{\calP_{\TT_l}(\G_l)}^2$ and $\left|\inp{\calP_{\TT_l}^{\perp}\G_l}{\T_l-\T^*}\right|$. As for the first term, note that \begin{align*}
		\calP_{\TT_l}(\G_l)=\U_l\U_l^{\top}\G_l+\G_l\V_l\V_l^{\top}-\U_l\U_l^{\top}\G_l\V_l\V_l^{\top}=\U_l\U_l^{\top}\G_l+(\I-\U_l\U_l^{\top})\G_l\V_l\V_l^{\top}
	\end{align*}
	could be written into sum of two rank at most $r$ matrices. Then we use `Partial Frobenius Norm' (see \cite{tong2021accelerating} or Lemma~\ref{teclem:partial F norm}) to bound $\fro{\calP_{\TT_l}\G_l}$; see Lemma~\ref{teclem:partial F norm} and Lemma~\ref{teclem:projected subgradient norm} for more details. As for the second term, we use
	\begin{align*}
		\left|\inp{\calP_{\TT_l}^{\perp}\G_l}{\T_l-\T^*}\right|=\left|\inp{\G_l}{\calP_{\TT_l}^{\perp}(\T_l-\T^*)}\right|,
	\end{align*}
	and use $\calP_{\TT_l}^{\perp}(\T_l-\T^*)=-(\I-\U_l\U_l^{\top})\T^*(\V\V^{\top}-\V_l\V_l^{\top})=(\I-\U_l\U_l^{\top})(\T_l-\T^*)(\V\V^{\top}-\V_l\V_l^{\top})$. Hence, with definition of partial Frobenius norm Lemma~\ref{teclem:partial F norm}, one has
	\begin{align*}
		\left|\inp{\calP_{\TT_l}^{\perp}\G_l}{\T_l-\T^*}\right|\leq\fro{(\I-\U_l\U_l^{\top})(\T_l-\T^*)(\V\V^{\top}-\V_l\V_l^{\top}) }\fror{\G_l}.
	\end{align*}
	\item \textit{Perturbation lemma.} Based on Theorem 1 in work \cite{xia2021normal}, we develop powerful analysis scheme for matrix perturbations, where we have $\fro{\T_{l+1}-\T^*}\leq \fro{\T_l-\T^*-\eta_{l}\calP_{\TT_l}(\G_l)}+O(\fro{\T_l-\T^*-\eta_{l}\calP_{\TT_l}(\G_l)}^2)$. For clarity, we present Theorem 1 in work \cite{xia2021normal} as Theorem~\ref{tecthm:pertbation-expansion}.  Lemma~\ref{teclem:symmetric perturbation} presents symmetric matrix perturbations, based on which we have general matrix perturbations Lemma~\ref{teclem:perturbation}.
	\item \textit{Empirical Processes.} We use emprical process theories \citep{ludoux1991probability,van1996weak,adamczak2008tail} to prove the regularity properties. One of the merits is that it enables strong exponential probability.
	\item \textit{Initialization Guarantees.} Note that the perturbation bound and convergence dynamics Equation~\eqref{eq:prop1-eq1} require $\fro{\M_0-\M^*}\lesssim \sigma_r$. Thus we start with a warm initialization. A clean and simple spectral initialization scheme is provided. Section~\ref{proof:initialization} shows the proofs.
	\item \textit{Proposition 1.} Proposition~\ref{prop:main} analyzes the general convergence under two-phase regularity property assumption and depicts the stepsize selections. Lemma~\ref{lem:Gaussian-l1}, \ref{lem:heavytail-l1}, \ref{lem:heavytail-huber} and \ref{lem:heavytail-quantile} prove two-phase regularity properties under several settings. Namely, the loss function behaves in the  typical non-smooth way when it is far from the oracle matrix $\M^*$, while when it gets sufficiently close to $\M^*$, the loss function performs alike smooth $\ell_2$ loss. 
\end{enumerate}

\section{Additional Results}\label{sec:add-result}
This section collects some results which are not presented in the main context due to the page limit. 

\subsection{Computational benefit of Riemannian sub-gradient algorithm} The Riemannian sub-gradient is fast computable.  Let $\M_l=\U_l\bSigma_l\V_l^{\top}$ be the thin SVD of $\M_l$.  It is well-known \citep{absil2009optimization,  vandereycken2013low} that the tangent space $\TT_l$ can be characterized by $\TT_l:=\{\Z\in\RR^{d_1\times d_2}: \Z=\U_l\R^{\top}+\L\V_l^{\top},  \R\in\RR^{d_2\times r},  \L\in\RR^{d_1\times r}\}$.  Then projection of $\G_l$ onto $\TT_l$ is
$$
\calP_{\TT_l}(\G_l)=\U_l\U_l^{\top}\G_l+\G_l\V_l\V_l^{\top}-\U_l\U_l^{\top}\G_l\V_l\V_l^{\top},
$$
which is of rank at most $2r$. The first step in Algorithm~\ref{alg:RsGrad} could be written as
\begin{align*}
	&{~~~~}\M_l-\eta_{l}\calP_{\TT_l}(\G_l)\\
	&=\U_l\left(\bSigma_l-\eta_{l}\U_l^{\top}\G_l\V_l\right)\V_l^{\top}-\eta_l\U_l\U_l^{\top}\G_l\left(\I_{d_2}-\V_l\V_l^{\top}\right)-\eta_{l}\left(\I_{d_1}-\U_l\U_l^{\top}\right)\G_l\V_l\V_l^{\top}.
\end{align*} Let $\left(\I_{d_2}-\V_l\V_l^{\top}\right)\G_l^{\top}\U_l=\Q_1\R_1$ and $\left(\I_{d_1}-\U_l\U_l^{\top}\right)\G_l\V_l=\Q_2\R_2 $ be the QR decomposition. Then the update is
\begin{align*}
	\M_l-\eta_{l}\calP_{\TT_l}(\G_l)=\left[\U_l\; \Q_2\right]\left[\begin{matrix}
		\bSigma_l-\eta_{l}\U_l^{\top}\G_l\V_l& -\eta_{l}\R_1^{\top}\\
		-\eta_{l}\R_2&\boldsymbol{0}
	\end{matrix}\right] \left[\V_l\; \Q_1\right]^{\top},
\end{align*} with $ \left[\U_l\; \Q_2\right]$, $\left[\V_l\; \Q_1\right]$ being orthogonal matrices. Consequently,  the final step of retraction only requires the SVD of the medium term, a $2r\times 2r$ matrix, which can be computed using $O(r^3)$ floating point operations.  See,  e.g.,  \cite{vandereycken2013low} and \cite{mishra2014fixed} for more details.

\subsection{More simulation results of low-rank linear regression}

\paragraph*{Accuracy} We now compare the statistical accuracy of the final estimator output by RsGrad,  ScaledSM and those by convex approaches including the  nuclear norm penalized \citep{elsener2018robust} absolute loss (convex $\ell_1$),  Huber loss (convex Huber) and the square loss ($\ell_2$-loss) by seminal work \cite{candes2011tight}.   Functions from Matlab library cvx \citep{grant2014cvx} are borrowed to implement the convex methods for convex $\ell_1$,  convex Huber and $\ell_2$-loss.  For each setting,  every method is repeated for $10$ times and the box-plots of error rates are presented.

The parameter settings are similar as above where both the Gaussian noise and Student's $t$-distribution are experimented.  The comparison of statistical accuracy under Gaussian noise is displayed in Figure~\ref{fig:accu-Gaussian}. The top two plots suggest that all the convex methods and ScaledSM performs poorly when sample size is small ($n=2.5rd_1$).  On the other hand,  ScaledSM becomes much better when sample size is large but still under-performs RsGrad,  as predicted by our theory.   Similar results are also observed when noise has a Students' $t$-distribution with d.f.  $\nu=2$.  See Figure~\ref{fig:accu-t2}.  

\begin{figure}[t]
	\centering
	\begin{subfigure}[b]{0.45\textwidth}
		\centering
		\includegraphics[width=\textwidth]{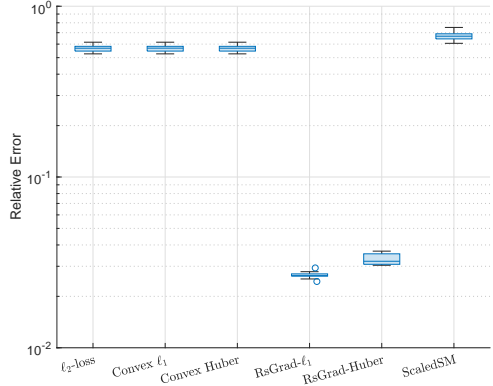}
		\caption{SNR=40}
		\label{fig:71}
	\end{subfigure}
	\hfill
	\begin{subfigure}[b]{0.45\textwidth}
		\centering
		\includegraphics[width=\textwidth]{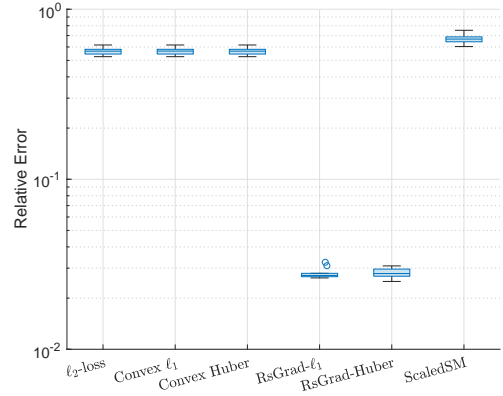}
		\caption{SNR=80}
		\label{fig:72}
	\end{subfigure}
	
	\begin{subfigure}[b]{0.45\textwidth}
		\centering
		\includegraphics[width=\textwidth]{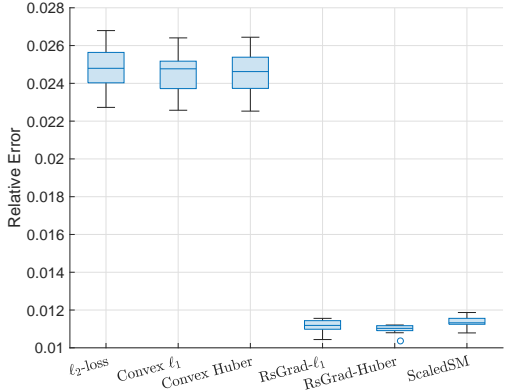}
		\caption{SNR=40}
		\label{fig:91}
	\end{subfigure}
	\hfill
	\begin{subfigure}[b]{0.45\textwidth}
		\centering
		\includegraphics[width=\textwidth]{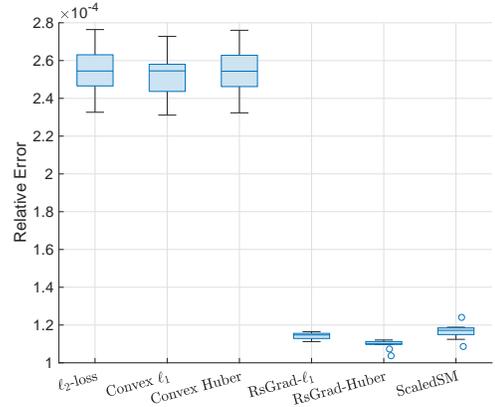}
		\caption{SNR=80}
		\label{fig:92}
	\end{subfigure}
	
	\caption{Accuracy comparisons of RsGrad, ScaledSM, robust convex models \citep{elsener2018robust} and convex $\ell_2$ loss \citep{candes2011tight} under Gaussian noise.  Top two figures:  $n=2.5rd_1$;  bottom two figures: $n=5rd_1$.}
	\label{fig:accu-Gaussian}
\end{figure}

\begin{figure}[t]
	\centering
	\begin{subfigure}[b]{0.45\textwidth}
		\centering
		\includegraphics[width=\textwidth]{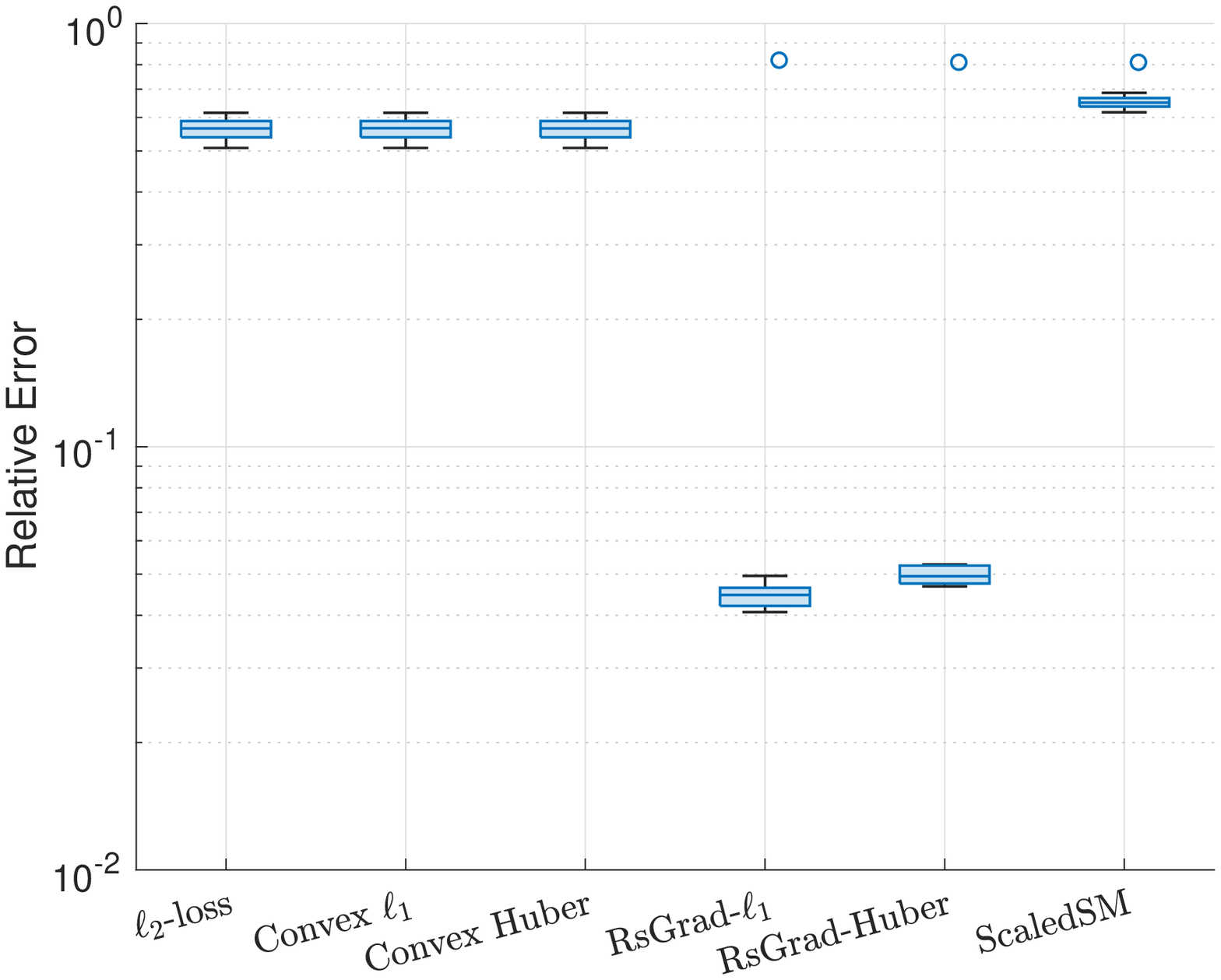}
		\caption{SNR=40}
		\label{fig:81}
	\end{subfigure}
	\hfill
	\begin{subfigure}[b]{0.45\textwidth}
		\centering
		\includegraphics[width=\textwidth]{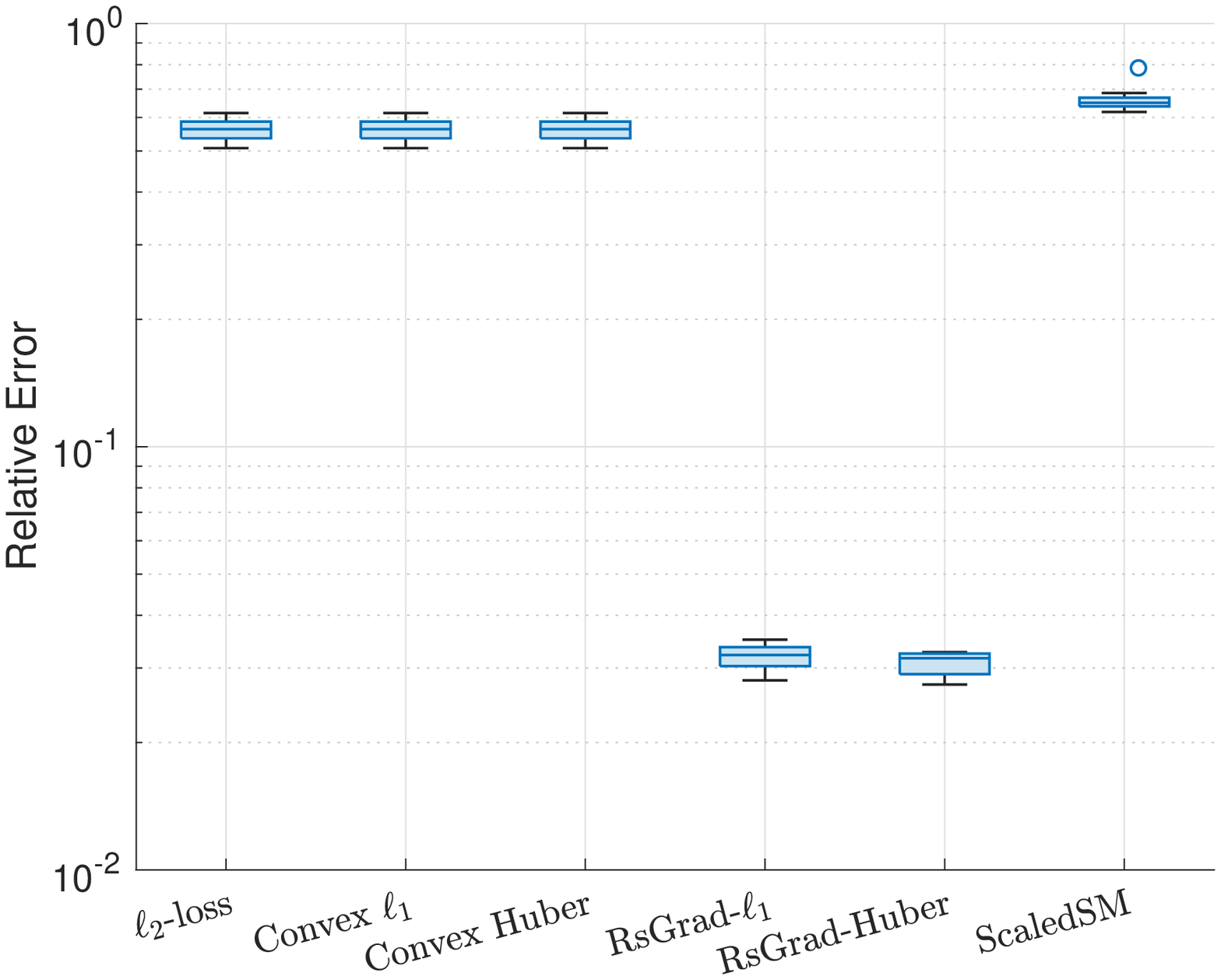}
		\caption{SNR=80}
		\label{fig:82}
	\end{subfigure}
	
	\begin{subfigure}[b]{0.45\textwidth}
		\centering
		\includegraphics[width=\textwidth]{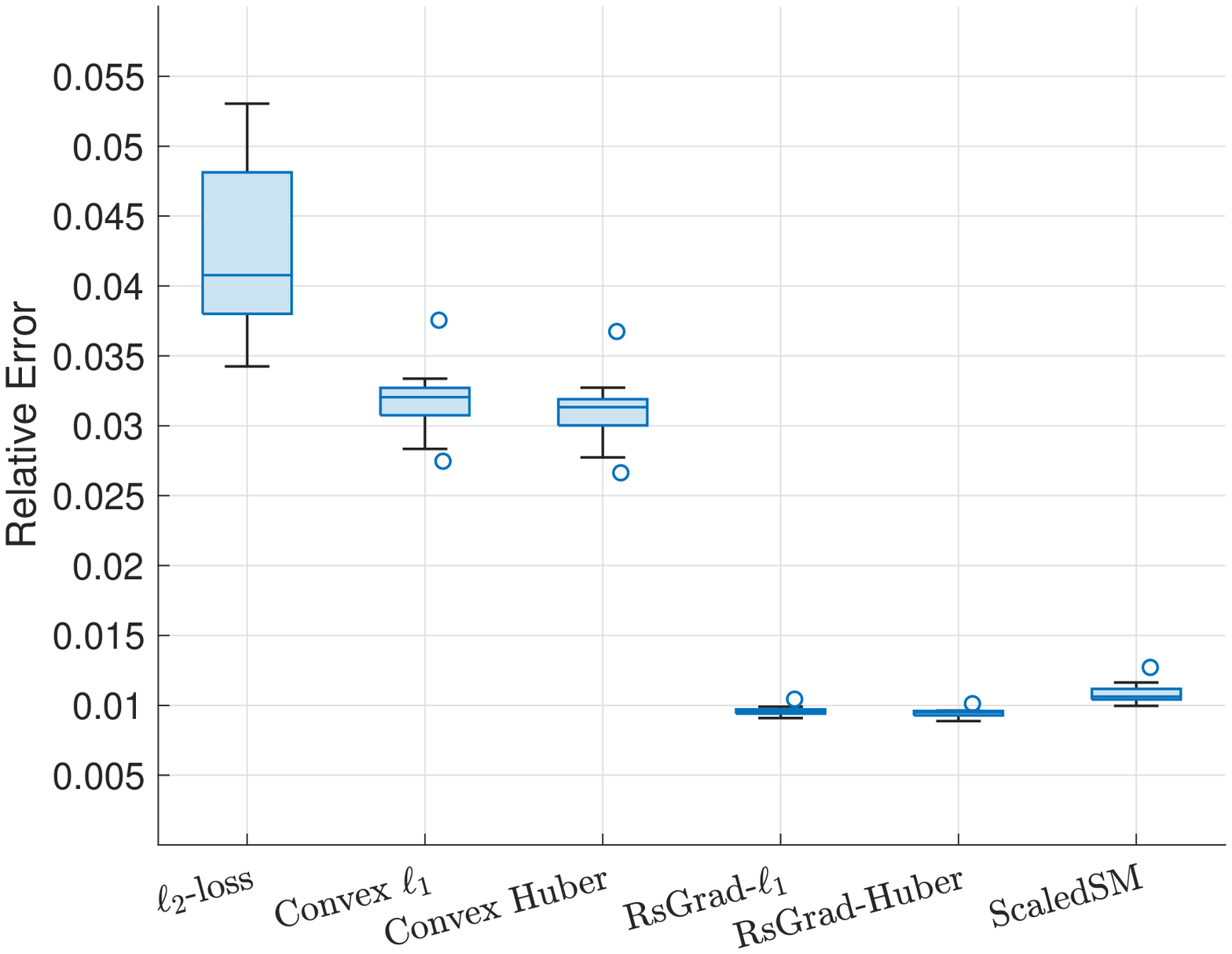}
		\caption{SNR=40}
		\label{fig:101}
	\end{subfigure}
	\hfill
	\begin{subfigure}[b]{0.45\textwidth}
		\centering
		\includegraphics[width=\textwidth]{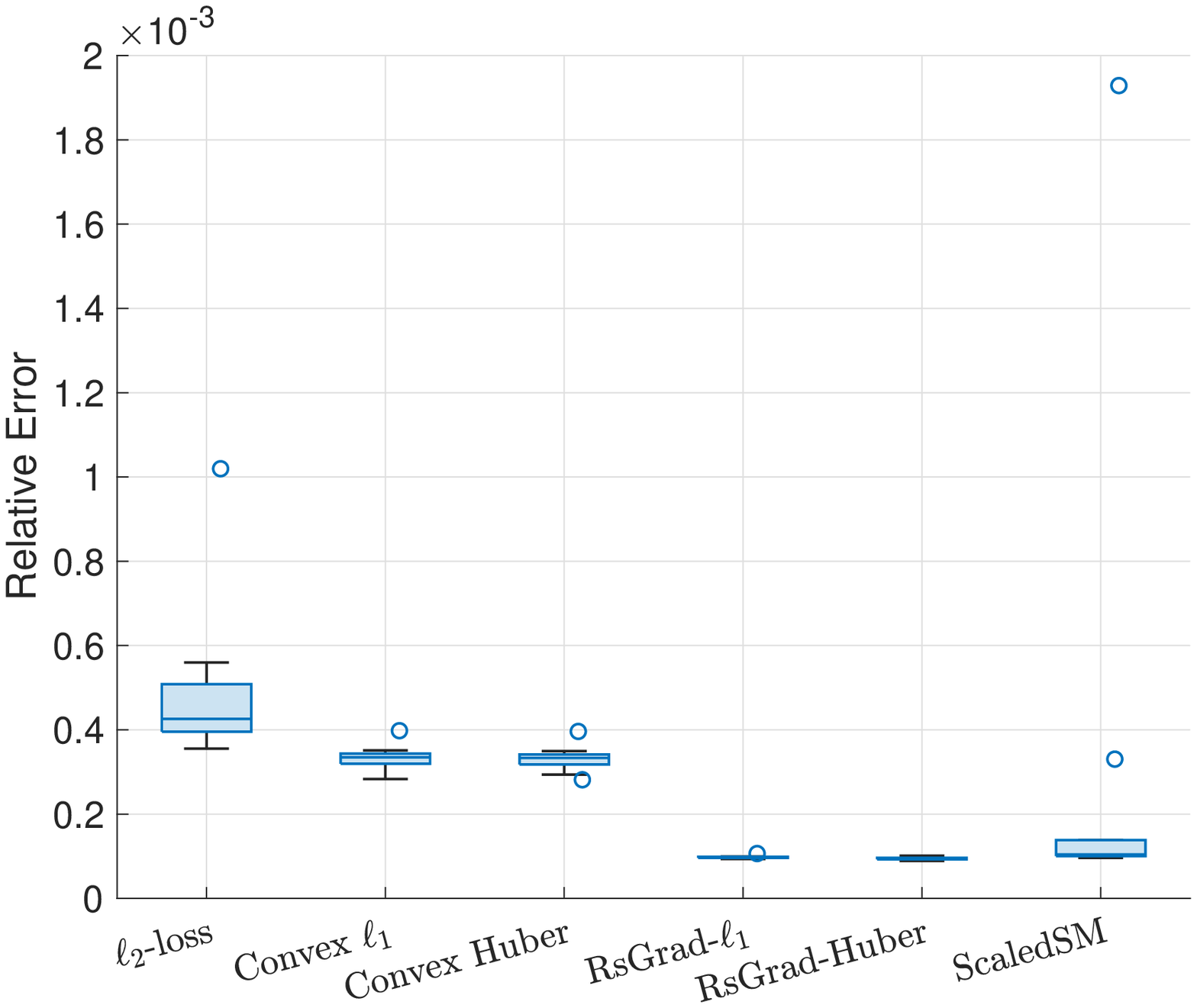}
		\caption{SNR=80}
		\label{fig:102}
	\end{subfigure}
	
	\caption{Accuracy comparisons of RsGrad, ScaledSM, robust convex models \citep{elsener2018robust} and convex $\ell_2$ loss \citep{candes2011tight} under Students' $t$-distribution noise with d.f.  $\nu=2$.   Top two figures:  $n=2.5rd_1$;  bottom two figures: $n=5rd_1$.}
	\label{fig:accu-t2}
\end{figure}

From Figure~\ref{fig:accu-Gaussian} and Figure~\ref{fig:accu-t2},  we observe that non-convex models achieve better accuracy, which may be attributed to more finely tuning parameters than convex approaches. Convex ones we compare with are also proven statistically optimal in theory and are computationally more demanding.  For instance,  in the case $n=5rd_1$,  convex $\ell_1$, convex Huber, convex $\ell_2$ need more than 1.5h, 1.5h, 3h to complete the ten repetitions,  respectively.  In sharp contrast,  RsGrad and ScaledSM only take around ten minutes.  When it comes to an even larger sample size like $n=10d_1r$,  the Matlab package on our computing platform needs 3h and 6h to complete the simulations (10 repetitions) for convex $\ell_1$ and convex Huber,  respectively,  whereas RsGrad and ScaledSM each takes about 20 minutes to do the job.

\section{Proofs of Sparse Linear Regression}
\label{proof:tec:vec}

In general, there are three key technical points, Proposition~\ref{prop:emp:vec}, Theorem~\ref{tecthm:vec-perturb} and Lemma~\ref{teclem:vec:l1exp}; see below.
\begin{proposition}
	\label{prop:emp:vec}
	Suppose function $f(\cdot)$ is given in Equation~\eqref{eq:vec-absloss:sparse}. Then there exist absolute constants $C,C_1>0$ such that
	\begin{align*}
		\left|f(\Bbeta+\Delta\Bbeta)-f(\Bbeta)-\EE\left[f(\Bbeta+\Delta\Bbeta)-f(\Bbeta) \right]\right|\leq C\sqrt{ns\ku\log(2d/s)}\ltwo{\Delta\Bbeta}
	\end{align*}
	holds for all $\Bbeta\in\RR^{d}$ and $\Delta\Bbeta\in\left\{\b\in\RR^{d}:\; \left|\text{supp}(\b)\right|\leq s\right\}$ with probability exceeding $1-\exp(-\frac{C_1^2s\log(2d/s)}{3})-3\exp\left(-\frac{\sqrt{n\log(2d/s)}}{\log n}\right)$.
\end{proposition}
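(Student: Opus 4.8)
The plan is to view the left-hand side as the supremum, over the pair $(\Bbeta,\Delta\Bbeta)$, of a centered empirical process and to bound it through symmetrization, a Lipschitz reduction to linear functionals, and a tail inequality for unbounded empirical processes. Set $Z_i:=|Y_i-\langle\Bbeta+\Delta\Bbeta,\X_i\rangle|-|Y_i-\langle\Bbeta,\X_i\rangle|$, so that $f(\Bbeta+\Delta\Bbeta)-f(\Bbeta)=\sum_{i=1}^nZ_i$. Writing $a_i:=\langle\Bbeta,\X_i\rangle-Y_i$ and $c_i:=\langle\Delta\Bbeta,\X_i\rangle$ exposes the structure $Z_i=\phi(a_i,c_i)$ with $\phi(a,c):=|a+c|-|a|$: this $\phi$ is $1$-Lipschitz in each argument, satisfies $\phi(a,0)=0$ for every $a$, and obeys the $\Bbeta$-free envelope $|Z_i|\le|c_i|=|\langle\Delta\Bbeta,\X_i\rangle|$. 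Consequently the per-summand variance is controlled by $\EE Z_i^2\le\EE\langle\Delta\Bbeta,\X_i\rangle^2\le\ku\ltwo{\Delta\Bbeta}^2$ uniformly in $\Bbeta$, which already pins down the target order $\sqrt{n\ku}\,\ltwo{\Delta\Bbeta}$ once the deviation level is set to $s\log(2d/s)$.

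First I would reduce to finitely many supports: since $\Delta\Bbeta$ is $s$-sparse, a union bound over the $\binom{d}{s}\le(ed/s)^s$ possible supports turns the sparse supremum into a supremum over a fixed $s$-dimensional subspace, at the cost of the factor $s\log(2d/s)$ in the exponent. On a fixed support I would symmetrize with Rademacher variables $\{\epsilon_i\}$ and aim to dominate the resulting Rademacher complexity by that of the linear class $\{\sum_i\epsilon_i\langle\Delta\Bbeta,\X_i\rangle\}$ over $s$-sparse directions, for which the Gaussianity of $\X_i$ in Assumption~\ref{assump:sensing operators:vec} and a covering of the $s$-dimensional unit sphere give $\EE\sup|\sum_i\epsilon_i\langle\Delta\Bbeta,\X_i\rangle|\lesssim\sqrt{n\ku\,s\log(2d/s)}\,\ltwo{\Delta\Bbeta}$. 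The vanishing property $\phi(a,0)=0$ together with the envelope $|Z_i|\le|c_i|$ is what makes this reduction possible without the expected supremum inheriting the full-dimensional complexity of $\Bbeta$.

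To upgrade this in-expectation control to the stated high-probability tail, I would invoke the tail inequality for suprema of unbounded empirical processes of \cite{adamczak2008tail}. Because the (normalized) envelope $|\langle\Delta\Bbeta,\X_i\rangle|/\ltwo{\Delta\Bbeta}$ is only sub-Gaussian and not bounded under Gaussian covariates, this inequality splits the deviation into a variance-driven Gaussian regime, whose weak variance of order $n\ku$ yields the $\exp(-c\,s\log(2d/s))$ term, and an envelope-driven regime governed by $\big\|\max_{i\le n}\sup_{\u}|\langle\u,\X_i\rangle|\big\|_{\psi_1}$, which produces the $3\exp(-\sqrt{n\log(2d/s)}/\log n)$ term; the $\log n$ factor is precisely the price of taking a maximum of $n$ sub-Gaussian envelopes. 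This is the mechanism, alluded to in the main text, that avoids ever bounding $\sum_i|\xi_i|$ and hence stays insensitive to the heavy tail of the noise, with the companion technical estimates (Theorem~\ref{tecthm:vec-perturb} and Lemma~\ref{teclem:vec:l1exp}) expected to supply the per-point moment and exponential bounds feeding these two regimes.

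The main obstacle is the uniformity over the base point $\Bbeta\in\RR^d$, which is full-dimensional and enters $Z_i$ nonlinearly through the sign of the residual $a_i$. A naive entropy or contraction argument applied separately to the two absolute-value terms would charge a spurious $\sqrt{d}$ factor, since each of the classes $\{|Y_i-\langle\theta,\X_i\rangle|\}$ ranges over all $\theta\in\RR^d$. The resolution must exploit that the increment is a \emph{constrained difference} $\phi(a_i,c_i)=|a_i+c_i|-|a_i|$ with $c_i=\langle\Delta\Bbeta,\X_i\rangle$ sparse and $\phi(a,0)=0$: the $a$-dependence, hence the $\Bbeta$-dependence, is modulated by the small sparse perturbation $c$, so that both the variance and the envelope remain bounded by $|c_i|$ uniformly in $\Bbeta$ and only the sparse direction contributes to the effective complexity. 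Making this decoupling rigorous, rather than bounding the two absolute-value terms in isolation, is the delicate heart of the argument.
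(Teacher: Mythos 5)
Your toolkit is essentially the paper's: symmetrization (Theorem~\ref{Symmetrization of Expectation}), the contraction principle (Theorem~\ref{Contraction Theorem}) applied to $\phi(a,c)=|a+c|-|a|$ (which is $1$-Lipschitz in $c$ with $\phi(a,0)=0$, killing the $\Bbeta$-dependence), and Adamczak's two-regime tail inequality (Theorem~\ref{tecthm:orlicz norm empirical}) with variance proxy $n\ku$ and a $\Psi_1$ envelope for the maximum. Your diagnosis of why the full-dimensional base point $\Bbeta$ is harmless is also correct.

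The genuine gap is the ordering of your first step: ``union bound over the $\binom{d}{s}$ supports first, then concentration on each fixed support.'' Adamczak's inequality produces two terms, and only the Gaussian one, $\exp\bigl(-t^2/(2(1+\delta)n\ku)\bigr)$, can pay for the union-bound factor $\binom{d}{s}\le e^{s\log(ed/s)}$ by taking $t\asymp\sqrt{n\ku s\log(2d/s)}$. The envelope-driven term is $3\exp\bigl(-t/(CK\log n)\bigr)$ with $K\asymp\sqrt{\ku s}$ \emph{independent of which support you fixed}; at the chosen $t$ it equals $3\exp\bigl(-c\sqrt{n\log(2d/s)}/\log n\bigr)$ for every single support, so after the union bound it becomes $3\exp\bigl(s\log(ed/s)-c\sqrt{n\log(2d/s)}/\log n\bigr)$. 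In the regime where the proposition is actually used — Theorem~\ref{thm:vec:sparse} operates at $n\asymp\tilde s\log(2d/\tilde s)$ — one has $\sqrt{n\log(2d/s)}/\log n\asymp\sqrt{s}\log(2d/s)/\log n\ll s\log(ed/s)$, so this quantity is vacuous; rescuing it forces an extra condition of the form $n\gtrsim s^2\log(2d/s)\log^2 n$, which the statement does not have. Raising $t$ to compensate degrades the deviation level to order $s^{3/2}\log(ed/s)\log n$ instead of $\sqrt{ns\log(2d/s)}$, with the same sample-size penalty.

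The fix is exactly what the paper does: never pay a union bound at the probability level. Apply Theorem~\ref{tecthm:orlicz norm empirical} \emph{once} to the global supremum $Z$ over all $\Bbeta\in\RR^d$ and all $s$-sparse $\Delta\Bbeta$ jointly, so the envelope term appears a single time, and confine the sparse-support combinatorics to the expectation: after symmetrization and contraction, $\EE Z\le 4\,\EE\sup_{|\Omega|\le s}\bigl\|\calP_\Omega\bigl(\sum_{i=1}^n\tilde\epsilon_i\X_i\bigr)\bigr\|_2\le C\sqrt{ns\ku\log(2d/s)}$, where the $\log(2d/s)$ comes from the Gaussian order-statistics bound (Lemma~\ref{teclem:maxsums}, i.e.\ Proposition~E.1 of Bellec et al., via Corollary~\ref{teccol:maxsums}) rather than from a union bound over supports. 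With that restructuring, the remainder of your plan goes through and reproduces the stated probability.
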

Proposition~\ref{prop:emp:vec} bounds deviation of $f(\Bbeta+\Delta\Bbeta)-f(\Bbeta)$. Its proof is based on empirical process theories and is presented later in this section. Besides, we also need perturbation bound. Rough perturbation bound $\ltwo{\calH_{K}(\b)-\x}\leq 2\ltwo{\b-\x}$ is usually used in $\ell_2$ loss \citep{blumensath2009iterative,blumensath2010normalized} and is good enough to derive statiscal optimality in $\ell_2$ loss. However, the coefficient $2$ is too large for phase two convergence of $\ell_1$ loss. Thanksfully, work \cite{shen2017tight} provides a tight bound for hard thresholding perturbations.
\begin{theorem}[Tight Bound for Hard Thresholding \cite{shen2017tight}]
	Let $\b \in \RR^d$ be an arbitrary vector and $\x \in \RR^d$ be any $K$-sparse signal. For any $k \geq K$, we have the following bound:
	$$
	\left\|\mathcal{H}_k(\boldsymbol{b})-\boldsymbol{x}\right\|_2 \leq \sqrt{\nu}\|\boldsymbol{b}-\boldsymbol{x}\|_2, \quad \nu=1+\frac{\rho+\sqrt{(4+\rho) \rho}}{2}, \quad \rho=\frac{\min \{K, d-k\}}{k-K+\min \{K, d-k\}}.
	$$
	\label{tecthm:vec-perturb}
\end{theorem}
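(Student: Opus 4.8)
The plan is to establish the squared form $\ltwo{\calH_k(\b)-\x}^2\le\nu\,\ltwo{\b-\x}^2$ by an exact worst-case analysis of the thresholding excess. Set $S:=\text{supp}(\x)$ with $|S|\le K$ and let $T$ be the index set of the $k$ largest-magnitude entries of $\b$, so $\calH_k(\b)=\b_T$. Decomposing over $T$ and $T^{\rm c}$ and using that $\x$ vanishes off $S$ yields the identity
\begin{align*}
\ltwo{\calH_k(\b)-\x}^2-\ltwo{\b-\x}^2=\ltwo{\x_{T^{\rm c}}}^2-\ltwo{\b_{T^{\rm c}}-\x_{T^{\rm c}}}^2.
\end{align*}
Coordinates in $T^{\rm c}\setminus S$ contribute the nonpositive term $-b_i^2$, so the excess is at most $\sum_{i\in R}\big(x_i^2-(b_i-x_i)^2\big)$ where $R:=S\setminus T$ is the lost part of the true support. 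It therefore suffices to bound this sum by $(\nu-1)\ltwo{\b-\x}^2$.

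The crux is the ordering that hard thresholding imposes. Writing $T_2:=T\setminus S$ for the spuriously kept indices (on which $x_j=0$, hence $b_j=b_j-x_j$), maximality of $T$ gives $\max_{i\in R}|b_i|\le\min_{j\in T_2}|b_j|$. Abbreviating $a_i:=b_i-x_i$ and using $\ltwo{\b-\x}^2\ge\sum_{i\in R}a_i^2+\sum_{j\in T_2}a_j^2$ (the sets $R\subseteq T^{\rm c}$ and $T_2\subseteq T$ are disjoint), the theorem reduces to showing that
\begin{align*}
\frac{\sum_{i\in R}\big(x_i^2-a_i^2\big)}{\sum_{i\in R}a_i^2+\sum_{j\in T_2}a_j^2}\le\nu-1
\end{align*}
under the constraint $|x_i+a_i|\le|a_j|$ for all $i\in R$, $j\in T_2$.

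I would then solve this finite-dimensional extremal problem. By homogeneity normalize $\min_{j\in T_2}|a_j|=1$; minimizing the denominator forces $|a_j|\equiv1$ on $T_2$. Writing each numerator term as $x_i^2-a_i^2=u_iv_i$ with $u_i:=x_i+a_i$, $v_i:=x_i-a_i$ (so $a_i=(u_i-v_i)/2$) and applying a per-coordinate KKT/Lagrangian argument shows the optimum is symmetric, $|u_i|\equiv1$ and $v_i\equiv v$. With $p:=|R|$ and $q:=|T_2|$ the ratio becomes $g(v)=\frac{pv}{p(1-v)^2/4+q}$; a short calculation gives the maximizer $v=\sqrt{1+4q/p}$ and the value $g=\frac{2}{\sqrt{1+4q/p}-1}$, which after rationalizing and substituting $\rho=p/q$ equals $\frac{\rho+\sqrt{(4+\rho)\rho}}{2}=\nu-1$.

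Finally I would identify the extremal cardinalities. Since $R\subseteq S\cap T^{\rm c}$ we have $p\le\min\{K,d-k\}$, and $q=|T|-|S\cap T|=k-|S|+p\ge k-K+p$. As $g^{\ast}(p,q)=\frac{2}{\sqrt{1+4q/p}-1}$ is increasing in $p$ and decreasing in $q/p$, its maximum over admissible $(p,q)$ is attained at $p=\min\{K,d-k\}$ and $q=k-K+p$, for which $q/p=1/\rho$ with the stated $\rho$; this yields exactly $\nu-1$. The main obstacle is making the symmetric reduction airtight — verifying through the per-coordinate optimization that unequal mass on $R$ or on $T_2$ can only decrease the ratio — together with checking the degenerate cases ($R=\emptyset$, or $\min\{K,d-k\}=d-k$, or $|S|<K$), after which matching $g$ to $\nu$ is purely algebraic.
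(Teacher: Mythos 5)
The paper itself contains no proof of this statement: Theorem~\ref{tecthm:vec-perturb} is quoted verbatim from \cite{shen2017tight} and used as a black box in the proofs of Theorems~\ref{thm:vec:sparse} and \ref{thm:vec:sparse:outlier}, so there is no internal argument to compare against. Judged on its own merits, your proposal is correct, and it is in the same spirit as the variational argument of the cited source. I checked the key steps: the identity $\ltwo{\calH_k(\b)-\x}^2-\ltwo{\b-\x}^2=\ltwo{\x_{T^{\rm c}}}^2-\ltwo{\b_{T^{\rm c}}-\x_{T^{\rm c}}}^2$; the restriction to $R=S\setminus T$ and $T_2=T\setminus S$ with the thresholding constraint $|x_i+a_i|\le|a_j|$; the normalization $|a_j|\equiv 1$ on $T_2$; the closed form $g=2/(\sqrt{1+4q/p}-1)$; the algebra identifying this with $\frac{\rho+\sqrt{\rho(\rho+4)}}{2}=\nu-1$ under $\rho=p/q$; and the cardinality bookkeeping $p\le\min\{K,d-k\}$, $q\ge k-K+p$ together with the monotonicity in $(p,q)$. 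All of these are sound, and the relaxations you make (dropping $T^{\rm c}\setminus S$ from the numerator, shrinking the denominator to the $R\cup T_2$ terms, keeping only the cross ordering constraint) all go in the right direction for an upper bound.

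The one genuine obstacle you flag — making the reduction to $|u_i|\equiv 1$, $v_i\equiv v$ airtight — closes cleanly if you replace direct KKT on the ratio by a Dinkelbach (fractional-programming) argument, which also lets you skip the $v$-optimization entirely. Let $\lambda$ be the supremum of the ratio; it is attained because the ratio tends to $0$ as the $v_i$ blow up, and the theorem is trivial unless $\lambda>0$. Any maximizer of the ratio must maximize $\sum_{i\in R}\bigl[u_iv_i-\tfrac{\lambda}{4}(u_i-v_i)^2\bigr]$ over $|u_i|\le 1$, $v_i\in\RR$, and the maximum of this functional equals $\lambda q$. Optimizing each concave quadratic in $v_i$ first gives $v_i=u_i(1+2/\lambda)$ with per-coordinate value $u_i^2(1+1/\lambda)$, which forces $|u_i|=1$ at the optimum; summing yields
\begin{equation*}
p\Bigl(1+\frac{1}{\lambda}\Bigr)=\lambda q,\qquad\text{i.e.}\qquad q\lambda^2-p\lambda-p=0,\qquad\text{so}\qquad \lambda=\frac{\rho+\sqrt{\rho(\rho+4)}}{2}=\nu-1,
\end{equation*}
with no separate symmetry or second-order verification needed. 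The degenerate cases are then immediate: if $R=\emptyset$ the excess is nonpositive, and $q=0$ forces $T\subseteq S$ with $|T|=k\ge K\ge|S|$, hence $T=S$ and $R=\emptyset$ anyway. With these two patches your outline is a complete proof.
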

Note that to some extent larger hard thresholding parameter $k$ infers smaller upper bound coefficient $\nu$. Besides, Theorem~\ref{tecthm:vec-perturb} works for arbitrary $\b$ and puts no assumption on $\b$. It leads to we do not require good initializations for sparse vector recovery but need the practical thresholding parameter $\tilde{s}\geq \min\{cs,d\}$ to have convergence guarantees. We conjecture there are tighter bounds if with small enough $\ltwo{\b-\x}$ based on which we could choose $\tilde{s}=s$. We leave this for future study.
\begin{lemma}
	Suppose function $f(\cdot)$ is given in Equation~\eqref{eq:vec-absloss:sparse}. Suppose Assumptions \ref{assump:sensing operators:vec} and \ref{assump:heavy-tailed} hold. Then for any vector $\Bbeta\in\left\{\b\in\RR^d: \, \ltwo{\b-\Bbeta^*}\leq8\sqrt{\ku\kl^{-1}}\gamma\right\}$, we have \begin{align*}
		\EE f(\Bbeta)-\EE f(\Bbeta^*)\geq \frac{n}{6b_0}\ltwo{\Bbeta-\Bbeta^*}^2.
	\end{align*}
	For arbitrary $\Bbeta_1,\Bbeta_2\in\RR^d$, we have\begin{align*}
		\EE f(\Bbeta_1)-\EE f(\Bbeta_2)\leq nb_1^{-1}\ku\cdot\left[\ltwo{\Bbeta_1-\Bbeta_2}^2+2\ltwo{\Bbeta_1-\Bbeta_2}\ltwo{\Bbeta_2-\Bbeta^*}\right].
	\end{align*}
	\label{teclem:vec:l1exp}
\end{lemma}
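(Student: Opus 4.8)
The plan is to reduce both inequalities to one-dimensional statements about the scalar function $\phi(t):=\EE_{\xi}[|\xi-t|-|\xi|]$ and then to take expectations over the Gaussian covariates. Writing $\Delta:=\Bbeta-\Bbeta^*$ and using $Y_i-\inp{\Bbeta}{\X_i}=\xi_i-\inp{\X_i}{\Delta}$ together with the independence of $\xi_i$ and $\X_i$, one obtains
\[
\EE f(\Bbeta)-\EE f(\Bbeta^*)=\sum_{i=1}^n\EE\big[\phi(\inp{\X_i}{\Delta})\big],\qquad \inp{\X_i}{\Delta}\sim N(0,v_i),\ \ v_i:=\Delta^{\top}\bSigma_i\Delta.
\]
Since $\phi'(t)=2\big(H_{\xi}(t)-H_{\xi}(0)\big)$ and $\phi''(t)=2h_{\xi}(t)$, with $\phi(0)=\phi'(0)=0$ by the zero-median assumption, the density bounds of Assumption~\ref{assump:heavy-tailed} translate into two pointwise estimates: integrating $h_{\xi}\geq b_0^{-1}$ twice gives $\phi(t)\geq b_0^{-1}t^2$ for $|t|\leq 8(\ku/\kl)^{1/2}\gamma$, and integrating $h_{\xi}\leq b_1^{-1}$ twice gives $\phi(t)\leq b_1^{-1}t^2$ for every $t$; convexity also yields $\phi\geq 0$ everywhere. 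These are the only analytic facts about the noise that I would use.

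For the sharpness (first) bound I would restrict to $\ltwo{\Delta}\leq 8(\ku/\kl)^{1/2}\gamma=:T_0$ and discard the region $|t|>T_0$ via $\phi\geq0$, so that for $t\sim N(0,v_i)$ one has $\EE[\phi(\inp{\X_i}{\Delta})]\geq b_0^{-1}\EE\big[t^2\mathbbm{1}(|t|\leq T_0)\big]$. It then remains to show that a fixed fraction of the second moment $v_i$ survives the truncation at $T_0$. This is exactly where the range restriction $\ltwo{\Delta}\leq T_0$ is essential: it forces $\sqrt{v_i}\leq \ku^{1/2}\ltwo{\Delta}\leq\ku^{1/2}T_0$, so $T_0/\sqrt{v_i}$ is bounded below and a direct truncated-Gaussian second-moment computation gives $\EE[t^2\mathbbm{1}(|t|\leq T_0)]\geq c\,v_i\geq c\kl\ltwo{\Delta}^2$; summing over $i$ and tracking the explicit numerical constant produces the quadratic lower bound $\tfrac{n}{6b_0}\ltwo{\Delta}^2$. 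I expect this truncation estimate to be the main obstacle, since the inequality is simply false without the range restriction, and the sharp constant hinges on a careful lower bound for the truncated Gaussian moment rather than on the trivial bound $v_i\geq\kl\ltwo{\Delta}^2$.

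For the smoothness (second) bound I would instead work with $g(t):=\EE_{\xi}|\xi-t|=\phi(t)+\EE|\xi|$, so that with $\Delta_j:=\Bbeta_j-\Bbeta^*$,
\[
\EE f(\Bbeta_1)-\EE f(\Bbeta_2)=\sum_{i=1}^n\EE\big[g(\inp{\X_i}{\Delta_1})-g(\inp{\X_i}{\Delta_2})\big].
\]
Because $g''=2h_{\xi}\leq 2b_1^{-1}$, the function $g$ is $2b_1^{-1}$-smooth, whence $g(a)-g(b)\leq g'(b)(a-b)+b_1^{-1}(a-b)^2$, while $|g'(b)|=2|H_{\xi}(b)-H_{\xi}(0)|\leq 2b_1^{-1}|b|$. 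Setting $a=\inp{\X_i}{\Delta_1}$ and $b=\inp{\X_i}{\Delta_2}$, bounding the cross term $\EE[g'(b)(a-b)]$ by Cauchy--Schwarz, and using $\EE(a-b)^2\leq\ku\ltwo{\Delta_1-\Delta_2}^2$ together with $\EE b^2\leq\ku\ltwo{\Delta_2}^2$ yields, after summing over $i$, exactly $nb_1^{-1}\ku\big[\ltwo{\Delta_1-\Delta_2}^2+2\ltwo{\Delta_1-\Delta_2}\ltwo{\Delta_2}\big]$. This half is routine once the one-dimensional smoothness estimate is in hand; no range restriction is needed, which is consistent with the lemma stating it for arbitrary $\Bbeta_1,\Bbeta_2$.
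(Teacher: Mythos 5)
Your overall route is the paper's route: both proofs reduce to the one-dimensional function $\phi(t)=\EE\big[|\xi-t|-|\xi|\big]$ and its representation $\phi(t)=2\int_0^t\int_0^s h_{\xi}(w)\,dw\,ds$ (the paper gets this by integration by parts plus the tail argument $\lim_{s\to\infty}s(1-H_{\xi}(s))=0$; your $\phi''=2h_{\xi}$, $\phi(0)=\phi'(0)=0$ is the same identity), and your proof of the second inequality is correct and essentially identical to the paper's: the paper expands the double integral into $b_1^{-1}\big|(a-b)^2+2b(a-b)\big|$ with $a=\inp{\X_i}{\Bbeta_1-\Bbeta^*}$, $b=\inp{\X_i}{\Bbeta_2-\Bbeta^*}$ and applies H\"older, which is your smoothness-plus-Cauchy--Schwarz computation in different clothing.

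The genuine gap is the truncation step in the first inequality. You truncate $z_i=\inp{\X_i}{\Bbeta-\Bbeta^*}\sim N(0,v_i)$ at the \emph{fixed} level $T_0=8(\ku/\kl)^{1/2}\gamma$, and the region only guarantees $T_0/\sqrt{v_i}\geq \ku^{-1/2}$. Within your argument (which uses only $\phi(t)\geq b_0^{-1}t^2$ on $[-T_0,T_0]$ and $\phi\geq 0$ outside), the retained fraction of the second moment is therefore
\begin{align*}
\EE\big[z_i^2\mathbbm{1}(|z_i|\leq T_0)\big]=v_i\,\EE\big[g^2\mathbbm{1}(|g|\leq T_0/\sqrt{v_i})\big]
\geq v_i\int_{-1/\sqrt{\ku}}^{1/\sqrt{\ku}}t^2\tfrac{1}{\sqrt{2\pi}}e^{-t^2/2}\,dt,\qquad g\sim N(0,1),
\end{align*}
and in the worst case ($\ltwo{\Bbeta-\Bbeta^*}=T_0$, $\bSigma_i=\ku\I$) nothing better is available. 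This Gaussian factor decays like $\ku^{-3/2}$ and already falls below $1/6$ once $\ku\gtrsim 1.2$, so ``tracking the explicit numerical constant'' cannot yield $\frac{n}{6b_0}\ltwo{\Bbeta-\Bbeta^*}^2$; you only obtain $\frac{c(\ku)\,\kl\,n}{b_0}\ltwo{\Bbeta-\Bbeta^*}^2$ with $c(\ku)$ deteriorating in $\ku$. The paper instead truncates at \emph{one standard deviation} $\sigma_i:=\sqrt{v_i}$, so the Gaussian factor is the absolute constant $\int_{-1}^{1}t^2\frac{1}{\sqrt{2\pi}}e^{-t^2/2}\,dt\geq 1/6$; the price is that $h_{\xi}\geq b_0^{-1}$ must hold on $[-\sigma_i,\sigma_i]$, which is precisely what the radius restriction is for: one needs $\sigma_i\leq\sqrt{\ku}\,\ltwo{\Bbeta-\Bbeta^*}\leq 8(\ku/\kl)^{1/2}\gamma$, i.e.\ effectively $\ltwo{\Bbeta-\Bbeta^*}\leq 8\kl^{-1/2}\gamma$, the radius at which the lemma is actually invoked inside Lemma~\ref{lem:vec:sparse}. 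So the fix is to replace your truncation level $T_0$ by $\sigma_i$ on that smaller ball. (Two side remarks: your bound, like the paper's own proof, carries the factor $\kl$ that the lemma statement drops; and the mismatch you would face for $\ku>1$ on the stated radius $8\sqrt{\ku\kl^{-1}}\gamma$ is an inconsistency already present in the paper, since the paper's truncation at $\sigma_i$ also exits the density range there.)
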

Lemma~\ref{teclem:vec:l1exp} provides upper bound and lower bound of expectation of $f(\Bbeta_1)-f(\Bbeta_2)$ and $f(\Bbeta)-f(\Bbeta^*)$, respectively. Then we are ready to prove regularity proporties Lemma~\ref{lem:vec:sparse}.
\begin{proof}[Proof of Lemma~\ref{lem:vec:sparse}]
	We shall assume the event $$\bcalE:=\left\{\sup_{\Bbeta\in\RR^d,\; |\text{supp}(\Delta\Bbeta)|\leq 3\tilde{s}}\left|f(\Bbeta+\Delta\Bbeta)-f(\Bbeta)-\EE\left[f(\Bbeta+\Delta\Bbeta)-f(\Bbeta) \right]\right|\cdot\ltwo{\Delta\Bbeta}^{-1}\leq C\sqrt{n\tilde{s}\ku\log(2d/\tilde{s})}\right\}$$
	holds. Proposition~\ref{prop:emp:vec} proves $\PP(\bcalE)\geq 1-\exp(-\frac{C^2\tilde{s}\log(2d/\tilde{s})}{3})-3\exp\left(-\frac{\sqrt{n\log(2d/\tilde{s})}}{\log n}\right)$. Based on event $\bcalE$, we shall obtain the regularity properties. 
	
	First, analyze phase one properties, namely, when $\ltwo{\Bbeta-\Bbeta^*}\geq 8\sqrt{\kl^{-1}}\gamma$. As for $f(\Bbeta)-f(\Bbeta^*)$, under the event $\bcalE$, it has
	\begin{align*}
		f(\Bbeta)-f(\Bbeta^*)&\geq \EE f(\Bbeta)-\EE f(\Bbeta^*)-C\sqrt{n\tilde{s}\ku\log(2d/\tilde{s})}\ltwo{\Bbeta-\Bbeta^*}\\
		&=\EE\left[\lone{\X(\Bbeta-\Bbeta^*)-\bxi}-\lone{\bxi}\right]-C\sqrt{n\tilde{s}\ku\log(2d/\tilde{s})}\ltwo{\Bbeta-\Bbeta^*}\\
		&\geq \EE\lone{\X(\Bbeta-\Bbeta^*)} - 2\EE\lone{\bxi}-C\sqrt{n\tilde{s}\ku \log(2d/\tilde{s})}\ltwo{\Bbeta-\Bbeta^*}.
	\end{align*}
	Notice that $\left|\X_i^{\top}(\Bbeta-\Bbeta^*)\right|$ follows folded Gaussian distributions and then we have $\EE\lone{\X(\Bbeta-\Bbeta^*)}\geq n\sqrt{\frac{2}{\pi}}\sqrt{\kl}\ltwo{\Bbeta-\Bbeta^*}$. Thus, together with $\EE|\xi|=\gamma$, $\ltwo{\Bbeta-\Bbeta^*}\geq 8\sqrt{\kl^{-1}}\gamma$ and $n\geq C_1\ku\kl^{-1}\tilde{s}\log(2d/\tilde{s})$, it has
	\begin{align*}
		f(\Bbeta)-f(\Bbeta^*)\geq \frac{n}{4}\sqrt{\kl}\ltwo{\Bbeta-\Bbeta^*}.
	\end{align*}
	Then consider $\ltwo{\calP_{\Omega\cup\Pi\cup\Omega^*}(\G)}$. By definition of sub-gradient, it has
	\begin{align}
		\ltwo{\calP_{\Omega\cup\Pi\cup\Omega^*}(\G)}^2=\inp{\G}{\calP_{\Omega\cup\Pi\cup\Omega^*}(\G)}\leq f(\Bbeta+\calP_{\Omega\cup\Pi\cup\Omega^*}(\G))-f(\Bbeta).
		\label{eq29}
	\end{align}
	On the other hand, under the event $\bcalE$, it has
	\begin{align*}
		&{~~~~}f(\Bbeta+\calP_{\Omega\cup\Pi\cup\Omega^*}(\G))-f(\Bbeta)\\
		&\leq \EE f(\Bbeta+\calP_{\Omega\cup\Pi\cup\Omega^*}(\G))-\EE f(\Bbeta)+C\sqrt{n\tilde{s}\ku\log(2d/\tilde{s})}\ltwo{\calP_{\Omega\cup\Pi\cup\Omega^*}(\G)}\\
		&\leq \EE\lone{\X\calP_{\Omega\cup\Pi\cup\Omega^*}(\G)}+C\sqrt{n\tilde{s}\ku\log(2d/\tilde{s})}\ltwo{\calP_{\Omega\cup\Pi\cup\Omega^*}(\G)}.
	\end{align*}
	Then together with folded Gaussian distributions, it has $\EE\lone{\X\calP_{\Omega\cup\Pi\cup\Omega^*}(\G)}\leq n\sqrt{\frac{2}{\pi}}\sqrt{\ku}\ltwo{\calP_{\Omega\cup\Pi\cup\Omega^*}(\G)}$. More details of calculations can be found in Corollary~\ref{cor:l1expecation noiseless}. Furthermore, with $n\geq C_1\tilde{s}\log(2d/\tilde{s})$, it has\begin{align*}
		f(\Bbeta+\calP_{\Omega\cup\Pi\cup\Omega^*}(\G))-f(\Bbeta)\leq  n\sqrt{\ku}\ltwo{\calP_{\Omega\cup\Pi\cup\Omega^*}(\G)}.
	\end{align*}
	Then the above equation and Equation~\eqref{eq29} lead to
	\begin{align*}
		\ltwo{\calP_{\Omega\cup\Pi\cup\Omega^*}(\G)}\leq n\sqrt{\ku}.
	\end{align*}
	
	Secondly, analyze phase two properties, when $C_2b_0\sqrt{\frac{\ku}{\kl^2}\cdot\frac{\tilde{s}\log(2d/\tilde{s})}{n}}\leq\ltwo{\Bbeta-\Bbeta^*}\leq 8\sqrt{\kl^{-1}}\gamma$. With Lemma~\ref{teclem:vec:l1exp} and event $\bcalE$, we have
	\begin{align*}
		f(\Bbeta)-f(\Bbeta^*)&\geq \EE f(\Bbeta)-\EE f(\Bbeta^*) - C\sqrt{n\tilde{s}\ku\log(2d/\tilde{s})}\ltwo{\Bbeta-\Bbeta^*}\\
		&\geq \frac{n}{6b_0}\kl\ltwo{\Bbeta-\Bbeta^*}^2-C\sqrt{n\tilde{s}\ku\log(2d/\tilde{s})}\ltwo{\Bbeta-\Bbeta^*}\\
		&\geq \frac{n}{12b_0}\kl\ltwo{\Bbeta-\Bbeta^*}^2,
	\end{align*}
	where the last line is from $\ltwo{\Bbeta-\Bbeta^*}\geq C_2\sqrt{\frac{\tilde{s}\log(2d/\tilde{s})}{n}\cdot\frac{\ku}{\kl^2}}b_0$. On the other hand, we have
	\begin{align*}
		&{~~~~}f(\Bbeta+\frac{b_1}{2n\ku}\calP_{\Omega\cup\Pi\cup\Omega^*}(\G))-f(\Bbeta)\\
		&\leq \EE f(\Bbeta+\frac{b_1}{2n\ku}\calP_{\Omega\cup\Pi\cup\Omega^*}(\G))-\EE f(\Bbeta)+C_1\sqrt{n\tilde{s}\ku\log(2d/\tilde{s})}\cdot \frac{b_1}{2n\ku}\ltwo{\calP_{\Omega\cup\Pi\cup\Omega^*}(\G)}\\
		&\leq \frac{n\ku}{b_1}\left[ \left( \frac{b_1}{2n\ku}\right)^2\ltwo{\calP_{\Omega\cup\Pi\cup\Omega^*}(\G)}^2+2\frac{b_1}{2n\ku}\ltwo{\calP_{\Omega\cup\Pi\cup\Omega^*}(\G)}\ltwo{\Bbeta-\Bbeta^*}\right]\\
		&{~~~~~~~~~~~~~~~~~~~~~~~~~~~~~~~~~~~~~~~~~~~~~~~~~~~~~~~~~~~~~~~~~~~~~~~~}+C_1\sqrt{n\tilde{s}\ku\log(2d/\tilde{s})}\cdot\frac{b_1}{2n\ku}\ltwo{\calP_{\Omega\cup\Pi\cup\Omega^*}(\G)}\\
		&\leq \frac{b_1}{4n\ku}\ltwo{\calP_{\Omega\cup\Pi\cup\Omega^*}(\G)}^2+(2+C_1)\ltwo{\calP_{\Omega\cup\Pi\cup\Omega^*}(\G)}\ltwo{\Bbeta-\Bbeta^*},
	\end{align*}
	where the second equation is from  Lemma~\ref{teclem:vec:l1exp} and the last equation is from the phase two region constraint $\sqrt{\frac{\tilde{s}\log(2d/\tilde{s})}{\ku n}}b_1\leq C_2b_0\sqrt{\frac{\ku}{\kl^2}\cdot\frac{\tilde{s}\log(2d/\tilde{s})}{n}}\leq \ltwo{\Bbeta-\Bbeta^*}$. Meanwhile, by definition of sub-gradient, we obtain a lower bound for $f(\Bbeta+\frac{b_1}{2n\ku}\calP_{\Omega\cup\Pi\cup\Omega^*}(\G))-f(\Bbeta)$,
	\begin{align*}
		f(\Bbeta+\frac{b_1}{2n\ku}\calP_{\Omega\cup\Pi\cup\Omega^*}(\G))-f(\Bbeta)\geq\inp{\G}{\frac{b_1}{2n\ku}\calP_{\Omega\cup\Pi\cup\Omega^*}(\G) }=\frac{b_1}{2n\ku}\ltwo{\calP_{\Omega\cup\Pi\cup\Omega^*}(\G)}^2.
	\end{align*}
	Combine the above two equations and then the quadratic inequality leads to 
	\begin{align*}
		\ltwo{\calP_{\Omega\cup\Pi\cup\Omega^*}(\G)}\leq C_3\ku\frac{n}{b_1}\ltwo{\Bbeta-\Bbeta^*}.
	\end{align*}
\end{proof}
Then we are ready to prove convergence dynamics of IHT-$\ell_1$, Theorem~\ref{thm:vec:sparse}.
\begin{proof}[Proof of Theorem~\ref{thm:vec:sparse}]
	In phase one, for convenience, denote $D_l:= (1-c_1)^l\ltwo{\Bbeta_0-\Bbeta^*}$. Prove by induction and it holds obviously for $l=0$. We suppose $\ltwo{\Bbeta_{l}-\Bbeta^*}\leq D_l$ and we are going to prove $\ltwo{\Bbeta_{l+1}-\Bbeta^*}\leq D_{l+1}$. We have
	\begin{align*}
		\ltwo{\Bbeta_{l}-\eta_{l}\calP_{\Omega_{l}\cup\Pi_l\cup\Omega^*}(\G_l)-\Bbeta^*}^2&=\ltwo{\Bbeta_l-\Bbeta^*}^2-2\eta_{l}\inp{\Bbeta_l-\Bbeta^*}{\G_l}+\eta_{l}^2\ltwo{\calP_{\Omega_{l}\cup\Pi_l\cup\Omega^*}(\G_l)}^2\\
		&\leq \ltwo{\Bbeta_l-\Bbeta^*}^2-2\eta_{l}\left(f(\Bbeta_l)-f(\Bbeta^*)\right)+\eta_{l}^2\ltwo{\calP_{\Omega_{l}\cup\Pi_l\cup\Omega^*}(\G_l)}^2\\
		&\leq \ltwo{\Bbeta_l-\Bbeta^*}^2-\eta_{l}\frac{n}{2}\sqrt{\kl}\ltwo{\Bbeta_l-\Bbeta^*}+n^2\eta_{l}^2\ku,
	\end{align*}
	where the first line is due to $\text{supp}(\Bbeta_l-\Bbeta^*)\subseteq \Omega_{l}\cup\Pi_l\cup\Omega^*$ that $\inp{\Bbeta_l-\Bbeta^*}{\calP_{\Omega_{l}\cup\Pi_l\cup\Omega^*}(\G_l)}=\inp{\Bbeta_l-\Bbeta^*}{\G_{l}}$, the second line uses sub-gradient definition and the last line uses Lemma~\ref{lem:vec:sparse}.
	By induction, we have $\ltwo{\Bbeta_{l}-\Bbeta^*}\leq D_l$ and selection of stepsize says $\eta_l\in n^{-1}\sqrt{\frac{\kl}{\ku^2}}\cdot[\frac{1}{8}D_l,\frac{3}{8}D_l]$. Based on quadratic function properties Claim~\ref{claim:quadraticfct}, we have
	$$\ltwo{\Bbeta_l-\Bbeta^*}^2-\eta_{l}\frac{n}{2}\sqrt{\kl}\ltwo{\Bbeta_l-\Bbeta^*}\leq D_l^2- \eta_{l}\frac{n}{2}\sqrt{\kl} D_l.$$ Then insert stepsize selection and with quadratic function properties, it becomes
	\begin{align*}
		\ltwo{\Bbeta_{l}-\eta_{l}\calP_{\Omega_{l}\cup\Pi_l\cup\Omega^*}(\G_l)-\Bbeta^*}^2\leq \left(1-\frac{3}{64}\frac{\kl}{\ku}\right)\cdot D_{l}^2.
	\end{align*}
	It implies $$\ltwo{\Bbeta_{l}-\eta_{l}\calP_{\Omega_{l}\cup\Pi_l\cup\Omega^*}(\G_l)-\Bbeta^*}\leq \left(1-\frac{3}{128}\frac{\kl}{\ku}\right) D_{l}. $$
	Then with hard thresholding perturbation Theorem~\ref{tecthm:vec-perturb} and $\tilde{s}$ selection, we obtain\begin{align*}
		\ltwo{\Bbeta_{l+1}-\Bbeta^*}=\ltwo{\calH_{\tilde{s}}(\Bbeta_{l}-\eta_{l}\calP_{\Omega_{l}\cup\Pi_l\cup\Omega^*}(\G_l))-\Bbeta^*}\leq \left(1-\frac{1}{64}\frac{\kl}{\ku}\right)D_l\leq (1-c_1)D_l=D_{l+1},
	\end{align*}
	where constant $c_1\leq \frac{1}{64}\frac{\ku}{\kl}$. 
	
	Similarly, in phase two, based on Lemma~\ref{lem:vec:sparse}, we have
	\begin{align*}
		\ltwo{\Bbeta_{l}-\eta_{l}\calP_{\Omega_{l}\cup\Pi_l\cup\Omega^*}(\G_l)-\Bbeta^*}^2&=\ltwo{\Bbeta_l-\Bbeta^*}^2-2\eta_{l}\inp{\Bbeta_l-\Bbeta^*}{\G_l}+\eta_{l}^2\ltwo{\calP_{\Omega_{l}\cup\Pi_l\cup\Omega^*}(\G_l)}^2\\
		&\leq \ltwo{\Bbeta_l-\Bbeta^*}^2-2\eta_{l}\left(f(\Bbeta_l)-f(\Bbeta^*)\right)+\eta_{l}^2\ltwo{\calP_{\Omega_{l}\cup\Pi_l\cup\Omega^*}(\G_l)}^2\\
		&\leq \ltwo{\Bbeta_l-\Bbeta^*}^2-\eta_{l}\frac{n}{6b_0}\kl\ltwo{\Bbeta_l-\Bbeta^*}^2+C_3^2n^2b_1^{-2}\eta_{l}^2\ku^2\ltwo{\Bbeta_l-\Bbeta^*}^2.
	\end{align*}
	With stepsize $\eta_{l}\in\frac{\kl}{\ku^2}\cdot\frac{b_1^2}{b_0}\cdot[C_4,C_5]$, we have
	\begin{align*}
		\ltwo{\Bbeta_{l}-\eta_{l}\calP_{\Omega_{l}\cup\Pi_l\cup\Omega^*}(\G_l)-\Bbeta^*}^2\leq \left(1-\tilde{c}_2\frac{\kl^2}{\ku^2}\cdot\frac{b_1^2}{b_0^2}\right) \ltwo{\Bbeta_{l}-\Bbeta^*}^2.
	\end{align*}
	It leads to $$\ltwo{\Bbeta_{l}-\eta_{l}\calP_{\Omega_{l}\cup\Pi_l\cup\Omega^*}(\G_l)-\Bbeta^*}\leq \left(1-\tilde{c}_2\frac{\kl^2}{2\ku^2}\cdot\frac{b_1^2}{b_0^2}\right) \ltwo{\Bbeta_{l}-\Bbeta^*}. $$
	Then with $\tilde{s}\geq\min\left\{C_6s\cdot\frac{\ku^8}{\kl^8}\cdot\frac{b_0^8}{b_1^8},d \right\} $, perturbation bound Theorem~\ref{tecthm:vec-perturb} infers
	\begin{align*}
		\ltwo{\Bbeta_{l+1}-\Bbeta^*}&\leq \left(1+\sqrt{\frac{s}{\tilde{s}}+\sqrt{\frac{s}{\tilde{s}}}}\right)\cdot\left(1-\tilde{c}_2\frac{\kl^2}{2\ku^2}\cdot\frac{b_1^2}{b_0^2}\right) \ltwo{\Bbeta_{l}-\Bbeta^*}\\
		&\leq\left(1-c_2\frac{\kl^2}{\ku^2}\cdot\frac{b_1^2}{b_0^2}\right) \ltwo{\Bbeta_{l}-\Bbeta^*}.
	\end{align*}
\end{proof}

\begin{proof}[Proof of  Proposition~\ref{prop:emp:vec}, Vector Recovery Empirical Process]
	Denote $\VV_s:= \left\{\b\in\RR^{d}:\; \left|\text{supp}(\b)\right|\leq s\right\}$. Then consider
	\begin{align*}
		Z:=\sup_{\Bbeta\in\RR^d,\; \Delta\Bbeta\in \VV_s}\left| f(\Bbeta+\Delta\Bbeta)-f(\Bbeta)-\EE\left[f(\Bbeta+\Delta\Bbeta)-f(\Bbeta) \right]\right|\big/ \ltwo{\Delta\Bbeta}.
	\end{align*}
	With $f(\Bbeta):=\sum_{i=1}^{n}|\X_i^{\top}\Bbeta-Y_i|$, we have
	\begin{align*}
		\EE Z&\leq 2\EE\sup_{\Bbeta\in\RR^d,\; \Delta\Bbeta\in \VV_s}\left| \sum_{i=1}^{n}\tilde{\epsilon}_i\cdot \left( |\X_i^{\top}(\Bbeta+\Delta\Bbeta)-Y_i|-|\X_i^{\top}\Bbeta-Y_i|\right)\right|\big/ \ltwo{\Delta\Bbeta}\\
		&\leq 4\EE\sup_{\Delta\Bbeta\in \VV_s}\left| \sum_{i=1}^{n}\tilde{\epsilon}_i\cdot \X_i^{\top}\Delta\Bbeta\right|\big/ \ltwo{\Delta\Bbeta}\\
		&= 4\EE\sup_{\Delta\Bbeta\in \VV_s}\left| \left(\sum_{i=1}^{n}\tilde{\epsilon}_i\X_i\right)^{\top}\Delta\Bbeta\right|\big/ \ltwo{\Delta\Bbeta}\\
		&\leq 4\EE\sup_{|\Omega|\leq s}\left\|\calP_{\Omega}\left(\sum_{i=1}^{n}\tilde{\epsilon}_i\X_i\right) \right\|_{2}.
	\end{align*}
	where $\{\wt\epsilon_{i}\}_{i=1}^n$ is Rademacher sequence independent of $\X_1,\ldots,\X_n$ and operator $\calP_{\Omega}(\cdot)$ only keeps entries in $\Omega$. The first inequality is from Theorem \ref{Symmetrization of Expectation} and  the second inequality is from Theorem \ref{Contraction Theorem}. 
	
	Notice that $\sum_{i=1}^n\wt\epsilon_i\X_i\sim N(\boldsymbol{0}, \sum_{i=1}^{n}\bSigma_i)$ and $\op{\sum_{i=1}^{n}\bSigma_i}\leq n\ku$ and then by Corollary~\ref{teccol:maxsums}, we have $\EE\sup_{|\Omega|\leq s}\left\|\calP_{\Omega}\left(\sum_{i=1}^{n}\tilde{\epsilon}_i\X_i\right) \right\|_{2}\leq \left(\EE\sup_{|\Omega|\leq s}\left\|\calP_{\Omega}\left(\sum_{i=1}^{n}\tilde{\epsilon}_i\X_i\right) \right\|_{2}^2 \right)^{1/2}\leq C\sqrt{ns\ku\log(2d/s)}$. Thus, $\EE Z$ could be upper bounded with $$\EE Z\leq C\sqrt{ns\ku\log(2d/s)}.$$
	With similar analyses in matrix case Section~\ref{proof:thm:empirical}, we have\begin{align*}
		&~~~\left\|\max_{i=1,\ldots,n}\sup_{\Bbeta\in\RR^d,\; \Delta\Bbeta\in \VV_s}|\X_i^{\top}(\Bbeta+\Delta\Bbeta)-Y_i|-|\X_i^{\top}\Bbeta-Y_i|-\EE\left[|\X_i^{\top}(\Bbeta+\Delta\Bbeta)-Y_i|-|\X_i^{\top}\Bbeta-Y_i| \right]\right\|_{\Psi_{1}}\\
		&\leq C_1\sqrt{\ku s}\log n,
	\end{align*}
	and
	\begin{align*}
		\EE\left(|\X_i^{\top}(\Bbeta+\Delta\Bbeta)-Y_i|-|\X_i^{\top}\Bbeta-Y_i| \right)^2\big/ \ltwo{\Delta\Bbeta}^2\leq \ku.
	\end{align*}
	Thus with Theorem~\ref{tecthm:orlicz norm empirical}, we have
	\begin{align*}
		\PP\left(Z\geq C\sqrt{ns\ku\log(2d/s)}\right)\leq\exp\left(-\frac{C^2s\log(2d/s)}{3}\right)+3\exp\left(-\frac{\sqrt{n\log(2d/s)}}{\log n}\right),
	\end{align*}
	which completes the proof.
\end{proof}

\begin{proof}[Proof of Lemma~\ref{teclem:vec:l1exp}]
	Notice that $\EE f(\Bbeta)-\EE f(\Bbeta^*)$ has the following expansion \begin{align*}
		\EE f(\Bbeta)-\EE f(\Bbeta^*)&=\sum_{i=1}^{n} \EE\left|\xi_i-\inp{\X_i}{\Bbeta-\Bbeta^*}\right|-\EE\left|\xi_{i}\right|\\
		&=\sum_{i=1}^{n} \EE\left[\EE\left[\left|\xi-\inp{\X_i}{\Bbeta-\Bbeta^*}\right|-\left|\xi\right|\bigg|\X_i\right]\right].
	\end{align*}
	To bound the conditional expectation $\EE\left[\left|\xi-\inp{\X_i}{\Bbeta-\Bbeta^*}\right|-\left|\xi\right|\bigg|\X_i\right]$, we shall first analyze $\EE|\xi-t_0|$, with $t_0\in\RR$,
	\begin{equation}
		\begin{split}
			\EE|\xi-t_0| &= \int_{s\geq t_0}(s-t_0)\,dH_{\xi}(s) + \int_{s< t_0}(t_0-s)\,dH_{\xi}(s)\\
			&=2\int_{s\geq t_0}(s-t_0)\,dH_{\xi}(s) + \int_{-\infty}^{+\infty}(t_0-s)\,dH_{\xi}(s)\\
			&=2\int_{s\geq t_0}(1-H_{\xi}(s))\,ds + t_0 - \int_{-\infty}^{\infty}s \,dH_{\xi}(s),
		\end{split}
		\label{eq:l1condiexp-vec}
	\end{equation}
	where the last line is due to \begin{align*}
		\int_{s\geq t_0}(s-t_0)\,dH_{\xi}(s)&=-\int_{s\geq t_0} (s-t_0)\, d(1-H_\xi(s))\\
		&=-(s-t_0)(1-H_\xi(s))\bigg|_{t_0}^{+\infty}+\int_{s\geq t_0}\left(1-H_\xi
		(s)\right)\, ds\\
		&=\lim_{s\to+\infty}(s-t_0)(1-H_\xi(s))+\int_{s\geq t_0}\left(1-H_\xi
		(s)\right)\, ds,
	\end{align*} whose first term vanishes
	\begin{align*}
		0\leq\lim_{s\to+\infty}(s-t_0)(1-H_\xi(s))\leq\lim_{s\to+\infty} (s-t_0)\cdot\PP(\xi>s)\leq\lim_{s\to+\infty} (s-t_0)\cdot\frac{\EE|\xi|^{1+\eps}}{s^{1+\eps}}=0.
	\end{align*}
	Set $t_0 = \inp{\X_i}{\Bbeta-\Bbeta^*}$ and then Equation~\eqref{eq:l1condiexp-vec} becomes
	\begin{align*}
		\EE\left[|\xi-\inp{\X_i}{\Bbeta-\Bbeta^*}| \bigg|\X_i\right]= 2\int_{s\geq \inp{\X_i}{\Bbeta-\Bbeta^*}}(1-H_{\xi}(s))\,ds + \inp{\X_i}{\Bbeta-\Bbeta^*} - \int_{-\infty}^{+\infty}s dH_{\xi}(s).
	\end{align*}
	With $t_0=0$, it becomes
	\begin{align*}
		\EE|\xi| = 2\int_{s\geq 0}(1-H_{\xi}(s))\,ds - \int_{-\infty}^{+\infty}s dH_{\xi}(s).
	\end{align*}
	Thus, by taking difference of the above two equations, we have the conditional expectation $\EE\left[\left|\xi-\inp{\X_i}{\Bbeta-\Bbeta^*}\right|-\left|\xi\right|\bigg|\X_i\right]$,
	\begin{align*}
		\EE\left[\left|\xi-\inp{\X_i}{\Bbeta-\Bbeta^*}\right|-\left|\xi\right|\bigg|\X_i\right]
		&=-2\int_{0}^{\inp{\X_i}{\Bbeta-\Bbeta^*}}(1-H_{\xi}(s))\,ds + \inp{\X_i}{\Bbeta-\Bbeta^*}\\
		&=2\int_{0}^{\inp{\X_i}{\Bbeta-\Bbeta^*}}(H_{\xi}(s)-H_{\xi}(0))\,ds\\
		&=2\int_{0}^{\inp{\X_i}{\Bbeta-\Bbeta^*}}\int_{0}^{s}h_{\xi}(w)\, dw\, ds,
	\end{align*}
	where $H_{\xi}(0)=1/2$ is used. Note that $\inp{\X_i}{\Bbeta-\Bbeta^*}\sim N(0, (\Bbeta-\Bbeta^*)^{\top}\bSigma_i(\Bbeta-\Bbeta))$. Denote $z_i:= \inp{\X_i}{\Bbeta-\Bbeta^*}$, $\sigma_i^2:= (\Bbeta-\Bbeta^*)^{\top}\bSigma_i(\Bbeta-\Bbeta)$ and let $f_{z_i}(\cdot)$ be density of $z_i$. Take expectation over $\X_i$ on each side of the above equation and then we obtain,
	\begin{align*}
		\EE \left[\left|\xi-\inp{\X_i}{\Bbeta-\Bbeta^*}\right|-\left|\xi\right|\right]=2\int_{-\infty}^{+\infty}\int_{0}^{t}\int_{0}^{s}h_{\xi}(w) f_{z_i}(t)\, dw\, ds\, dt.
	\end{align*}
	Note that with Assumption~\ref{assump:heavy-tailed}, we have
	\begin{align*}
		\EE \left[\left|\xi-\inp{\X_i}{\Bbeta-\Bbeta^*}\right|-\left|\xi\right|\right]&\geq2\int_{-\sigma_i}^{+\sigma_i}\int_{0}^{t}\int_{0}^{s}h_{\xi}(w) f_{z_i}(t)\, dw\, ds\, dt\\
		&\geq \frac{2}{b_0}\int_{-\sigma_i}^{+\sigma_i}\int_{0}^{t}s f_{z_i}(t)\, ds\, dt\\
		&=\frac{1}{b_0}\int_{-\sigma_i}^{+\sigma_i}t^2 f_{z_i}(t)\, dt\\
		&=\frac{\sigma_i^2}{b_0}\int_{-1}^{1} t^2\cdot\frac{1}{\sqrt{2\pi}}\exp(-t^2/2)\, dt\geq \frac{\sigma_i^2}{6b_0},
	\end{align*}
	where the last line uses $\int_{-1}^{1} t^2\cdot\frac{1}{\sqrt{2\pi}}\exp(-t^2/2)\, dt\geq\frac{1}{6}$. Besides, Assumption~\ref{assump:sensing operators:vec} implies $\sigma_i^2\geq \kl\ltwo{\Bbeta-\Bbeta^*}^2$. Thus we have\begin{align*}
		\EE f(\Bbeta)-\EE f(\Bbeta^*)|&=\sum_{i=1}^{n} \EE \left[\left|\xi-\inp{\X_i}{\Bbeta-\Bbeta^*}\right|-\left|\xi\right|\right]\\
		&\geq \frac{n}{6b_0}\kl\ltwo{\Bbeta-\Bbeta^*}^2.
	\end{align*}
	On the other hand for arbitrary $\Bbeta_1,\Bbeta_2\in\RR^d$, based on Equation~\eqref{eq:l1condiexp-vec} it has
	\begin{align*}
		&{~~~~}\EE\left[\left|\xi-\inp{\X_i}{\Bbeta_1-\Bbeta^*}\right|-\left|\xi-\inp{\X_i}{\Bbeta_2-\Bbeta^*}\right|\bigg|\X_i\right]\\
		&=2\int_{\inp{\X_i}{\Bbeta_2-\Bbeta^*}}^{\inp{\X_i}{\Bbeta_1-\Bbeta^*}}\int_{0}^{s}h_{\xi}(w)\, dw\, ds\\
		&\leq\frac{2}{b_1}\left|\int_{\inp{\X_i}{\Bbeta_2-\Bbeta^*}}^{\inp{\X_i}{\Bbeta_1-\Bbeta^*}}\int_{0}^{s}\, dw\, ds\right|\\
		&=b_1^{-1}\cdot\left|\inp{\X_i}{\Bbeta_1-\Bbeta_2}^2+2\inp{\X_i}{\Bbeta_1-\Bbeta_2}\inp{\X_i}{\Bbeta_2-\Bbeta^*}\right|,
	\end{align*}
	where the third line is from noise density condition, Assumption~\ref{assump:heavy-tailed}. Then take expectation over $\X_i$ on each side of the above equation and it becomes
	\begin{align*}
		\EE\left[\left|\xi-\inp{\X_i}{\Bbeta_1-\Bbeta^*}\right|-\left|\xi-\inp{\X_i}{\Bbeta_2-\Bbeta^*}\right|\right]&\leq b_1^{-1}\ku\cdot\left[\ltwo{\Bbeta_1-\Bbeta_2}^2+2\ltwo{\Bbeta_1-\Bbeta_2}\ltwo{\Bbeta_2-\Bbeta^*}\right],
	\end{align*}
	where Holder's inequality is used. Sum over $i=1,\dots,n$ and it is
	\begin{align*}
		\EE f(\Bbeta_1)-\EE f(\Bbeta_2)\leq nb_1^{-1}\ku\cdot\left[\ltwo{\Bbeta_1-\Bbeta_2}^2+2\ltwo{\Bbeta_1-\Bbeta_2}\ltwo{\Bbeta_2-\Bbeta^*}\right],
	\end{align*}
	which completes the proof.
\end{proof}
\begin{lemma}[Proposition~E.1 of \cite{bellec2018slope}]
	Let $g_1, \ldots, g_p$ be zero-mean Gaussian random variables with variance at most $\sigma^2$. Denote by $\left(g_1^{\sharp}, \ldots, g_p^{\sharp}\right)$ be a non-increasing rearrangement of $\left(\left|g_1\right|, \ldots,\left|g_p\right|\right)$. Then
	$$
	\mathbb{P}\left(\frac{1}{s \sigma^2} \sum_{j=1}^s\left(g_j^{\sharp}\right)^2>t \log \left(\frac{2 p}{s}\right)\right) \leq\left(\frac{2 p}{s}\right)^{1-\frac{3 t}{8}}
	$$
	for all $t>0$ and $s \in\{1, \ldots, p\}$.
	\label{teclem:maxsums}
\end{lemma}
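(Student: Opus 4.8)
The plan is to reduce the ordered sum to a supremum over sparse index sets and then control it by a Chernoff/union-bound argument. Writing $g=(g_1,\dots,g_p)$, I would first observe that the sum of the top $s$ squared magnitudes is exactly
\[
\sum_{j=1}^s (g_j^\sharp)^2 \;=\; \max_{S\subseteq\{1,\dots,p\},\,|S|=s}\ \sum_{i\in S} g_i^2,
\]
since the non-increasing rearrangement selects precisely the $s$ coordinates of largest magnitude. Hence, writing $L:=\log(2p/s)$, the event in question is $\{\max_{|S|=s}\sum_{i\in S}g_i^2 > t s\sigma^2 L\}$, and a union bound over the $\binom{p}{s}$ subsets gives
\[
\PP\Big(\sum_{j=1}^s (g_j^\sharp)^2 > t s\sigma^2 L\Big)\ \le\ \binom{p}{s}\,\max_{|S|=s}\PP\Big(\sum_{i\in S} g_i^2 > t s\sigma^2 L\Big).
\]

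For a fixed $S$, the vector $(g_i)_{i\in S}$ is a centered Gaussian with covariance $\bSigma_S$ whose diagonal entries are at most $\sigma^2$, so $\operatorname{tr}(\bSigma_S)\le s\sigma^2$. The quadratic form $\sum_{i\in S}g_i^2$ then obeys, for $0<\lambda<1/(2s\sigma^2)$,
\[
\EE\exp\Big(\lambda\sum_{i\in S}g_i^2\Big)=\prod_{k}(1-2\lambda\mu_k)^{-1/2},
\]
where $\mu_k\ge 0$ are the eigenvalues of $\bSigma_S$ and $\sum_k\mu_k\le s\sigma^2$; a Chernoff bound then yields a sub-exponential (chi-squared type) tail for $\sum_{i\in S}g_i^2$ at the level $t s\sigma^2 L$. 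The point I would emphasize is that, when the coordinates inside $S$ are not too correlated, this tail decays like $e^{-c\,s\,tL}$, and the factor $s$ in the exponent is exactly what absorbs the combinatorial entropy $\log\binom{p}{s}\le s\log(ep/s)$, which is comparable to $sL$. Choosing the Chernoff parameter suitably and weakening the exponent from $\tfrac12$ to $\tfrac{3}{8}$ leaves enough slack to soak up both the chi-squared correction and the discrepancy between $\log(ep/s)$ and $L$, producing the clean bound $(2p/s)^{1-3t/8}$ uniformly in $s$ and $t$ (note this bound exceeds $1$ whenever $t\le 8/3$, so the small-$t$ regime is automatic).

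The main obstacle is that the per-subset tail and the union bound pull against each other across correlation structures: the subset tail is heaviest in the fully correlated extreme (all $g_i$ equal, $\bSigma_S$ rank one, $\sum_{i\in S}g_i^2=s\,g^2$), where it is only a single-Gaussian tail with no helpful $s$-factor, whereas the union bound is sharp only when distinct subsets carry nearly independent information. Since these two worst cases cannot occur simultaneously, a naive ``worst tail $\times$ full union'' estimate is too lossy. To handle this coherently I would check the extreme correlated case directly---there $\tfrac1{s\sigma^2}\sum_{j\le s}(g_j^\sharp)^2=g^2/\sigma^2$ and $\PP(g^2/\sigma^2>tL)\le 2e^{-tL/2}\le (2p/s)^{1-3t/8}$---and otherwise use the trace bound $\operatorname{tr}(\bSigma_S)\le s\sigma^2$ together with Gaussian concentration (Borell--TIS) of $\sup_{\|v\|_2\le1,\,\|v\|_0\le s}\langle g,v\rangle$, whose weak variance is at most $s\sigma^2$, to interpolate between the two regimes. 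This balancing, which pins down the precise constant $3/8$, is carried out in \cite{bellec2018slope}, and we invoke the result in the stated form.
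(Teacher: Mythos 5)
The paper gives no proof of this lemma at all --- it is imported verbatim as Proposition~E.1 of \cite{bellec2018slope} --- and your proposal, after a sound heuristic sketch (the reduction to a maximum over $s$-subsets, the quadratic-form MGF, the observation that the bound is vacuous for $t\le 8/3$, the direct check of the fully correlated extreme, and the correct diagnosis that a naive union bound with worst-case per-subset tails fails under arbitrary correlations), ultimately does exactly the same thing: it invokes the cited result for the balancing argument that produces the constant $3/8$. So you match the paper in the only respect that matters here --- both treat the lemma as a black-box citation --- and nothing in your sketch is incorrect.
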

\begin{corollary}[Corollary of Lemma~\ref{teclem:maxsums}]
	Under same conditions of Lemma~\ref{teclem:maxsums}, we have\begin{align*}
		\EE\sum_{i=1}^{s}\left(g_i^{\sharp}\right)^2\leq Cs\log(p/s)\sigma^2,
	\end{align*}
	where $C$ is some absolute constant.
	\label{teccol:maxsums}
\end{corollary}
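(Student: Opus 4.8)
The plan is to convert the tail bound of Lemma~\ref{teclem:maxsums} into a bound on the expectation via the layer-cake (tail-integration) identity. Write $S := \sum_{i=1}^s (g_i^\sharp)^2$ and normalize by setting $X := S/(s\sigma^2)$, so that the target is equivalent to $\EE X \le C\log(p/s)$. Abbreviate $L := \log(2p/s)$ and $M := 2p/s$, and note the convenient identity $L = \log M$, which will make the tail integral collapse cleanly. With this notation Lemma~\ref{teclem:maxsums} reads $\PP(X > tL) \le M^{1-3t/8} = e^{L}e^{-3tL/8}$ for every $t>0$.

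Next I would use the nonnegativity of $X$ to write $\EE X = \int_0^\infty \PP(X>u)\,du$ and rescale $u = tL$, obtaining $\EE X = L\int_0^\infty \PP(X>tL)\,dt$. The exponent $1 - 3t/8$ in the tail bound changes sign at $t = 8/3$, which is the natural place to split the integral: for $t \le 8/3$ I bound $\PP(X>tL)$ trivially by $1$, contributing $L\cdot\tfrac{8}{3}$; for $t > 8/3$ I insert the tail bound $e^{L}e^{-3tL/8}$ and integrate, giving $\int_{8/3}^\infty e^{L}e^{-3tL/8}\,dt = \tfrac{8}{3L}$ by an elementary computation that telescopes precisely because $L = \log M$. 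Hence $\EE X \le L\big(\tfrac{8}{3} + \tfrac{8}{3L}\big) = \tfrac{8}{3}(L+1)$, and since $L = \log(2p/s) \ge \log 2$, this is at most an absolute constant times $L$.

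It then remains to translate $\log(2p/s)$ into the stated $\log(p/s)$. In the regime in which the corollary is invoked in Proposition~\ref{prop:emp:vec} (namely $p=d$ with $s$ the sparsity level, so that $s \le p/2$) one has $\log(p/s)\ge \log 2$, whence $\log(2p/s) = \log 2 + \log(p/s) \le 2\log(p/s)$; multiplying back by $s\sigma^2$ then yields $\EE S \le Cs\sigma^2\log(p/s)$ with an absolute constant $C$.

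The calculation is essentially routine, so the only points needing care are (i) choosing the split point to coincide with the sign change of the exponent $1-3t/8$ so that the tail integral both converges and cancels against the prefactor $e^L$, and (ii) keeping all constants absolute. The second point is the genuinely delicate one: taken literally the bound degenerates when $s=p$, where the right-hand side vanishes while the left-hand side does not, so the statement must be read with the implicit normalization $s\le p/2$ under which it is actually applied.
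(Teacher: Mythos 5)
Your proof is correct, and it reaches the bound by a cleaner route than the paper does. The paper also converts the tail bound of Lemma~\ref{teclem:maxsums} into an expectation bound, but it does so discretely: it slices the range of $S=\sum_{i=1}^s(g_i^\sharp)^2$ into level sets $\{jsL\sigma^2\le S\le (j+1)sL\sigma^2\}$ for $j\ge 8$ (with $L=\log(2p/s)$), bounds each slice by $(j+1)sL\sigma^2$ times the tail probability, weakens the exponent via $1-\tfrac38 t\le -\tfrac14 t$ for $t\ge 8$, and then controls the resulting series $\sum_{j\ge 8}(j+1)\bigl((s/2p)^{1/4}\bigr)^{j}\le\sum_{j\ge1}(j+1)(0.85)^j\le 49$. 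You instead integrate the tail exactly via the layer-cake identity, splitting at $t=8/3$ where the exponent $1-\tfrac38 t$ changes sign, which collapses to $\EE X\le\tfrac83(L+1)$ with no series estimate and with explicit absolute constants; the two splits ($t=8/3$ versus $t=8$) play the same role, and both proofs genuinely produce the bound with $\log(2p/s)$, not $\log(p/s)$.

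On that last point you are more careful than the paper: its proof also terminates at $Cs\log(2p/s)\sigma^2$, yet the corollary is stated with $\log(p/s)$ and no comment, which is vacuous (indeed false) at $s=p$. Your reduction $\log(2p/s)\le 2\log(p/s)$ under $s\le p/2$ is exactly the missing bridge, and it is harmless in context: in the application inside Proposition~\ref{prop:emp:vec} the bound is in fact invoked in the form $C\sqrt{ns\ku\log(2d/s)}$, i.e.\ with the factor of $2$ retained, so the discrepancy is best read as a typo in the corollary's statement rather than a gap in either argument.
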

\begin{proof}
	We have,
	\begin{align*}
		&{~~~~}\EE\sum_{i=1}^{s}\left(g_i^{\sharp}\right)^2\\
		&= \EE\left[\sum_{i=1}^{s}\left(g_i^{\sharp}\right)^2\cdot 1_{ \sum_{i=1}^{s}\left(g_i^{\sharp}\right)^2\leq 8s\log(2p/s)\sigma^2} \right]+\sum_{j=8}^{+\infty} \EE\left[\sum_{i=1}^{s}\left(g_i^{\sharp}\right)^2\cdot 1_{ js\log(2p/s)\sigma^2\leq \sum_{i=1}^{s}\left(g_i^{\sharp}\right)^2\leq (j+1)s\log(2p/s)\sigma^2} \right]\\
		&\leq 8s\log(2p/s)\sigma^2+\sum_{j=8}^{+\infty} (j+1)s\log(2p/s)\sigma^2\cdot \PP\left(\sum_{i=1}^{s}\left(g_i^{\sharp}\right)^2> js\log(2p/s)\sigma^2 \right)\\
		&\leq 8s\log(2p/s)\sigma^2+ s\log(2p/s)\sigma^2\cdot\sum_{j=8}^{+\infty} (j+1)\left(\left(\frac{s}{2p}\right)^{\frac{1}{4}}\right)^{j},
	\end{align*}
	where the last line is from Lemma~\ref{teclem:maxsums} and $1-\frac{3}{8}t\leq-\frac{1}{4}t$ when $t\geq8$. Also, note that $\left(\frac{s}{2p}\right)^{\frac{1}{4}}\leq 0.5^{\frac{1}{4}}\leq 0.85$ and then with mathematical analyses, we have
	\begin{align*}
		\sum_{j=8}^{+\infty} (j+1)\left(\left(\frac{s}{2p}\right)^{\frac{1}{4}}\right)^{j}\leq \sum_{j=1}^{+\infty} (j+1)\cdot (0.85)^{j}\leq \frac{1}{(1-0.85)^2}\leq 49.
	\end{align*}
	Thus there exist some absolute constant $C>0$, such that
	\begin{align*}
		\EE\sum_{i=1}^{s}\left(g_i^{\sharp}\right)^2\leq Cs\log(2p/s)\sigma^2.
	\end{align*}
\end{proof}

\section{Proof of Proposition~\ref{prop:main}}
\label{proof:prop:main}
\subsection*{Phase One} For convenience, denote $d_{l}:=\left(1-c\frac{\muc^2}{\Lc^2}\right)^{l}\cdot\fro{\M_0-\M^*}$. Prove by induction. When $l=0$, $\fro{\M_{0}-\M^*}\leq d_{0} $ is obvious. Suppose we already have $\fro{\M_l-\M^*}\leq d_{l}$ and we are going to prove $\fro{\M_{l+1}-\M^*}\leq d_{l+1}$. 

Notice that $\M_{l+1}=\text{SVD}_r(\M_{l}-\eta_{l}\mathcal{P}_{\TT_l}(\G_l))$ and we shall use Lemma~\ref{teclem:perturbation} to bound the distance between $\M_{l+1}$ and the ground truth matrix $\M^*$. First consider $\Vert \M_{l}-\eta_{l}\mathcal{P}_{\mathbb{T}_{l}}(\G_{l})-\M^{*}\Vert_{\mathrm{F}}^{2}$,
\begin{equation}\label{eq1}
	\begin{split}
		&\Vert \M_{l}-\eta_{l}\mathcal{P}_{\mathbb{T}_{l}}(\G_{l})-\M^{*}\Vert_{\mathrm{F}}^{2}\\
		&{~~~~~~~~~}= \fro{\M_{l}-\M^{*}}^{2} - 2\eta_{l}\langle \M_{l}-\M^{*}, \mathcal{P}_{\mathbb{T}_{l}}(\G_{l})\rangle + \eta_{l}^{2}\fro{ \mathcal{P}_{\mathbb{T}_{l}}(\G_{l})}^{2}.
	\end{split}
\end{equation} By definition of sub-gradient, the second term can be bounded in the way of
\begin{equation}
	\begin{split}
		\inp{\M_l-\M^*}{\mathcal{P}_{\mathbb{T}_{l}}(\G_{l}) }&=\inp{\M_l-\M^*}{\G_{l}}-\inp{\M_l-\M^*}{\mathcal{P}_{\mathbb{T}_{l}}^{\perp}(\G_l)}\\
		&\geq f(\M_{l}) - f(\M^{*}) -\inp{\M_l-\M^*}{\mathcal{P}_{\mathbb{T}_{l}}^{\perp}(\G_l)}.\\
	\end{split}
	\label{eq2-2}
\end{equation}
Notice that $ \langle \M_{l}-\M^{*}, \mathcal{P}_{\mathbb{T}_{l}}^{\perp}(\mathbf{G}_{l})\rangle $ has the following expansion, see Lemma 4.1 of \cite{wei2016guarantees} for more detailed calculations,
\begin{align*}
	\inp{\M_l-\M^*}{\mathcal{P}_{\mathbb{T}_{l}}^{\perp}(\G_l)}&=\inp{ \calP_{\TT_l}^{\perp}(\M_{l}-\M^{*})}{\mathbf{G}_{l}}\\
	&=-\inp{\calP_{\TT_l}^{\perp}(\M^{*})}{\mathbf{G}_{l}}\\
	&=\inp{(\U\U^{\top}-\U_l\U_l^{\top})(\M_l-\M^*)(\I-\V_l\V_l^\top)}{\G_{l}}.
\end{align*}
Besides, the term $(\U\U^{\top}-\U_l\U_l^{\top})(\M_l-\M^*)(\I-\V_l\V_l^\top)=-(\U\U^{\top}-\U_l\U_l^{\top})\M^*(\I-\V_l\V_l^\top)$ has rank at most $r$, due to $\text{rank}(\M^*)\leq r$. Therefore by properties of partial Frobenius norm Lemma~\ref{teclem:partial F norm}, we have
\begin{align*}
	\inp{\M_l-\M^\ast}{\mathcal{P}_{\mathbb{T}_{l}}^{\perp}(\G_{l})}&=\inp{(\U\U^{\top}-\U_l\U_l^{\top})(\M_l-\M^*)(\I-\V_l\V_l^\top)}{\G_{l}}\\
	&\leq\fro{(\U\U^{\top}-\U_l\U_l^{\top})(\M_l-\M^*)(\I-\V_l\V_l^{\top})}\fror{\G_{l}}.
\end{align*}
Furthermore, Lemma~\ref{teclem:perturbation} shows perturbation bound $\op{\U\U^{\top}-\U_l\U_l^\top}\leq \frac{8\fro{\M_l-\M^*}}{\sigma_{r}}$, so the above equation becomes $$\inp{\M_l-\M^\ast}{\mathcal{P}_{\mathbb{T}_{l}}^{\perp}(\G_{l})}\leq 8\frac{1}{\sigma_r}\fro{\M_l-\M^\ast}^2\fror{\G_l}.$$
Also, Condition~\ref{assump:two-phase} assumes $\fror{\G_l}\leq \Lc$ and then the above equation becomes $$\inp{\M_l-\M^\ast}{\mathcal{P}_{\mathbb{T}_{l}}^{\perp}(\G_{l})}\leq 8\frac{\Lc}{\sigma_r}\fro{\M_l-\M^\ast}^2.$$
Thus Equation~\eqref{eq2-2} has the lower bound \begin{align*}
	\inp{\M_l-\M^*}{\mathcal{P}_{\mathbb{T}_{l}}(\G_{l}) }\geq f(\M_{l}) - f(\M^{*})-8\frac{\Lc}{\sigma_r}\fro{\M_l-\M^\ast}^2.
\end{align*}
Then analyze last term of Equation~\eqref{eq1}, $\fro{\calP_{\TT_l}(\G_l)}^2$. Though $$\calP_{\TT_l}(\G_l)=\U_l\U_l^{\top}\G_l+\G_l\V_l\V_l^{\top}-\U_l\U_l^{\top}\V_l\V_l^{\top}$$ is not $\text{SVD}_r(\G_l)$, since $\calP_{\TT_l}(\G_l)$ could be written into sum of two rank at most $r$ matrices, $\fro{\calP_{\TT_l}(\G_l)}$ can be upper bounded by $\fro{\text{SVD}_r(\G_l)}$ up to constant scales. Lemma~\ref{teclem:projected subgradient norm} shows $$\fro{\calP_{\TT_l}(\G_l)}\leq\sqrt{2}\fror{\G_l}.$$
Then with phase one regularity conditions in Condition~\ref{assump:two-phase}, $$\fror{\G_l}\leq\Lc,\quad f(\M_{l}) - f(\M^{*})\geq \muc \fro{\M_l-\M^*},$$Equation~\eqref{eq1} could be  bounded in the way of
\begin{equation}
	\begin{split}
		&\fro{\M_l-\eta_l\mathcal{P}_{\TT_l}(\G_l)-\M^*}^2\\
		&{~~~~~~~~}\leq \fro{\M_{l}-\M^*}^2-2\eta_l\muc\fro{\M_l-\M^*}+16\eta_{l}\frac{\Lc}{\sigma_{r}}\fro{\M_l-\M^*}^2+2\eta_l^2\Lc^{2}\\
		&{~~~~~~~~}\leq \fro{\M_{l}-\M^*}^2-\eta_l\muc\fro{\M_l-\M^*}+2\eta_l^2\Lc^{2},
	\end{split}
	\label{eq:prop1-eq1}
\end{equation}
where the last line uses the initialization condition and the induction that $\Vert \M_{l} - \M^{*}\Vert_{\mathrm{F}}\leq d_l\leq \fro{\M_0-\M^*}\leq\frac{1}{16}\frac{\muc}{\Lc}\sigma_{r}$.
\begin{claim}
	For quadratic function $q(x)=x^2-ax:\RR\to\RR$ with parameter $a>0$ and for any $x_2\geq x_1\geq 0$, if $q(x_2)>0$, then we have $$q(x_1)\leq q(x_2).$$
	\label{claim:quadraticfct}
\end{claim}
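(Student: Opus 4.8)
The plan is to reduce everything to a single factorization of the difference $q(x_2)-q(x_1)$ together with one sign determination. First I would expand
$$
q(x_2)-q(x_1)=(x_2^2-x_1^2)-a(x_2-x_1)=(x_2-x_1)(x_1+x_2-a),
$$
which turns the claim into showing that the right-hand side is nonnegative. The first factor $x_2-x_1$ is already nonnegative by the hypothesis $x_2\ge x_1$, so it remains only to control the sign of $x_1+x_2-a$.

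The one genuine input is to locate $x_2$ using the positivity hypothesis. Writing $q(x_2)=x_2(x_2-a)$, the assumption $q(x_2)>0$ combined with $x_2\ge 0$ forces $x_2-a>0$, i.e. $x_2>a$; this is precisely where the nonnegativity of $x_2$ is used, since without it the positivity of $q(x_2)$ could instead be produced by $x_2<0$. Given $x_2>a$ and $x_1\ge 0$, I then obtain $x_1+x_2\ge x_2>a$, so the second factor $x_1+x_2-a$ is strictly positive.

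Multiplying the two factors gives $q(x_2)-q(x_1)=(x_2-x_1)(x_1+x_2-a)\ge 0$, which is exactly the assertion $q(x_1)\le q(x_2)$. There is no substantial obstacle here: the statement is an elementary fact about the upward parabola $q$, and its only subtlety is the deduction $x_2>a$ from the hypotheses, which is what guarantees $x_1+x_2>a$. Equivalently, one could argue by monotonicity, noting that $q$ decreases on $[0,a/2]$ and increases on $[a/2,\infty)$ and splitting on whether $x_1\le a$ (where $q(x_1)\le 0<q(x_2)$) or $x_1>a$ (where both points lie on the increasing branch), but the factorization above is cleaner and avoids the case split.
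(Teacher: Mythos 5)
Your proof is correct, and it takes a genuinely different (and arguably cleaner) route than the paper. The paper argues by a case split on the sign of $q(x_1)$: if $q(x_1)\leq 0$ the conclusion is immediate from $q(x_2)>0$; otherwise $q(x_1)>0$ together with $x_1\geq 0$ forces $x_1>a$, and then both points lie on the branch $(a/2,+\infty)$ where $q$ is increasing, so $x_1\leq x_2$ gives $q(x_1)\leq q(x_2)$. You instead factor the difference once,
\begin{equation*}
q(x_2)-q(x_1)=(x_2-x_1)(x_1+x_2-a),
\end{equation*}
and observe that $q(x_2)>0$ with $x_2\geq 0$ forces $x_2>a$, whence $x_1+x_2-a\geq x_2-a>0$ and both factors are nonnegative. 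This is exactly the monotonicity argument compressed into a single algebraic identity: it avoids the case analysis entirely and makes explicit where the hypothesis $x_2\geq 0$ enters (ruling out $q(x_2)>0$ being produced by $x_2<0$), a point the paper leaves implicit. The paper's version, on the other hand, makes the geometric picture of the parabola's increasing branch more visible, which is how the claim is actually invoked later (the iterates' error bounds $d_l$ sit to the right of the vertex). Either argument suffices; yours is self-contained and requires no appeal to monotonicity of $q$ on an interval.
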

\begin{proof}[Proof of Claim~\ref{claim:quadraticfct}]
	On the one hand, if $q(x_1)\leq 0$, then $q(x_1)\leq0<q(x_2)$. On the other hand, if $q(x_1)\geq 0$, then it infers $x_1>a$. Notice that $q(\cdot)$ is an increasing function when restricted to $(a/2,+\infty)$. Thus, the relation $x_1\leq x_2$ implies $q(x_1)\leq q(x_2)$.
\end{proof}
Regard $\fro{\M_{l}-\M^*}^2-\eta_l\muc\fro{\M_l-\M^*}$ as quadratic function of $\fro{\M_{l}-\M^*}$. By induction, one already has $\fro{\M_{l}-\M^{*}}\leq d_{l}$ and by stepsize selection, we know $\eta_l$ satisfies $$ \frac{1}{5}d_{l}\muc\Lc^{-2}\leq\eta_{l}=(1-c\muc^2/\Lc^2)^{l}\cdot\eta_{0}\leq \frac{3}{10}d_{l}\muc\Lc^{-2}.$$ Thus, the quadratic function is positive at point $d_l$, namely $d_l^2-\eta_{l}\muc d_l>0$. Then Claim~\ref{claim:quadraticfct} implies
\begin{align*}
	\fro{\M_{l}-\M^*}^2-\eta_l\muc\fro{\M_l-\M^*}\leq d_l^2-\eta_{l}\muc d_l.
\end{align*}
As a consequence, Equation~\eqref{eq:prop1-eq1} could be bounded with
\begin{equation}
	\begin{split}
		\fro{\M_l-\eta_l\mathcal{P}_{\TT_l}(\G_l)-\M^*}^2&\leq d_l^2-\eta_{l}\muc d_l+2\eta_{l}^2\Lc^2\\
		&\leq (1-\frac{3}{25}\frac{\muc^2}{\Lc^2})d_{l}^2,
	\end{split}
	\label{eq2}
\end{equation}
where the last line is from $ \frac{1}{5}d_{l}\muc\Lc^{-2}\leq\eta_{l}\leq \frac{3}{10}d_{l}\muc\Lc^{-2}$. Take square root on each side of Equation~\eqref{eq2},
\begin{align}
	\fro{\M_l-\eta_l\mathcal{P}_{\TT_l}(\G_l)-\M^*}\leq(1-\frac{3}{50}\frac{\muc^2}{\Lc^2})d_{l}<d_0\leq\sigma_{r}/8.
	\label{eq2-1}
\end{align} Then apply matrix perturbation bound Lemma~\ref{teclem:perturbation},
\begin{equation}
	\begin{split}
		\fro{\M_{l+1}-\M^*}
		&= \Vert  {\rm SVD}_{r}(\M_{l} - \eta_{l}\mathcal{P}_{\mathbb{T}_{l}}(\mathbf{G}_{l}))- \M^{*}\Vert_{\mathrm{F}}\\
		&\leq \fro{\M_l-\eta_l\mathcal{P}_{\TT_l}(\G_l)-\M^*}+\frac{40}{\sigma_{r}}\fro{\M_l-\eta_l\mathcal{P}_{\TT_l}(\G_l)-\M^*}^{2},
	\end{split}
	\label{eq3}
\end{equation}
Insert Equation~\eqref{eq2-1} into Equation~\eqref{eq3} :
\begin{align*}
	\fro{\M_{l+1}-\M^*}&\leq (1-\frac{3}{50}\frac{\muc^{2}}{\Lc^{2}})d_{l}  + \frac{40}{\sigma_{r}}(1-\frac{3}{50}\frac{\muc^{2}}{\Lc^{2}})^2d_{l}^{2}\\
	&\leq (1-\frac{1}{25}\frac{\muc^{2}}{\Lc^{2}})d_{l},
\end{align*}
which uses $\frac{40}{\sigma_{r}}d_{l}\leq\frac{40}{\sigma_{r}}d_{0} \leq\frac{1}{50}\frac{\muc^{2}}{\Lc^{2}}$. That is $$ \fro{\M_{l+1}-\M^*}\leq (1-\frac{1}{25}\frac{\muc^{2}}{\Lc^{2}})d_{l}\leq \left(1-c\frac{\muc^2}{\Lc^2}\right)d_l=d_{l+1},$$
with constant $c\leq\frac{1}{25}$. Hence, analyses of phase one convergence finish.
\subsection*{Phase Two} Consider the second phase, when $\taus\leq\fro{\M_l-\M^*}<\tauc$. The derivation is slightly similar to phase one case,
\begin{align*}
	&{~~~~}\fro{\M_l-\eta_l\mathcal{P}_{\TT_l}(\G_l)-\M^*}^2\\
	&=\fro{\M_l-\M^*}^{2} - 2\eta_l\langle \M_{l}-\M^{*}, \mathcal{P}_{\mathbb{T}_{l}}(\mathbf{G}_{l})\rangle + \eta_l^{2}\Vert \mathcal{P}_{\mathbb{T}_{l}}(\mathbf{G}_{l})\Vert_{\mathrm{F}}^{2}\\
	&\leq \fro{\M_l-\M^*}^{2} - 2\eta_l(f(\M_{l}) - f(\M^{*})) + 2\eta_l\frac{\Ls}{\sigma_{r}}\Vert \M_{l} - \M^{*}\Vert_{\mathrm{F}}^{3} + 2\eta_l^{2}\fror{\G_l}^2\\
	&\leq \fro{\M_l-\M^*}^{2} - 2\eta_l\mus\fro{\M_l-\M^*}^{2}+ 2\eta_l\frac{\Ls}{\sigma_{r}}\fro{\M_l-\M^*}^{3} + 2\eta_l^{2}\Ls^2\fro{\M_l-\M^*}^2\\
	&\leq(1-\frac{7}{4}\eta_l\mus)\fro{\M_l-\M^*}^{2}+2\eta_l^{2}\Ls^2\fro{\M_l-\M^*}^2,
\end{align*}
where the fourth line follows from Condition~\ref{assump:two-phase} that $f(\M_l)-f(\M^*)\geq\mus\fro{\M_l-\M^*}^2$, $\fror{\G_l}\leq \Ls\fro{\M_l-\M^*}$ and the last line uses $\fro{\M_l-\M^*}\leq\fro{\M_0-\M^*}\leq \frac{1}{8}\frac{\mus}{\Ls}\sigma_r$.
Take stepsize $ \frac{1}{8}\cdot\frac{\mus}{\Ls^2}\leq\eta_l \leq \frac{3}{4}\cdot\frac{\mus}{\Ls^2}$ and one has
\begin{align*}
	\fro{\M_l-\eta\calP_{\TT_l}(\G_l)-\M^*}^2 \leq (1-\frac{1}{8}\frac{\mus^2}{\Ls^2}) \fro{\M_{l} -\M^{*}}^2.
\end{align*}
Take the square root of the above equation and it arrives at \begin{align*}
	\fro{\M_l-\eta\calP_{\TT_l}(\G_l)-\M^*}\leq (1-\frac{1}{16}\frac{\mus^2}{\Ls^2}) \fro{\M_{l} -\M^{*}}<\fro{\M_0-\M^*}\leq\sigma_{r}/8.
\end{align*}
Then use Lemma~\ref{teclem:perturbation} to cope with the matrix perturbations,
\begin{align*}
	\Vert \M_{l+1}-\M^{*}\Vert_{\mathrm{F}}&= \Vert  {\rm SVD}_{r}(\M_{l} - \eta_{l}\mathcal{P}_{\mathbb{T}_{l}}(\mathbf{G}_{l}))- \M^{*}\Vert_{\mathrm{F}}\\
	&\leq \fro{\M_l-\eta\calP_{\TT_l}(\G_l)-\M^*}+\frac{40}{\sigma_{r}}\fro{\M_l-\eta\calP_{\TT_l}(\G_l)-\M^*}^2\\
	&\leq 	(1-\frac{1}{16}\frac{\mus^2}{\Ls^2}) \fro{\M_{l} -\M^{*}}+\frac{40}{\sigma_{r}}(1-\frac{1}{8}\frac{\mus^2}{\Ls^2})^{2} \fro{\M_{l} -\M^{*}}^2\\
	&\leq (1-\frac{1}{32}\frac{\mus^2}{\Ls^2}) \fro{\M_{l} -\M^{*}},
\end{align*}
which uses the initialization condition $40\fro{\M_{l} -\M^{*}}\leq40\fro{\M_{0} -\M^{*}} \leq\frac{1}{32}\cdot\frac{\mus^2}{\Ls^2}\sigma_{r}$. It finishes the proof.

\section{Proofs of Low-rank Linear Regression}
\label{sec:proof-app}
\subsection{Proof of Theorem~\ref{thm:Gaussian-l1}}\label{proof:lem:Gaussian-l1} The following lemma establishes the two-phase regularity conditions of the absolute loss under Gaussian noise. 

\begin{lemma}\label{lem:Gaussian-l1}
	Assume $\xi_1,\dots, \xi_n \stackrel{i.i.d.}{\sim} N(0,\sigma^2)$ and $\{{\rm vec}(\X_i)\}_{i=1}^n$ satisfy Assumption~\ref{assump:sensing operators:vec}.  There exist absolute constants $C_1, C_2,  C_3,  c_1>0$ such that if $n\geq C_1rd_1\ku\kl^{-1}$ and $\BB_1$, $\BB_2$ are given by
	\begin{align*}
		&\BB_1:=\left\{\M\in\MM_r: \|\M-\M^{\ast}\|_{\rm F}\geq \sqrt{\kl^{-1}}\sigma\right\},\\
		&\BB_2:=\left\{\M\in\MM_r: C_2\sigma\left(\frac{\ku}{\kl^2}\cdot\frac{rd_1}{n}\right)^{1/2}\leq \|\M-\M^{\ast}\|_{\rm F}\leq \sqrt{\kl^{-1}}\sigma\right\},
	\end{align*}
	then with probability at least $1-\exp(-c_1rd_1)-3\exp(-\sqrt{n}/\log n)$, the absolute loss $f(\M)=\sum_{i=1}^n |Y_i-\langle \M,  \X_i\rangle |$ satisfies Condition~\ref{assump:two-phase}:
	\begin{enumerate}[(1)]
		\item the rank-$r$ restricted \textbf{two-phase sharpness} with respect to $\M^*$,
		$$
		f(\M)-f(\M^{\ast})\geq 
		\begin{cases}
			\frac{n}{12}\sqrt{\kl} \|\M-\M^{\ast}\|_{\rm F}, & \textrm{ for }\ \  \M\in\BB_1; \\
			\frac{n}{12\sigma}\kl \|\M-\M^{\ast}\|_{\rm F}^2, & \textrm{ for }\ \  \M\in\BB_2;
		\end{cases}
		$$
		
		\item the rank-r restricted \textbf{two-phase sub-gradient bound} with respect to $\M^{\ast}$,
		$$
		\|\G\|_{\rm F,  r}\leq 
		\begin{cases}
			2n\sqrt{\ku}, & \textrm{ for }\ \ \M\in\BB_1;\\
			C_3n\sigma^{-1}\ku\|\M-\M^{\ast}\|_{\rm F},& \textrm{ for }\ \ \M\in\BB_2,
		\end{cases}
		$$
		where $\G\in\partial f(\M)$ is any sub-gradient.
	\end{enumerate}
\end{lemma}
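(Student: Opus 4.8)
The plan is to follow the two-step template of the sparse-vector argument in Lemma~\ref{lem:vec:sparse}, adapted to the low-rank manifold. The two workhorses are: (i) a low-rank empirical-process deviation bound, the rank-$r$ analogue of Proposition~\ref{prop:emp:vec} (proved later as Proposition~\ref{thm:empirical process}), which I would use in the form
\[
\sup_{\text{rank}(\Delta\M)\leq 3r}\frac{\big|f(\M+\Delta\M)-f(\M)-\EE[f(\M+\Delta\M)-f(\M)]\big|}{\fro{\Delta\M}}\ \lesssim\ \sqrt{nrd_1\ku}
\]
on an event of probability $1-\exp(-c_1rd_1)-3\exp(-\sqrt{n}/\log n)$; and (ii) sharp estimates of the population gap $\EE f(\M)-\EE f(\M^*)$, the matrix counterpart of Lemma~\ref{teclem:vec:l1exp}. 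Because Assumption~\ref{assump:sensing operators:vec} makes $\inp{\X_i}{\M-\M^*}$ an exact centered Gaussian, the population side is clean while the empirical-process side carries the difficulty; conditioning on the good event, both regularity conditions follow by combining (i) and (ii).

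For the population estimates I would reuse the identity $\EE[|\xi-t|-|\xi|]=2\int_0^t\int_0^s h_\xi(w)\,dw\,ds$, valid since the finite $1+\eps$ moment kills the boundary term exactly as in Lemma~\ref{teclem:vec:l1exp}. For Gaussian noise $h_\xi\geq b_0^{-1}\asymp\sigma^{-1}$ on an $O(\sigma)$-neighbourhood of the origin and $h_\xi\leq b_1^{-1}\asymp\sigma^{-1}$ everywhere (so $b_0\asymp b_1\asymp\sigma$). Writing $z_i:=\inp{\X_i}{\M-\M^*}\sim N(0,\sigma_i^2)$ with $\sigma_i^2\geq\kl\fro{\M-\M^*}^2$, integrating over a unit-width band and using $\int_{-1}^1 t^2\tfrac{1}{\sqrt{2\pi}}e^{-t^2/2}\,dt\geq\tfrac16$ yields the near-regime quadratic lower bound $\EE f(\M)-\EE f(\M^*)\geq\frac{n\kl}{6\sigma}\fro{\M-\M^*}^2$, whereas in the far regime the folded-Gaussian bound $\EE|z_i|\geq\sqrt{2/\pi}\,\sqrt{\kl}\,\fro{\M-\M^*}$ together with $\gamma=\EE|\xi|=\sigma\sqrt{2/\pi}$ gives the linear lower bound. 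The matching upper bound, from $h_\xi\leq b_1^{-1}$, reads $\EE[f(\M_1)-f(\M_2)]\lesssim\sigma^{-1}\ku\big(\fro{\M_1-\M_2}^2+\fro{\M_1-\M_2}\fro{\M_2-\M^*}\big)$.

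The sharpness bounds then drop out. On $\BB_1$ I add the deviation bound to $\EE f(\M)-\EE f(\M^*)\geq\EE\sum_i|\inp{\X_i}{\M-\M^*}|-2n\gamma$; since $\fro{\M-\M^*}\geq\sqrt{\kl^{-1}}\sigma$ and $n\gtrsim\ku\kl^{-1}rd_1$, both the $2n\gamma$ term and the $\sqrt{nrd_1\ku}\,\fro{\M-\M^*}$ fluctuation are dominated, leaving $\frac{n}{12}\sqrt{\kl}\,\fro{\M-\M^*}$. On $\BB_2$ the quadratic curvature $\frac{n\kl}{6\sigma}\fro{\M-\M^*}^2$ overtakes the linear fluctuation precisely when $\fro{\M-\M^*}\gtrsim\sigma(\ku rd_1/(\kl^2 n))^{1/2}$, which is the lower edge of $\BB_2$, leaving $\frac{n\kl}{12\sigma}\fro{\M-\M^*}^2$. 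For the sub-gradient bounds I exploit $\fror{\G}^2=\inp{\G}{\text{SVD}_r(\G)}\leq f(\M+t\,\text{SVD}_r(\G))-f(\M)$ (the sub-gradient inequality with the rank-$\leq r$ increment $\Delta=t\,\text{SVD}_r(\G)$), bound the right-hand side by the population upper bound plus the deviation term, and solve the resulting quadratic in $\fror{\G}$: in $\BB_1$ taking $t=1$ gives the constant bound $2n\sqrt{\ku}$, and in $\BB_2$ choosing $t\asymp\sigma(n\ku)^{-1}$ gives $C_3 n\sigma^{-1}\ku\fro{\M-\M^*}$.

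The main obstacle is ingredient (i): establishing the low-rank empirical-process bound with the stated exponential tail. Unlike the sparse case, the supremum ranges over the non-convex rank-$r$ set, so I expect to combine symmetrization and contraction with covering-number estimates for low-rank matrices (producing the $rd_1$ factor in place of $s\log(2d/s)$) and the Adamczak tail inequality, which is what avoids crudely bounding $\sum_i|\xi_i|$ and yields the $3\exp(-\sqrt{n}/\log n)$ term rather than a weak polynomial tail. Equally delicate is the quantitative balance on $\BB_2$: the statistical boundary $\taus\asymp\sigma(rd_1/n)^{1/2}$ is exactly the crossover where the quadratic population curvature beats the linear stochastic fluctuation, so the absolute constants must be tracked with care to certify both the sharpness and the sub-gradient bounds on the same event.
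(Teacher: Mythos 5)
Your overall architecture (conditioning on the empirical-process event of Proposition~\ref{thm:empirical process}, then adding population bounds) is the paper's, and three of your four bounds are sound: the phase-one sub-gradient bound, and the phase-two sub-gradient bound via $\M_1=\M+t\,{\rm SVD}_r(\G)$ with $t\asymp\sigma(n\ku)^{-1}$ and a quadratic inequality, coincide with the paper's Lemma~\ref{lemma:upperboundsubgradient:gaussian}. The genuine gap is in your phase-one sharpness argument, and it sits exactly where the Gaussian lemma differs from its heavy-tailed counterpart (Lemma~\ref{lem:heavytail-l1}). You lower-bound the population gap by the triangle inequality, $\EE f(\M)-\EE f(\M^{\ast})\geq \EE\sum_i|\langle \X_i,\M-\M^{\ast}\rangle|-2n\gamma$, and assert that $2n\gamma$ is dominated on $\BB_1$. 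For Gaussian noise $\gamma=\sqrt{2/\pi}\,\sigma$, while $\EE\sum_i|\langle \X_i,\M-\M^{\ast}\rangle|\geq n\sqrt{2/\pi}\sqrt{\kl}\fro{\M-\M^{\ast}}$; at the inner edge of $\BB_1$, where $\fro{\M-\M^{\ast}}=\kl^{-1/2}\sigma$, the main term equals $n\sqrt{2/\pi}\,\sigma$ and the subtracted term equals $2n\sqrt{2/\pi}\,\sigma$, so your lower bound is negative, and it remains below the claimed $\tfrac{n}{12}\sqrt{\kl}\fro{\M-\M^{\ast}}$ for all $\fro{\M-\M^{\ast}}\lesssim 2.2\,\kl^{-1/2}\sigma$. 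This is precisely why the heavy-tailed lemma places its phase boundary at $8\kl^{-1/2}\gamma$ rather than $\kl^{-1/2}\sigma$. The paper never uses this lossy triangle inequality in the Gaussian case: since $\xi_i-\langle \X_i,\M-\M^{\ast}\rangle$ is exactly Gaussian, Lemma~\ref{teclem:l1expectation} gives $\EE f(\M)-\EE f(\M^{\ast})\geq n\sqrt{2/\pi}\bigl[\sqrt{\sigma^2+\kl\fro{\M-\M^{\ast}}^2}-\sigma\bigr]=n\sqrt{2/\pi}\,\kl\fro{\M-\M^{\ast}}^2\big/\bigl(\sqrt{\sigma^2+\kl\fro{\M-\M^{\ast}}^2}+\sigma\bigr)$, which on all of $\BB_1$ is at least $\tfrac{\sqrt{2/\pi}}{\sqrt{2}+1}n\sqrt{\kl}\fro{\M-\M^{\ast}}$, uniformly down to the edge. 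To repair your proof you must either invoke this exact folded-Gaussian computation, or redefine $\BB_1$ with the heavy-tailed threshold — which proves a different lemma than the one stated.

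A secondary, quantitative defect: in phase two you take "$h_\xi\geq b_0^{-1}\asymp\sigma^{-1}$ on an $O(\sigma)$-neighbourhood," but the band you integrate over has half-width $\sigma_i\leq\sqrt{\ku}\fro{\M-\M^{\ast}}\leq\sqrt{\ku/\kl}\,\sigma$, on which the best uniform Gaussian density lower bound is $(\sqrt{2\pi}\sigma)^{-1}e^{-\ku/(2\kl)}$; hence $b_0=\sqrt{2\pi}\,e^{\ku/(2\kl)}\sigma>2\sigma$, and your curvature constant is strictly worse than the lemma's $\tfrac{n\kl}{12\sigma}$ even when $\ku=\kl$. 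The constants remain absolute (since $\ku/\kl$ is), but the stated constants — which feed the explicit stepsize prescriptions in Theorem~\ref{thm:Gaussian-l1} — are only recovered by the exact Gaussian expectation formula, not by specializing the generic $b_0,b_1$ machinery of the heavy-tailed proof, which is in essence what your proposal does.
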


By putting together Lemma~\ref{lem:Gaussian-l1} and  Proposition~\ref{prop:main}, we  immediately obtain the convergence and statistical performance of RsGrad Algorithm~\ref{alg:RsGrad}.  The proof is a combination of Proposition~\ref{prop:main} and Lemma~\ref{lem:Gaussian-l1},  and is hence omitted.  

\begin{proof}[Proof of Lemma~\ref{lem:Gaussian-l1}]
	We shall proceed assuming the event $\bcalE = \{\sup_{\substack{\Delta\M\in\MM_{2r} \\ \M\in\RR^{d_1\times d_2} }}|f(\M+\Delta\M)-f(\M) - \EE(f(\M+\Delta\M)-f(\M))|\cdot\fro{\Delta\M}^{-1}\leq C_2\sqrt{nd_1r\ku} \}$ holds. Specifically, Theorem \ref{thm:empirical process} proves $$\PP(\bcalE)\geq1-\exp(-cd_1r/4)-3\exp(-\sqrt{n}/\log n). $$
	First consider upper bound of partial Frobenius norm for the sub-gradient $\fror{\G}$ in phase one, where $\G\in \partial f(\M)$ and $\M$ has rank at most $r$. According to event $\bcalE$, it has
	\begin{align*}
		&{~~~~}f(\M+\text{SVD}_r(\G))-f(\M)\\
		&\leq \EE f(\M+\text{SVD}_r(\G))-\EE f(\M)+C_2\sqrt{nd_1r\ku}\fro{\text{SVD}_r(\G)}\\
		&=\EE \sum_{i=1}^{n}\vert Y_i-\inp{\M +\text{SVD}_r(\G)}{\X_i}\vert - \EE\sum_{i=1}^{n}\vert Y_i-\inp{\M}{\X_i}\vert +C_2\sqrt{nd_1r\ku}\fro{\text{SVD}_r(\G)}.
	\end{align*}
	Then by triangular inequality, we have
	\begin{align*}
		f(\M+\text{SVD}_r(\G))-f(\M)\leq \sum_{i=1}^{n}\EE\vert\inp{\text{SVD}_r(\G)}{\X_i}\vert+C_2\sqrt{nd_1r\ku}\fro{\text{SVD}_r(\G)}.
	\end{align*}
	Moreover, Corollary~\ref{cor:l1expecation noiseless} proves $\sum_{i=1}^{n}\EE\vert\inp{\text{SVD}_r(\G)}{\X_i}\vert\leq n\sqrt{\ku}\fro{\text{SVD}_r(\G)}$. Thus, together with sample size condition $n\geq c\sqrt{\ku\kl^{-1}}d_1r\geq cd_1r$, we obtain \begin{align*}
		f(\M+\text{SVD}_r(\G))-f(\M)\leq 2n\sqrt{\ku}\fro{\text{SVD}_r(\G)}.
	\end{align*} Then, recall definition of partial Frobenius norm $\fro{\text{SVD}_r(\G)}=\fror{\G}$; see Lemma~\ref{teclem:partial F norm}, and it leads to 
	\begin{align}
		f(\M+\text{SVD}_r(\G))-f(\M)\leq2n\sqrt{\ku}\fror{\G}.
		\label{eq:31}
	\end{align}
	On the other hand, by definition of sub-gradient, we have $$f(\M+\text{SVD}_r(\G))-f(\M)\geq \inp{\text{SVD}_r(\G)}{\G}=\fro{\text{SVD}_r(\G)}^2=\fror{\G}^2,$$ and together with Equation~\eqref{eq:31}, we have $$\fror{\G}\leq 2n\sqrt{\ku},$$ which implies $\Lc=2n\sqrt{\ku}$.
	
	Next consider the lower bound for $f(\M) - f(\M^*)$. We discuss the two phases respectively, namely, phase one when $\fro{\M-\M^*}\geq \sqrt{\kl^{-1}}\sigma$ and phase two when $C\sqrt{\frac{\ku}{\kl^2}\cdot\frac{d_1 r}{n}}\sigma\leq\fro{\M-\M^*}< \sqrt{\kl^{-1}}\sigma$. 
	
	\noindent\textit{Case 1:}
	When $\Vert \mathbf{M}-\mathbf{M}^{*}\Vert_{\mathrm{F}}\geq\sqrt{\kl^{-1}}\sigma=\tauc$, under event $\bcalE$, we have 
	\begin{align*}
		f(\mathbf{M})-f(\mathbf{M}^*)&\geq\mathbb{E}\left[f(\mathbf{M})-f(\mathbf{M}^*) \right] - C_2\sqrt{nd_1r\ku}\Vert \mathbf{M}-\mathbf{M}^{*}\Vert_{\mathrm{F}}\\
		&\geq n\sqrt{2/\pi}\frac{\kl}{\sqrt{\kl\Vert \mathbf{M}-\mathbf{M}^{*}\Vert_{\mathrm{F}}^{2}+\sigma^{2}}+\sigma}\Vert \mathbf{M}-\mathbf{M}^{*}\Vert_{\mathrm{F}}^{2} - C_2\sqrt{nd_1r\ku}\Vert \mathbf{M}-\mathbf{M}^{*}\Vert_{\mathrm{F}}\\
		&\geq n\sqrt{2/\pi}\frac{\sqrt{\kl}}{\sqrt{2}+1}\Vert \mathbf{M}-\mathbf{M}^{*}\Vert_{\mathrm{F}} -C_2\sqrt{nd_1r\ku}\Vert \mathbf{M}-\mathbf{M}^{*}\Vert_{\mathrm{F}}\\
		&\geq \frac{1}{12}n\sqrt{\kl}\Vert \mathbf{M}-\mathbf{M}^{*}\Vert_{\mathrm{F}},
	\end{align*}
	where the second line uses $\EE f(\M)-\EE f(\M^*)$, see Lemma~\ref{teclem:l1expectation}, the penultimate line is from $\Vert \mathbf{M}-\mathbf{M}^{*}\Vert_{\mathrm{F}}\geq\sqrt{\kl^{-1}}\sigma$ and the last line uses $n\geq Cd_1r\ku\kl^{-1}$ for some absolute constant $C>0$. It shows with $\taus=\sqrt{\kl^{-1}}\sigma$,  $\muc=\frac{1}{12}n\sqrt{\kl}$.

	\noindent\textit{Case 2:}
	When $\taus=C\sqrt{\frac{\ku}{\kl^2}\cdot\frac{d_1 r}{n}}\sigma\leq\Vert \mathbf{M}-\mathbf{M}^{*}\Vert_{\mathrm{F}}<\sqrt{\kl^{-1}}\sigma=\tauc$, under event $\bcalE$, we have \begin{equation*}
		\begin{split}
			f(\mathbf{M})-f(\mathbf{M}^*)&\geq\mathbb{E}\left[f(\mathbf{M})-f(\mathbf{M}^*) \right] - C_2\sqrt{nd_1r\ku}\Vert \mathbf{M}-\mathbf{M}^{*}\Vert_{\mathrm{F}}\\
			&\geq n\sqrt{2/\pi}\frac{\kl}{\sqrt{\kl\Vert \mathbf{M}-\mathbf{M}^{*}\Vert_{\mathrm{F}}^{2}+\sigma^{2}}+\sigma}\Vert \mathbf{M}-\mathbf{M}^{*}\Vert_{\mathrm{F}}^{2} - C_2\sqrt{nd_1r\ku}\Vert \mathbf{M}-\mathbf{M}^{*}\Vert_{\mathrm{F}}\\
			&\geq\frac{1}{6}\frac{n}{\sigma}\kl\Vert \mathbf{M}-\mathbf{M}^{*}\Vert_{\mathrm{F}}^{2} -C_2\sqrt{nd_1r\ku}\Vert \mathbf{M}-\mathbf{M}^{*}\Vert_{\mathrm{F}}\\
			&\geq \frac{1}{12}\frac{n}{\sigma}\kl\Vert \mathbf{M}-\mathbf{M}^{*}\Vert_{\mathrm{F}}^{2},
		\end{split}
	\end{equation*}
	where the penultimate line uses $\sqrt{\kl}\Vert \mathbf{M}-\mathbf{M}^{*}\Vert_{\mathrm{F}}< \sigma$ and the last line is from $\fro{\M-\M^*}\geq C\sqrt{\frac{\ku}{\kl^2}\cdot\frac{d_1 r}{n}}\sigma$. It shows $\mus=\frac{1}{12}\frac{n}{\sigma}\kl$ under $\taus=C\sqrt{\frac{\ku}{\kl^2}\cdot\frac{d_1 r}{n}}\sigma$.
	
	Finally, the following Lemma \ref{lemma:upperboundsubgradient:gaussian} proves under the event $\bcalE$, $\Ls=Cn\sigma^{-1}\ku$ and thus proof of Lemma~\ref{lem:Gaussian-l1} completes.
\end{proof}

\begin{lemma}[Upper bound for sub-gradient]\label{lemma:upperboundsubgradient:gaussian}
	Let $\M\in\MM_r$ satisfy $\fro{\M-\M^*}\geq \taus:=\sqrt{\frac{d_1r}{n}\cdot\frac{\ku}{\kl^2}}\sigma$. Let $\G\in\partial f(\M)$ be any sub-gradient. Under the event $\bcalE$, we have $\fror{\G}\leq Cn\sigma^{-1}\ku\fro{\M-\M^*}$ for some absolute constant $C>0$.
\end{lemma}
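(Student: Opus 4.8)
The plan is to mimic the phase-two sub-gradient argument used in the vector case (the second part of the proof of Lemma~\ref{lem:vec:sparse}), replacing the thresholded sub-gradient by its best rank-$r$ approximation. Set $\D:=\text{SVD}_r(\G)$, so that $\inp{\G}{\D}=\fro{\text{SVD}_r(\G)}^2=\fror{\G}^2$ and $\fro{\D}=\fror{\G}$. For a small step $t>0$ to be fixed later, the defining inequality of the sub-gradient gives the lower bound
\[
f(\M+t\D)-f(\M)\ \geq\ t\inp{\G}{\D}\ =\ t\,\fror{\G}^2 .
\]
Since $\D$ has rank at most $r$, the perturbation $t\D$ lies in $\MM_{2r}$, so the event $\bcalE$ is applicable to it.

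The crux is to produce a matching \emph{upper} bound whose leading term is quadratic in $t$ and whose cross term carries a factor $\fro{\M-\M^*}$. On $\bcalE$ we have
\[
f(\M+t\D)-f(\M)\ \leq\ \EE\!\left[f(\M+t\D)-f(\M)\right]+C_2\sqrt{nd_1r\ku}\; t\,\fro{\D}.
\]
To control the expectation I would reproduce the matrix analogue of the expectation computation in Lemma~\ref{teclem:vec:l1exp}. Writing $a_i:=\inp{\M-\M^*}{\X_i}$ and $b_i:=\inp{\D}{\X_i}$ and using $H_\xi(0)=1/2$, the conditional identity $\EE[\,|\xi-a_i-tb_i|-|\xi-a_i|\mid\X_i\,]=2\int_{a_i}^{a_i+tb_i}(H_\xi(s)-1/2)\,ds$ together with the Gaussian density bound $h_\xi\leq(\sigma\sqrt{2\pi})^{-1}$, hence $|H_\xi(s)-1/2|\leq|s|/(\sigma\sqrt{2\pi})$, yields a per-sample bound of order $\sigma^{-1}(|a_i|\,|tb_i|+t^2b_i^2)$. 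Taking expectation over $\X_i$, applying Cauchy--Schwarz and Assumption~\ref{assump:sensing operators:vec} ($\EE a_i^2\leq\ku\fro{\M-\M^*}^2$, $\EE b_i^2\leq\ku\fro{\D}^2$), and summing over $i$ gives
\[
\EE\!\left[f(\M+t\D)-f(\M)\right]\ \leq\ \frac{Cn\ku}{\sigma}\Big(t\,\fro{\D}\,\fro{\M-\M^*}+t^2\fro{\D}^2\Big).
\]

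Combining the two bounds and cancelling a common factor $t\,\fro{\D}$ (the case $\fro{\D}=0$ being trivial) gives
\[
\fror{\G}\ \leq\ \frac{Cn\ku}{\sigma}\Big(\fro{\M-\M^*}+t\,\fror{\G}\Big)+C_2\sqrt{nd_1r\ku}.
\]
Choosing $t\asymp \sigma/(n\ku)$ small enough that $\frac{Cn\ku}{\sigma}t\leq\tfrac12$ lets the $t\,\fror{\G}$ term be absorbed into the left-hand side, leaving $\fror{\G}\leq \frac{Cn\ku}{\sigma}\fro{\M-\M^*}+C\sqrt{nd_1r\ku}$. Finally I invoke the phase-two hypothesis $\fro{\M-\M^*}\geq\taus=\sqrt{(d_1r/n)\,\ku\kl^{-2}}\,\sigma$: this makes $\frac{n\ku}{\sigma}\fro{\M-\M^*}\geq \sqrt{nd_1r}\,\ku^{3/2}\kl^{-1}\geq\sqrt{nd_1r\ku}$ because $\ku\geq\kl$, so the additive term $\sqrt{nd_1r\ku}$ is dominated by the multiplicative one, and the claimed bound $\fror{\G}\leq Cn\sigma^{-1}\ku\fro{\M-\M^*}$ follows.

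The main obstacle is the expectation upper bound: unlike the far-field (phase-one) argument, where a crude triangle-inequality bound of order $n\sqrt{\ku}\,\fro{\D}$ suffices, here one must extract the extra $\sigma^{-1}\fro{\M-\M^*}$ factor, which is precisely the quantitative form of the noise's smoothing effect and requires the second-order (quadratic-plus-cross-term) expansion of the folded-Gaussian expectation rather than a first-order bound. Correctly tracking this expansion, and then tuning $t$ so that the quadratic remainder is absorbed, is the delicate part; the conversion of the residual $\sqrt{nd_1r\ku}$ term into the multiplicative form via $\taus$ is routine once $\ku\geq\kl$ is used.
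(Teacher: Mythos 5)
Your proof is correct, and its overall skeleton matches the paper's: perturb $\M$ in the direction $\text{SVD}_r(\G)$ with a step of size $\asymp\sigma/(n\ku)$, lower-bound the increment by $t\fror{\G}^2$ via the sub-gradient inequality, upper-bound it via the event $\bcalE$ plus a bound on the expected increment, and then solve for $\fror{\G}$. Where you genuinely diverge is in the central estimate of $\EE\bigl[f(\M+t\D)-f(\M)\bigr]$. The paper exploits the exact folded-Gaussian formula (its Lemma on the expectation of the absolute loss under Gaussian noise), i.e.\ $\EE f(\M)\asymp n\sqrt{\sigma^2+\fro{\M-\M^*}^2}$ up to $\kl,\ku$ factors, and expands the difference of square roots; because the upper and lower envelopes carry $\ku$ and $\kl$ respectively, this produces an extra additive $\frac{n\ku}{\sigma}\fro{\M-\M^*}^2$ term and forces a quadratic inequality in $\fror{\G}$ at the end. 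You instead use the CDF identity $\EE[\,|\xi-a-tb|-|\xi-a|\mid\X_i\,]=2\int_a^{a+tb}(H_\xi(s)-\tfrac12)\,ds$ together with the density bound $h_\xi\leq(\sigma\sqrt{2\pi})^{-1}$ — which is exactly the device the paper deploys for its \emph{heavy-tailed} sub-gradient bound (with $b_1$ in place of $\sigma\sqrt{2\pi}$), specialized here to Gaussian noise. Your route buys two things: it is more general (it never uses the Gaussian closed form, only a density upper bound), and it yields a cleaner inequality with no additive $\fro{\M-\M^*}^2$ term, so you can absorb the $t\fror{\G}$ term rather than solve a quadratic. The remaining steps — dividing out $t\fro{\D}$, choosing $t$ so the self-referential term has coefficient at most $1/2$, and dominating the residual $\sqrt{nd_1r\ku}$ term by $\frac{n\ku}{\sigma}\fro{\M-\M^*}$ using $\fro{\M-\M^*}\geq\taus$ and $\ku\geq\kl$ — are all sound and mirror how the paper disposes of the same residual term.
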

\begin{proof}[Proof of Lemma~\ref{lemma:upperboundsubgradient:gaussian}]
	Take $\M_1 = \M + \frac{\sigma}{2n\ku}\text{SVD}_r(\G)$, where $\text{SVD}_r(\G)$ is the best rank $r$ approximation of $\G$. So $\rank(\M-\M_1) \leq r$. Then by Lemma~\ref{teclem:l1expectation}, we have
	\begin{align*}
		\EE f(\M_{1})-\EE f(\M)\leq& n\sqrt{2/\pi}\sqrt{\sigma^2+\ku\fro{\M_1-\M^*}^2}-n\sqrt{2/\pi}\sqrt{\sigma^2+\kl\fro{\M-\M^*}^2}\\
		=&\sqrt{2/\pi}n\frac{\ku\Vert \M_{1}-\M^{*}\Vert_{\mathrm{F}}^{2} -\kl \Vert \M-\M^{*}\Vert_{\mathrm{F}}^{2} }{\sqrt{\sigma^{2} + \ku\Vert \M_{1}-\M^{*}\Vert_{\mathrm{F}}^{2} } + \sqrt{\sigma^2 + \kl\Vert \M-\M^{*}\Vert_{\mathrm{F}}^{2} } }\\
		=&\sqrt{2/\pi}n\frac{\kl\Vert \M_{1}-\M\Vert_{\mathrm{F}}^{2}+2\kl\langle \M-\M^{*}, \M_{1}-\M\rangle+(\ku-\kl)\fro{\M_1-\M^*}^2 }{\sqrt{\sigma^{2} + \ku\Vert \M_{1}-\M^{*}\Vert_{\mathrm{F}}^{2} } + \sqrt{\sigma^2 +\kl \Vert \M-\M^{*}\Vert_{\mathrm{F}}^{2} } }\\
		\leq&\frac{n}{\sigma}\ku\left( \Vert \M_{1}-\M\Vert_{\mathrm{F}}^{2}+2\Vert \M-\M^{*}\Vert_{\mathrm{F}} \Vert\M_{1}-\M\Vert_{\mathrm{F}}+\fro{\M-\M^*}^2\right),
	\end{align*}
	Besides, event $\bcalE$ shows
	\begin{align*}
		f(\M_{1}) - f(\M)\leq&\EE f(\M_{1})-\EE f(\M)+C_1\sqrt{ nd_1r\ku}\fro{\M_1-\M}\\
		\leq&\EE f(\M_{1})-\EE f(\M)+C_1\frac{n}{\sigma}\ku\fro{\M-\M^*} \fro{\M_1-\M}\\
		\leq& \frac{n}{\sigma}\ku\Vert \M_{1}-\M\Vert_{\mathrm{F}}^{2} + (2+C_1) \frac{n}{\sigma}\ku\Vert \M-\M^{*}\Vert_{\mathrm{F}} \Vert\M_{1}-\M\Vert_{\mathrm{F}} +\frac{n}{\sigma}\ku\Vert \M-\M^{*}\Vert_{\mathrm{F}}^{2},
	\end{align*}
	where the second inequality is from the condition $\fro{\M-\M^*}\geq\taus\geq \sqrt{\frac{d_1r}{n\ku}}\sigma$. Then insert $\M_1 = \M + \frac{\sigma}{2n\ku}\text{SVD}_r(\G)$ into the equation and then we obtain
	\begin{equation}\label{eq11}
		\begin{split}
			f(\M+\frac{\sigma}{2n\ku}\text{SVD}_{r}(\G))&-f(\M)\\&\leq \frac{\sigma}{4n\ku} \Vert\G\Vert_{\mathrm{F,r}}^{2} + (1+C_1/2) \Vert \M-\M^{*}\Vert_{\mathrm{F}} \Vert\G\Vert_{\mathrm{F,r}}+\frac{n}{\sigma}\ku\Vert \M-\M^{*}\Vert_{\mathrm{F}}^{2}.
		\end{split}
	\end{equation}
	On the other hand, by the definition of sub-gradient, we have
	\begin{align}\label{eq12}
		f(\M+\frac{\sigma}{2n\ku}\text{SVD}_{r}(\G))-f(\M)\geq\inp{\G}{\frac{\sigma}{2n\ku}\text{SVD}_{r}(\G) }= \frac{\sigma}{2n\ku}\fror{\G}^2.
	\end{align}
	Combine Equation~\eqref{eq11} with Equation~\eqref{eq12} and by solving the quadratic inequality we get $$\fror{\G}\leq Cn\sigma^{-1}\ku\fro{\M-\M^*}.$$
\end{proof}

\subsection{Proof of Theorem~\ref{thm:heavytail-l1}}
The following lemma establishes the two-phase regularity conditions of the absolute loss under heavy-tailed noise.

\begin{lemma}\label{lem:heavytail-l1}
	Assume $\{\xi_i\}_{i=1}^n$ and $\{{\rm vec}(\X_i)\}_{i=1}^n$ satisfy Assumptions~\ref{assump:heavy-tailed} and \ref{assump:sensing operators:vec}, respectively.  There exist absolute constants $C_1, C_2, C_3,c_1>0$ such that if $n\geq C_1rd_1\ku\kl^{-1}$ and $\BB_1$, $\BB_2$ are given by
	\begin{align*}
		&\BB_1:=\left\{\M\in\MM_r: \|\M-\M^{\ast}\|_{\rm F}\geq 8\sqrt{\kl^{-1}}\gamma\right\},\\
		&\BB_2:=\left\{\M\in\MM_r: C_2b_0\left(\frac{\ku}{\kl^2}\cdot\frac{rd_1}{n}\right)^{1/2}\leq\|\M-\M^{\ast}\|_{\rm F}< 8\sqrt{\kl^{-1}}\gamma\right\},
	\end{align*}
	then with probability at least $1-\exp(-c_1rd_1)-3\exp(-\sqrt{n}/\log n)$, the absolute loss $f(\M)=\sum_{i=1}^n|Y_i-\langle \M, \X_i\rangle|$ satisfies Condition~\ref{assump:two-phase}:
	\begin{enumerate}[(1)]
		\item the rank-$r$ restricted \textbf{two-phase sharpness} with respect to $\M^{\ast}$,
		$$
		f(\M)-f(\M^{\ast})\geq 
		\begin{cases}
			\frac{n}{4}\sqrt{\kl} \|\M-\M^{\ast}\|_{\rm F}, & \textrm{ for }\ \  \M\in\BB_1; \\
			\frac{n}{12b_0}\kl \|\M-\M^{\ast}\|_{\rm F}^2, & \textrm{ for }\ \  \M\in\BB_2;
		\end{cases}
		$$
		
		\item the rank-r restricted \textbf{two-phase sub-gradient bound} with respect to $\M^{\ast}$,
		$$
		\|\G\|_{\rm F,  r}\leq 
		\begin{cases}
			2n\sqrt{\ku}, & \textrm{ for }\ \ \M\in\BB_1;\\
			C_3nb_1^{-1}\ku\|\M-\M^{\ast}\|_{\rm F},& \textrm{ for }\ \ \M\in\BB_2,
		\end{cases}
		$$
		where $\G\in\partial f(\M)$ is any sub-gradient.
	\end{enumerate}
\end{lemma}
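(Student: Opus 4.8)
The plan is to follow the template of \reflm{lem:Gaussian-l1} (the Gaussian-noise matrix case) but to replace its Gaussian expectation identity with the heavy-tailed expectation estimates of \reflm{teclem:vec:l1exp}. The whole argument will be run on the high-probability event
$\bcalE=\{\sup_{\Delta\M\in\MM_{2r},\,\M}|f(\M+\Delta\M)-f(\M)-\EE[f(\M+\Delta\M)-f(\M)]|\cdot\fro{\Delta\M}^{-1}\le C\sqrt{nd_1r\ku}\}$,
which is exactly the event used in \reflm{lem:Gaussian-l1} and holds with probability at least $1-\exp(-c_1rd_1)-3\exp(-\sqrt n/\log n)$ by the matrix empirical process bound. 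The key observation making this reuse legitimate is that each increment $|\inp{\X_i}{\M+\Delta\M}-Y_i|-|\inp{\X_i}{\M}-Y_i|$ is $1$-Lipschitz, hence bounded by $|\inp{\X_i}{\Delta\M}|$ irrespective of $\xi_i$; thus the heavy tail of the noise never enters the concentration step and the strong tail probability is preserved.

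First I would record the deterministic expectation estimates that substitute for the Gaussian computation of \reflm{lem:Gaussian-l1}. Since $z_i:=\inp{\X_i}{\M-\M^*}\sim N(0,\sigma_i^2)$ with $\kl\fro{\M-\M^*}^2\le\sigma_i^2\le\ku\fro{\M-\M^*}^2$, the one-dimensional conditional-expectation calculation of \reflm{teclem:vec:l1exp} transfers verbatim with $\ltwo{\cdot}$ replaced by $\fro{\cdot}$. This yields, for $\fro{\M-\M^*}\le 8\sqrt{\ku/\kl}\,\gamma$, the lower bound $\EE f(\M)-\EE f(\M^*)\ge \frac{n}{6b_0}\kl\fro{\M-\M^*}^2$ (via $h_\xi\ge b_0^{-1}$ near the origin), and for arbitrary $\M_1,\M_2$ the upper bound $\EE f(\M_1)-\EE f(\M_2)\le nb_1^{-1}\ku[\fro{\M_1-\M_2}^2+2\fro{\M_1-\M_2}\fro{\M_2-\M^*}]$ (via $h_\xi\le b_1^{-1}$ and H\"older's inequality). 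For phase one I would instead use the reverse triangle inequality to get $\EE f(\M)-\EE f(\M^*)\ge \EE\sum_{i=1}^n|\inp{\X_i}{\M-\M^*}|-2n\gamma\ge n\sqrt{2/\pi}\sqrt{\kl}\fro{\M-\M^*}-2n\gamma$.

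Second, I would assemble the four conclusions by adding the centering error $-C\sqrt{nd_1r\ku}\fro{\M-\M^*}$ from $\bcalE$ to the expectation bounds. For the sharpness in phase one ($\M\in\BB_1$), the constraint $\fro{\M-\M^*}\ge 8\sqrt{\kl^{-1}}\gamma$ makes $2n\gamma\le \tfrac{n}{4}\sqrt{\kl}\fro{\M-\M^*}$ absorbable while $n\ge C_1rd_1\ku\kl^{-1}$ kills the centering term, giving $f(\M)-f(\M^*)\ge\frac n4\sqrt{\kl}\fro{\M-\M^*}$; in phase two ($\M\in\BB_2$), the region bound $\fro{\M-\M^*}\ge C_2b_0(\ku\kl^{-2}rd_1/n)^{1/2}$ dominates the centering term and halves the quadratic to $\frac{n}{12b_0}\kl\fro{\M-\M^*}^2$. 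For the phase-one sub-gradient bound I copy \reflm{lem:Gaussian-l1} verbatim: combine $f(\M+\text{SVD}_r(\G))-f(\M)\ge\fror{\G}^2$ with the $\bcalE$-based upper bound and $\sum_i\EE|\inp{\text{SVD}_r(\G)}{\X_i}|\le n\sqrt{\ku}\fror{\G}$ (Corollary~\ref{cor:l1expecation noiseless}) to get $\fror{\G}\le 2n\sqrt{\ku}$. For the phase-two sub-gradient bound I mimic \reflm{lemma:upperboundsubgradient:gaussian}: set $\M_1=\M+\frac{b_1}{2n\ku}\text{SVD}_r(\G)$, feed the heavy-tailed expectation upper bound and $\bcalE$ into $f(\M_1)-f(\M)\ge\frac{b_1}{2n\ku}\fror{\G}^2$, and solve the resulting quadratic inequality in $\fror{\G}$, where the region bound together with $b_1\le b_0$ again absorbs the centering term to yield $\fror{\G}\le C_3 nb_1^{-1}\ku\fro{\M-\M^*}$.

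The main obstacle is not this bookkeeping but the empirical process bound underlying $\bcalE$ with the claimed sub-exponential tail $3\exp(-\sqrt n/\log n)$: under only a finite $1+\eps$ moment, a crude control of $\sum_i|\xi_i|$ would degrade the tail to $1-O(n^{-\eps})$, so the centered process must be controlled by a chaining/Adamczak-type argument over rank-$2r$ matrices that avoids the noise magnitude entirely, precisely the content reused from \reflm{lem:Gaussian-l1}. A secondary subtlety is checking that the boundary term in the conditional-expectation calculation vanishes, i.e. $\lim_{s\to\infty}s(1-H_\xi(s))=0$, which is exactly where the finite $1+\eps$ moment assumption is consumed.
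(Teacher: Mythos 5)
Your proposal is correct and follows essentially the same route as the paper's proof: the same event $\bcalE$ from the matrix empirical process bound (Proposition~\ref{thm:empirical process}), the triangle-inequality argument with Corollary~\ref{cor:l1expecation noiseless} for phase-one sharpness, the density-based conditional-expectation bounds ($h_\xi\geq b_0^{-1}$, $h_\xi\leq b_1^{-1}$) for phase two, and the quadratic-inequality argument with $\M_1=\M+\frac{b_1}{2n\ku}\mathrm{SVD}_r(\G)$ for the phase-two sub-gradient bound, which is exactly the paper's Lemma~\ref{lemma:upperboundsubgradient:heavytail}. The only cosmetic difference is that you import the expectation estimates from the vector-case Lemma~\ref{teclem:vec:l1exp}, whereas the paper rederives the identical computation inline in the matrix setting.
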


The proof of Theorem~\ref{thm:heavytail-l1} is a combination of Proposition~\ref{prop:main} and Lemma~\ref{lem:heavytail-l1}. Thus we only prove Lemma~\ref{lem:heavytail-l1} here. 

\begin{proof}[Proof of Lemma~\ref{lem:heavytail-l1}]
	Suppose the event $\bcalE = \{\sup_{\substack{\Delta\M\in\MM_{2r} \\ \M\in\RR^{d_1\times d_2} }}|f(\M+\Delta\M)-f(\M) - \EE(f(\M+\Delta\M)-f(\M))|\cdot\fro{\Delta\M}^{-1}\leq C_2\sqrt{nd_1r\ku} \}$ holds. Specifically, Theorem \ref{thm:empirical process} proves $$\PP(\bcalE)\geq1-\exp(-cd_1r/4)-3\exp(-\sqrt{n}/\log n). $$ 
	The $\Lc=2n\sqrt{\ku}$ proof is the same as the one in the proof of Lemma~\ref{lem:Gaussian-l1}. Then consider lower bound of $f(\M)-f(\M^*)$.
	
	\noindent\textit{Case 1:} When $\fro{\M-\M^*}\geq8\sqrt{\kl^{-1}}\gamma=:\tauc$, by event $\bcalE$, we have
	\begin{align*}
		f(\M)-f(\M^*)&\geq\EE f(\M)-\EE f(\M^*)-C_2\sqrt{nd_1r\ku}\fro{\M-\M^*}\\
		&=\sum_{i=1}^{n}\EE\left(\left|\inp{\M-\M^*}{\X_{i}}-\xi_{i}\right|-\left|\xi_{i}\right|\right)-C_2\sqrt{nd_1r\ku}\fro{\M-\M^*}\\
		&\geq \sum_{i=1}^{n}\EE\left|\inp{\M-\M^*}{\X_{i}}\right|-2\sum_{i=1}^{n}\EE|\xi_i|-C_2\sqrt{nd_1r\ku}\fro{\M-\M^*}\\
		&\geq n\sqrt{\frac{2\kl}{\pi}}\fro{\M-\M^*}-2n\gamma-C_2\sqrt{nd_1r\ku}\fro{\M-\M^*},
	\end{align*}
	where the third line is from triangular inequality and the last line is from Corollary~\ref{cor:l1expecation noiseless}. Then combine with $\fro{\M-\M^*}\geq 8\sqrt{\kl^{-1}}\gamma$ and $n\geq cd_1r\ku\kl^{-1}$, and $f(\M)-f(\M^*)$ could be further bounded with 
	\begin{align*}
		f(\M)-f(\M^*)\geq\frac{n}{4}\sqrt{\kl}\fro{\M-\M^*},
	\end{align*}
	which verifies $\muc = \frac{n}{4}\sqrt{\kl}$ under $\tauc=8\sqrt{\kl^{-1}}\gamma$.
	
	\noindent\textit{Case 2:} Then consider the second phase when $C_2\sqrt{\frac{\ku}{\kl^2}\cdot\frac{d_1r}{n}}b_0 =\taus\leq \fro{\M-\M^*}< \tauc=8\sqrt{\kl^{-1}}\gamma$. To bound expectation of $f(\M)-f(\M^*)$, we first consider the conditional expectation $\EE\left[f(\M)-f(\M^*)\bigg|\X_1,\dots,\X_n\right]$. Notice for all $t_0\in\RR$,
	\begin{equation}
		\begin{split}
			\EE|\xi-t_0| &= \int_{s\geq t_0}(s-t_0)\,dH_{\xi}(s) + \int_{s< t_0}(t_0-s)\,dH_{\xi}(s)\\
			&=2\int_{s\geq t_0}(s-t_0)\,dH_{\xi}(s) + \int_{-\infty}^{+\infty}(t_0-s)\,dH_{\xi}(s)\\
			&=2\int_{s\geq t_0}(1-H_{\xi}(s))\,ds + t_0 - \int_{-\infty}^{\infty}s \,dH_{\xi}(s),
		\end{split}
		\label{eq:l1condiexp}
	\end{equation}
	where the last line is due to \begin{align*}
		\int_{s\geq t_0}(s-t_0)\,dH_{\xi}(s)&=-\int_{s\geq t_0} (s-t_0)\, d(1-H_\xi(s))\\
		&=-(s-t_0)(1-H_\xi(s))\bigg|_{t_0}^{+\infty}+\int_{s\geq t_0}\left(1-H_\xi
		(s)\right)\, ds\\
		&=\lim_{s\to+\infty}(s-t_0)(1-H_\xi(s))+\int_{s\geq t_0}\left(1-H_\xi
		(s)\right)\, ds,
	\end{align*} whose first term vanishes
	\begin{align*}
		0\leq\lim_{s\to+\infty}(s-t_0)(1-H_\xi(s))\leq\lim_{s\to+\infty} (s-t_0)\cdot\PP(\xi>s)\leq\lim_{s\to+\infty} (s-t_0)\cdot\frac{\EE|\xi|^{1+\eps}}{s^{1+\eps}}=0.
	\end{align*}
	Set $t_0 = \inp{\X_i}{\M-\M^*}$ and then Equation~\eqref{eq:l1condiexp} becomes
	\begin{align*}
		\EE\left[\left|\xi-\inp{\X_i}{\M-\M^*}\right|\big| \X_i\right] = 2\int_{s\geq \inp{\X_i}{\M-\M^*}}(1-H_{\xi}(s))\,ds + \inp{\X_i}{\M-\M^*} - \int_{-\infty}^{+\infty}s dH_{\xi}(s).
	\end{align*}
	In the meantime, with $t_0 = 0$, Equation~\eqref{eq:l1condiexp} becomes $$\EE\left|\xi\right|=2\int_{s\geq 0}(1-H_{\xi}(s))\, ds-\int_{-\infty}^{+\infty} s\, dH_{\xi}(s).$$ Take the difference of the above two equations and it yields
	\begin{align}
		\EE\left[\left|\xi-\inp{\X_i}{\M-\M^*}\right|\big| \X_i\right]-\EE\left|\xi\right| = 2\int_{0}^{\inp{\X_i}{\M-\M^*}}H_{\xi}(s)ds -  \inp{\X_i}{\M-\M^*}.
		\label{eq14}
	\end{align}
	Note that $\langle \mathbf{X}_{i}, \mathbf{M}-\mathbf{M}^{*}\rangle\sim N(0,\text{vec}(\M-\M^*)^{\top}\bSigma_i\text{vec}(\M-\M^*)) $ and denote $z_i:=\langle \mathbf{X}_{i}, \mathbf{M}-\mathbf{M}^{*}\rangle$, $\sigma_{i}^2:=\text{vec}(\M-\M^*)^{\top}\bSigma_i\text{vec}(\M-\M^*)$. Note that Assumption~\ref{assump:sensing operators:vec}, conditions of sensing matrices, implies \begin{align*}
		\kl\fro{\M-\M^*}^2\leq\sigma_{i}^2\leq\ku\fro{\M-\M^*}^2
	\end{align*}
	Denote the density of $z_i$ as $f_{z_i}(\cdot)$. Take expectation of $z_i$ on both sides of Equation~\eqref{eq14}
	\begin{align*}
		\mathbb{E}\left[\left|\xi_i-\inp{\X_i}{\M-\M^*}\right|-\left|\xi_i\right|\right]
		&=2\int_{-\infty}^{+\infty}\int_{0}^{t}\left(H_{\xi}(s)-0.5\right)f_{z_i}(t)\,ds\, dt\\
		&=2\int_{-\infty}^{+\infty}\int_{0}^{t}\int_{0}^{s}h_{\xi}(w)f_{z_i}(t)\,dw\,ds\,dt.
	\end{align*}
	Notice that by conditions on sensing matrices Assumption~\ref{assump:sensing operators:vec}, we have $\sigma_i\leq\sqrt{\ku}\fro{\M-\M^*}\leq 8\sqrt{\ku\kl^{-1}}\gamma$. Hence, when $w\in(-\sigma_i,\sigma_i)$, by Assumption~\ref{assump:heavy-tailed}, we have $h_{\xi}(w)\geq b_0^{-1}$. Thus the above equation could be further bounded with
	\begin{align*}
		\mathbb{E}\left[\left|\xi_i-\inp{\X_i}{\M-\M^*}\right|-\left|\xi_i\right|\right]&\geq2\int_{-\sigma_{i}}^{\sigma_{i}}\int_{0}^{t}f_{z_i}(t)b_0^{-1}s\,ds\,dt\\
		&=b_0^{-1}\int_{-\sigma_{i}}^{\sigma_{i}}t^2f_{z_i}(t)\,dt = b_0^{-1}\sigma_{i}^2\int_{-1}^{1}t^2\cdot \frac{1}{\sqrt{2\pi}}e^{-t^2/2}\,dt\\
		&\geq\frac{1}{6b_0}\sigma_{i}^2,
	\end{align*}
	where the last line is from $\int_{-1}^1t^2\cdot\frac{1}{\sqrt{2\pi}}e^{-t^2/2}\,dt\geq 1/6$. Therefore, sum over $i=1.\dots,n$ and then it has
	\begin{align*}
		\EE f(\M)- \EE f(\M^*) = \sum_{i=1}^n  \mathbb{E}\left[\left|\xi_i-\inp{\X_i}{\M-\M^*}\right|-\left|\xi_i\right|\right] \geq \frac{1}{6b_0}\sum_{i=1}^n\sigma_{i}^2\geq \frac{n}{6b_0}\kl\fro{\M-\M^*}^2.
	\end{align*}
	Invoke event $\bcalE$,
	\begin{align*}
		f(\M)-f(\M^*)&\geq \EE[f(\M)-f(\M^*)] - C\sqrt{nd_1r}\fro{\M-\M^*}\\
		&\geq \frac{n}{6b_{0}}\kl \fro{\M-\M^*}^2 - C\sqrt{nd_1r\ku}\fro{\M-\M^*}\\
		&\geq\frac{n}{12b_0}\kl\fro{\M-\M^*}^2,
	\end{align*}
	where the last inequality uses $\fro{\M-\M^{*}}\geq C_2\sqrt{\frac{d_1r}{n}\cdot \frac{\ku}{\kl^2}}b_{0}$. This proves $\mus=\frac{n}{12b_0}\kl$.
	
	Finally, from the following lemma, we see that $\Ls \leq C_3nb_1^{-1}\ku$ when $\bcalE$ holds.
	And this finishes the proof.
\end{proof}

\begin{lemma}[Upper bound for sub-gradient]\label{lemma:upperboundsubgradient:heavytail}
	Let $\M\in\MM_r$ satisfy $\fro{\M-\M^*}\geq \taus:=C_1\sqrt{\frac{d_1r}{n}\cdot\frac{\ku}{\kl^2}}b_1$. Let $\G\in\partial f(\M)$ be any sub-gradient. Under event $\bcalE$, we have $$\fror{\G}\leq C_2nb_{1}^{-1}\ku\fro{\M-\M^*},$$ where $C_1,C_2>0$ are absolute constants.
\end{lemma}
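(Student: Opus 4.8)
The plan is to mirror the argument used for the Gaussian case in Lemma~\ref{lemma:upperboundsubgradient:gaussian}, replacing the Gaussian-specific expectation formula with a bound that invokes only the density upper bound $h_\xi(x)\leq b_1^{-1}$ from Assumption~\ref{assump:heavy-tailed}. Concretely, I would probe the sub-gradient $\G$ by perturbing $\M$ in the direction of its best rank-$r$ approximation: set $\M_1:=\M+\frac{b_1}{2n\ku}\text{SVD}_r(\G)$, so that $\rank(\M_1-\M)\leq r$ and $\fro{\M_1-\M}=\frac{b_1}{2n\ku}\fror{\G}$. The goal is to sandwich $f(\M_1)-f(\M)$ between a sub-gradient lower bound and an expectation-driven upper bound, and then solve the resulting quadratic inequality for $\fror{\G}$.

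The key new ingredient is an upper bound on $\EE f(\M_1)-\EE f(\M)$ under heavy-tailed noise. Using the conditional-expectation representation already derived in the proof of Lemma~\ref{lem:heavytail-l1}, namely $\EE[\,|\xi-\inp{\X_i}{\M_1-\M^*}|-|\xi-\inp{\X_i}{\M-\M^*}|\mid\X_i\,]=2\int_{\inp{\X_i}{\M-\M^*}}^{\inp{\X_i}{\M_1-\M^*}}\int_0^s h_\xi(w)\,dw\,ds$, and bounding $h_\xi\leq b_1^{-1}$ exactly as in the matrix analog of Lemma~\ref{teclem:vec:l1exp}, I would obtain
$$\EE f(\M_1)-\EE f(\M)\leq nb_1^{-1}\ku\big[\fro{\M_1-\M}^2+2\fro{\M_1-\M}\fro{\M-\M^*}\big],$$
where the factor $\ku$ comes from $\lambda_{\max}(\bSigma_i)\leq\ku$ together with Cauchy--Schwarz applied to the Gaussian linear forms $\inp{\X_i}{\M_1-\M}$ and $\inp{\X_i}{\M-\M^*}$. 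Substituting $\fro{\M_1-\M}=\frac{b_1}{2n\ku}\fror{\G}$ turns this into $\frac{b_1}{4n\ku}\fror{\G}^2+\fror{\G}\fro{\M-\M^*}$.

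Next I would pass to the empirical process on the event $\bcalE$, which contributes a deviation term $C_2\sqrt{nd_1r\ku}\,\fro{\M_1-\M}=\tfrac{C_2 b_1}{2}\sqrt{\tfrac{d_1r}{n\ku}}\,\fror{\G}$. The phase-two lower bound $\fro{\M-\M^*}\geq\taus=C_1 b_1\sqrt{\tfrac{d_1r}{n}\cdot\tfrac{\ku}{\kl^2}}\geq C_1 b_1\sqrt{\tfrac{d_1r}{n\ku}}$, using $\ku/\kl^2\geq\ku^{-1}$ (which follows from $\ku\geq\kl$), lets me absorb this deviation into $\fror{\G}\fro{\M-\M^*}$ up to an absolute constant. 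Combining the resulting upper bound with the sub-gradient lower bound $f(\M_1)-f(\M)\geq\inp{\G}{\M_1-\M}=\frac{b_1}{2n\ku}\fror{\G}^2$ yields $\frac{b_1}{4n\ku}\fror{\G}^2\lesssim\fror{\G}\fro{\M-\M^*}$, from which $\fror{\G}\leq C_2\,nb_1^{-1}\ku\fro{\M-\M^*}$ follows after dividing by $\fror{\G}$.

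The main obstacle is the expectation upper bound in the second step: unlike the Gaussian case, no closed form for $\EE|\xi-t|$ is available, so the argument must route through the double-integral representation and carefully exploit the \emph{upper} density bound $h_\xi\leq b_1^{-1}$ (rather than the lower bound $b_0^{-1}$ used to establish sharpness). The remaining delicacy is bookkeeping the scaling $\frac{b_1}{2n\ku}$ so that the quadratic term $\frac{b_1}{4n\ku}\fror{\G}^2$ survives with a strictly positive coefficient after subtracting the sub-gradient lower bound, which is precisely what makes the quadratic inequality solvable and forces the $b_1^{-1}$ dependence in the final bound.
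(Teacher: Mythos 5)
Your proposal is correct and follows essentially the same route as the paper's proof: the same probe point $\M_1=\M+\frac{b_1}{2n\ku}\mathrm{SVD}_r(\G)$, the same double-integral representation of the conditional expectation bounded via $h_\xi\leq b_1^{-1}$ (giving the $nb_1^{-1}\ku$ quadratic-plus-cross-term bound), the same absorption of the empirical-process deviation on $\bcalE$ using the phase-two lower bound on $\fro{\M-\M^*}$, and the same quadratic inequality against the sub-gradient lower bound $\frac{b_1}{2n\ku}\fror{\G}^2$. All the constants and scalings check out, so this is a faithful reconstruction of the paper's argument.
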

\begin{proof}[Proof of Lemma~\ref{lemma:upperboundsubgradient:heavytail}]
	Take $\M_1 = \M + \frac{b_1}{2n\ku}\text{SVD}_r(\G)$, where $\text{SVD}_r(\G)$ is the best rank $r$ approximation of $\G$. We finish the proof via combining lower bound and upper bound of $f(\M_{1})-f(\M)$. 
	
	First consider $\EE f(\M_1)-\EE f(\M)$.
	Use the same notation as the above proof and similarly,
	\begin{align*}
		&\EE\left[\left|\xi_i-\inp{\X_i}{\M_1-\M^*}\right|- \left|\xi_i-\inp{\X_i}{\M-\M^*}\right|\bigg|\X_i\right]\\
		=&2\int_{\langle \X_{i}, \M-\M^{*}\rangle}^{\langle \X_{i}, \M-\M^{*}\rangle+\langle \X_{i}, \M_{1}-\M\rangle}\int_{0}^{\xi} h_{\xi}(x)\,dx\,d\xi\\
		\leq& 2b_1^{-1}\left|\int_{\langle \X_{i}, \M-\M^{*}\rangle}^{\langle \X_{i}, \M-\M^{*}\rangle+\langle \X_{i}, \M_{1}-\M\rangle}\xi \,d\xi\right|\\
		=&b_{1}^{-1}\left|(\langle \X_{i}, \M-\M^{*}\rangle+\langle \X_{i}, \M_{1}-\M\rangle )^2 - (\langle \X_{i}, \M-\M^{*}\rangle)^2\right|,
	\end{align*}
	where the inequality follows from the upper bound for $h_{\xi}(x)$. Then take expectation with respect to $\X_i$ on each side and sum up over $i$ :
	\begin{align*}
		\EE f(\M_{1}) - \EE f(\M) =& \sum_{i=1}^{n}\EE \left[\left|\xi_i-\inp{\X_i}{\M_1-\M^*}\right|- \left|\xi_i-\inp{\X_i}{\M-\M^*}\right|\bigg|\X_i\right]\\
		\leq&b_1^{-1}\sum_{i=1}^{n} \EE\left|(\langle \X_{i}, \M-\M^{*}\rangle+\langle \X_{i}, \M_{1}-\M\rangle )^2 - (\langle \X_{i}, \M-\M^{*}\rangle)^2\right|\\
		\leq&nb_1^{-1}\ku\left[ \Vert \M_{1} -\M\Vert_{\mathrm{F}}^{2} + 2\Vert \M_{1} -\M\Vert_{\mathrm{F}} \Vert \M -\M^{*}\Vert_{\mathrm{F}}\right],
	\end{align*}
	where the last inequality uses Holder's inequality. Then, bound $f(\M_{1})-f(\M)$ under event $\bcalE$, we have
	\begin{align*}
		f(\M_{1}) - f(\M)\leq&\EE f(\M_{1})-\EE f(\M)+C_1\sqrt{ nd_1r\ku}\fro{\M_1-\M}\\
		\leq&\EE f(\M_{1})-\EE f(\M)+C_1nb_1^{-1}\ku\fro{\M-\M^*}\fro{\M_1-\M}\\
		\leq& nb_1^{-1}\ku \Vert \M_{1}-\M\Vert_{\mathrm{F}}^{2} + (2+C_1) nb_1^{-1}\ku\Vert \M-\M^{*}\Vert_{\mathrm{F}} \Vert\M_{1}-\M\Vert_{\mathrm{F}},
	\end{align*}
	where the second inequality is from the condition $\fro{\M-\M^*}\geq\taus= C_2\sqrt{\frac{d_1r}{n}\cdot\frac{\ku}{\kl^2}}b_0\geq C_2\sqrt{\frac{d_1r}{n}}\sqrt{\ku^{-1}}b_1$. Insert $\M_1 = \M + \frac{b_1}{2n\ku}\text{SVD}_r(\G)$ and it becomes
	\begin{equation}
		\label{eq18}
		\begin{split}
			f(\M+\frac{b_1}{2n\ku}\text{SVD}_{r}(\G))&-f(\M)\\&\leq \frac{b_1}{4n\ku} \Vert\G\Vert_{\mathrm{F,r}}^{2} + (1+C_1/2) \Vert \M-\M^{*}\Vert_{\mathrm{F}} \Vert\G\Vert_{\mathrm{F,r}}.
		\end{split}
	\end{equation}
	On the other hand, by the definition of sub-gradient, $f(\M_{1})-f(\M)$ has lower bound
	\begin{align}\label{eq17}
		f(\M+\frac{b_1}{2n\ku}\text{SVD}_{r}(\G))-f(\M)\geq\inp{\frac{b_1}{2n\ku}\text{SVD}_r(\G)}{\G}= \frac{b_1}{2n\ku}\fror{\G}^2.
	\end{align}
	Combine Equation~\eqref{eq18} with Equation~\eqref{eq17} and then solve the quadratic inequality which leads to $$\fror{\G}\leq C_2nb_{1}^{-1}\ku\fro{\M-\M^*}.$$
\end{proof}

\subsection{Proof of Theorem~\ref{thm:heavytail-huber}}
\label{proof:lem:heavytail-huber}

The following lemma establishes the two-phase regularity conditions of Huber loss under heavy-tailed noise. 

\begin{lemma}\label{lem:heavytail-huber}
	Assume $\{\xi_i\}_{i=1}^n$ and $\{{\rm vec}(\X_i)\}_{i=1}^n$ satisfy Assumptions~\ref{assump:heavytail-huber} and \ref{assump:sensing operators:vec}, respectively. There exist absolute constants $C_1,C_2, C_3,c_1>0$ such that if $n\geq C_1rd_1\ku\kl^{-1}$ and define $\BB_1$, $\BB_2$ to be
	\begin{align*}
		&\BB_1:=\left\{\M\in\MM_r: \|\M-\M^{\ast}\|_{\rm F}\geq 8\sqrt{\kl^{-1}}\gamma+2\sqrt{\kl^{-1}}\delta\right\},\\
		&\BB_2:=\left\{\M\in\MM_r: C_2b_0\left(\frac{\ku}{\kl^2}\cdot\frac{rd_1}{n}\right)^{1/2} \leq\|\M-\M^{\ast}\|_{\rm F}<8\sqrt{\kl^{-1}}\gamma+2\sqrt{\kl^{-1}}\delta \right\},
	\end{align*}
	then with probability at least $1-\exp(-c_1rd_1)-3\exp(-\sqrt{n}/\log n)$, the Huber loss $f(\M)=\sum_{i=1}^n \rho_{H,\delta}(Y_i-\langle \M,  \X_i\rangle)$ satisfies Condition~\ref{assump:two-phase}:
	\begin{enumerate}[(1)]
		\item the rank-$r$ restricted \textbf{two-phase sharpness} with respect to $\M^{\ast}$,
		$$
		f(\M)-f(\M^{\ast})\geq 
		\begin{cases}
			\frac{\delta n}{2}\sqrt{\kl} \|\M-\M^{\ast}\|_{\rm F}, & \textrm{ for }\ \  \M\in\BB_1; \\
			\frac{\delta n}{3b_0}\kl \|\M-\M^{\ast}\|_{\rm F}^2, & \textrm{ for }\ \  \M\in\BB_2;
		\end{cases}
		$$
		
		\item the  rank-r restricted \textbf{two-phase sub-gradient bound} with respect to $\M^{\ast}$,
		$$
		\|\G\|_{\rm F,  r}\leq 
		\begin{cases}
			4\delta n\sqrt{\ku}, & \textrm{ for }\ \ \M\in\BB_1;\\
			C_3\delta nb_1^{-1}\ku\|\M-\M^{\ast}\|_{\rm F},& \textrm{ for }\ \ \M\in\BB_2,
		\end{cases}
		$$
		where $\G\in\partial f(\M)$ is any sub-gradient.
	\end{enumerate}
	
\end{lemma}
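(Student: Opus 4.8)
The plan is to mirror the proofs of Lemmas~\ref{lem:Gaussian-l1} and \ref{lem:heavytail-l1}. I would first condition on a single uniform empirical-process event and then reduce each of the four quantities $\muc,\mus,\Lc,\Ls$ to a deterministic population computation together with the defining inequality of a sub-gradient. Concretely, applying Theorem~\ref{thm:empirical process} and exploiting that $\rho_{H,\delta}$ is $2\delta$-Lipschitz (so the contraction step Theorem~\ref{Contraction Theorem} carries a factor $2\delta$), I obtain with the claimed probability the event $\bcalE$ on which $|f(\M+\Delta\M)-f(\M)-\EE[f(\M+\Delta\M)-f(\M)]|\le C_2\delta\sqrt{nd_1r\ku}\,\fro{\Delta\M}$ holds uniformly over $\M\in\RR^{d_1\times d_2}$ and $\Delta\M\in\MM_{2r}$. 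Under the sample-size condition $n\geq C_1 rd_1\ku\kl^{-1}$ this $\delta$-scaled slack is negligible against every main term.

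For the sub-gradient bounds I reuse the test-point device. In phase one I set $\M_1=\M+\text{SVD}_r(\G)$: the $2\delta$-Lipschitz property together with $\EE|\inp{\text{SVD}_r(\G)}{\X_i}|\le\sqrt{\ku}\,\fro{\text{SVD}_r(\G)}$ (Corollary~\ref{cor:l1expecation noiseless}) gives $\EE[f(\M_1)-f(\M)]\le 2\delta n\sqrt{\ku}\,\fror{\G}$, while the sub-gradient inequality gives $f(\M_1)-f(\M)\ge\fror{\G}^2$; combining on $\bcalE$ yields $\fror{\G}\le 4\delta n\sqrt{\ku}=\Lc$. In phase two I copy Lemma~\ref{lemma:upperboundsubgradient:heavytail} verbatim with $\M_1=\M+\tfrac{b_1}{c\delta n\ku}\text{SVD}_r(\G)$, the only difference being that the population increment is now controlled via the curvature upper bound derived below; solving the resulting quadratic inequality produces $\Ls=C_3\delta nb_1^{-1}\ku$.

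The heart of the argument is the sharpness estimate, driven by the scalar function $g(t):=\EE[\rho_{H,\delta}(\xi-t)]$. The key identity is $g''(t)=2\big(H_\xi(t+\delta)-H_\xi(t-\delta)\big)$, since $\rho_{H,\delta}''=2\,\mathbbm{1}(|\cdot|\le\delta)$; this is exactly why Assumption~\ref{assump:heavytail-huber} is stated through the window increment $H_\xi(x+\delta)-H_\xi(x-\delta)$ rather than the pointwise density, and the symmetry of $H_\xi$ forces $g'(0)=0$. For phase two, writing $z_i=\inp{\X_i}{\M-\M^*}\sim N(0,\sigma_i^2)$ with $\kl\fro{\M-\M^*}^2\le\sigma_i^2\le\ku\fro{\M-\M^*}^2$, the region constraint $\fro{\M-\M^*}<8\sqrt{\kl^{-1}}\gamma+2\sqrt{\kl^{-1}}\delta$ forces $\sigma_i\le 8(\ku/\kl)^{1/2}\gamma+2(\ku/\kl)^{1/2}\delta$, so on $|w|\le\sigma_i$ the lower bound $g''(w)\ge 4\delta b_0^{-1}$ of Assumption~\ref{assump:heavytail-huber} applies; Taylor expanding from $g'(0)=0$ gives $g(z_i)-g(0)\ge 2\delta b_0^{-1}z_i^2$ on that range, and restricting the Gaussian integral to $|z_i|\le\sigma_i$ with $\int_{-1}^1 t^2\cdot\frac{1}{\sqrt{2\pi}}e^{-t^2/2}\,dt\ge\frac16$ yields $\EE[g(z_i)-g(0)]\ge\frac{\delta}{3b_0}\sigma_i^2$, hence $\mus=\frac{\delta n}{3b_0}\kl$. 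For phase-one sharpness I instead use the elementary sandwich $2\delta|x|-\delta^2\le\rho_{H,\delta}(x)\le 2\delta|x|$ and the triangle inequality to reach $\EE[f(\M)-f(\M^*)]\ge 2\delta n\sqrt{2/\pi}\sqrt{\kl}\,\fro{\M-\M^*}-4\delta n\gamma-n\delta^2$; the two inequalities defining $\BB_1$, namely $\gamma\le\frac{\sqrt{\kl}}{8}\fro{\M-\M^*}$ and $\delta\le\frac{\sqrt{\kl}}{2}\fro{\M-\M^*}$, absorb the last two terms and leave $\muc=\frac{\delta n}{2}\sqrt{\kl}$ after subtracting the empirical-process slack.

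The main obstacle is the phase-two curvature computation. Unlike the absolute loss, where $g''(t)=2h_\xi(t)$ is governed by the pointwise density, the Huber loss replaces this by the windowed increment $H_\xi(t+\delta)-H_\xi(t-\delta)$; one must therefore verify that it is the full symmetry hypothesis (not merely $H_\xi(0)=1/2$) that makes $g'(0)=0$, and check that the enlarged phase boundary $\tauc=8\sqrt{\kl^{-1}}\gamma+2\sqrt{\kl^{-1}}\delta$ is exactly calibrated so that $\sigma_i$ never leaves the window $|x|\le 8(\ku/\kl)^{1/2}\gamma+2(\ku/\kl)^{1/2}\delta$ on which Assumption~\ref{assump:heavytail-huber} is active. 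The remaining hazard is purely bookkeeping: carrying the factor $\delta$ uniformly through every slack term so that it cancels against the $\delta$ appearing in each of $\muc,\mus,\Lc,\Ls$.
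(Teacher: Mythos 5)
Your proposal is correct and follows essentially the same route as the paper's own proof: the same $\delta$-scaled empirical-process event from Theorem~\ref{thm:empirical process}, the same test-point/sub-gradient-inequality device for $\Lc$ and $\Ls$ (the paper's Lemma~\ref{lemma:upperboundsubgradient:huber}), the same sandwich $2\delta|x|-\delta^2\leq\rho_{H,\delta}(x)\leq 2\delta|x|$ for phase-one sharpness, and the same computation $\frac{d^2}{dz^2}\EE[\rho_{H,\delta}(\xi-z)]=2\big(H_{\xi}(z+\delta)-H_{\xi}(z-\delta)\big)$ with $g'(0)=0$ from symmetry, Gaussian truncation to $|z_i|\leq\sigma_i$, and the windowed-increment bounds of Assumption~\ref{assump:heavytail-huber} for phase two. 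The only differences are cosmetic (Taylor form versus the paper's exact integral identity, and absolute constants that differ by factors of $2$), neither of which affects the argument.
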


The proof of Theorem~\ref{thm:heavytail-huber} is a combination of Proposition~\ref{prop:main} and Lemma~\ref{lem:heavytail-huber}. Thus we only prove Lemma~\ref{lem:heavytail-huber} here. 

\begin{proof}[Proof of Lemma~\ref{lem:heavytail-huber}]
	We shall proceed assuming the event $\bcalE= \{\sup_{\substack{\Delta\M\in\MM_{2r} \\ \M\in\RR^{d_1\times d_2} }}|f(\M+\Delta\M)-f(\M) - \EE(f(\M+\Delta\M)-f(\M))|\cdot\fro{\Delta\M}^{-1}\leq C_2\delta\sqrt{nd_1r\ku} \}$ holds. Specifically, Theorem \ref{thm:empirical process} proves $$\PP(\bcalE)\geq1-\exp(-cd_1r/4)-3\exp(-\sqrt{n}/\log n). $$
	First consider upper bound of sub-gradient $\G\in f(\M)$, where matrix $\M$ has rank at most $r$. By event $\bcalE$ and Lipschitz $2\delta$ continuity of $\rho_{H,\delta}(\cdot)$, we have
	\begin{align*}
		&{~~~~}f(\M+\text{SVD}_r(\G)) - f(\M)\\
		&= \sum_{i=1}^n\rho_{H,\delta}(\xi_i - \inp{\X_i}{\M+\text{SVD}_r(\G)}) -\rho_{H,\delta}(\xi_i - \inp{\X_i}{\M})\\
		&\leq\EE\sum_{i=1}^n\rho_{H,\delta}(\xi_i - \inp{\X_i}{\M+\text{SVD}_r(\G)}) -\rho_{H,\delta}(\xi_i - \inp{\X_i}{\M})+C_2\delta\sqrt{nd_1r\ku}\fro{\text{SVD}_r(\G)}\\
		&\leq \EE\sum_{i=1}^n 2\delta|\inp{\X_i}{\text{SVD}_r(\G)}|+C_2\delta\sqrt{nd_1r\ku}\fro{\text{SVD}_r(\G)}.
	\end{align*}
	Note that Corollary~\ref{cor:l1expecation noiseless} bounds $\left| \inp{\X_i}{\text{SVD}_r(\G)}\right|\leq n\sqrt{2/\pi}\sqrt{\ku}\fro{\text{SVD}_r(\G)}=n\sqrt{2/\pi}\sqrt{\ku}\fror{\G}$ and $n\geq cd_1r$. Thus the above equation could be upper bounded with \begin{align*}
		f(\M+\text{SVD}_r(\G)) - f(\M)\leq4\delta n\sqrt{\ku}\fro{\text{SVD}_r(\G)}=4\delta n\sqrt{\ku}\fror{\G}
	\end{align*} On the other hand, by definition of sub-gradient, it has
	\begin{align*}
		f(\M+\text{SVD}_r(\G)) - f(\M)\geq\inp{\text{SVD}_r(\G)}{\G}=\fror{\G}^2.
	\end{align*}
	Thus the above two equations lead to $$\fror{\G}^2\leq f(\M+\text{SVD}_r(\G)) - f(\M)\leq 4\delta n\sqrt{\ku}\fror{\G},$$
	which implies $\fror{\G}\leq4\delta n\sqrt{\ku}$, namely, $\Lc=4\delta n\sqrt{\ku}$.
	
	Then consider the lower bound of $f(\M)-f(\M^*)$.
	
	\noindent\textit{Case 1:}  When $\fro{\M-\M^*}\geq 8\gamma\sqrt{\kl^{-1}}+2\delta\sqrt{\kl^{-1}}$, use the inequality $2\delta|x| - \delta^2\leq\rho_{H,\delta}(x)\leq 2\delta\vert x\vert$. Under the event $\bcalE$, we have
	\begin{align*}
		f(\M)-f(\M^*) &\geq \EE f(\M)-\EE f(\M^*)-C_2\delta\sqrt{nd_1r\ku}\fro{\M-\M^*}\\
		&= \EE\sum_{i=1}^n\rho_{H,\delta}(\xi_i - \inp{\X_i}{\M-\M^*}) -\rho_{H,\delta}(\xi_i) - C_2\sqrt{nd_1r\ku}\fro{\M-\M^*}\\
		&\geq 2\delta \EE\sum_{i=1}^n\left(|\xi_i - \inp{\X_i}{\M-\M^*}| - |\xi_i|\right) - n\delta^2- C_2\delta\sqrt{nd_1r\ku}\fro{\M-\M^*}\\
		&\geq2\delta\sqrt{\frac{2}{\pi}}\sum_{i=1}^{n}\EE\left|\inp{\X_i}{\M-\M^*}\right| - 4\delta \sum_{i=1}^{n}\EE|\xi_i| - n\delta^2- C_2\delta\sqrt{nd_1r\ku}\fro{\M-\M^*}\\
		&\geq 2n\delta\sqrt{\frac{2\kl}{\pi}}\fro{\M-\M^*}-4\delta n\gamma-n\delta^2- C_2\delta\sqrt{nd_1r\ku}\fro{\M-\M^*},
	\end{align*}
	where the last line is from Corollary~\ref{cor:l1expecation noiseless} and $\EE|\xi_{i}|=\gamma$. Then with $\fro{\M-\M^*}\geq 8\gamma\sqrt{\kl^{-1}}+2\delta\sqrt{\kl^{-1}}$ and sample size $n\geq cd_1r\ku\kl^{-1}$, it has
	\begin{align*}
		f(\M)-f(\M^*) \geq \frac{1}{2}n\delta\sqrt{\kl}\fro{\M-\M^*}.
	\end{align*}
	It shows with $\tauc=8\gamma\sqrt{\kl^{-1}} + 2\delta\sqrt{\kl^{-1}}$, we have $\muc=\delta n\sqrt{\kl}/2$.
	
	\noindent\textit{Case 2: } When $C_2b_0\sqrt{\frac{\ku}{\kl^2}\cdot\frac{d_1r}{n}}\leq\fro{\M-\M^*}\leq 8\gamma\sqrt{\kl^{-1}}+2\delta\sqrt{\kl^{-1}}$,
	first consider expectation conditioned on $\X_i$. For any $z\in\RR$, it has
	\begin{equation}\label{eq19}
		\begin{split}
			&{~~~~~}\EE\left[\rho_{H,\delta}(\xi_{i}-z)\right]\\
			&= \int_{\delta+z}^{+\infty} 2\delta(x - z)-\delta^2 \,dH_{\xi}(x) + \int_{-\infty}^{z-\delta} 2\delta(z- x)-\delta^2 dH_{\xi}(x) + \int_{z-\delta}^{z+\delta}\left(z -x \right)^{2}dH_{\xi}(x).
		\end{split}
	\end{equation}
	Take derivative over variable $z$,
	\begin{align*}
		&{~~~~~}\frac{d}{d\, z}\EE\left[\rho_{H,\delta}(\xi_{i}-z)\right]\\ &=-\delta^2 h_{\xi}(\delta+z)-2\delta\int_{\delta+z}^{+\infty}\, dH_{\xi}(x)+\delta^2h_\xi(z-\delta)+2\delta\int_{-\infty}^{z-\delta}\, dH_{\xi}(x)\\
		&{~~~~~}+\delta^2h_{\xi}(z+\delta)-\delta^2h_{\xi}(z-\delta)-2\int_{z-\delta}^{z+\delta}(x-z)\, dH_{\xi}(x)\\
		&=-2\delta+2\delta H_{\xi}(z+\delta)+2\delta H_{\xi}(z-\delta)-2(x-z)H_{\xi}(x)\bigg|_{z-\delta}^{z+\delta}+2\int_{z-\delta}^{z+\delta} H_{\xi_{i}}(x)\, dx\\
		&=2\int_{z-\delta}^{z+\delta}H_{\xi}(x)\,dx - 2\delta,
	\end{align*}
	and take second order derivative,
	\begin{align*}
		\frac{d^2}{dz^2}\EE\left[\rho_{H,\delta}(\xi_{i}-z)\right] &= 2(H_{\xi}(z+\delta) - H_{\xi}(z-\delta)).
	\end{align*}
	Assumption~\ref{assump:heavytail-huber} guarantees $\frac{d}{d\, z}\EE\left[\rho_{H,\delta}(\xi_{i}-z)\right]\bigg|_{z=0} = 0$. Therefore the Taylor expansion at $0$ is
	\begin{align*}
		\EE\left[\rho_{H,\delta}(\xi_{i}-z)\right] = \EE\left[\rho_{H,\delta}(\xi_{i})\right] + 2\int_{0}^{z}(H_{\xi}(y+\delta) - H_{\xi}(y-\delta))y\,dy.
	\end{align*}
	Insert $z =\inp{\X_i}{\M-\M^*}\sim N(0,\text{vec}(\M-\M^*)^{\top}\bSigma_i\text{vec}(\M-\M^*))$ into the above equation,
	\begin{align*}
		\EE\left[\rho_{H,\delta}(\xi_{i}- \inp{\X_i}{\M-\M^*})\bigg| \X_i\right] - \EE\left[\rho_{H,\delta}(\xi_{i})\bigg| \X_i\right] = 2\int_{0}^{z}(H_{\xi}(y+\delta) - H_{\xi}(y-\delta))y\,dy.
	\end{align*}
	Denote  density of $\inp{\X_i}{\M-\M^*}$ by $f_{z_i}(\cdot)$ and denote its variance by $\sigma_{i}^2:=\text{vec}(\M-\M^*)^{\top}\bSigma_i\text{vec}(\M-\M^*)$. Take expectation w.r.t. $\X_i$ and we have 
	\begin{align*}
		\EE\left[\rho_{H,\delta}(\xi_{i}- \inp{\X_i}{\M-\M^*})\right] - \EE\left[\rho_{H,\delta}(\xi_{i})\right] &= 2\int_{-\infty}^{\infty}\int_{0}^t(H_{\xi}(y+\delta) - H_{\xi}(y-\delta))yf_{z_i}(t)\,dy\,dt\\
		&\geq 2\int_{-\sigma_{i}}^{\sigma_{i}}\int_{0}^tf_{z_i}(t)(H_{\xi}(y+\delta) - H_{\xi}(y-\delta))y\,dy\,dt.
	\end{align*}
	Notice that the conditions on sensing matrices Assumption~\ref{assump:sensing operators:vec} and phase-two region  imply $\sigma_i\leq\sqrt{\ku}\fro{\M-\M^*}\leq 8\gamma\sqrt{\ku\kl^{-1}}+2\delta\sqrt{\ku\kl^{-1}}$. Thus for $y\in(-\sigma_i,\sigma_i)$, by Assumption~\ref{assump:heavytail-huber}, it has $H_{\xi}(y+\delta)-H_{\xi}(y-\delta)\geq2\delta b_0^{-1}$. Hence, the above equation could be lower bounded with \begin{align*}
		\EE\left[\rho_{H,\delta}(\xi_{i}- \inp{\X_i}{\M-\M^*})\right] - \EE\left[\rho_{H,\delta}(\xi_{i})\right] &\geq 4\delta b_0^{-1}\int_{-\sigma_{i}}^{\sigma_{i}}t^2f_{z_i}(t)\,dt \\
		&= 4\delta b_0^{-1}\sigma_{i}^2\int_{-1}^{1} t^2 \frac{1}{\sqrt{2\pi}}\exp(-t^2/2)\,dt\\
		&\geq \frac{2}{3}\delta b_{0}^{-1}\sigma_{i}^2,
	\end{align*}
	where the last line uses $\int_{-1}^{1} t^2 \frac{1}{\sqrt{2\pi}}\exp(-t^2/2)\,dt\geq \frac{1}{6}$.
	Conditions on sensing matrices Assumption~\ref{assump:sensing operators:vec} also imply $\sigma_i^2\geq \kl\fro{\M-\M^*}^2$. Therefore, $\EE f(\M)-\EE f(\M^*)$ has the following lower bound $$\EE f(\M)-\EE f(\M^*)=\sum_{i=1}^{n}\EE \left[\rho_{H,\delta}(\xi_{i}- \inp{\X_i}{\M-\M^*})\right] - \EE\left[\rho_{H,\delta}(\xi_{i})\right]\geq\frac{2}{3}n\delta b_0^{-1}\kl\fro{\M-\M^*}^2.$$
	Based on event $\bcalE$, we obtain
	\begin{align*}
		f(\M)-f(\M^*) &\geq \EE[f(\M)-f(\M^*)] - C_2\delta\sqrt{nd_1r\ku}\fro{\M-\M^*}.
	\end{align*}
	Thus when $\fro{\M-\M^*}\geq b_0C_2\sqrt{\frac{d_1 r}{n}\cdot\frac{\ku}{\kl^2}}$, we have $$f(\M)-f(\M^*)\geq \frac{1}{3}\delta n b_{0}^{-1}\kl\fro{\M-\M^*}^2.$$ It proves under $\taus=b_0C_2\sqrt{\frac{d_1 r}{n}\cdot\frac{\ku}{\kl^2}} $, it has $\mus= \frac{1}{3}\delta n b_0^{-1}\kl$. The following lemma shows when $\bcalE$ holds, it has $\Ls=C_2\delta\ku\fro{\M-\M^*}$ and the the proof completes.
\end{proof}

\begin{lemma}[Upper bound for sub-gradient]\label{lemma:upperboundsubgradient:huber}
	Let $\M\in\MM_r$ satisfy $\fro{\M-\M^*}\geq \taus:=c\sqrt{\frac{d_1r}{n}\cdot\frac{\ku}{\kl^2}}b_0$. Let $\G\in\partial f(\M)$ be the sub-gradient.  Under the event $\bcalE$, we have $\fror{\G}\leq C_2\delta\fro{\M-\M^*}$ for some absolute constant $C_2>0$.
\end{lemma}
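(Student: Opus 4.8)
The plan is to follow the same probe-point strategy already used for the absolute loss in Lemma~\ref{lemma:upperboundsubgradient:heavytail}, adapting it to the Huber loss. Set the probe point $\M_1 := \M + t\cdot\text{SVD}_r(\G)$ with a step $t\asymp b_1(\delta n\ku)^{-1}$, so that $\text{rank}(\M_1-\M)\leq r$ and $\fro{\M_1-\M}=t\fror{\G}$. The argument is a sandwich: the sub-gradient inequality gives the lower bound $f(\M_1)-f(\M)\geq \inp{\G}{\M_1-\M}=t\fror{\G}^2$, while an expectation computation followed by the concentration event $\bcalE$ gives a matching upper bound that is quadratic in $\fror{\G}$. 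Comparing the two and solving the resulting inequality will yield $\fror{\G}\leq C_2\delta n\ku b_1^{-1}\fro{\M-\M^*}$, which is precisely the value $\Ls\asymp \delta n\ku b_1^{-1}$ claimed in Lemma~\ref{lem:heavytail-huber}.

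For the upper bound on $\EE f(\M_1)-\EE f(\M)$ I would reuse the second-order expansion of $z\mapsto \EE[\rho_{H,\delta}(\xi-z)]$ derived inside the proof of Lemma~\ref{lem:heavytail-huber}, namely that its second derivative equals $2\big(H_{\xi}(z+\delta)-H_{\xi}(z-\delta)\big)$. Writing $z=\inp{\X_i}{\M-\M^*}$ and $z_1=\inp{\X_i}{\M_1-\M^*}$, the conditional increment is $2\int_{z}^{z_1}\big(H_{\xi}(y+\delta)-H_{\xi}(y-\delta)\big)y\,dy$. The crucial point is that here I invoke the \emph{global} upper bound $H_{\xi}(y+\delta)-H_{\xi}(y-\delta)\leq 2\delta b_1^{-1}$ from Assumption~\ref{assump:heavytail-huber}, which holds for all $y\in\RR$; unlike the lower-bound arguments, no localization of $z_1$ is needed, so the probe point may sit anywhere. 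This bounds the increment by $2\delta b_1^{-1}|z_1^2-z^2|$, and after expanding $z_1^2-z^2=\inp{\X_i}{\M_1-\M}^2+2\inp{\X_i}{\M-\M^*}\inp{\X_i}{\M_1-\M}$, taking expectation over $\X_i$ and applying the covariance upper bound $\lambda_{\max}(\bSigma_i)\leq\ku$ (via Hölder, as in Lemma~\ref{teclem:vec:l1exp}), summing over $i$ gives
$$
\EE f(\M_1)-\EE f(\M)\leq 2\delta n b_1^{-1}\ku\Big[\fro{\M_1-\M}^2+2\fro{\M_1-\M}\fro{\M-\M^*}\Big].
$$

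It then remains to pass from expectation to the empirical loss through event $\bcalE$, which contributes a deviation term $C_2\delta\sqrt{nd_1r\ku}\,\fro{\M_1-\M}$. The hypothesis $\fro{\M-\M^*}\geq\taus=c\sqrt{(d_1r/n)(\ku/\kl^2)}\,b_0$ is exactly what lets me absorb this: it gives $\sqrt{nd_1r\ku}\lesssim n\kl b_0^{-1}\fro{\M-\M^*}\lesssim n\ku b_1^{-1}\fro{\M-\M^*}$ (using $\kl\leq\ku$, $b_1\leq b_0$), so the deviation term is of the same order $\delta n\ku b_1^{-1}\fro{\M-\M^*}\fro{\M_1-\M}$ as the main term. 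Substituting $\fro{\M_1-\M}=t\fror{\G}$ and comparing with the sub-gradient lower bound $t\fror{\G}^2$, the self-referential quadratic term $2\delta n b_1^{-1}\ku\,t^2\fror{\G}^2$ can be made at most $\tfrac12 t\fror{\G}^2$ by choosing the constant in $t\asymp b_1(\delta n\ku)^{-1}$ appropriately, and the desired linear bound on $\fror{\G}$ follows. I expect the only real subtlety to be bookkeeping the $\delta$ factors so that the step $t$ simultaneously kills the quadratic self-term and matches the Lipschitz scale of $\rho_{H,\delta}$; everything else is routine and parallels the absolute-loss case verbatim.
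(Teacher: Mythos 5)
Your proposal is correct and follows essentially the same route as the paper's own proof: the same probe point $\M_1=\M+t\,\mathrm{SVD}_r(\G)$ with $t\asymp b_1(\delta n\ku)^{-1}$, the same sandwich between the sub-gradient lower bound and the expectation-plus-concentration upper bound, the same use of the global bound $H_{\xi}(y+\delta)-H_{\xi}(y-\delta)\leq 2\delta b_1^{-1}$ in the integral expansion, and the same absorption of the $C\delta\sqrt{nd_1r\ku}$ deviation term via the hypothesis $\fro{\M-\M^*}\geq\taus$. Note also that your derived bound $\fror{\G}\leq C_2\delta n\ku b_1^{-1}\fro{\M-\M^*}$ is the one the paper's proof actually establishes (and the one matching $\Ls$ in Lemma~\ref{lem:heavytail-huber}); the factor $nb_1^{-1}\ku$ is simply dropped in the lemma statement as printed.
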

\begin{proof}
	Take $\M_1 = \M + \frac{b_1}{4n\delta\kl}\text{SVD}_r(\G)$. First consider $\EE f(\M_1)-\EE f(\M)$. Continue from Equation~\eqref{eq19},
	\begin{align*}
		&~~~~\EE \left[\rho_{H,\delta}(\xi_i - \inp{\X_i}{\M_1-\M^*})\bigg|\X_i \right] - \EE\left[\rho_{H,\delta}(\xi_i - \inp{\X_i}{\M-\M^*}) \bigg|\X_i\right] \\
		&= 2\int_{\inp{\X_i}{\M-\M^*}}^{\inp{\X_i}{\M_1-\M^*}}[H_{\xi}(y+\delta) - H_{\xi}(y-\delta)]y\,dy\\
		&\leq 4\delta b_1^{-1}\left|\int_{\inp{\X_i}{\M-\M^*}}^{\inp{\X_i}{\M_1-\M^*}}y\,dy\right|\\
		&= 2\delta b_{1}^{-1}\left|\inp{\X_i}{\M_1-\M}^2 + 2\inp{\X_i}{\M_1-\M}\inp{\X_i}{\M-\M^*}\right|.
	\end{align*}
	Then take expectation over $\X_i$ on each side and sum up over $i$:
	\begin{align*}
		\EE f(\M_1)-\EE f(\M)
		&\leq 2\delta nb_{1}^{-1}\ku[\fro{\M_1-\M}^2 + 2\fro{\M_1-\M}\fro{\M-\M^*}],
	\end{align*}
	which uses assumptions of sensing operators. Then invoke event $\bcalE$,  $f(\M_1)-f(\M)$ could be bounded with:
	\begin{align*}
		&{~~~~}f(\M_1)-f(\M)\\&\leq\EE f(\M_1)-\EE f(\M) + C_1\delta\sqrt{nd_1r\ku}\fro{\M_1-\M}\\
		&\leq 2\delta n b_1^{-1}\ku\fro{\M_1-\M}^2 +4\delta n b_1^{-1}\ku\fro{\M_1-\M}\fro{\M-\M^*}+C_1\delta\sqrt{nd_1r\ku}\fro{\M_1-\M}\\
		&\leq2\delta n b_1^{-1}\ku\fro{\M_1-\M}^2 + (C_1+4)\delta n b_1^{-1}\ku\fro{\M_1-\M}\fro{\M-\M^*},
	\end{align*}
	where the last inequality uses $\fro{\M-\M^*}\geq\taus=c\sqrt{\frac{d_1r}{n}\cdot \frac{\ku}{\kl^2}}b_0\geq c\sqrt{\frac{d_1r}{n}\cdot\frac{1}{\ku}}b_1$. Insert $\M_1=\M+\frac{b_1}{4n\delta\ku}\text{SVD}_r(\G)$ and then it is
	\begin{equation}
		\begin{split}
			f(\M+\frac{b_1}{4n\delta\ku}\text{SVD}_r(\G))-f(\M)\leq \frac{b_1}{8n\delta\ku}\fror{\G}^2 + \frac{C_1+4}{4}\fror{\G}\fro{\M-\M^*}.
		\end{split}
		\label{eq22}
	\end{equation}
	On the other hand, from the definition of the sub-gradient, we have 
	\begin{align}\label{eq23}
		f(\M+\frac{b_1}{4n\delta\ku}\text{SVD}_r(\G)) - f(\M)\geq\inp{\G}{\frac{b_1}{4n\delta\ku}\text{SVD}_r(\G) }= \frac{b_1}{4n\delta\ku}\fror{\G}^2.
	\end{align}
	Combine Equation~\eqref{eq22} with Equation~\eqref{eq23} and then the quadratic inequality of $\fror{\G}$ gives $$\fror{\G}\leq C_2\delta n b_1^{-1}\ku\fro{\M-\M^*}.$$
\end{proof}

\subsection{Proof of Theorem~\ref{thm:heavytail-quantile}}
\label{proof:lem:heavytail-quantile}
The two-phase regularity of the quantile loss is provided by the following lemma.

\begin{lemma}\label{lem:heavytail-quantile}
	Suppose Assumption~\ref{assump:heavy-tailed quantile} holds.  There exist absolute constants $C_1, C_2, C_3,c_1>0$ such that if $n\geq C_1rd_1\max\{\delta^2,(1-\delta)^2\}\ku\kl^{-1}$ abd $\BB_1$, $\BB_2$ are given by \begin{align*}
		&\BB_1:=\left\{\M\in\MM_r:\fro{\M-\M^*}\geq 8\sqrt{\kl^{-1}}\gamma\right\},\\
		&\BB_2:=\left\{\M\in\MM_r:C_2\max\{\delta,1-\delta\}b_0\Big(\frac{\ku}{\kl^2}\cdot\frac{rd_1}{n}\Big)^{1/2}\leq\fro{\M-\M^*}<8\sqrt{\kl^{-1}}\gamma\right\},
	\end{align*}
	then with probability over $1-\exp(-c_1rd_1)-3\exp(-\sqrt{n}/\log n)$, the quantile loss $f(\M)=\sum_{i=1}^n \rho_{Q,\delta}(Y_i-\langle \M, \X_i\rangle)$ satisfies Condition~\ref{assump:two-phase}:
	\begin{enumerate}[(1)]
		\item rank-$r$ restricted \textbf{two-phase sharpness} with respect to $\M^*$,$$
		f(\M)-f(\M^{\ast})\geq 
		\begin{cases}
			\frac{n}{8}\sqrt{\kl} \|\M-\M^{\ast}\|_{\rm F}, & \textrm{ for }\ \  \M\in\BB_1; \\
			\frac{n}{24b_0}\kl \|\M-\M^{\ast}\|_{\rm F}^2, & \textrm{ for }\ \  \M\in\BB_2;
		\end{cases}
		$$
		\item rank-$r$ restricted \textbf{two-phase sub-gradient bound} with respect to $\M^*$, $$
		\|\G\|_{\rm F,  r}\leq 
		\begin{cases}
			n\sqrt{\ku}, & \textrm{ for }\ \ \M\in\BB_1;\\
			C_3nb_1^{-1}\ku\|\M-\M^{\ast}\|_{\rm F},& \textrm{ for }\ \ \M\in\BB_2,
		\end{cases}
		$$
		where $\G\in\partial f(\M)$ is any sub-gradient.
	\end{enumerate}
\end{lemma}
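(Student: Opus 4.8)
The plan is to mirror the proof of Lemma~\ref{lem:heavytail-l1}, working throughout on the empirical-process event $\bcalE$ that uniformly controls $|f(\M+\Delta\M)-f(\M)-\EE(f(\M+\Delta\M)-f(\M))|/\fro{\Delta\M}$ over rank-$2r$ increments. The only structural change is that $\rho_{Q,\delta}$ is Lipschitz with constant $\max\{\delta,1-\delta\}$ rather than $1$, so the analogue of Theorem~\ref{thm:empirical process} delivers the deviation bound $C\max\{\delta,1-\delta\}\sqrt{nd_1r\ku}\,\fro{\Delta\M}$ on $\bcalE$, with $\PP(\bcalE)\geq 1-\exp(-cd_1r/4)-3\exp(-\sqrt n/\log n)$. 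This Lipschitz factor is precisely the source of the $\max\{\delta,1-\delta\}$ appearing in $\taus$ (the lower edge of $\BB_2$) and in the sample-size requirement $n\geq C_1 rd_1\max\{\delta^2,(1-\delta)^2\}\ku\kl^{-1}$. A convenient device is the decomposition $\rho_{Q,\delta}(x)=\tfrac12|x|+(\delta-\tfrac12)x$, which makes transparent that the linear part contributes nothing in expectation under the zero-mean Gaussian covariates, while the $\ell_1$ part reduces everything to integrals already handled in Lemma~\ref{lem:heavytail-l1}.

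The central computation is the conditional expectation $g(z):=\EE[\rho_{Q,\delta}(\xi-z)]$. I would differentiate to obtain $g'(z)=H_\xi(z)-\delta$ and $g''(z)=h_\xi(z)$; since $\delta=H_\xi(0)$ by Assumption~\ref{assump:heavy-tailed quantile}, the first derivative vanishes at the origin and a second-order Taylor expansion gives $g(z)-g(0)=\int_0^z\int_0^y h_\xi(w)\,dw\,dy$. Setting $z=\langle\X_i,\M-\M^*\rangle\sim N(0,\sigma_i^2)$ with $\kl\fro{\M-\M^*}^2\leq\sigma_i^2\leq\ku\fro{\M-\M^*}^2$ and taking expectation over $\X_i$ reduces the expected excess loss to the same Gaussian integrals as the $\ell_1$ case, but without the factor $2$, i.e. exactly one half of the $\ell_1$ values. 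This is what produces $\muc=\tfrac n8\sqrt{\kl}$ and $\mus=\tfrac{n}{24b_0}\kl$, half of the corresponding $\ell_1$ constants.

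Concretely, on $\BB_1$ I bound $\EE f(\M)-\EE f(\M^*)\geq\tfrac12\sum_i\EE[|\xi_i-\langle\X_i,\Delta\rangle|-|\xi_i|]$, apply the folded-Gaussian estimate $\EE|\langle\X_i,\Delta\rangle|\geq\sqrt{2/\pi}\sqrt{\kl}\fro{\Delta}$ from Corollary~\ref{cor:l1expecation noiseless}, discard $2n\gamma$ via $\fro{\Delta}\geq 8\sqrt{\kl^{-1}}\gamma$, and absorb the $\bcalE$-deviation through the sample-size condition to reach $\muc=\tfrac n8\sqrt{\kl}$. On $\BB_2$ I restrict the double integral to $|z_i|\leq\sigma_i\leq 8\sqrt{\ku/\kl}\gamma$, where $h_\xi\geq b_0^{-1}$, and use $\int_{-1}^1 t^2\tfrac{1}{\sqrt{2\pi}}e^{-t^2/2}\,dt\geq\tfrac16$ with $\sigma_i^2\geq\kl\fro{\Delta}^2$ to get $\EE f(\M)-\EE f(\M^*)\geq\tfrac{n}{12b_0}\kl\fro{\Delta}^2$, after which the lower edge of $\BB_2$ absorbs the deviation and leaves $\mus=\tfrac{n}{24b_0}\kl$. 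For the sub-gradient bounds: on $\BB_1$ I combine $\fror{\G}^2\leq f(\M+\mathrm{SVD}_r(\G))-f(\M)$ with the $\bcalE$ upper bound and $\EE[f(\M+\mathrm{SVD}_r(\G))-f(\M)]\leq\tfrac12 n\sqrt{2/\pi}\sqrt{\ku}\fror{\G}$ (the linear term again dropping out in expectation), dividing to obtain $\Lc=n\sqrt{\ku}$; on $\BB_2$ I take $\M_1=\M+\tfrac{b_1}{2n\ku}\mathrm{SVD}_r(\G)$, control $\EE f(\M_1)-\EE f(\M)$ through $h_\xi\leq b_1^{-1}$ exactly as in Lemma~\ref{lemma:upperboundsubgradient:heavytail}, and solve the resulting quadratic inequality to get $\Ls=C_3 n b_1^{-1}\ku$.

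The main obstacle is bookkeeping rather than a new idea: the linear part of the quantile loss is invisible to every expectation (because the covariates are centered) but not to the fluctuation, so I must carry the Lipschitz factor $\max\{\delta,1-\delta\}$ consistently through the empirical-process step and verify that the stated sample-size condition is exactly strong enough to dominate each $\bcalE$-deviation term at the moment a phase-one linear or a phase-two quadratic lower bound must prevail. The secondary subtlety is that $H_\xi(0)=\delta\neq\tfrac12$ in general; this is precisely what forces $g'(0)=0$ and keeps the phase-two integral identical in form to the symmetric case, and the asymmetry of the density plays no further role since the noise enters only through $h_\xi$ on the symmetric window $|w|\leq 8\sqrt{\ku/\kl}\gamma$. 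Granting Lemma~\ref{lem:heavytail-quantile}, Theorem~\ref{thm:heavytail-quantile} then follows by feeding these four constants into Proposition~\ref{prop:main}.
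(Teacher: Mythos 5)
Your proposal is correct and follows essentially the same route as the paper's proof: the same empirical-process event with Lipschitz constant $\max\{\delta,1-\delta\}$, the same conditional-expectation expansion in which $H_{\xi}(0)=\delta$ forces the first-order term to vanish and leaves $\int_0^z\int_0^y h_{\xi}(w)\,dw\,dy$, and the same shifted-point/quadratic-inequality device for the sub-gradient bounds, arriving at the identical constants $\muc=\tfrac{n}{8}\sqrt{\kl}$, $\mus=\tfrac{n}{24 b_0}\kl$, $\Lc=n\sqrt{\ku}$, $\Ls=C_3 n b_1^{-1}\ku$. Your decomposition $\rho_{Q,\delta}(x)=\tfrac12|x|+(\delta-\tfrac12)x$ and the derivative computation $g'(z)=H_{\xi}(z)-\delta$, $g''(z)=h_{\xi}(z)$ are only cosmetic repackagings of what the paper does via the subadditivity $\rho_{Q,\delta}(x_1+x_2)\leq\rho_{Q,\delta}(x_1)+\rho_{Q,\delta}(x_2)$, the identity $\rho_{Q,\delta}(x)+\rho_{Q,\delta}(-x)=|x|$, and an integration-by-parts evaluation of $\EE\rho_{Q,\delta}(\xi-t_0)$, so the two arguments coincide in substance.
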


By combining Proposition~\ref{prop:main} with Lemma~\ref{lem:heavytail-quantile},  we get the convergence dynamics and statistical accuracy of RsGrad Algorithm~\ref{alg:RsGrad} for quantile loss, namely, Theorem~\ref{thm:heavytail-quantile}.

\begin{proof}[Proof of Lemma~\ref{lem:heavytail-quantile}]
	Notice that quantile loss function $\rho_{Q,\delta}(\cdot)$ is Lipschitz continuous with $\max\{\delta,1-\delta\}$ and satisfies $f(x_1+x_2)\leq f(x_1)+f(x_2)$. 
	
	Suppose the event $\bcalE = \{\sup_{\substack{\Delta\M\in\MM_{2r} \\ \M\in\RR^{d_1\times d_2} }}|f(\M+\Delta\M)-f(\M) - \EE(f(\M+\Delta\M)-f(\M))|\cdot\fro{\Delta\M}^{-1}\leq C_2\max\{\delta,1-\delta\}\sqrt{nd_1r\ku} \}$ holds. Specifically, Theorem \ref{thm:empirical process} proves $$\PP(\bcalE)\geq1-\exp(-cd_1r/4)-3\exp(-\sqrt{n}/\log n). $$
	First consider upper bound of sub-gradient $\G\in f(\M)$, where matrix $\M$ has rank at most $r$. By event $\bcalE$, we have 
	\begin{align*}
		&~~~~f(\M+\text{SVD}_r(\G))-f(\M)\\
		&\leq \EE f(\M+\text{SVD}_r(\G))-\EE f(\M)+C_2\max\{\delta,1-\delta\}\sqrt{nd_1r\ku}\fro{\text{SVD}_r(\G)}\\
		&=\sum_{i=1}^{n}\EE\left[\rho_{Q,\delta}(\xi_i - \inp{\X_i}{\M+\text{SVD}_r(\G)}) -\rho_{Q,\delta}(\xi_i - \inp{\X_i}{\M}) \right]+C_2\max\{\delta,1-\delta\}\sqrt{nd_1r\ku}\fror{\G}\\
		&\leq\sum_{i=1}^{n}\EE\rho_{Q,\delta}(-\inp{\X_i}{\text{SVD}_r(\G)})+C_2\max\{\delta,1-\delta\}\sqrt{nd_1r\ku}\fror{\G}\\
		&\leq \sqrt{\frac{1}{2\pi}}n\sqrt{\ku}\fror{\G}+C_2\max\{\delta,1-\delta\}\sqrt{nd_1r\ku}\fror{\G}\\
		&\leq n\sqrt{\ku}\fror{\G},
	\end{align*}
	where the last line uses $n\geq cd_1r\cdot\max\{\delta^2,(1-\delta)^2\}$. On the other hand, by definition of sub-gradient, we have $$f(\M+\text{SVD}_r(\G))-f(\M)\geq\inp{\G}{\text{SVD}_r(\G)}=\fror{\G}^2.$$
	Then combine the above two equations and it leads to $$\fror{\G}\leq n\sqrt{\ku}.$$
	Then consider the lower bound of $f(\M)-f(\M^*)$.
	
	\noindent\textit{Case 1:} When $\fro{\M-\M^*}\geq8\sqrt{\kl^{-1}}\gamma$. Notice that quantile loss function satisfies triangle inequality, $\rho_{Q,\delta}(x_1+x_2)\leq\rho_{Q,\delta}(x_1)+\rho_{Q,\delta}(x_2)$ with which we have
	\begin{align*}
		&{~~~~}f(\M)-f(\M^*)\\
		&\geq \EE f(\M)-\EE f(\M^*)-C_2\max\{\delta,1-\delta\}\sqrt{nd_1r\ku}\fro{\M-\M^*}\\
		&= \sum_{i=1}^n\EE\left[\rho_{Q,\delta}(\xi_i - \inp{\X_i}{\M-\M^*}) -\rho_{Q,\delta}(\xi_i)\right]-C_2\max\{\delta,1-\delta\}\sqrt{nd_1r\ku}\fro{\M-\M^*}\\
		&\geq \sum_{i=1}^n\EE\rho_{Q,\delta}(-\inp{\X_i}{\M-\M^*})-\sum_{i=1}^{n}\EE\left[\rho_{Q,\delta}(\xi_{i})+\rho_{Q,\delta}(-\xi_{i}) \right]-C_2\max\{\delta,1-\delta\}\sqrt{nd_1r\ku}\fro{\M-\M^*}\\
		&\geq \sqrt{\frac{1}{2\pi}}n\sqrt{\kl}\fro{\M-\M^*}-n\gamma-C_2\max\{\delta,1-\delta\}\sqrt{nd_1r\ku}\fro{\M-\M^*}.
	\end{align*}
	Thus under the condition $\fro{\M-\M^*}\geq 8\sqrt{\kl^{-1}}\gamma$ and $n\geq Cd_1r\ku\kl^{-1}\cdot\max\{\delta^2,(1-\delta)^2\}$, it has $$f(\M)-f(\M^*)\geq \frac{n}{8}\sqrt{\kl}\fro{\M-\M^*},$$
	which verifies with $\taus=8\sqrt{\kl^{-1}}\gamma$, $\muc=\frac{1}{8}n\sqrt{\kl}$.
	
	\noindent\textit{Case 2:} Then consider the second phase where $c\sqrt{\frac{d_1 r}{n}}b_{0}\leq\fro{\M-\M^*}\leq8\sqrt{\kl^{-1}}\gamma$. Notice for all fixed $t_0\in\RR$, 
	\begin{align*}
		\EE\rho_{Q,\delta}(\xi-t_0) &= \delta\int_{s\geq t_0}(s-t_0)\,dH_{\xi}(s) + (1-\delta)\int_{s< t_0}(t_0-s)\,dH_{\xi}(s)\\
		&=\int_{s\geq t_0}(s-t_0)\,dH_{\xi}(s) +(1-\delta) \int_{-\infty}^{+\infty}(t_0-s)\,dH_{\xi}(s)\\
		&=\int_{s\geq t_0}(1-H_{\xi}(s))\,ds + (1-\delta)t_0 - (1-\delta)\int_{-\infty}^{\infty}s \,dH_{\xi}(s),
	\end{align*}
	where the last line is same as the absolute loss case. Set $t_0 = \inp{\X_i}{\M-\M^*}$ and it becomes
	\begin{align*}
		&{~~~~~}\EE\left[\rho_{Q,\delta}(\xi-\inp{\X_i}{\M-\M^*})\bigg| \X_i\right] \\
		&= \int_{s\geq \inp{\X_i}{\M-\M^*}}(1-H_{\xi}(s))\,ds + (1-\delta)\inp{\X_i}{\M-\M^*} - (1-\delta)\int_{-\infty}^{+\infty}s \,dH_{\xi}(s).
	\end{align*}
	With $t_0=0$, it becomes \begin{align*}
		\EE\left[\rho_{Q,\delta}(\xi)\bigg| \X_i\right] = \int_{s\geq 0}(1-H_{\xi}(s))\,ds - (1-\delta)\int_{-\infty}^{+\infty}s \,dH_{\xi}(s).
	\end{align*}The above two equations lead to 
	\begin{align}
		\EE\left[\rho_{Q,\delta}(\xi-\inp{\X_i}{\M-\M^*})\bigg| \X_i\right]- \EE\left[\rho_{Q,\delta}(\xi)\bigg| \X_i\right]= \int_{0}^{\inp{\X_i}{\M-\M^*}}H_{\xi}(s)\,ds -\delta \inp{\X_i}{\M-\M^*}.
		\label{eq24}
	\end{align}
	Note that $\langle \mathbf{X}_{i}, \mathbf{M}-\mathbf{M}^{*}\rangle\sim N(0,\text{vec}(\M-\M^*)^{\top}\bSigma_i\text{vec}(\M-\M^*)) $. Denote $z_i:=\langle \mathbf{X}_{i}, \mathbf{M}-\mathbf{M}^{*}\rangle$ and $\sigma_i^2:= \text{vec}(\M-\M^*)^{\top}\bSigma_i\text{vec}(\M-\M^*)$. Let $f_{z_i}(\cdot)$ be the density of $z_i$. Take expectation of $\X_i$ on both sides of Equation~\eqref{eq24}
	\begin{align*}
		\mathbb{E}\left[\rho_{Q,\delta}(\xi-\inp{\X_i}{\M-\M^*})-\rho_{Q,\delta}(\xi) \right]
		&=\int_{-\infty}^{+\infty}\int_{0}^{t}\left(H_{\xi}(s)-\delta\right)f_{z_i}(t)\,ds\, dt\\
		&=\int_{-\infty}^{+\infty}\int_{0}^{t}\int_{0}^{s}h_{\xi}(w)f_{z_i}(t)\,dw\,ds\,dt
	\end{align*}
	Notice that sensing matrices conditions imply $\sigma_i\leq \sqrt{\ku}\fro{\M-\M^*}\leq 8\sqrt{\ku\kl^{-1}}\gamma$. Then when $w\in(-\sigma_i,\sigma_i)$, under Assumption~\ref{assump:heavy-tailed quantile}, it has $h_{\xi}(w)\geq b_0^{-1}$. Thus it has the following upper bound,
	\begin{align*}
		\mathbb{E}\left[\rho_{Q,\delta}(\xi-\inp{\X_i}{\M-\M^*})-\rho_{Q,\delta}(\xi) \right]&\geq b_{0}^{-1}\int_{-\sigma_{i}}^{\sigma_{i} }\int_{0}^{t}f_{z_i}(t)s\,ds\,dt\\
		&=\frac{1}{2}b_0^{-1}\int_{-\sigma_i}^{\sigma_i}t^2f_{z_i}(t)\,dt =\frac{1}{2} b_0^{-1}\sigma_i^2\int_{-1}^{1}t^2\cdot \frac{1}{\sqrt{2\pi}}e^{-t^2/2}\,dt\\
		&\geq\frac{1}{12b_0}\sigma_{i}^2,
	\end{align*}
	and the last line is from $\int_{-1}^{1}t^2\cdot\frac{1}{\sqrt{2}}e^{-t^2/2}dt\geq1/6$. Therefore, \begin{align*}
		\EE f(\M)-\EE f(\M^*)=\sum_{i=1}^n \mathbb{E}\left[\rho_{Q,\delta}(\xi-\inp{\X_i}{\M-\M^*})-\rho_{Q,\delta}(\xi) \right]\geq\frac{1}{12b_0}\sum_{i=1}^{n}\sigma_i^2\geq\frac{n}{12b_0}\kl\fro{\M-\M^*}^2.
	\end{align*}
	Invoke event $\bcalE$, \begin{align*}
		f(\M)-f(\M^*)&\geq \EE f(\M)-\EE f(\M^*) - C_2\max\{\delta,1-\delta\}\sqrt{nd_1r\ku}\fro{\M-\M^*}\\
		&\geq \frac{n}{12b_0}\kl\fro{\M-\M^*}^2-C_2\max\{\delta,1-\delta\}\sqrt{nd_1r\ku}\fro{\M-\M^*}\\
		&\geq \frac{n}{24b_0}\kl\fro{\M-\M^*}^2,
	\end{align*}
	where the last inequality uses $\fro{\X-\X^*}\geq c\max\{\delta,1-\delta\}\sqrt{\frac{d_1 r}{n}\cdot\frac{\ku}{\kl^2}}b_0$. This proves the lower bound in the second phase and shows $\mus=\frac{n}{24b_0}\kl$.
	
	Finally, from the following lemma, we see that under $\bcalE$, $\Ls\leq C_2nb_1^{-1}$. And this finishes the proof of the lemma.
\end{proof}

\begin{lemma}[Upper bound for sub-gradient]\label{lemma:upperboundsubgradient:quantile}
	Let $\M\in\MM_r$ satisfy $\fro{\M-\M^*}\geq \taus:=c\max\{\delta,1-\delta\}\sqrt{\frac{d_1 r}{n}\cdot\frac{\ku}{\kl^2}}b_1$. Let $\G\in\partial f(\M)$ be the sub-gradient.  Under the event $\bcalE$, we have $\fror{\G}\leq Cnb_1^{-1}\fro{\M-\M^*}$ for some absolute constant $C>0$.
\end{lemma}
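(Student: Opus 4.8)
The plan is to mirror the arguments used for the absolute and Huber losses in Lemmas~\ref{lemma:upperboundsubgradient:heavytail} and \ref{lemma:upperboundsubgradient:huber}, probing the loss at the perturbed point $\M_1:=\M+\frac{b_1}{2n\ku}\,\text{SVD}_r(\G)$, so that $\fro{\M_1-\M}=\frac{b_1}{2n\ku}\fror{\G}$ and $\text{rank}(\M_1-\M)\leq r$. The sub-gradient magnitude will then be pinned down by sandwiching $f(\M_1)-f(\M)$ between a deterministic upper bound coming from the smoothing of the quantile loss by the noise density and the elementary convexity lower bound.

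First I would compute the population increment. Reusing the identity established inside the proof of Lemma~\ref{lem:heavytail-quantile}, namely that the conditional expectation difference equals $\int_{\inp{\X_i}{\M-\M^*}}^{\inp{\X_i}{\M_1-\M^*}}(H_\xi(s)-\delta)\,ds=\int_{\inp{\X_i}{\M-\M^*}}^{\inp{\X_i}{\M_1-\M^*}}\int_0^s h_\xi(w)\,dw\,ds$, I would invoke the upper density bound $h_\xi\leq b_1^{-1}$ from Assumption~\ref{assump:heavy-tailed quantile} to obtain the conditional estimate $\leq \frac{1}{2b_1}\big|\inp{\X_i}{\M_1-\M}^2+2\inp{\X_i}{\M_1-\M}\inp{\X_i}{\M-\M^*}\big|$. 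Summing over $i$ and applying the covariance bound $\lambda_{\max}(\bSigma_i)\leq\ku$ together with Cauchy--Schwarz then yields $\EE f(\M_1)-\EE f(\M)\leq \frac{n\ku}{2b_1}\big(\fro{\M_1-\M}^2+2\fro{\M_1-\M}\fro{\M-\M^*}\big)$.

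Next I would upgrade the population bound to the empirical one on the event $\bcalE$, paying the fluctuation $C\max\{\delta,1-\delta\}\sqrt{nd_1r\ku}\,\fro{\M_1-\M}$. The crucial point is to absorb this term: the hypothesis $\fro{\M-\M^*}\geq\taus=c\max\{\delta,1-\delta\}\sqrt{\tfrac{d_1r}{n}\cdot\tfrac{\ku}{\kl^2}}\,b_1$ forces $\max\{\delta,1-\delta\}\sqrt{nd_1r\ku}\lesssim \frac{n\ku}{b_1}\fro{\M-\M^*}$, so the fluctuation merges into the cross term. After substituting the choice of $\M_1$ this gives $f(\M_1)-f(\M)\leq \frac{b_1}{8n\ku}\fror{\G}^2+C'\fror{\G}\fro{\M-\M^*}$. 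Pairing it with the convexity lower bound $f(\M_1)-f(\M)\geq\inp{\G}{\M_1-\M}=\frac{b_1}{2n\ku}\fror{\G}^2$ leaves a quadratic inequality in $\fror{\G}$ whose solution is the claimed $\fror{\G}\leq Cnb_1^{-1}\ku\fro{\M-\M^*}$, i.e. $\Ls\leq Cnb_1^{-1}\ku$.

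I expect the only genuine obstacle to be the asymmetry introduced by a non-zero median, that is $\delta\neq 1/2$. Unlike the absolute-loss case, the relevant antiderivative is $H_\xi(s)-\delta$ rather than $H_\xi(s)-\tfrac12$, and one must verify that this quantity still vanishes and changes sign precisely at $s=0$---which holds because $\delta=H_\xi(0)$---so that the inner integral $\int_0^s h_\xi(w)\,dw$ is controlled by $b_1^{-1}|s|$ with the correct sign bookkeeping. The Lipschitz constant $\max\{\delta,1-\delta\}$ must likewise be threaded through the $\bcalE$ term and shown not to disturb the absorption step above. Everything else reduces to the same quadratic-inequality manipulation already carried out in the preceding two lemmas.
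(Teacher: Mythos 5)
Your proposal is correct and follows essentially the same route as the paper's own proof: probe at $\M_1=\M+\frac{b_1}{2n\ku}\,\mathrm{SVD}_r(\G)$, bound $\EE f(\M_1)-\EE f(\M)$ via the antiderivative identity for the quantile loss together with $h_\xi\leq b_1^{-1}$, pass to the empirical bound on $\bcalE$ and absorb the fluctuation term using $\fro{\M-\M^*}\geq\taus$, then close with the convexity lower bound $\inp{\G}{\M_1-\M}=\frac{b_1}{2n\ku}\fror{\G}^2$ and the resulting quadratic inequality. Your treatment of the non-symmetric case ($\delta=H_\xi(0)\neq 1/2$) and your consistent use of the step $\frac{b_1}{2n\ku}$ are if anything slightly cleaner than the paper's write-up, which contains minor notational slips on exactly those points.
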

\begin{proof}
	Take $\M_1 = \M + \frac{b_1}{2n}\text{SVD}_r(\G)$, where $\text{SVD}_r(\G)$ is the best rank $r$ approximation of $\G$ and then $\rank(\M_1) \leq2r$. First consider $\EE f(\M_1)-\EE f(\M)$.
	Use the same notation as in the above proof and similarly,
	\begin{align*}
		&\EE\left[\rho_{Q,\delta}(\xi-\inp{\X_i}{\M_1-\M^*})-\rho_{Q,\delta}(\xi-\inp{\X_i}{\M-\M^*})\bigg|\X_i\right]\\
		=&\int_{\langle \X_{i}, \M-\M^{*}\rangle}^{\langle \X_{i}, \M-\M^{*}\rangle+\langle \X_{i}, \M_{1}-\M\rangle}\int_{0}^{\xi} h_{\xi}(x)\,dx\,d\xi\\
		\leq& b_{1}^{-1}\left|\int_{\langle \X_{i}, \M-\M^{*}\rangle}^{\langle \X_{i}, \M-\M^{*}\rangle+\langle \X_{i}, \M_{1}-\M\rangle}\xi \,d\xi\right|\\
		=&\frac{1}{2b_1} \left|(\langle \X_{i}, \M-\M^{*}\rangle+\langle \X_{i}, \M_{1}-\M\rangle )^2 - (\langle \X_{i}, \M-\M^{*}\rangle)^2\right|,
	\end{align*}
	where the inequality follows from the upper bound for $h_{\xi}(x)\leq b_{1}^{-1}$. Then take expectation over $\X_i$ on each side and sum up over $i=1,\dots,n$,
	\begin{align*}
		\EE f(\M_{1}) - \EE f(\M) =& \sum_{i=1}^{n}\EE\left[\rho_{Q,\delta}(\xi-\inp{\X_i}{\M_1-\M^*})-\rho_{Q,\delta}(\xi-\inp{\X_i}{\M-\M^*})\right]\\
		\leq&\frac{1}{2b_{1}} \sum_{i=1}^{n}\EE\left|(\langle \X_{i}, \M-\M^{*}\rangle+\langle \X_{i}, \M_{1}-\M\rangle )^2 - (\langle \X_{i}, \M-\M^{*}\rangle)^2\right|\\
		\leq&\frac{n}{2b_{1}} \ku\left| \Vert \M_{1} -\M\Vert_{\mathrm{F}}^{2} + 2\Vert \M_{1} -\M\Vert_{\mathrm{F}} \Vert \M -\M^{*}\Vert_{\mathrm{F}}\right|.
	\end{align*}
	Bound $f(\M_{1})-f(\M)$ under event $\bcalE$:
	\begin{align*}
		f(\M_{1}) - f(\M)\leq&\EE f(\M_{1})-\EE f(\M)+C_1\max\{\delta,1-\delta\}\sqrt{ nd_1r\ku}\fro{\M_1-\M}\\
		\leq&\EE f(\M_{1})-\EE f(\M)+C_1nb_{1}^{-1}\ku\Vert \M-\M^{*}\Vert_{\mathrm{F}}\fro{\M_1-\M}\\
		\leq& \frac{n}{2b_1}\ku \Vert \M_{1}-\M\Vert_{\mathrm{F}}^{2} + (0.5+C_1) nb_{1}^{-1}\ku\Vert \M-\M^{*}\Vert_{\mathrm{F}} \Vert\M_{1}-\M\Vert_{\mathrm{F}},
	\end{align*}
	where the second inequality is from the condition $\fro{\M-\M^*}\geq c\max\{\delta,1-\delta\}\sqrt{\frac{d_1r}{n}\cdot\frac{\ku}{\kl^2}}b_{1}\geq c\max\{\delta,1-\delta\}\sqrt{\frac{d_1r}{n}\cdot\frac{1}{\kl}}b_{1}$. Since $\M_1 = \M + \frac{b_1}{2n\ku}\text{SVD}_r(\G)$, it becomes
	\begin{equation}
		\label{eq28}
		\begin{split}
			&f(\M+\frac{b_1}{2n\ku}\text{SVD}_{r}(\G))-f(\M)\\&{~~~~~~~~~~~~~~~~~~~}\leq \frac{b_1}{8n\ku} \Vert\G\Vert_{\mathrm{F,r}}^{2} + (1/2+C_1) \Vert \M-\M^{*}\Vert_{\mathrm{F}} \Vert\G\Vert_{\mathrm{F,r}}.
		\end{split}
	\end{equation}
	On the other hand, by the definition of sub-gradient, $f(\M_{1})-f(\M)$ has lower bound
	\begin{align}
		f(\M+\frac{b_1}{2n\ku}\text{SVD}_{r}(\G))-f(\M)\geq \frac{b_1^2}{2n\ku}\fror{\G}^2.
		\label{eq27}
	\end{align}
	Combine Equation~\eqref{eq28}, Equation~\eqref{eq27} and then solve the quadratic inequality which leads to $$\fror{\G}\leq Cnb_{1}^{-1}\ku\fro{\M-\M^*}.$$
\end{proof}

\section{Proof of  Initializations}
\label{proof:initialization}
\begin{proof}[Proof of Theorem~\ref{thm:init-general-known}]
	To start with, consider
	\begin{align*}
		&\op{\frac{1}{n}\sum_{i=1}^{n}Y_{i}\text{mat}(\bSigma_i^{-1}\text{vec}(\X_i)) -\M^*}\\
		&{~~~~~~~~~}=\op{\frac{1}{n}\sum_{i=1}^{n}\left(\xi_i+\inp{\X_i}{\M^*}\right)\text{mat}(\bSigma_i^{-1}\text{vec}(\X_i)) -\M^* }\\
		&{~~~~~~~~~}\leq\underbrace{\op{\frac{1}{n}\sum_{i=1}^{n}\inp{\X_i}{\M^*}\text{mat}(\bSigma_i^{-1}\text{vec}(\X_i)) -\M^* }}_{B_1}+\underbrace{\op{\frac{1}{n}\sum_{i=1}^{n}\xi_{i}\text{mat}(\bSigma_i^{-1}\text{vec}(\X_i)}}_{B_2}.
	\end{align*}
	\paragraph*{Analysis of $B_1$}
	First show $\inp{\X_i}{\M^*}\text{mat}(\bSigma_i^{-1}\text{vec}(\X_i))$ is an unbiased estimator of $\M^*$. Or equivalently, $\inp{\X_i}{\M^*}\bSigma_i^{-1}\text{vec}(\X_i)$ is an unbiased estimator of $\text{vec}(\M^*)$,
	\begin{align*}
		\EE\inp{\X_i}{\M^*}\bSigma_i^{-1}\text{vec}(\X_i)&=\EE \text{vec}(\X_i)^{\top}\text{vec}(\M^*)\bSigma_i^{-1}\text{vec}(\X_i)\\
		&=\EE\bSigma_i^{-1}\text{vec}(\X_i)\text{vec}(\X_i)^{\top}\text{vec}(\M^*)\\
		&=\text{vec}(\M^*),
	\end{align*}
	where the second line switches vector and scalar. Then we shall use Bernstein Theorem~\ref{thm:Bernstein Ineq} to bound $B_1$. Note that \begin{align}
		\inp{\X_i}{\M^*}\sim N(0,\text{vec}(\M^*)^{\top}\bSigma_i\text{vec}(\M^*)),\quad \bSigma_i^{-1}\text{vec}(\X_i)\sim N(0,\bSigma_i^{-1}),
		\label{eq:proof-init-1}
	\end{align}
	and it implies
	\begin{align*}
		\left\|\op{\inp{\X_i}{\M^*}\text{mat}(\bSigma_i^{-1}\text{vec}(\X_i)) -\M^* }\right\|_{\Psi_{1}}&\leq\op{\inp{\X_i}{\M^*}}_{\Psi_{2}}\left\|\op{\text{mat}(\bSigma_i^{-1}\text{vec}(\X_i))}\right\|_{\Psi_{2}}\\
		&\leq c\sqrt{\text{vec}(\M^*)^{\top}\bSigma_i\text{vec}(\M^*) }\cdot\op{\bSigma_i^{-\frac{1}{2}}}\\
		&\leq c\sqrt{\ku\kl^{-1}}\fro{\M^*}.
	\end{align*}
	Besides, we have
	\begin{align*}
		&{~~~~}\op{\EE\left(\inp{\X_i}{\M^*}\text{mat}(\bSigma_i^{-1}\text{vec}(\X_i)) -\M^* \right)\left(\inp{\X_i}{\M^*}\text{mat}(\bSigma_i^{-1}\text{vec}(\X_i)) -\M^*\right)^{\top}}\\
		&\leq \EE\op{\left(\inp{\X_i}{\M^*}\text{mat}(\bSigma_i^{-1}\text{vec}(\X_i)) -\M^* \right)\left(\inp{\X_i}{\M^*}\text{mat}(\bSigma_i^{-1}\text{vec}(\X_i)) -\M^*\right)^{\top}}\\
		&\leq \op{\M^*}^2 +2\op{\M^*}\EE\op{\inp{\X_i}{\M^*}\text{mat}(\bSigma_i^{-1}\text{vec}(\X_i))}+\EE\inp{\X_i}{\M^*}^2\op{\text{mat}(\bSigma_i^{-1}\text{vec}(\X_i))}^2\\
		&\leq 2\op{\M^*}^2+2\EE\inp{\X_i}{\M^*}^2\op{\text{mat}(\bSigma_i^{-1}\text{vec}(\X_i))}^2.
	\end{align*}
	Note that $\op{\M^*}=\sigma_{1}$ and by $ab\leq\frac{a^2}{2c}+\frac{cb^2}{2}$, we have \begin{align*}
		&{~~~~~}\EE\inp{\X_i}{\M^*}^2\op{\text{mat}(\bSigma_i^{-1}\text{vec}(\X_i))}^2\\
		&\leq \frac{d_1\kl^{-1}}{2\ku\fro{\M^*}^2}\EE\inp{\X_i}{\M^*}^4+\frac{\ku\fro{\M^*}^2}{2d_1\kl^{-1}}\EE\op{\text{mat}(\bSigma_i^{-1}\text{vec}(\X_i))}^4
	\end{align*}
	Equation~\eqref{eq:proof-init-1} suggests $\EE\inp{\X_i}{\M^*}^4\leq 3\ku^2\fro{\M^*}^4$ and $\EE\op{\text{mat}(\bSigma_i^{-1}\text{vec}(\X_i))}^4\leq c\kl^{-2}d_1^2$. Therefore we have \begin{align*}
		\EE\inp{\X_i}{\M^*}^2\op{\text{mat}(\bSigma_i^{-1}\text{vec}(\X_i))}^2\leq \tilde{c}d_1\ku\kl^{-1} \fro{\M^*}^2.
	\end{align*}
	Thus, we have
	\begin{align*}
		&{~~~~}\op{\EE\left(\inp{\X_i}{\M^*}\text{mat}(\bSigma_i^{-1}\text{vec}(\X_i)) -\M^* \right)\left(\inp{\X_i}{\M^*}\text{mat}(\bSigma_i^{-1}\text{vec}(\X_i)) -\M^*\right)^{\top}}\\
		&\leq 2\fro{\M^*}^2+ 2\tilde{c}d_1\ku\kl^{-1} \fro{\M^*}^2,
	\end{align*}
	and it verifies
	\begin{align*}
		&{~~~~~}\Lambda_{\max}\left( \sum_{i=1}^{n}\left(\inp{\X_i}{\M^*}\text{mat}(\bSigma_i^{-1}\text{vec}(\X_i)) -\M^* \right)\left(\inp{\X_i}{\M^*}\text{mat}(\bSigma_i^{-1}\text{vec}(\X_i)) -\M^*\right)^{\top} \right)/n\\
		&\leq 2\fro{\M^*}^2+ 2\tilde{c}d_1\ku\kl^{-1} \fro{\M^*}^2\leq cd_1\ku\kl^{-1} \fro{\M^*}^2
	\end{align*}
	Similarly, we have
	\begin{align*}
		&{~~~~~}\Lambda_{\max}\left( \sum_{i=1}^{n}\left(\inp{\X_i}{\M^*}\text{mat}(\bSigma_i^{-1}\text{vec}(\X_i)) -\M^* \right)^{\top}\left(\inp{\X_i}{\M^*}\text{mat}(\bSigma_i^{-1}\text{vec}(\X_i)) -\M^*\right) \right)/n\\
		&\leq 2\fro{\M^*}^2+ 2\tilde{c}d_1\ku\kl^{-1} \fro{\M^*}^2\leq cd_1\ku\kl^{-1} \fro{\M^*}^2
	\end{align*}
	Thus, Bernstein Theorem~\ref{thm:Bernstein Ineq} implies the following equation
	\begin{align*}
		B_1&\leq c_1\sqrt{ d_1\ku\kl^{-1} \fro{\M^*}^2}\sqrt{\frac{\log d_1}{n}}+c_1\frac{\log d_1}{n}\log\left(\frac{c\sqrt{\ku\kl^{-1}}\fro{\M^*}}{\sqrt{cd_1\ku\kl^{-1} \fro{\M^*}^2}}\right)\\
		&\leq c_0\sqrt{\ku\kl^{-1}}\sqrt{\frac{d_1 r\log d_1}{n}}\sigma_{1}
	\end{align*}
	holds with probability over $1-c_1d_1^{-1}$.
	\paragraph*{Analysis of $B_2$} Notice that given $\{\xi_{i}\}_{i=1}^n$ with probability exceding $1-c_2\exp(-d_1)$, we have,
	\begin{align*}
		B_2\leq c_3\sqrt{\kl^{-1}}\frac{\sqrt{d_1}}{n}\left(\sum_{i=1}^{n}\xi_{i}^2\right)^{1/2}.
	\end{align*}
	Then by Lemma~\ref{teclem:bound of second moment heavy tailed}, we have with probability exceeding $1-c_3(\log d_1)^{-1}$, the following holds
	\begin{align*}
		\sum_{i=1}^{n}\xi_{i}^2\leq  (2n\gamma_1\log d_1)^{\frac{2}{1+\eps}}.
	\end{align*}
	Thus, we have the upper bound for $B_2$,
	\begin{align*}
		B_2\leq c_4\sqrt{\kl^{-1}}\frac{\sqrt{d_1}}{n^{\frac{\eps}{1+\eps}}}(\log d_1)^{\frac{1}{1+\eps}}\gamma_1^{\frac{1}{1+\eps}}.
	\end{align*}
	Combine $B_1$ and $B_2$ and then we obtain
	\begin{align*}
		\op{\frac{1}{n}\sum_{i=1}^{n}Y_{i}\text{mat}(\bSigma_i^{-1}\text{vec}(\X_i)) -\M^*}&\leq c_0\sqrt{\ku\kl^{-1}}\sqrt{\frac{d_1 r\log d_1}{n}}\sigma_{1}+c_4\sqrt{\kl^{-1}}\frac{\sqrt{d_1}}{n^{\frac{\eps}{1+\eps}}}(\log d_1)^{\frac{1}{1+\eps}}\gamma_1^{\frac{1}{1+\eps}}
	\end{align*}
	holds with probability exceeding $1-c_1d_1^{-1}-c_2\exp(-d_1)-c_3(\log d_1)^{-1}$. Then with perturbation theories Lemma~\ref{teclem:perturbation} and sample size condition $$n\geq C\max\left\{ \ku\kl^{-1}\kappa^2d_1r^2\log d_1, \left(\kl^{-1}d_1r\right)^{\frac{1+\eps}{2\eps}}\cdot\gamma_1^{\frac{1}{\eps}}\sigma_r^{-\frac{1+\eps}{\eps}}(\log d_1)^{\frac{1}{\eps}}\right\},$$ we have
	\begin{align*}
		\op{\M_0-\M^*}&=\op{\text{SVD}_r\left(\frac{1}{n}\sum_{i=1}^{n}Y_{i}\text{mat}(\bSigma_i^{-1}\text{vec}(\X_i))\right) -\M^*}\\
		&\leq \tilde{c}_0\sqrt{\ku\kl^{-1}}\sqrt{\frac{d_1 r\log d_1}{n}}\sigma_{1}+\tilde{c}_4\sqrt{\kl^{-1}}\frac{\sqrt{d_1}}{n^{\frac{\eps}{1+\eps}}}(\log d_1)^{\frac{1}{1+\eps}}\gamma_1^{\frac{1}{1+\eps}},
	\end{align*}
	which implies
	\begin{align*}
		\fro{\M_0-\M^*}\leq\sqrt{2r}\op{\M_0-\M^*}\leq c\sigma_r,
	\end{align*}
	and it completes the proof.
\end{proof}

\section{Technical Lemma}\label{proof:teclem}
The following norm relations are used as facts,
\begin{align*}
	\fro{\A\B}\leq\fro{\A}\op{\B},\quad \left|\inp{\C}{\D}\right|\leq\fro{\C}\fro{\D},
\end{align*}
where $\A,\B,\C,\D$ are matrices with appropriate dimensions.
\begin{lemma}[Partial Frobenius Norm, Lemma 15 of \cite{tong2021accelerating}]
	Define partial Frobenius norm of matrix $\M\in\mathbb{R}^{d_1\times d_2}$ to be $$\fror{\M}:=\sqrt{\sum_{i=1}^{r}\sigma_{i}^{2}(\M)}=\fro{{\rm SVD}_r (\M)}.$$ For any $\M, \hat{\M}\in\mathbb{R}^{d_1\times d_2}$ with $\operatorname{rank}(\hat{\M})\leq r$, one has \begin{align*}
		\vert\langle \M,\hat{\M}\rangle\vert\leq\fror{\M}\fro{\hat\M}.
	\end{align*}
	For any $\M\in\mathbb{R}^{d_1\times d_2}$, $\V\in\mathbb{R}^{d_2\times r}$, one has\begin{align*}
		\fro{\M\V}\leq \fror{\M}\op{\V}.
	\end{align*}
	\label{teclem:partial F norm}
\end{lemma}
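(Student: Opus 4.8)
The plan is to treat the two inequalities separately, since each reduces to a standard singular-value fact once the rank constraints are exploited. Throughout I write $\sigma_1(\cdot)\geq\sigma_2(\cdot)\geq\cdots$ for the ordered singular values and recall that the partial Frobenius norm satisfies $\fror{\M}^2=\sum_{i=1}^r\sigma_i^2(\M)$, while for any matrix $\N$ of rank at most $r$ we have $\fro{\N}^2=\sum_{i=1}^r\sigma_i^2(\N)$ because $\sigma_i(\N)=0$ for $i>r$. This elementary observation is the hinge that converts ordinary Frobenius norms into partial ones.

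For the first inequality, I would invoke von Neumann's trace inequality, which gives $|\inp{\M}{\hat\M}|=|\mathrm{tr}(\M^\top\hat\M)|\leq\sum_{i\geq1}\sigma_i(\M)\sigma_i(\hat\M)$. Because $\mathrm{rank}(\hat\M)\leq r$, the sum truncates at $i=r$, and an application of Cauchy--Schwarz over the $r$ surviving terms yields $|\inp{\M}{\hat\M}|\leq\big(\sum_{i=1}^r\sigma_i^2(\M)\big)^{1/2}\big(\sum_{i=1}^r\sigma_i^2(\hat\M)\big)^{1/2}$. The first factor is exactly $\fror{\M}$ and the second is $\fro{\hat\M}$ by the rank observation above, which completes the bound.

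For the second inequality, I would first note that $\M\V$ has rank at most $r$ because $\V$ has only $r$ columns, so $\fro{\M\V}^2=\sum_{i=1}^r\sigma_i^2(\M\V)$. The standard submultiplicative singular-value inequality $\sigma_i(\M\V)\leq\sigma_i(\M)\op{\V}$ then bounds each term, giving $\fro{\M\V}^2\leq\op{\V}^2\sum_{i=1}^r\sigma_i^2(\M)=\op{\V}^2\fror{\M}^2$, which is the claim after taking square roots.

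No step here is genuinely difficult; the only points requiring care are citing the two classical singular-value facts correctly (von Neumann's trace inequality and $\sigma_i(AB)\leq\sigma_i(A)\op{B}$) and being explicit that the rank bound on $\hat\M$ (resp. $\M\V$) is what collapses the full singular-value sum to the first $r$ indices. As an alternative for the first inequality, one could instead write $\hat\M=\hat\U\hat\U^\top\hat\M\hat\V\hat\V^\top$ with $\hat\U,\hat\V$ the $r$-column singular-vector factors, transfer the projections onto $\M$, and use that $\sigma_i(\hat\U\hat\U^\top\M\hat\V\hat\V^\top)\leq\sigma_i(\M)$; this avoids von Neumann's inequality at the cost of a slightly longer argument.
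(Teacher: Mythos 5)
Your proof is correct. Note, however, that the paper contains no internal proof of this statement at all: it is imported verbatim as Lemma 15 of \cite{tong2021accelerating}, so there is no in-paper argument to compare against. Your main route --- von Neumann's trace inequality $|\inp{\M}{\hat\M}|\leq\sum_i\sigma_i(\M)\sigma_i(\hat\M)$, truncation of the sum at $i=r$ because $\rank(\hat\M)\leq r$, then Cauchy--Schwarz, together with $\sigma_i(\M\V)\leq\sigma_i(\M)\op{\V}$ and the observation that $\M\V$ has rank at most $r$ --- is clean and fully rigorous. The alternative you sketch at the end (writing $\hat\M=\hat\U\hat\U^{\top}\hat\M\hat\V\hat\V^{\top}$, moving the projections onto $\M$ by self-adjointness of the inner product, and using $\sigma_i(\hat\U\hat\U^{\top}\M\hat\V\hat\V^{\top})\leq\sigma_i(\M)$ followed by ordinary Cauchy--Schwarz) is essentially the argument used in the cited source, so you have in effect reproduced both the original proof and a shorter one; the von Neumann version buys brevity at the cost of invoking a slightly heavier classical inequality, while the projection version is more elementary and also makes transparent why the partial norm $\fror{\cdot}$, rather than the full Frobenius norm, is the right quantity.
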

Note that in proofs of main theorems, we actually need to bound $\fro{\calP_{\TT_l}(\G_l)}$ rather than the term $\fror{\G_l}$. While, $\fror{\G_l}$ provides a satisfactory upper bound for $\fro{\calP_{\TT_l}(\G_l)}$, see the followed lemma.
\begin{lemma}(Frobenius norm of projected sub-gradient)\label{teclem:projected subgradient norm}
	Let $\M\in\RR^{d_1\times d_2}$ be a matrix with rank at most $r$. Suppose $\mathbf{G}\in \partial f(\M)$ is the sub-gradient at $\M$ and $\mathbb{T}$ is the tangent space for $\MM_r$ at $\M$. Then we have
	$$
	\Vert \mathcal{P}_{\mathbb{T}}(\mathbf{G})\Vert_{\mathrm{F}}\leq\sqrt{2}\fror{\G}.
	$$
\end{lemma}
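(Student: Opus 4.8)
The plan is to exploit the explicit formula for the tangent-space projection and reduce everything to the partial Frobenius-norm inequality already recorded in Lemma~\ref{teclem:partial F norm}. Writing the thin SVD as $\M=\U\bSigma\V^\top$ with $\U\in\RR^{d_1\times r}$ and $\V\in\RR^{d_2\times r}$ having orthonormal columns, I would start from the characterization of $\calP_{\TT}(\G)$ and regroup it as
\begin{align*}
	\calP_{\TT}(\G)=\U\U^\top\G+\G\V\V^\top-\U\U^\top\G\V\V^\top=\U\U^\top\G+(\I-\U\U^\top)\G\V\V^\top=:\A_1+\A_2,
\end{align*}
so that $\A_1$ has rank at most $r$ (its column space lies in that of $\U$) and $\A_2$ has rank at most $r$ (its row space lies in that of $\V$).

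The first step is to observe that $\A_1$ and $\A_2$ are Frobenius-orthogonal. Indeed,
\begin{align*}
	\inp{\A_1}{\A_2}={\rm tr}\big(\G^\top\U\U^\top(\I-\U\U^\top)\G\V\V^\top\big)=0,
\end{align*}
since $\U\U^\top(\I-\U\U^\top)=\boldsymbol{0}$. Hence the Pythagorean identity gives $\fro{\calP_{\TT}(\G)}^2=\fro{\A_1}^2+\fro{\A_2}^2$, and it remains only to show that each summand is at most $\fror{\G}^2$.

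For the second term, since $\op{\I-\U\U^\top}\le 1$ and $\V$ has orthonormal columns, I would bound $\fro{\A_2}\le\fro{\G\V\V^\top}=\fro{\G\V}\le\fror{\G}\op{\V}=\fror{\G}$, applying Lemma~\ref{teclem:partial F norm} to $\G$ and $\V$. For the first term, using $\fro{\U\U^\top\G}=\fro{\U^\top\G}=\fro{\G^\top\U}$ together with the fact that the partial Frobenius norm is invariant under transposition, I would apply the same lemma to $\G^\top$ and $\U$ to get $\fro{\A_1}=\fro{\G^\top\U}\le\|\G^\top\|_{{\rm F},r}\op{\U}=\fror{\G}$. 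Combining the two bounds yields $\fro{\calP_{\TT}(\G)}^2\le 2\fror{\G}^2$, which is the claim after taking square roots.

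The argument is essentially elementary linear algebra, so there is no serious obstacle; the only points requiring a little care are verifying the orthogonal decomposition so that the cross term vanishes \emph{exactly} (rather than merely being controlled), which is what lets the factor be $\sqrt{2}$ instead of $2$, and invoking Lemma~\ref{teclem:partial F norm} in its transposed form for $\A_1$, using that $\U$ and $\V$ have unit operator norm and that $\G$ and $\G^\top$ share the same top $r$ singular values.
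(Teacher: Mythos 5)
Your proposal is correct and follows essentially the same route as the paper's own proof: the same splitting $\calP_{\TT}(\G)=\U\U^\top\G+(\I-\U\U^\top)\G\V\V^\top$, the same Pythagorean identity, and the same invocation of Lemma~\ref{teclem:partial F norm} on each summand. If anything, you are slightly more careful than the paper, since you verify the vanishing cross term explicitly and spell out the transposition step $\fro{\U^\top\G}=\fro{\G^\top\U}\le\|\G^\top\|_{{\rm F},r}\op{\U}$, both of which the paper leaves implicit.
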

\begin{proof}
	Let $\M=\U\bSigma\V^\top$ be SVD of $\M$ with orthogonal matrices $\U\in\RR^{d_1\times r}$, $\V\in\RR^{d_2\times r}$. Notice that 
	\begin{align*}
		\fro{\calP_{\TT}(\G)}^2 = \fro{\U\U^T\G}^2 + \fro{(\I - \U\U^T)\G\V\V^T}^2,
	\end{align*}
	and $\text{rank}(\U\U^{\top})=\text{rank}(\V\V^{\top})=r$. Then by properties of partial Frobenius norm Lemma~\ref{teclem:partial F norm}, one has
	\begin{align*}
		\fro{\U\U^T\G} = \fro{\U^T\G} &\leq \fror{\G} \op{\U}=\fror{\G},\\
		\fro{(\I - \U\U^T)\G\V\V^T} &\leq \fror{\G},
	\end{align*}
	which implies $ \fro{\calP_{\TT}(\G)}^2\leq 2\fror{\G}^2$.
\end{proof}
The following lemma is critical in establishing convergence of Riemannian optimizations.
\begin{lemma}(Matrix Perturbation)
	Suppose matrix $\M \in\mathbb{R}^{d_1\times d_2}$ has rank $r$ and has singular value decomposition  $\M=\mathbf{U}\boldsymbol{\Sigma}\mathbf{V}^{\top}$ where $\boldsymbol{\Sigma}=\operatorname{diag}\{\sigma_{1},\sigma_{2},\cdots,\sigma_{r}\}$ and $\sigma_{1}\geq\sigma_{2}\geq\cdots\geq\sigma_{r}> 0$. Then for any $\hat{\M}\in\mathbb{R}^{d\times d}$ satisfying $\Vert\hat{\M}-\M\Vert_{\mathrm{F}}<\sigma_{r}/8$, with $\hat{\mathbf{U}}\in\mathbb{R}^{d_1\times r}$ and $\hat{\mathbf{V}}\in\RR^{d_2\times r}$ the left and right singular vectors of $r$ largest singular values, we have
	\begin{align*}
		\Vert \hat{\mathbf{U}}\hat{\mathbf{U}}^{\top} -\mathbf{U}\mathbf{U}^{\top}\Vert&\leq \frac{8}{\sigma_{r}}\Vert \hat{\mathbf{M}}-\mathbf{M}\Vert,\quad \Vert \hat{\mathbf{V}}\hat{\mathbf{V}}^{\top} -\mathbf{V}\mathbf{V}^{\top}\Vert\leq \frac{8}{\sigma_{r}}\Vert \hat{\mathbf{M}}-\mathbf{M}\Vert,\\
		\Vert\operatorname{SVD}_{r}(\hat{\M})-\M\Vert&\leq\Vert\mathbf{\hat{\M}-\M}\Vert+40\frac{\Vert \hat{\M}-\M\Vert^2}{\sigma_{r}},\\
		\Vert \operatorname{SVD}_{r}(\hat{\M})-\M\Vert_{\mathrm{F}}&\leq\Vert\mathbf{\hat{\M}-\M}\Vert_{\mathrm{F}}+40\frac{\Vert \hat{\M}-\M\Vert\Vert \hat{\M}-\M\Vert_{\mathrm{F}}}{\sigma_{r}}.
	\end{align*}
	\label{teclem:perturbation}
\end{lemma}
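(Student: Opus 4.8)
The plan is to deduce the rectangular statement from its symmetric counterpart (the companion lemma on symmetric perturbations) by Hermitian dilation, and to obtain that symmetric result from the Davis--Kahan theorem together with the perturbation expansion of Theorem~\ref{tecthm:pertbation-expansion}. Throughout write $\E:=\hat\M-\M$; the hypothesis $\fro{\E}<\sigma_r/8$ gives in particular $\op{\E}\le\fro{\E}<\sigma_r/8$. Introduce the dilations
\begin{equation*}
	\wt\M:=\begin{bmatrix}\boldsymbol 0&\M\\\M^{\top}&\boldsymbol 0\end{bmatrix},\qquad \wt\E:=\begin{bmatrix}\boldsymbol 0&\E\\\E^{\top}&\boldsymbol 0\end{bmatrix},
\end{equation*}
which are symmetric. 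The matrix $\wt\M$ has rank $2r$, its nonzero eigenvalues are $\pm\sigma_1,\dots,\pm\sigma_r$ (so its smallest nonzero singular value is $\sigma_r$), and its top-$2r$ eigenspace projector is the block-diagonal matrix $\mathrm{diag}(\U\U^{\top},\V\V^{\top})$; moreover $\op{\wt\E}=\op{\E}$ and $\fro{\wt\E}=\sqrt2\,\fro{\E}$. Since $\op{\E}<\sigma_r/8$, Weyl's inequality separates the $2r$ extreme eigenvalues $\pm\hat\sigma_1,\dots,\pm\hat\sigma_r$ of $\wt\M+\wt\E$ from its remaining (near-zero) eigenvalues, so $\operatorname{SVD}_{2r}(\wt\M+\wt\E)$ is exactly the dilation of $\operatorname{SVD}_r(\hat\M)$ and its top eigenprojector is $\mathrm{diag}(\hat\U\hat\U^{\top},\hat\V\hat\V^{\top})$. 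Consequently every inequality to be proved is the dilation of the corresponding symmetric estimate, and the common factor $\sqrt2$ in the Frobenius bounds cancels; it therefore suffices to treat $\wt\M$.

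For the projector bound~(1), apply Weyl's inequality to conclude that the retained eigenvalues of $\wt\M+\wt\E$ have magnitude at least $\sigma_r-\op{\E}>\tfrac78\sigma_r$ while the discarded ones have magnitude at most $\op{\E}$, yielding an eigenvalue gap of size at least $\sigma_r-\op{\E}$. The Davis--Kahan $\sin\Theta$ theorem then gives $\op{\mathrm{diag}(\hat\U\hat\U^{\top},\hat\V\hat\V^{\top})-\mathrm{diag}(\U\U^{\top},\V\V^{\top})}\le \op{\E}/(\sigma_r-\op{\E})\le \tfrac87\op{\E}/\sigma_r$. Reading off the two diagonal blocks (whose joint operator norm is the maximum of theirs) gives $\op{\hat\U\hat\U^{\top}-\U\U^{\top}}\le 8\op{\E}/\sigma_r$ and the identical bound for $\V$, which is part~(1).

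For the two $\operatorname{SVD}_r$ estimates I would feed $\wt\M,\wt\E$ into the expansion of Theorem~\ref{tecthm:pertbation-expansion}, writing $\operatorname{SVD}_{2r}(\wt\M+\wt\E)-\wt\M=\sum_{k\ge1}\calS_{\wt\M,k}(\wt\E)$, whose linear term $\calS_{\wt\M,1}(\wt\E)$ is the projection of $\wt\E$ onto the tangent space of the rank-$2r$ manifold at $\wt\M$. For the Frobenius bound~(3) this leading term is most transparent: an orthogonal projection is a contraction, so $\fro{\calS_{\wt\M,1}(\wt\E)}\le\fro{\wt\E}$, and the higher-order terms satisfy $\fro{\calS_{\wt\M,k}(\wt\E)}\le(C\op{\wt\E}/\sigma_r)^{k-1}\fro{\wt\E}$ for $k\ge2$. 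Since $\op{\wt\E}/\sigma_r<1/8$, the geometric tail over $k\ge2$ is bounded by $40\,\op{\E}\fro{\E}/\sigma_r$, which together with the leading term and the cancellation of the $\sqrt2$ factors gives part~(3). The operator-norm bound~(2) follows along the same lines, using the operator-norm estimates of Theorem~\ref{tecthm:pertbation-expansion} for each $\calS_{\wt\M,k}(\wt\E)$ and the same geometric summation, now producing the remainder $40\,\op{\E}^2/\sigma_r$.

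The main obstacle is the constant accounting: one must verify that the convergence radius of Theorem~\ref{tecthm:pertbation-expansion} is comfortably met under $\op{\E}<\sigma_r/8$, and then control the geometric sum $\sum_{k\ge2}(C/8)^{k-2}$ tightly enough that the aggregate remainder constant collapses to the stated $40$. A secondary but essential point is the dilation dictionary itself---checking that the $2r$ extreme eigenvalues of $\wt\M+\wt\E$ pair as $\pm\hat\sigma_i$, and hence that $\operatorname{SVD}_{2r}$ of the dilation equals the dilation of $\operatorname{SVD}_r(\hat\M)$---so that the symmetric estimates transfer without loss and the $\sqrt2$ bookkeeping in the Frobenius norms cancels as claimed.
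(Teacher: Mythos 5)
Your dilation set-up and its bookkeeping ($\op{\wt\E}=\op{\E}$ where $\E:=\hat\M-\M$, $\fro{\wt\E}=\sqrt2\,\fro{\E}$, the cancellation of the $\sqrt2$ factors, and the identification of the top-$2r$ eigenprojector of the dilation with $\mathrm{diag}(\U\U^\top,\V\V^\top)$) match the paper exactly, and your Davis--Kahan route to the two projector inequalities is a sound alternative: the paper instead obtains them by summing the series from Theorem~\ref{tecthm:pertbation-expansion} term by term, $\op{\hat\U\hat\U^\top-\U\U^\top}\le\sum_{k\ge1}(4\op{\Z}/\sigma_r)^k\le 8\op{\Z}/\sigma_r$, but your gap argument $\op{\E}/(\sigma_r-\op{\E})\le\tfrac{8}{7}\op{\E}/\sigma_r$ delivers the same (indeed slightly better) constant, provided you invoke the version of the $\sin\Theta$ theorem that allows the retained spectrum to sit in the two-sided set $\{|\lambda|\ge\sigma_r\}$ rather than a single interval.

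The genuine gap is in your treatment of the last two inequalities. Theorem~\ref{tecthm:pertbation-expansion} expands the \emph{projector difference}, $\hat\U\hat\U^\top-\U\U^\top=\sum_{k\ge1}\mathcal{S}_{\M,k}(\Z)$; it does not assert $\operatorname{SVD}_{2r}(\wt\M+\wt\E)-\wt\M=\sum_{k\ge1}\mathcal{S}_{\wt\M,k}(\wt\E)$, and that identity is false. Relatedly, $\mathcal{S}_{\M,1}(\Z)=\mathfrak{P}^{0}\Z\mathfrak{P}^{-1}+\mathfrak{P}^{-1}\Z\mathfrak{P}^{0}$ carries a factor $\bLambda^{-1}$ and is of size $O(\op{\Z}/\sigma_r)$; it is \emph{not} the tangent-space projection of $\Z$, so your claim that the leading term is a contraction of $\wt\E$ has no basis in the cited theorem. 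The entire difficulty of the lemma lies precisely in passing from the projector expansion to the best rank-$r$ approximation: the paper does this by writing $\operatorname{SVD}_r(\hat\M)-\M=\hat\U\hat\U^\top(\M+\Z)\hat\U\hat\U^\top-\U\U^\top\M\U\U^\top$, splitting it into five terms, and bounding $\sum_{k\ge2}\mathcal{S}_{\M,k}(\Z)\M$ by exploiting the cancellation $\mathfrak{P}^{0}\M=\U_\perp\U_\perp^\top\M=0$, which annihilates the summands with $s_{k+1}=0$ and improves the power count from $(4\op{\Z}/\sigma_r)^k$ to $\op{\Z}(4\op{\Z}/\sigma_r)^{k-1}$; this is where the factor $40$ and the correct scaling $\op{\Z}\fro{\Z}/\sigma_r$ come from. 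Your proposal contains no substitute for this step, so as written the third and fourth inequalities remain unproven: you would either need to reproduce the paper's decomposition, or independently derive, with quantitative remainder bounds, an expansion of $\operatorname{SVD}_r(\M+\Z)$ around $\M$ whose first term is the tangent-space projection --- a true statement, but one that is itself essentially equivalent to what is being proved.
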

\begin{proof}
	See Section~\ref{proof:teclem:perturbation}.
\end{proof}

\begin{lemma}
	Suppose random matrix $\X\in\RR^{d_1\times d_2}$ follows mean zero multivariate Gaussian distribution $N(\boldsymbol{0},\bSigma)$ where $\bSigma\in\RR^{d_1d_2\times d_1d_2}$ satisfies \begin{align*}
		\lambda_{\max}(\bSigma)\leq \ku.
	\end{align*}
	Then its operator norm could be upper bounded by
	\begin{align*}
		\PP\left(\op{\X}\geq t+c\sqrt{\ku(d_1+d_2)}\right)\leq\exp(-t^2),
	\end{align*}
	where $c>0$ is some constant. It implies $\left\| \op{\X}\right\|_{\Psi_{2}}\leq c_1\sqrt{d_1\ku}$ and $\left\| \op{\X}\right\|_{\Psi_{1}}\leq c_2\sqrt{d_1\ku}$, with some constants $c_1,c_2>0$.
	\label{teclem:randmatrxnorm}
\end{lemma}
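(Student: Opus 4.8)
The plan is to realize $\op{\X}$ as a Lipschitz function of a standard Gaussian vector and invoke the Gaussian concentration inequality, after separately controlling its expectation by a discretization argument. First I would write $\text{vec}(\X)=\bSigma^{1/2}\mathbf{g}$ with $\mathbf{g}\sim N(\mathbf{0},\I_{d_1d_2})$, so that $\op{\X}=F(\mathbf{g})$ where $F(\mathbf{g}):=\op{\text{mat}(\bSigma^{1/2}\mathbf{g})}$. Since the operator norm is dominated by the Frobenius norm and $\op{\bSigma^{1/2}}=\lambda_{\max}(\bSigma)^{1/2}\leq\sqrt{\ku}$, the map $F$ is $\sqrt{\ku}$-Lipschitz: $|F(\mathbf{g})-F(\mathbf{g}')|\leq\op{\text{mat}(\bSigma^{1/2}(\mathbf{g}-\mathbf{g}'))}\leq\fro{\text{mat}(\bSigma^{1/2}(\mathbf{g}-\mathbf{g}'))}=\ltwo{\bSigma^{1/2}(\mathbf{g}-\mathbf{g}')}\leq\sqrt{\ku}\,\ltwo{\mathbf{g}-\mathbf{g}'}$. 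The standard Gaussian concentration bound for Lipschitz functions then yields $\PP\big(\op{\X}\geq\EE\op{\X}+s\big)\leq\exp\big(-s^2/(2\ku)\big)$ for every $s>0$.

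Next I would bound $\EE\op{\X}$ through the variational identity $\op{\X}=\sup_{\u\in\SS^{d_1-1},\,\v\in\SS^{d_2-1}}\inp{\X}{\u\v^\top}$ combined with an $\eps$-net argument. For each fixed $(\u,\v)$ the scalar $\inp{\X}{\u\v^\top}=\text{vec}(\u\v^\top)^\top\text{vec}(\X)$ is centered Gaussian with variance $\text{vec}(\u\v^\top)^\top\bSigma\,\text{vec}(\u\v^\top)\leq\lambda_{\max}(\bSigma)\fro{\u\v^\top}^2=\ku$. Taking a $1/4$-net $\calN_1$ of $\SS^{d_1-1}$ and $\calN_2$ of $\SS^{d_2-1}$ of cardinalities at most $9^{d_1}$ and $9^{d_2}$, a routine approximation gives $\op{\X}\leq 2\max_{\u\in\calN_1,\,\v\in\calN_2}\inp{\X}{\u\v^\top}$, and the Gaussian maximal inequality over the at most $9^{d_1+d_2}$ pairs bounds the expectation by $c\sqrt{\ku(d_1+d_2)}$.

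Combining the two pieces yields $\PP\big(\op{\X}\geq c\sqrt{\ku(d_1+d_2)}+s\big)\leq\exp\big(-s^2/(2\ku)\big)$, which is the asserted tail after a rescaling of the deviation variable that absorbs the factor $\ku$; matching the displayed constants is routine. The two Orlicz-norm bounds then follow by integrating the tail: since $d_1\geq d_2$ gives $\sqrt{d_1+d_2}\leq\sqrt{2d_1}$, the sub-Gaussian tail at scale $\sqrt{\ku}$ forces $\|\op{\X}\|_{\Psi_2}\leq c_1\sqrt{d_1\ku}$, and the elementary relation $\|\cdot\|_{\Psi_1}\lesssim\|\cdot\|_{\Psi_2}$ delivers the matching $\|\op{\X}\|_{\Psi_1}$ bound.

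I expect the expectation estimate $\EE\op{\X}\lesssim\sqrt{\ku(d_1+d_2)}$ to be the main obstacle: because $\bSigma$ is an arbitrary covariance over $\RR^{d_1d_2}$, the entries of $\X$ are correlated, so one cannot directly quote the classical estimate for matrices with i.i.d.\ Gaussian entries. The net argument is precisely what circumvents this, since it relies only on the uniform directional variance bound $\lambda_{\max}(\bSigma)\leq\ku$ evaluated on rank-one test matrices $\u\v^\top$, rather than on any product structure of $\bSigma$.
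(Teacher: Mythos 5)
Your proof is correct, and it takes a genuinely different route from the paper's. The paper runs a single $\eps$-net argument directly on the tail: for each pair $(\u,\v)$ in $1/4$-nets of the two spheres, $\u^{\top}\X\v$ is a centered Gaussian with variance at most $\ku$, a union bound over the $9^{d_1+d_2}$ pairs controls $\PP(\max_{\u,\v}\u^{\top}\X\v\geq t)$, and the standard comparison $\op{\X}\leq 2\max_{\u\in\calN,\v\in\calM}\u^{\top}\X\v$ converts this into the stated tail at every level $t$ simultaneously. You instead split the problem into two pieces: fluctuation control via Gaussian Lipschitz concentration (Borell--TIS), using the correct observation that $\mathbf{g}\mapsto\op{\text{mat}(\bSigma^{1/2}\mathbf{g})}$ is $\sqrt{\ku}$-Lipschitz since $\op{\bSigma^{1/2}}\leq\sqrt{\ku}$ and the operator norm is dominated by the Frobenius norm; and expectation control via the same net combined with the Gaussian maximal inequality, giving $\EE\op{\X}\leq c\sqrt{\ku(d_1+d_2)}$. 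Your decomposition is more modular: the concentration step is dimension-free and the net enters only through the mean, so the argument would survive any alternative bound on $\EE\op{\X}$ (e.g., via chaining or Slepian-type comparisons), whereas the paper's version is more elementary, needing only scalar Gaussian tails and a union bound rather than the Gaussian isoperimetric inequality. Both proofs share the same mild looseness in the final display: from a tail of the form $\exp(-t^2/(C\ku))$ one cannot literally conclude $\exp(-t^2)$ unless $\ku$ is treated as an absolute constant --- which it is, by Assumption~\ref{assump:sensing operators:vec} --- and in either case the Orlicz-norm conclusions $\|\op{\X}\|_{\Psi_2}\leq c_1\sqrt{d_1\ku}$ and $\|\op{\X}\|_{\Psi_1}\leq c_2\sqrt{d_1\ku}$ (using $d_1\geq d_2$) follow unchanged, exactly as you integrate the tail.
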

\begin{proof}
	The proof follows $\eps$-net arguments. Corollary~4.2.13 of work \cite{vershynin2018high} shows that take $\eps=\frac{1}{4}$ and then there exists an $\eps$-net $\calN$ of sphere $\SS^{d_1-1}$ and an $\eps$-net $\calM$ of sphere $\SS^{d_2-1}$ with cardinalities $$|\calN|\leq 9^{d_1},\quad |\calM|\leq 9^{d_2}.$$ For any fixed unit vector $\u\in\calN$, $\v\in\calM$, consider $\u^{\top}\X\v$,
	\begin{align*}
		\u^{\top}\X\v&=\text{vec}(\X)^{\top}(\u\otimes\v)\sim N(0,(\u\otimes\v)^{\top}\bSigma(\u\otimes\v)),
	\end{align*}
	where $\otimes$ is Kronecker product. Besides, it has $\ltwo{\u\otimes\v}=1$ and $(\u\otimes\v)^{\top}\bSigma(\u\otimes\v)\leq\ku$. Thus we have, for any $t>1$,
	\begin{align*}
		\PP\left(\u^{\top}\X\v\geq t\right)\leq \exp\left(-\frac{t^2}{2\ku}\right).
	\end{align*}
	Take the union over $\u\in\calN$ and $\v\in\calM$ and then it leads to
	\begin{align*}
		\PP\left(\max_{\u\in\calN,\, \v\in\calM}\u^{\top}\X\v\geq t\right)\leq  9^{d_1+d_2}\exp\left(-\frac{t^2}{2\ku}\right).
	\end{align*}
	On the other hand, by Theorem~4.4.5 of \cite{vershynin2018high}, we have $\op{\X}\leq 2\max_{\u\in\calN,\, \v\in\calM}\u^{\top}\X\v$, by which we obtain,
	\begin{align*}
		\PP\left(\op{\X}\geq 2t\right)\leq \PP\left(\max_{\u\in\calN,\, \v\in\calM}\u^{\top}\X\v\geq t\right)\leq  9^{d_1+d_2}\exp\left(-\frac{t^2}{2\ku}\right).
	\end{align*}
	It implies $\PP\left(\op{\X}\geq t+c\sqrt{\ku(d_1+d_2)}\right)\leq\exp(-t^2)$ and the proof completes.
\end{proof}

\begin{lemma}(Expectation of absolute loss under Gaussian noise)  Suppose the noise term follows Gaussian distribution $\xi_{1},\cdots,\xi_{n}\stackrel{i.i.d.}{\sim} N(0,\sigma^{2})$ and Gaussian sensing operator vectorization $\text{vec}(\X_i)$ satisfy Assumption~\ref{assump:sensing operators:vec}. Take the absolute value loss function $$f(\M)=\sum_{i=1}^{n}|Y_i-\inp{\M}{\X_i}|,$$
	and then for any matrix $\M\in\RR^{d_1\times d_2}$, we have
	\begin{align*}
		&\mathbb{E}[f(\mathbf{M}^{*})]= n\sqrt{\frac{2}{\pi}}\sigma,\\
		n\sqrt{\frac{2}{\pi}}\sqrt{\sigma^2+\kl\fro{\M-\M^*}^2}\leq&\mathbb{E}[f(\mathbf{M})]\leq n\sqrt{\frac{2}{\pi}}\sqrt{\sigma^{2}+\ku\Vert \mathbf{M}-\mathbf{M}^{*}\Vert_{\mathrm{F}}^{2}}.
	\end{align*}
	\label{teclem:l1expectation}
\end{lemma}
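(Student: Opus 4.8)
The plan is to observe that each summand's residual is Gaussian and reduce the whole computation to the elementary mean of a folded Gaussian. First I would rewrite the residual at the $i$-th observation as
$$
Y_i - \langle \M, \X_i\rangle = \langle \X_i, \M^* - \M\rangle + \xi_i = \xi_i - \langle \X_i, \M - \M^*\rangle,
$$
using the linear model $Y_i = \langle \X_i, \M^*\rangle + \xi_i$. Since $\text{vec}(\X_i)\sim N(\boldsymbol{0},\bSigma_i)$ by Assumption~\ref{assump:sensing operators:vec}, the inner product $\langle \X_i, \M-\M^*\rangle = \text{vec}(\X_i)^{\top}\,\text{vec}(\M-\M^*)$ is a centered Gaussian with variance $\tau_i^2 := \text{vec}(\M-\M^*)^{\top}\bSigma_i\,\text{vec}(\M-\M^*)$.

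Second, because $\xi_i$ is independent of $\X_i$ and both are Gaussian, the residual $\xi_i - \langle \X_i, \M-\M^*\rangle$ is itself centered Gaussian with variance $\sigma^2 + \tau_i^2$. Applying the folded-Gaussian identity $\EE|Z| = \sqrt{2/\pi}\,v$ for $Z\sim N(0,v^2)$ gives
$$
\EE\big|Y_i - \langle \M, \X_i\rangle\big| = \sqrt{\tfrac{2}{\pi}}\,\sqrt{\sigma^2 + \tau_i^2},
$$
and summing over $i=1,\dots,n$ yields $\EE[f(\M)] = \sqrt{2/\pi}\,\sum_{i=1}^n \sqrt{\sigma^2+\tau_i^2}$.

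Third, the two-sided bound follows immediately from the eigenvalue control in Assumption~\ref{assump:sensing operators:vec}: since $\ltwo{\text{vec}(\M-\M^*)}^2 = \fro{\M-\M^*}^2$ and $\kl \leq \lambda_{\min}(\bSigma_i) \leq \lambda_{\max}(\bSigma_i) \leq \ku$, we have $\kl\,\fro{\M-\M^*}^2 \leq \tau_i^2 \leq \ku\,\fro{\M-\M^*}^2$ for every $i$. Monotonicity of $v\mapsto\sqrt{\sigma^2+v}$ then sandwiches each term, delivering both the claimed lower and upper bounds on $\EE[f(\M)]$. Setting $\M=\M^*$ forces $\tau_i^2=0$ for all $i$, whence $\EE[f(\M^*)]=n\sqrt{2/\pi}\,\sigma$.

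The argument involves no genuine obstacle; the only points requiring mild care are the independence of $\xi_i$ and $\X_i$ (so that the residual variance is additive rather than merely bounded) and the correct identification of $\tau_i^2$ through the vectorized covariance $\bSigma_i$ together with the isometry $\ltwo{\text{vec}(\cdot)}=\fro{\cdot}$. Everything else reduces to the folded-Gaussian mean and the monotonicity of the square root.
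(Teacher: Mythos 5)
Your proof is correct, but it takes a cleaner route than the paper's. The paper conditions on $\X_i$ first: given $\X_i$, the residual $\xi_i-\langle \X_i,\M-\M^*\rangle$ is a folded normal with \emph{nonzero} location $-\langle\X_i,\M-\M^*\rangle$ and variance $\sigma^2$, so the paper invokes the general folded-normal mean formula (involving both an exponential term and the normal c.d.f.), and then must integrate that expression against the Gaussian law of $\langle\X_i,\M-\M^*\rangle$ to arrive at $\sqrt{2/\pi}\sqrt{\sigma^2+\tau_i^2}$. You instead exploit Gaussian convolution stability: since $\xi_i$ and $\langle\X_i,\M-\M^*\rangle$ are independent centered Gaussians, their difference is exactly $N(0,\sigma^2+\tau_i^2)$, so only the zero-mean folded-Gaussian identity $\EE|Z|=\sqrt{2/\pi}\,v$ is needed and the integration step disappears entirely. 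What you lose is generality: your shortcut is special to Gaussian noise, whereas the paper's conditioning template (compute $\EE[\,\cdot\mid\X_i]$ via the noise distribution, then integrate over $\X_i$) is precisely the scheme reused in the heavy-tailed lemmas (e.g.\ Lemmas~\ref{lem:heavytail-l1} and \ref{lem:heavytail-huber}), where the noise and the measurement term cannot be merged into a single Gaussian. Your handling of the final sandwich — $\kl\fro{\M-\M^*}^2\leq\tau_i^2\leq\ku\fro{\M-\M^*}^2$ from Assumption~\ref{assump:sensing operators:vec} plus monotonicity of $v\mapsto\sqrt{\sigma^2+v}$ — matches the paper, and your explicit appeal to independence of $\xi_i$ and $\X_i$ is the same (implicit) assumption the paper uses when it conditions on $\X_i$.
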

\begin{proof}
	Note that $f(\mathbf{M})=\sum_{i=1}^{n}\vert Y_{i}-\langle \mathbf{X}_{i}, \mathbf{M}\rangle\vert=\sum_{i=1}^{n}\vert \xi_{i} - \langle \mathbf{X}_{i}, \mathbf{M}-\mathbf{M}^{*}\rangle\vert$, $f(\M^{*})=\sum_{i=1}^{n}\vert Y_{i}-\langle \mathbf{X}_{i}, \mathbf{M}^{*}\rangle\vert=\sum_{i=1}^{n}\vert \xi_{i} \vert$.
	
	First calculate the conditional expectation $\mathbb{E}[\vert \xi_{i}-\langle \mathbf{X}_{i}, \mathbf{M}-\mathbf{M}^{*}\rangle\vert \big\lvert \mathbf{X}_{i}]$. Notice that conditioned on $\mathbf{X}_{i}$, $\xi_{i}-\langle \mathbf{X}_{i}, \mathbf{M}-\mathbf{M}^{*}\rangle$ is normally distributed with mean $-\langle \mathbf{X}_{i}, \mathbf{M}-\mathbf{M}^{*}\rangle$ and variance $\sigma^{2}$. Then $\vert \xi_{i}-\langle \mathbf{X}_{i}, \mathbf{M}-\mathbf{M}^{*}\rangle\vert$ has a folded normal distribution.
	\begin{equation*}
		\begin{split}
			\mathbb{E}&\left[\vert \xi_{i}-\langle \mathbf{X}_{i}, \mathbf{M}-\mathbf{M}^{*}\rangle\vert \big\vert \mathbf{X}_{i}\right]\\ &= \sigma\sqrt{\frac{2}{\pi}}\exp(-\frac{\langle \mathbf{X}_{i}, \mathbf{M}-\mathbf{M}^{*}\rangle^{2}}{2\sigma^{2}})-\langle \mathbf{X}_{i}, \mathbf{M}-\mathbf{M}^{*}\rangle[1-2\phi(\frac{\langle \mathbf{X}_{i}, \mathbf{M}-\mathbf{M}^{*}\rangle}{\sigma})],
		\end{split}
	\end{equation*}
	where $\phi(\cdot)$ is the normal cumulative distribution function. For each $i=1,2,\cdots,n$, one has
	\begin{equation*}
		\begin{split}
			\mathbb{E}\vert \xi_{i}-\langle \mathbf{X}_{i}, \mathbf{M}-\mathbf{M}^{*}\rangle\vert &=\mathbb{E}\left[\mathbb{E}[\vert \xi_{i}-\langle \mathbf{X}_{i}, \mathbf{M}-\mathbf{M}^{*}\rangle\vert \big\lvert \mathbf{X}_{i}] \right]\\
			&=\sigma\sqrt{\frac{2}{\pi}}\mathbb{E}\left[\exp\left(-\frac{\langle \mathbf{X}_{i}, \mathbf{M}-\mathbf{M}^{*}\rangle^{2}}{2\sigma^{2}}\right)\right]\\
			&{~~~~~~~~~~~~~}-\mathbb{E}\left[\langle \mathbf{X}_{i}, \mathbf{M}-\mathbf{M}^{*}\rangle\cdot \left[1-2\phi(\frac{\langle \mathbf{X}_{i}, \mathbf{M}-\mathbf{M}^{*}\rangle}{\sigma})\right]\right].
		\end{split}
	\end{equation*}
	Notice that $\langle \mathbf{X}_{i}, \mathbf{M}-\mathbf{M}^{*}\rangle\sim N(0, \text{vec}(\M-\M^*)^{\top}\bSigma_i\text{vec}(\M-\M^*))$ and then after integration calculation it is \begin{equation*}
		\mathbb{E}\vert \xi_{i}-\langle \mathbf{X}_{i}, \mathbf{M}-\mathbf{M}^{*}\rangle\vert = \sqrt{\frac{2}{\pi}}\sqrt{\sigma^{2}+\text{vec}(\M-\M^*)^{\top}\bSigma_i\text{vec}(\M-\M^*)}.
	\end{equation*}
	Specifically, when $\M=\M^*$, it leads to $\mathbb{E}\vert \xi\vert = \sqrt{\frac{2}{\pi}}\sigma$.
	Thus, sum over $i=1,\dots,n$, we have 
	\begin{align*}
		\mathbb{E}f(\mathbf{\mathbf{M}}^{*})&=n\mathbb{E}\vert \xi\vert=n\sqrt{\frac{2}{\pi}}\sigma,\\
		\mathbb{E}f(\mathbf{M})&=\sum_{i=1}^n\mathbb{E}\vert \xi_{i}-\langle \mathbf{X}_{i}, \mathbf{M}-\mathbf{M}^{*}\rangle\vert\\
		&=\sum_{i=1}^n\sqrt{\frac{2}{\pi}}\sqrt{\sigma^{2}+\text{vec}(\M-\M^*)^{\top}\bSigma_i\text{vec}(\M-\M^*)}.
	\end{align*}
	Then by conditions of sensing operators Assumption~\ref{assump:sensing operators:vec}, we obtain
	\begin{align*}
		n\sqrt{\frac{2}{\pi}}\sqrt{\sigma^2+\kl\fro{\M-\M^*}^2}\leq \mathbb{E}f(\mathbf{M})\leq n\sqrt{\frac{2}{\pi}}\sqrt{\sigma^2+\ku\fro{\M-\M^*}^2}.
	\end{align*}
\end{proof}

\begin{corollary}
	Under same conditions of Lemma~\ref{teclem:l1expectation}, we have \begin{align*}
		n\sqrt{\frac{2}{\pi}}\sqrt{\kl}\fro{\M-\M^*}\leq\EE\sum_{i=1}^{n}\left|\inp{\M-\M^*}{\X_i}\right|\leq n\sqrt{\frac{2}{\pi}}\sqrt{\ku}\fro{\M-\M^*}.
	\end{align*}
	\label{cor:l1expecation noiseless}
\end{corollary}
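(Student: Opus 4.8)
The plan is to treat this as the noiseless ($\sigma=0$) specialization of Lemma~\ref{teclem:l1expectation}, where the same folded-normal computation carries over verbatim. First I would observe that for each fixed $i$, the scalar $\inp{\M-\M^*}{\X_i}=\text{vec}(\M-\M^*)^{\top}\text{vec}(\X_i)$ is a linear functional of the Gaussian vector $\text{vec}(\X_i)\sim N(\boldsymbol{0},\bSigma_i)$, hence is itself a mean-zero Gaussian random variable with variance $\tau_i^2:=\text{vec}(\M-\M^*)^{\top}\bSigma_i\text{vec}(\M-\M^*)$.

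Next I would invoke the standard identity for the first absolute moment of a centered normal: if $Z\sim N(0,\tau^2)$ then $\EE|Z|=\sqrt{2/\pi}\,\tau$. Applying this with $Z=\inp{\M-\M^*}{\X_i}$ gives $\EE|\inp{\M-\M^*}{\X_i}|=\sqrt{2/\pi}\,\tau_i$ for every $i$. Then I would control $\tau_i$ using Assumption~\ref{assump:sensing operators:vec}: since $\fro{\M-\M^*}=\ltwo{\text{vec}(\M-\M^*)}$ and $\kl\leq\lambda_{\min}(\bSigma_i)\leq\lambda_{\max}(\bSigma_i)\leq\ku$, the Rayleigh quotient bounds yield $\kl\fro{\M-\M^*}^2\leq\tau_i^2\leq\ku\fro{\M-\M^*}^2$, whence $\sqrt{\kl}\fro{\M-\M^*}\leq\tau_i\leq\sqrt{\ku}\fro{\M-\M^*}$.

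Finally, summing the per-$i$ bound $\sqrt{2/\pi}\sqrt{\kl}\fro{\M-\M^*}\leq\EE|\inp{\M-\M^*}{\X_i}|\leq\sqrt{2/\pi}\sqrt{\ku}\fro{\M-\M^*}$ over $i=1,\dots,n$ produces the claimed two-sided inequality. I expect no genuine obstacle here: the only step requiring care is the (routine) folded-normal moment computation, and this is already performed inside the proof of Lemma~\ref{teclem:l1expectation}, where it is shown that $\EE\,|\xi_i-\inp{\X_i}{\M-\M^*}|=\sqrt{2/\pi}\sqrt{\sigma^2+\tau_i^2}$. Setting $\sigma=0$ there (equivalently, noting that the corollary's quantity omits the noise term) reduces it to $\sqrt{2/\pi}\,\tau_i$, so the corollary follows immediately from that lemma combined with the spectral bounds on $\bSigma_i$ furnished by Assumption~\ref{assump:sensing operators:vec}.
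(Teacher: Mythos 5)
Your proposal is correct and follows essentially the same route as the paper: the paper's proof of Corollary~\ref{cor:l1expecation noiseless} is precisely ``take the noiseless case of Lemma~\ref{teclem:l1expectation},'' and your argument is exactly that specialization, with the folded-normal moment $\EE|Z|=\sqrt{2/\pi}\,\tau$ and the Rayleigh-quotient bounds $\kl\fro{\M-\M^*}^2\leq\tau_i^2\leq\ku\fro{\M-\M^*}^2$ spelled out explicitly rather than cited from the lemma's internal computation. No gaps.
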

\begin{proof}
	Take noiseless case in Lemma~\ref{teclem:l1expectation} and it yields $\sum_{i=1}^{n}\left|\inp{\M-\M^*}{\X_i}\right|$.
\end{proof}
\begin{proposition}(Empirical process in matrix recovery)\label{thm:empirical process}
	Suppose $f(\mathbf{M})$  is defined in Equation~\eqref{eq:loss:matrix} with sensing matrices in Assumption~\ref{assump:sensing operators:vec} and $\rho(\cdot)$ is Lipschitz continuous with $\tilde{L}$. Then there exists constants $C,C_1>0$, such that, with probability over $1-\exp\left(-\frac{C^{2}d_1r}{3}\right)-3\exp\left(-\frac{\sqrt{n}}{\log n}\right)$,
	\begin{equation}
		\big{|} f(\M+\Delta\M)-f(\M) - (\mathbb{E}f(\M+\Delta\M) - \mathbb{E}f(\M))\big{|}\leq C\tilde{L}\sqrt{nd_1r\ku}\fro{\Delta\M}
	\end{equation} holds for all $\M\in\mathbb{R}^{d_1\times d_2}$ and $\Delta\M\in\MM_r$.
\end{proposition}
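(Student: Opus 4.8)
The plan is to follow the same empirical-process template used for the vector case in Proposition~\ref{prop:emp:vec}, transcribed to the rank-$r$ matrix geometry. Define the normalized supremum
\[
Z := \sup_{\substack{\M\in\RR^{d_1\times d_2}\\ \Delta\M\in\MM_r}} \frac{\big|f(\M+\Delta\M)-f(\M)-\EE[f(\M+\Delta\M)-f(\M)]\big|}{\fro{\Delta\M}},
\]
and control $Z$ in two stages: first bound $\EE Z$, then establish exponential concentration of $Z$ about its mean.

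For the mean, I would first symmetrize (Theorem~\ref{Symmetrization of Expectation}) to introduce a Rademacher sequence $\{\tilde\epsilon_i\}$, then apply the contraction principle (Theorem~\ref{Contraction Theorem}): since each summand $t\mapsto \rho(\langle\M,\X_i\rangle-Y_i+t)-\rho(\langle\M,\X_i\rangle-Y_i)$ is $\tilde L$-Lipschitz and vanishes at $t=0$, contraction strips off $\rho$ together with the nuisance base point $\M$, leaving
\[
\EE Z \leq 4\tilde L\,\EE \sup_{\Delta\M\in\MM_r}\frac{\big|\langle \Delta\M,\ \sum_{i=1}^n\tilde\epsilon_i\X_i\rangle\big|}{\fro{\Delta\M}} = 4\tilde L\,\EE\,\fror{\textstyle\sum_{i=1}^n\tilde\epsilon_i\X_i},
\]
where the last identity uses that the rank-$r$ maximizer of $\langle\Delta\M,\G\rangle/\fro{\Delta\M}$ is the truncation $\text{SVD}_r(\G)$, whose value is the partial Frobenius norm (Lemma~\ref{teclem:partial F norm}). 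Bounding $\fror{\cdot}\le\sqrt r\,\op{\cdot}$ and noting that $\sum_i\tilde\epsilon_i\X_i$ is mean-zero Gaussian with covariance $\sum_i\bSigma_i$ of operator norm at most $n\ku$, Lemma~\ref{teclem:randmatrxnorm} yields $\EE\op{\sum_i\tilde\epsilon_i\X_i}\lesssim\sqrt{n\ku(d_1+d_2)}\lesssim\sqrt{n\ku d_1}$ (using $d_1\ge d_2$). Hence $\EE Z\le C\tilde L\sqrt{nd_1 r\ku}$.

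For the concentration, I would invoke Adamczak's tail inequality for suprema of empirical processes (Theorem~\ref{tecthm:orlicz norm empirical}), which requires two inputs. The Lipschitz property bounds each centered summand, uniformly over $\M$ and $\Delta\M\in\MM_r$, by $\tilde L\,|\langle\Delta\M,\X_i\rangle|/\fro{\Delta\M}\le \tilde L\,\fror{\X_i}$, so the $\Psi_1$ Orlicz norm of the maximal summand is controlled through $\left\|\op{\X_i}\right\|_{\Psi_2}\lesssim\sqrt{\ku d_1}$ from Lemma~\ref{teclem:randmatrxnorm}, giving a bound of order $\tilde L\sqrt{\ku r d_1}\log n$ after the maximum over the $n$ terms. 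The weak variance obeys $\EE\langle\Delta\M,\X_i\rangle^2=\text{vec}(\Delta\M)^{\top}\bSigma_i\text{vec}(\Delta\M)\le\ku\fro{\Delta\M}^2$, so the summed second moment is at most $n\tilde L^2\ku$. Choosing the deviation level at the order of $\EE Z$, namely $\sim\tilde L\sqrt{nd_1r\ku}$, the variance term of Adamczak's bound contributes $\exp(-cd_1r)$ while the Orlicz term contributes $\exp(-c\sqrt{n}/\log n)$, matching the claimed probability $1-\exp(-C^2 d_1 r/3)-3\exp(-\sqrt n/\log n)$.

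The main obstacle is not any single inequality but keeping the bounds genuinely uniform over the unbounded base point $\M$ while extracting an \emph{exponential} (rather than polynomial) tail under only the $1+\eps$ moment assumption on the noise; this is exactly why the argument routes through contraction to make the increments depend on $\X_i$ solely through the linear form $\langle\Delta\M,\X_i\rangle$ — thereby eliminating both $\M$ and the noise $\xi_i$ — and then through Adamczak's inequality rather than a crude moment bound on $\sum_i|\xi_i|$. A secondary technical point is the rank-$r$ reduction $\sup_{\Delta\M\in\MM_r}\langle\Delta\M,\cdot\rangle/\fro{\Delta\M}=\fror{\cdot}$ together with its clean operator-norm control, which is what converts the generic Gaussian-complexity factor into the sharp $\sqrt{rd_1}$ dimensional scaling.
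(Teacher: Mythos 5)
Your proposal is correct and follows essentially the same route as the paper's own proof: symmetrization plus contraction to reduce to the Rademacher--Gaussian linear process, the partial Frobenius norm identity with $\fror{\cdot}\le\sqrt r\,\op{\cdot}$ and the Gaussian operator-norm lemma to get $\EE Z\lesssim \tilde L\sqrt{nd_1r\ku}$, and then Adamczak's inequality with the same $\Psi_1$-of-maximum and weak-variance inputs to obtain the two-term tail. The only cosmetic difference is that the paper bounds the maximal summand's Orlicz norm via $\Psi_1$ (splitting off its mean and using the sub-exponential maximum inequality to produce the $\log n$ factor), whereas you route through $\Psi_2$ of $\op{\X_i}$; both yield the same $\tilde L\sqrt{\ku rd_1}\log n$ bound.
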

\begin{proof}
	See Section~\ref{proof:thm:empirical}.
\end{proof}

\begin{lemma}(Tail inequality for sum of squared heavy-tailed random variables)\label{teclem:bound of second moment heavy tailed}
	Suppose $\xi,\xi_{1},\cdots,\xi_{n}$ are i.i.d. random variables and there exists $0<\varepsilon\leq1$ such that its $1+\varepsilon$ moment is finite, that is $\EE\vert \xi\vert^{1+\varepsilon}<+\infty$. Denote $\op{\xi}_{1+\eps}=\left( \EE\vert \xi\vert^{1+\varepsilon}\right)^{1/(1+\eps)}$. Then for any $s>0$, tail bound for sum of squares could be upper bounded $$\PP\left(\sum_{i=1}^{n}\xi_{i}^2>s\right)\leq 2n\op{\xi}_{1+\eps}^{1+\eps}s^{-\frac{1+\varepsilon}{2}}.$$
\end{lemma}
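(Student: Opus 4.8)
The plan is to exploit the concavity of the power map $t \mapsto t^{p}$ with exponent $p := (1+\eps)/2$, which lies in $(1/2,1]$ precisely because $\eps \in (0,1]$. The essential difficulty is that, under only a finite $(1+\eps)$-moment, the second moment $\EE\xi^2$ may well be infinite, so Markov's inequality cannot be applied to $\sum_i \xi_i^2$ directly. The whole point of the lemma is therefore to reduce the quantity $\sum_i \xi_i^2$, whose mean we cannot control, to the quantity $\sum_i |\xi_i|^{1+\eps}$, whose mean is exactly the finite number $n\op{\xi}_{1+\eps}^{1+\eps}$.

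First I would record the elementary subadditivity inequality: for nonnegative reals $a_1,\dots,a_n$ and any exponent $p\in(0,1]$, one has $\big(\sum_i a_i\big)^{p}\le \sum_i a_i^{p}$, proved by induction from the two-term case $(a+b)^p\le a^p+b^p$. Applying this with $a_i=\xi_i^2$ and using the identity $2p=1+\eps$ yields the pointwise bound
$$
\Big(\sum_{i=1}^n \xi_i^2\Big)^{p}\ \le\ \sum_{i=1}^n |\xi_i|^{1+\eps}.
$$

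Next I would use this bound to nest the tail events. If $\sum_i \xi_i^2>s$, then since $t\mapsto t^p$ is increasing on $[0,\infty)$ we get $\big(\sum_i \xi_i^2\big)^p>s^{p}$, and chaining through the displayed inequality gives $\sum_i |\xi_i|^{1+\eps}\ge \big(\sum_i \xi_i^2\big)^p>s^{p}$. Hence $\{\sum_i \xi_i^2>s\}\subseteq\{\sum_i |\xi_i|^{1+\eps}>s^{p}\}$, so that $\PP\big(\sum_i \xi_i^2>s\big)\le \PP\big(\sum_i |\xi_i|^{1+\eps}>s^{p}\big)$. Finally, Markov's inequality applied to the nonnegative variable $\sum_i |\xi_i|^{1+\eps}$, whose expectation is $n\EE|\xi|^{1+\eps}=n\op{\xi}_{1+\eps}^{1+\eps}$, gives $\PP\big(\sum_i |\xi_i|^{1+\eps}>s^{p}\big)\le n\op{\xi}_{1+\eps}^{1+\eps}\,s^{-(1+\eps)/2}$. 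This already establishes the claim with constant $1$; the factor $2$ in the stated bound is mere slack.

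There is essentially no hard step here. The only things to get right are the direction of the subadditivity inequality (concavity forces $(\sum a_i)^p\le\sum a_i^p$ for $p\le1$, the reverse of the $p\ge1$ case) and the exponent bookkeeping $2p=1+\eps$. I do not anticipate any genuine obstacle: the lemma is a clean consequence of concavity plus Markov, and its real content is that this reduction is exactly what lets the initialization argument of Theorem~\ref{thm:init-general-known} control $\sum_i\xi_i^2$ while avoiding the poor $1-O(n^{-\eps})$ tail that would arise from bounding $\sum_i|\xi_i|$ naively.
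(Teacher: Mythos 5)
Your proof is correct, and it takes a genuinely different route from the paper's. The paper argues by truncation: it sets $\varphi_i:=\xi_i^2\mathbbm{1}\{\xi_i^2<s\}$, splits $\sum_i\xi_i^2=\sum_i\varphi_i+\sum_i(\xi_i^2-\varphi_i)$, pays $n\op{\xi}_{1+\eps}^{1+\eps}s^{-(1+\eps)/2}$ via a union bound for the event that some $\xi_i^2$ exceeds $s$ (so the two sums differ), and pays the same amount again by applying Markov to $\sum_i\varphi_i$ after bounding $\EE\varphi\leq s^{(1-\eps)/2}\EE|\xi|^{1+\eps}$; adding the two terms is precisely where the factor $2$ comes from. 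You instead invoke the subadditivity $\bigl(\sum_i a_i\bigr)^p\leq\sum_i a_i^p$ for $p=(1+\eps)/2\in(1/2,1]$, which converts the event $\{\sum_i\xi_i^2>s\}$ into $\{\sum_i|\xi_i|^{1+\eps}>s^p\}$, and then a single application of Markov's inequality to $\sum_i|\xi_i|^{1+\eps}$ finishes the proof with constant $1$ rather than $2$. Your argument is shorter, avoids the truncation bookkeeping entirely, and yields a strictly sharper bound that trivially implies the stated one; what the paper's truncation technique buys in exchange is flexibility (it is the standard template when one wants finer control, e.g.\ Bernstein-type bounds on the truncated part), but none of that extra power is used here. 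Both your exponent bookkeeping ($2p=1+\eps$) and the direction of the concavity inequality are handled correctly, so there is no gap.
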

\begin{proof}
	For any $s>0$, it has $\PP\left(\xi^2\geq s\right)\leq s^{-\frac{1+\varepsilon}{2}} \EE\vert\xi\vert^{1+\varepsilon}$ which uses Markov's inequality. Introduce truncated random variable $\varphi_{i}:=\xi_{i}^21_{\{\xi_{i}^2<s\}}$. Note that $\sum_{i=1}^{n}\xi_{i}^2 $ could be written into sum of two parts $$\sum_{i=1}^{n}\xi_{i}^2=\sum_{i=1}^{n}\varphi_{i}+\sum_{i=1}^{n}(\xi_{i}^2-\varphi_{i}).$$
	Hence, we analyze these two parts respectively. First consider probability of $\sum_{i=1}^{n}\xi_{i}^2\neq\sum_{i=1}^{n}\varphi_{i}$:
	\begin{align*}
		\PP\left(\sum_{i=1}^{n}\xi_{i}^2\neq\sum_{i=1}^{n}\varphi_{i}\right)&\leq n\PP\left(\xi^2>s\right)\leq n\frac{\EE|\xi|^{1+\varepsilon}}{s^{\frac{1+\varepsilon}{2}}}=n\op{\xi}_{1+\eps}^{1+\eps}s^{-\frac{1+\varepsilon}{2}}.
	\end{align*}
	Then bound tail for sum of $\varphi_{i}$:
	\begin{align*}
		\PP\left(\sum_{i=1}^{n}\varphi_{i}>s\right)&\leq n s^{-1}\EE\varphi=ns^{-1}\EE\varphi^{\frac{1+\eps}{2}}\varphi^{\frac{1-\eps}{2}}\leq n\op{\xi}_{1+\eps}^{1+\eps}s^{-\frac{1+\varepsilon}{2}}.
	\end{align*}
	Finally, combine the above two equations, it has
	\begin{align*}
		\PP\left(\sum_{i=1}^{n}\xi_{i}^2>s\right)&\leq \PP\left(\sum_{i=1}^{n}\varphi_{i}>s\right)+\PP\left(\sum_{i=1}^{n}\xi_{i}^2\neq\sum_{i=1}^{n}\varphi_{i}\right)\leq2n\op{\xi}_{1+\eps}^{1+\eps}s^{-\frac{1+\varepsilon}{2}},
	\end{align*}
	which completes the proof.
\end{proof}

\begin{theorem}(Bernstein's  Inequality, Proposition 2 in \cite{koltchinskii2011nuclear})\label{thm:Bernstein Ineq}
	Let $\A,\A_1,\cdots,\A_n$ be independent $p\times q$ matrices that satisfy for some $\alpha\geq1$ (and all $i$) $$\EE \A_i=0,\qquad \Vert \Lambda_{\max}(\A_i)\Vert_{\Psi_\alpha}=:K<+\infty.$$
	Define $$S^2:=\max\left\{\Lambda_{\max}\left(\sum_{i=1}^n\EE\A_{i}\A_{i}^{\top}\right)\big/n, \Lambda_{\max}\left(\sum_{i=1}^n\EE\A_{i}^{\top}\A_{i}\right)\big/n\right\}.$$
	Then for some constant $C>0$ and for all $t>0$,
	\begin{align*}
		\PP\left(\Lambda_{\max}\left(\sum_{i=1}^{n}\A_{i}\right)\big/n\geq CS\sqrt{\frac{t+\log(p+q)}{n}}+CK\log^{1/\alpha}\left(\frac{K}{S}\right)\frac{t+\log(p+q)}{n} \right)\leq\exp(-t).
	\end{align*}
\end{theorem}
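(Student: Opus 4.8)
The plan is to prove the stated inequality by the matrix Laplace-transform (Chernoff) method combined with a truncation argument that converts the $\Psi_\alpha$ tail control into an effective uniform operator-norm bound. First I would reduce the rectangular problem to a symmetric one through the Hermitian dilation $\mathcal{D}(\A_i)=\begin{pmatrix}\boldsymbol{0}&\A_i\\\A_i^\top&\boldsymbol{0}\end{pmatrix}$, a $(p+q)\times(p+q)$ symmetric matrix with $\Lambda_{\max}(\mathcal{D}(\A_i))=\op{\A_i}$ and $\mathcal{D}(\sum_i\A_i)=\sum_i\mathcal{D}(\A_i)$. Since $\mathcal{D}(\A_i)^2=\mathrm{blkdiag}(\A_i\A_i^\top,\A_i^\top\A_i)$, the variance proxy transfers exactly: $\Lambda_{\max}(\sum_i\EE\mathcal{D}(\A_i)^2)=nS^2$. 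Hence it suffices to treat the symmetric version with ambient dimension $d:=p+q$, for which the matrix Chernoff bound $\PP(\Lambda_{\max}(\sum_i\M_i)\geq u)\leq d\inf_{\theta>0}e^{-\theta u}\Lambda_{\max}(\EE e^{\theta\sum_i\M_i})$, together with Lieb's concavity (yielding $\EE\operatorname{tr}\exp(\theta\sum_i\M_i)\leq\operatorname{tr}\exp(\sum_i\log\EE e^{\theta\M_i})$), is the workhorse.

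The difficulty is that the summands are not bounded in operator norm; only $\|\Lambda_{\max}(\A_i)\|_{\Psi_\alpha}=K$ is assumed. To handle this I would truncate at a level $\tau>0$ to be chosen: set $\B_i:=\A_i\mathbbm{1}(\op{\A_i}\leq\tau)$ and $\C_i:=\A_i-\B_i$. On the good event $\mathcal{G}:=\{\max_i\op{\A_i}\leq\tau\}$ we have $\sum_i\A_i=\sum_i\B_i$, and the $\Psi_\alpha$ bound gives $\PP(\mathcal{G}^{\mathrm c})\leq n\PP(\op{\A_i}>\tau)\leq 2n\exp(-(\tau/K)^\alpha)$. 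Because $\EE\A_i=0$, the truncated summands are no longer centered, but the bias is small: $\op{\EE\B_i}=\op{\EE\C_i}\leq\EE[\op{\A_i}\mathbbm{1}(\op{\A_i}>\tau)]$, which the $\Psi_\alpha$ moment bounds by a quantity decaying in $\tau$. I would therefore work with the centered bounded matrices $\tilde\B_i:=\B_i-\EE\B_i$, which satisfy $\op{\tilde\B_i}\leq 2\tau$ almost surely and $\EE\tilde\B_i=0$, while truncation only shrinks the second moment so that $\op{\sum_i\EE\tilde\B_i^2}\leq\op{\sum_i\EE\A_i^2}=nS^2$ up to a negligible correction.

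With bounded, centered, mean-zero summands in hand, I would invoke Tropp's bounded matrix Bernstein inequality, $\PP(\Lambda_{\max}(\sum_i\tilde\B_i)\geq u)\leq d\exp\!\big(-\tfrac{u^2/2}{nS^2+2\tau u/3}\big)$, whose exponent interpolates between a sub-Gaussian regime $u\asymp\sqrt{nS^2(t+\log d)}$ and a sub-exponential regime $u\asymp\tau(t+\log d)$. Writing the event of interest on $\mathcal{G}$ as $\Lambda_{\max}(\sum_i\A_i)\leq\Lambda_{\max}(\sum_i\tilde\B_i)+\op{\sum_i\EE\B_i}$, dividing through by $n$, and choosing $\tau\asymp K\log^{1/\alpha}(K/S)$ so that both $\PP(\mathcal{G}^{\mathrm c})$ and the bias term are dominated by the right-hand side, the two regimes become exactly $S\sqrt{(t+\log d)/n}$ and $K\log^{1/\alpha}(K/S)(t+\log d)/n$. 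The main obstacle I anticipate is precisely this final balancing: one must select $\tau$ so that the tail probability $\PP(\mathcal{G}^{\mathrm c})$, the truncation bias $\op{\sum_i\EE\B_i}$, and the sub-exponential Bernstein term all match, and it is this calibration that produces the characteristic $\log^{1/\alpha}(K/S)$ factor. Care is also needed because the hypothesis controls only the scalar $\Lambda_{\max}(\A_i)$ in Orlicz norm, so each step must be phrased so that this single tail bound simultaneously dominates the operator-norm truncation and the matrix variance.
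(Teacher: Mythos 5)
This statement is not proved in the paper at all: it is quoted verbatim as Proposition 2 of \cite{koltchinskii2011nuclear} and invoked as a black-box tool (in the proof of Theorem \ref{thm:init-general-known}), so there is no internal proof to compare against; your proposal has to be judged against the literature argument it is reconstructing. Your overall architecture --- Hermitian dilation to reduce to the symmetric case, the exact transfer of the variance proxy via $\mathcal{D}(\A_i)^2=\mathrm{blkdiag}(\A_i\A_i^{\top},\A_i^{\top}\A_i)$, then truncation combined with a bounded matrix Bernstein inequality --- is the standard route, and those reductions are correct.

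The gap is in the final calibration, and it is fatal to the stated form of the bound. If you truncate on the sample space and work on the good event $\mathcal{G}=\{\max_i\op{\A_i}\le\tau\}$, then to get $\PP(\mathcal{G}^{\mathrm c})\le e^{-t}$ you are forced to take $\tau\gtrsim K(t+\log 2n)^{1/\alpha}$: with your choice $\tau\asymp K\log^{1/\alpha}(K/S)$ you only get $\PP(\mathcal{G}^{\mathrm c})\le 2n\exp(-(\tau/K)^{\alpha})=2nS/K$, which does not decay in $t$, grows linearly in $n$, and can exceed $1$. The truncation bias fails for the same reason: $\op{\sum_i\EE\B_i}\le nK\exp(-(\tau/K)^{\alpha}/2)\asymp n\sqrt{SK}$ with your $\tau$, which after dividing by $n$ is $\sqrt{SK}$, far larger than $S\sqrt{(t+\log(p+q))/n}$. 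Consequently your argument, once the calibration is repaired, proves only the weaker inequality with $K(t+\log 2n)^{1/\alpha}\,(t+\log(p+q))/n$ in the linear term, not $K\log^{1/\alpha}(K/S)\,(t+\log(p+q))/n$; when $n$ is large and $K/S$ is bounded these differ substantially. The missing idea --- and the way the cited result is actually proved --- is to truncate \emph{inside the matrix moment generating function} rather than on the sample space: bound $\EE\exp(\theta\mathcal{D}(\A_i))$ through $\EE\big[\mathcal{D}(\A_i)^2\, g(\theta\op{\A_i})\big]$ with $g(u)=(e^u-1-u)/u^2$, split that expectation according to whether $\op{\A_i}\le\tau$, and control the tail piece by Cauchy--Schwarz against the $\psi_\alpha$ moment generating function, so that it can be absorbed into the variance proxy as soon as $\exp(-(\tau/K)^{\alpha}/2)\lesssim S^2/K^2$. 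That requirement is exactly what fixes $\tau\asymp K\log^{1/\alpha}(K/S)$ independently of $t$ and $n$; since no event-level union bound over the $n$ summands is ever taken, no $\log n$ enters the final bound.
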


\section{Proof of Matrix Perturbation Lemma~\ref{teclem:perturbation}}
\label{proof:teclem:perturbation}
To prove Lemma~\ref{teclem:perturbation}, first consider a simpler case, when the matrix and the perturbation are symmetric. The core of the proof is Theorem~1 of work \cite{xia2021normal} and it is presented here.

Let $\M$ and $\Z$ be symmetric $d\times d$ matrices. The matrix $\M$ has rank $r=\text{rank}(\M)\leq d$ with eigen-decomposition $\M=\U\bLambda\U^{\top}$, where $\U\in\RR^{d\times r}$ contains orthonormal columns and $\bLambda=\text{diag}(\lambda_{1},\dots,\lambda_{r})\in\RR^{r\times r}$ is diagonal. 

Let $\U_{\perp}\in\RR^{d\times (d-r)}$ be complement of $\U$ such that $[\U,\U_{\perp}]\in\RR^{d\times d}$ is orthonormal. Suppose $\hat{\U}\in\RR^{d\times r}$ contains eigenvectors of $\M+\Z$ corresponding to largest $r$ eigenvalues in absolute values.

Define the following projectors
\begin{align}
	\frakP^{0}:=\U_{\perp}\U_{\perp}^{\top},\quad \frakP^{-k}:=\U\bLambda^{-k}\U^{\top},\quad \text{ for all }k\geq1,
	\label{eq:perturb-projector}
\end{align}
and denote the $k$-th order perturbation term
\begin{align}
	S_{\M,k}(\Z):=\sum_{\s:s_1+\dots+s_{k+1}=k}(-1)^{1+\tau(\s)} \cdot \frakP^{-s_1}\Z\frakP^{-s_2}\Z\cdots\Z\frakP^{-s_{k+1}},
	\label{eq:expZ}
\end{align}
where $\s=(s_1,\dots,s_{k+1})$ contains non-negative integer indices and $\tau(\s):=\sum_{j=1}^{k+1}1_{s_j>0}$.

\begin{theorem}[Theorem~1 of \cite{xia2021normal}]
	If $\op{\Z}<\frac{\min_{1\leq i\leq r}|\lambda_i|}{2}$, then it has \begin{align*}
		\hat{\U}\hat{\U}^{\top}-\U\U^\top=\sum_{k\geq1} S_{\M,k}(\Z).
	\end{align*}
	\label{tecthm:pertbation-expansion}
\end{theorem}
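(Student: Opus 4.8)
The plan is to establish the identity through the Riesz (contour–integral) representation of spectral projectors, together with a Neumann expansion of the perturbed resolvent, and then to match the $k$-th order term against the combinatorial expression $S_{\M,k}(\Z)$ by residue calculus. Write $R(z):=(z\I-\M)^{-1}$ and $\hat R(z):=(z\I-(\M+\Z))^{-1}$. The gap condition $\op{\Z}<\tfrac{1}{2}\min_{i}|\lambda_i|$ guarantees that the nonzero eigenvalues $\lambda_1,\dots,\lambda_r$ of $\M$ are separated from its kernel eigenvalue $0$ by more than $2\op{\Z}$, so one can fix a contour $\Gamma$ — for instance the boundary of the $c\op{\Z}$-neighbourhood of $\{\lambda_1,\dots,\lambda_r\}$ for a suitable $c>1$ — that encloses $\lambda_1,\dots,\lambda_r$, excludes $0$, and stays at distance $>\op{\Z}$ from the whole spectrum of $\M$. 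By Weyl's inequality the same $\Gamma$ encloses exactly the $r$ largest-in-absolute-value eigenvalues of $\M+\Z$, whence $\U\U^{\top}=\frac{1}{2\pi i}\oint_{\Gamma}R(z)\,dz$ and $\hat\U\hat\U^{\top}=\frac{1}{2\pi i}\oint_{\Gamma}\hat R(z)\,dz$.

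Next I would expand $\hat R(z)=\sum_{k\ge0}R(z)[\Z R(z)]^{k}$. Because the matrices are symmetric, $\op{R(z)}=1/\mathrm{dist}(z,\mathrm{spec}(\M))$, so $\op{R(z)}\op{\Z}<1$ uniformly on $\Gamma$, the series converges uniformly there, and it may be integrated term by term; subtracting the $k=0$ term gives $\hat\U\hat\U^{\top}-\U\U^{\top}=\sum_{k\ge1}\frac{1}{2\pi i}\oint_{\Gamma}R(z)[\Z R(z)]^{k}\,dz$. It then suffices to show that the $k$-th summand equals $S_{\M,k}(\Z)$. To this end I would insert the partial-fraction form $R(z)=z^{-1}\frakP^{0}+\U(z\I-\bLambda)^{-1}\U^{\top}$ into the product of $k+1$ resolvents and expand into $2^{k+1}$ terms, indexed by choosing at each slot either the kernel part $z^{-1}\frakP^{0}$ or the signal part $\U(z\I-\bLambda)^{-1}\U^{\top}$. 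Since $0$ lies outside $\Gamma$, only the poles at $z=\lambda_j$ contribute, and these arise solely from the signal parts.

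The heart of the argument is a residue identity: for a fixed set of signal slots $J\subseteq\{0,\dots,k\}$ of size $\tau$ (the remaining $m=k+1-\tau$ slots carrying $z^{-1}\frakP^{0}$), summing the residues at the $\lambda_j$ produces exactly $(-1)^{1+\tau}$ times the sum, over all ways of assigning exponents $s_i\ge1$ to the slots in $J$ with $\sum_{i\in J}s_i=k$, of the interleaved product $\frakP^{-s_1}\Z\frakP^{-s_2}\Z\cdots\Z\frakP^{-s_{k+1}}$ (with $s_i=0$, i.e. $\frakP^{0}$, on the kernel slots). I have checked this directly for $k=1$, giving $\frakP^{-1}\Z\frakP^{0}+\frakP^{0}\Z\frakP^{-1}$, and for $k=2$, where, e.g., the slot pattern (signal, signal, kernel) yields residue coefficient $-\lambda_j^{-1}\lambda_l^{-1}$ and hence $-\frakP^{-1}\Z\frakP^{-1}\Z\frakP^{0}$, matching the sign $(-1)^{1+\tau}$ with $\tau=2$. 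Summing over all slot patterns $J$ reassembles $S_{\M,k}(\Z)$ in its stated form.

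The main obstacle is precisely this residue identity when two or more signal slots are adjacent, so that several factors $\U(z\I-\bLambda)^{-1}\U^{\top}$ meet with only $\Z$'s between them: the poles at the various $\lambda_j$ interact, and can coincide to produce higher-order poles, and one must show that the partial-fraction bookkeeping always redistributes the total pole degree $k=m+\tau-1$ across the active slots as powers of $\bLambda^{-1}$ summing to $k$, while contributing the uniform sign $(-1)^{1+\tau}$. I would handle this either by the scalar multivariate partial-fraction expansion of $z^{-m}\prod_{i\in J}(z-\mu_i)^{-1}$ and its matrix analogue, or — equivalently and more cleanly — by identifying $\sum_{k}S_{\M,k}(\Z)$ with Kato's closed-form perturbation series for the eigenprojection of the eigenvalue group $\{\lambda_1,\dots,\lambda_r\}$, whose reduced resolvent is $S=\U\bLambda^{-1}\U^{\top}=\frakP^{-1}$ (so $S^{s}=\frakP^{-s}$) and whose zeroth-order factor is $-\frakP^{0}$; this matches the exponents, the constraint $\sum s_i=k$, and the sign convention, completing the proof.
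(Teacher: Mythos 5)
Note first that the paper contains no proof of this statement: Theorem~\ref{tecthm:pertbation-expansion} is quoted verbatim from \cite{xia2021normal}, so the only meaningful comparison is with that source, whose argument is exactly the one you outline (Riesz projector as a contour integral of the resolvent, Neumann expansion on the contour, term-by-term residue calculus). Your proposal is correct, and the ``main obstacle'' you flag dissolves if you evaluate each scalar integral $\frac{1}{2\pi i}\oint_\Gamma z^{-m}\prod_{i\in J}(z-\lambda_{j_i})^{-1}\,dz$ not by summing residues at the $\lambda_{j_i}$ but by observing that the integrand decays like $|z|^{-(k+1)}$ with $k+1\geq 2$, so the sum of \emph{all} residues vanishes and the integral equals minus the residue at $z=0$, namely $(-1)^{1+\tau}\sum_{t_i\geq 0,\,\sum_i t_i=m-1}\prod_{i\in J}\lambda_{j_i}^{-(1+t_i)}$; this is uniform over coinciding eigenvalues and adjacent signal slots, and summing over the eigenvalue indices reassembles $\frakP^{-s_i}=\U\bLambda^{-s_i}\U^{\top}$ with $s_i=1+t_i$, yielding $S_{\M,k}(\Z)$ exactly (it also shows the all-signal pattern $\tau=k+1$ contributes zero, matching the constraint $\sum_i s_i=k$). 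The one detail to restate carefully is the contour: a radius of the form $c\op{\Z}$ is admissible only for $1<c<\min_i|\lambda_i|/\op{\Z}-1$, a range that is nonempty precisely because $\op{\Z}<\min_i|\lambda_i|/2$; the cleanest choice is the boundary of the union of balls of radius $\min_i|\lambda_i|/2$ centered at the $\lambda_i$, which stays at distance greater than $\op{\Z}$ from ${\rm spec}(\M)$ (so the Neumann series converges uniformly), encloses by Weyl's inequality exactly the $r$ largest-in-absolute-value eigenvalues of $\M+\Z$, and excludes $0$ together with the remaining eigenvalues of $\M+\Z$. Your fallback of ``identifying the series with Kato's eigenprojection expansion'' is not really a proof but a citation, so the residue route above is the one to keep.
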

\begin{lemma}(Symmetric Matrix Perturbation)
	Suppose symmetric matrix $\mathbf{M}\in\RR^{d\times d}$ has rank $r$. Let $\mathbf{M}=\mathbf{U}\boldsymbol{\Lambda}\mathbf{U}^{\top}$ be its singular value decomposition with $\boldsymbol{\Lambda}=\operatorname{diag}\{\lambda_{1},\lambda_{2},\cdots,\lambda_{r}\}$, $\vert \lambda_{1}\vert\geq\vert \lambda_{2}\vert\geq\cdots\geq\vert\lambda_{r}\vert>0$. Then for any matrix $\hat{\M}\in\RR^{d\times d}$ satisfying $\Vert\hat{\M}-\M\Vert_{\mathrm{F}}<|\lambda_{r}|/8$ with $\hat{\mathbf{U}}\in\mathbb{R}^{d\times r}$ eigenvectors of $r$ largest absolute eigenvalues of $\hat{\mathbf{M}}$, then we have 
	\begin{equation*}
		\Vert \hat{\mathbf{U}}\hat{\mathbf{U}}^{\top}-\mathbf{U}\mathbf{U}^{\top}\Vert\leq\frac{8\Vert\hat{\mathbf{M}}-\mathbf{M}\Vert}{\vert\lambda_{r}\vert},
	\end{equation*}
	\begin{equation*}
		\Vert \operatorname{SVD}_{r}(\hat{\mathbf{M}})-\mathbf{M}\Vert\leq\Vert\hat{\mathbf{M}}-\mathbf{M}\Vert+40\frac{\Vert \hat{\mathbf{M}}-\mathbf{M}\Vert^2}{|\lambda_{r}|},
	\end{equation*}
	\begin{equation*}
		\Vert \operatorname{SVD}_{r}(\hat{\mathbf{M}})-\mathbf{M}\Vert_{\mathrm{F}}\leq\Vert\hat{\mathbf{M}}-\mathbf{M}\Vert_{\mathrm{F}}+40\frac{\Vert \hat{\mathbf{M}}-\mathbf{M}\Vert\Vert\hat{\mathbf{M}}- \mathbf{M}\Vert_{\mathrm{F}}}{|\lambda_{r}|}.
	\end{equation*}
	\label{teclem:symmetric perturbation}
\end{lemma}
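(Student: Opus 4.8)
The plan is to deduce all three inequalities from a single source, the perturbation expansion
$$
\hat\U\hat\U^\top-\U\U^\top=\sum_{k\ge1}S_{\M,k}(\Z),\qquad \Z:=\hat\M-\M,
$$
furnished by Theorem~\ref{tecthm:pertbation-expansion}. This is legitimate because $\op{\Z}\le\fro{\Z}<|\lambda_r|/8<\tfrac12\min_i|\lambda_i|$. The common engine is a uniform bound on each homogeneous piece. Since $\op{\frakP^{0}}=1$ and $\op{\frakP^{-s}}=|\lambda_r|^{-s}$ for $s\ge1$, every summand $\frakP^{-s_1}\Z\cdots\Z\frakP^{-s_{k+1}}$ of $S_{\M,k}(\Z)$ with $s_1+\cdots+s_{k+1}=k$ has operator norm at most $|\lambda_r|^{-k}\op{\Z}^{k}$; as the number of admissible indices $\s$ equals $\binom{2k}{k}\le 4^{k}$, I obtain $\op{S_{\M,k}(\Z)}\le\bigl(4\op{\Z}/|\lambda_r|\bigr)^{k}$.

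For the first inequality I would sum the resulting geometric series: writing $x:=4\op{\Z}/|\lambda_r|<1/2$,
$$
\op{\hat\U\hat\U^\top-\U\U^\top}\le\sum_{k\ge1}x^{k}=\frac{x}{1-x}\le2x=\frac{8\op{\Z}}{|\lambda_r|}.
$$

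For the two reconstruction bounds I would use the exact identity $\operatorname{SVD}_r(\hat\M)=\hat\U\hat\U^\top\hat\M$ (valid since $\hat\U\hat\U^\top$ commutes with the symmetric $\hat\M$), set $E:=\hat\U\hat\U^\top-\U\U^\top$, and expand
$$
\operatorname{SVD}_r(\hat\M)-\M=\U\U^\top\Z+E\M+E\Z.
$$
The key algebraic observation is that right multiplication by $\M=\U\bLambda\U^\top$ absorbs one power of $\bLambda$: any summand ending in $\frakP^{0}$ is killed ($\frakP^{0}\M=0$), while a trailing $\frakP^{-s}$, $s\ge1$, becomes $\U\bLambda^{1-s}\U^\top$, lowering the effective exponent by one. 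A direct check gives $S_{\M,1}(\Z)\M=(\I-\U\U^\top)\Z\,\U\U^\top$, so the first-order part of $\operatorname{SVD}_r(\hat\M)-\M$ collapses to $\U\U^\top\Z+(\I-\U\U^\top)\Z\,\U\U^\top=\Z-(\I-\U\U^\top)\Z(\I-\U\U^\top)$, that is, $\Z$ with its lower block deleted.

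It then remains to bound the leading block and the remainder $\bigl(\sum_{k\ge2}S_{\M,k}(\Z)\bigr)\M+E\Z$. For the Frobenius bound the four projector blocks of $\Z$ are mutually orthogonal, so deleting one block only decreases the norm: $\fro{\Z-(\I-\U\U^\top)\Z(\I-\U\U^\top)}\le\fro{\Z}$. The remainder is estimated by the same term count, now extracting a single $\fro{\Z}$ factor and measuring the rest in operator norm; the exponent drop noted above makes each surviving piece of order $k-1$ in $\op{\Z}/|\lambda_r|$, and the geometric series sum to at most $\tfrac{40\op{\Z}\fro{\Z}}{|\lambda_r|}$, giving the third inequality with the stated constant. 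The operator-norm inequality follows from the identical decomposition, the remainder contributing $\tfrac{40\op{\Z}^2}{|\lambda_r|}$. The step demanding the most care — the genuine obstacle — is this bookkeeping: counting the multi-indices, tracking precisely which projectors survive multiplication by $\M$ and verifying that each surviving exponent falls by exactly one, so that the remainder is truly second order in $\Z/|\lambda_r|$ while the leading term keeps coefficient one; the convergence of the resulting geometric series under $\op{\Z}<|\lambda_r|/8$ is then routine.
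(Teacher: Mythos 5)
Your first inequality is proved exactly as in the paper: the same expansion $\hat\U\hat\U^\top-\U\U^\top=\sum_{k\ge1}S_{\M,k}(\Z)$ from Theorem~\ref{tecthm:pertbation-expansion}, the same count of $\binom{2k}{k}\le 4^{k}$ multi-indices with per-summand bound $\op{\Z}^{k}/|\lambda_r|^{k}$, and the same geometric series. For the Frobenius (third) inequality your route is genuinely different and cleaner. The paper expands $\operatorname{SVD}_r(\hat\M)-\M=\hat\U\hat\U^\top(\M+\Z)\hat\U\hat\U^\top-\U\U^\top\M\U\U^\top$ symmetrically into five pieces ($A_1,\dots,A_5$ in its proof) and bounds each; your identity $\operatorname{SVD}_r(\hat\M)=\hat\U\hat\U^\top\hat\M$ (valid because $\hat\U\hat\U^\top$ is a spectral projector of the symmetric $\hat\M$) collapses the difference to the three terms $\U\U^\top\Z+E\M+E\Z$. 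Both arguments then isolate the same first-order term $\Z-\U_\perp\U_\perp^\top\Z\U_\perp\U_\perp^\top$ through $S_{\M,1}(\Z)\M=\U_\perp\U_\perp^\top\Z\U\U^\top$, and your bookkeeping closes: the crude count gives $\fro{\sum_{k\ge2}S_{\M,k}(\Z)\M}\le 32\op{\Z}\fro{\Z}/|\lambda_r|$, while $\fro{E\Z}\le\op{E}\fro{\Z}\le 8\op{\Z}\fro{\Z}/|\lambda_r|$ by your first inequality, summing to the stated $40$. Having only two remainder terms instead of the paper's four is what lets the cruder counting still land on the same constant.

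The genuine gap is in the operator-norm (second) inequality, exactly where you write that it ``follows from the identical decomposition.'' Your leading-term step is ``deleting one block only decreases the norm,'' which you justify, correctly, by orthogonality of the blocks; but that argument is specific to the Frobenius norm. For the spectral norm the needed claim $\op{\Z-Q\Z Q}\le\op{\Z}$, with $Q=\I-\U\U^\top$, is false: taking
\begin{equation*}
\Z=\begin{pmatrix}1&1\\1&-1\end{pmatrix},\qquad Q=e_2e_2^\top,\qquad \Z-Q\Z Q=\begin{pmatrix}1&1\\1&0\end{pmatrix},
\end{equation*}
one has $\op{\Z}=\sqrt{2}$ but $\op{\Z-Q\Z Q}=(1+\sqrt{5})/2>\sqrt{2}$. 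Since the stated bound requires leading coefficient exactly $1$, no second-order control of the remainder can repair this. Be aware that the paper shares the defect: its operator-norm claim is dismissed with a single ``Likewise,'' yet its own bound on $A_3$ is the same Frobenius-only orthogonality argument. In fact the second inequality appears to be false as stated: take $\M=\lambda e_1e_1^\top$ and $\Z=\eps\bigl(\cos\theta\,(e_1e_1^\top-e_2e_2^\top)+\sin\theta\,(e_1e_2^\top+e_2e_1^\top)\bigr)$ with $\sin^2\theta=2/3$; then $\op{\Z}=\eps$, the first-order term satisfies $\op{\Z-Q\Z Q}=\tfrac{2}{\sqrt{3}}\,\eps\approx 1.155\,\eps$, and since the remainder is $O(\eps^2/\lambda)$ in operator norm (those product bounds are norm-agnostic), $\op{\operatorname{SVD}_1(\M+\Z)-\M}\ge\tfrac{2}{\sqrt{3}}\eps-O(\eps^2/\lambda)$ exceeds $\eps+40\eps^2/\lambda$ once $\eps/\lambda$ is small. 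What your decomposition does yield for the spectral norm is $\op{\operatorname{SVD}_r(\hat\M)-\M}\le 2\op{\Z}+40\op{\Z}^2/|\lambda_r|$ (bounding the deleted-block term by $2\op{\Z}$), and $\op{\operatorname{SVD}_r(\hat\M)-\M}\le 2\op{\Z}$ also follows directly from Eckart--Young and the triangle inequality; either version suffices for every downstream use of Lemma~\ref{teclem:perturbation} in the paper, where only the Frobenius bound and constant-factor operator bounds are invoked. Your proposal should state one of these weaker spectral bounds rather than claim the operator-norm case follows identically.
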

\begin{proof}
	Denote $\mathbf{Z}=\hat{\mathbf{M}}-\mathbf{M}$. Define $\mathbf{U}_{\perp}\in\mathbb{R}^{d\times (d-r)}$ such that $[\mathbf{U}, \mathbf{U}_{\perp}]\in\mathbb{R}^{d\times d}$ is orthonormal. The projectors $\frakP^{-k}$, for non-negative integers $k$ and the $k$-th order perturbation term are given by Equation~\eqref{eq:perturb-projector} and Equation~\eqref{eq:expZ} respectively. 
	
	Then norm of $\mathcal{S}_{\mathbf{M},k}(\mathbf{Z})$ could be bounded in the way of
	\begin{equation*}
		\Vert \mathcal{S}_{\mathbf{M},k}(\mathbf{Z})\Vert\leq \binom{2k}{k} \frac{\Vert\mathbf{Z}\Vert^{k}}{\vert\lambda_{r}\vert^{k}}\leq\left(\frac{4\Vert\mathbf{Z}\Vert}{\vert\lambda_{r}\vert}\right)^{k},\quad 	\Vert \mathcal{S}_{\mathbf{M},k}(\mathbf{Z})\Vert_{\mathrm{F}}\leq \binom{2k}{k} \frac{\Vert\mathbf{Z}\Vert_{\mathrm{F}}^{k}}{\vert\lambda_{r}\vert^{k}}\leq\left(\frac{4\Vert\mathbf{Z}\Vert_{\mathrm{F}}}{\vert\lambda_{r}\vert}\right)^{k}.
	\end{equation*}
	Hence, sum up the above equations over $k$ and by Theorem~\ref{tecthm:pertbation-expansion}, it leads to
	\begin{equation}
		\Vert \hat{\mathbf{U}}\hat{\mathbf{U}}^{\top}-\mathbf{U}\mathbf{U}^{\top}\Vert\leq \sum_{k\geq1}\Vert\mathcal{S}_{\mathbf{M},k}(\mathbf{Z})\Vert\leq \frac{8\Vert\mathbf{Z}\Vert}{\vert\lambda_{r}\vert},\quad \Vert \hat{\mathbf{U}}\hat{\mathbf{U}}^{\top}-\mathbf{U}\mathbf{U}^{\top}\Vert_{\mathrm{F}}\leq \sum_{k\geq1}\Vert\mathcal{S}_{\mathbf{M},k}(\mathbf{Z})\Vert_{\mathrm{F}}\leq \frac{8\Vert\mathbf{Z}\Vert_{\mathrm{F}}}{\vert\lambda_{r}\vert}.
		\label{eq:perturb-eigenvector}
	\end{equation}
	Then consider $\mathcal{S}_{\mathbf{M},k}(\mathbf{Z})\mathbf{M}$:\begin{equation*}
		\begin{split}
			\mathcal{S}_{\mathbf{M},k}(\mathbf{Z})\mathbf{M}
			&=\sum_{\mathbf{s}:s_{1}+\cdots+s_{k+1}=k} (-1)^{1+\tau(\mathbf{s})} \mathfrak{P}^{-s_{1}}\mathbf{Z}\mathfrak{P}^{-s_{2}}\cdots\mathfrak{P}^{-s_{k}}\mathbf{Z}\mathfrak{P}^{-s_{k+1}}\mathbf{M}\\
			&=\sum_{\mathbf{s}:s_{1}+\cdots+s_{k+1}=k} (-1)^{1+\tau(\mathbf{s})} \mathfrak{P}^{-s_{1}}\mathbf{Z}\mathfrak{P}^{-s_{2}}\cdots\mathfrak{P}^{-s_{k}}\mathbf{Z}\mathfrak{P}^{-s_{k+1}}\mathbf{U}\boldsymbol{\Lambda}\mathbf{U}^{\top}.
		\end{split}
	\end{equation*}
	Note that when $s_{k+1}=0$, $\mathfrak{P}^{-s_{k+1}}=\mathbf{U}_{\perp}\mathbf{U}_{\perp}^{\top}$, it has $\mathfrak{P}^{-s_{k+1}}\mathbf{M}=0$ and thus we have
	\begin{align*}
		\mathcal{S}_{\mathbf{M},k}(\mathbf{Z})\mathbf{M}&=\sum_{\mathbf{s}:s_{1}+\cdots+s_{k+1}=k,\, s_{k+1}>0} (-1)^{1+\tau(\mathbf{s})} \mathfrak{P}^{-s_{1}}\mathbf{Z}\mathfrak{P}^{-s_{2}}\cdots\mathfrak{P}^{-s_{k}}\mathbf{Z}\mathfrak{P}^{-s_{k+1}}\U\bLambda\U^{\top}\\
		&=\sum_{\mathbf{s}:s_{1}+\cdots+s_{k+1}=k,\, s_{k+1}=1} (-1)^{1+\tau(\mathbf{s})} \mathfrak{P}^{-s_{1}}\mathbf{Z}\mathfrak{P}^{-s_{2}}\cdots\mathfrak{P}^{-s_{k}}\mathbf{Z}\U\U^{\top}\\
		&{~~~}+\sum_{\mathbf{s}:s_{1}+\cdots+s_{k+1}=k,\, s_{k+1}\geq2} (-1)^{1+\tau(\mathbf{s})} \mathfrak{P}^{-s_{1}}\mathbf{Z}\mathfrak{P}^{-s_{2}}\cdots\mathfrak{P}^{-s_{k}}\mathbf{Z}\mathfrak{P}^{1-s_{k+1}}
	\end{align*}
	The operator norm of $\Vert \mathcal{S}_{\mathbf{M},k}(\mathbf{Z})\mathbf{M}\Vert$ could be bounded with
	$$\Vert \mathcal{S}_{\mathbf{M},k}(\mathbf{Z})\mathbf{M}\Vert\leq \binom{2k-1}{k}\frac{\Vert \mathbf{Z}\Vert^{k}}{\vert\lambda_{r}\vert^{k-1}}\leq \Vert\mathbf{Z}\Vert\left(\frac{4\Vert\mathbf{Z}\Vert}{\vert\lambda_{r}\vert}\right)^{k-1},$$
	which implies
	\begin{align*}
		\Vert \sum_{k\geq2} \mathcal{S}_{\mathbf{M},k}(\mathbf{Z})\mathbf{M}\Vert\leq\frac{8\Vert\mathbf{Z}\Vert^{2}}{\vert\lambda_{r}\vert}.
	\end{align*}
	Specifically, when $k=1$, it is $\mathcal{S}_{\mathbf{M},1}(\mathbf{Z})\mathbf{M}= \mathbf{U}_{\perp}\mathbf{U}_{\perp}^{\top}\mathbf{Z}\mathbf{U}\mathbf{U}^{\top}$. Similarly, $\Vert \sum_{k\geq2}\mathbf{M} \mathcal{S}_{\mathbf{M},k}(\mathbf{Z})\Vert\leq\frac{8\Vert\mathbf{Z}\Vert^{2}}{\vert\lambda_{r}\vert} $ holds. In parallel, Frobenius norm of  $\mathcal{S}_{\mathbf{M},k}(\mathbf{Z})\mathbf{M} $ has $$\Vert \mathcal{S}_{\mathbf{M},k}(\mathbf{Z})\mathbf{M}\Vert_{\mathrm{F}}\leq \binom{2k-1}{k}\frac{\Vert \mathbf{Z}\Vert^{k-1}}{\vert\lambda_{r}\vert^{k-1}}\Vert \mathbf{Z}\Vert_{\mathrm{F}}\leq \Vert\mathbf{Z}\Vert_{\mathrm{F}}\left(\frac{4\Vert\mathbf{Z}\Vert}{\vert\lambda_{r}\vert}\right)^{k-1},$$
	\begin{align}
		\Vert \sum_{k\geq2} \mathcal{S}_{\mathbf{M},k}(\mathbf{Z})\mathbf{M}\Vert_{\mathrm{F}}\leq\frac{8\Vert \mathbf{Z}\Vert_{\mathrm{F}}\Vert\mathbf{Z}\Vert}{\vert\lambda_{r}\vert},\quad \Vert \sum_{k\geq2}\mathbf{M} \mathcal{S}_{\mathbf{M},k}(\mathbf{Z})\Vert_{\mathrm{F}}\leq\frac{8\Vert \mathbf{Z}\Vert_{\mathrm{F}}\Vert\mathbf{Z}\Vert}{\vert\lambda_{r}\vert}.
		\label{eq:perturbation-sumk>2}
	\end{align}
	Then expand $\operatorname{SVD}_{r}(\hat{\mathbf{M}})-\mathbf{M} $,
	\begin{equation*}
		\begin{split}
			&{~~~~}\operatorname{SVD}_{r}(\hat{\mathbf{M}})-\mathbf{M}\\
			&=\hat{\mathbf{U}}\hat{\mathbf{U}}^{\top}(\mathbf{M}+\mathbf{Z})\hat{\mathbf{U}}\hat{\mathbf{U}}^{\top}-\mathbf{M}\\
			&=\hat{\mathbf{U}}\hat{\mathbf{U}}^{\top}(\mathbf{M}+\mathbf{Z})\hat{\mathbf{U}}\hat{\mathbf{U}}^{\top}-\mathbf{U}\mathbf{U}^{\top}\mathbf{M}\mathbf{U}\mathbf{U}^{\top}\\
			&=(\hat{\mathbf{U}}\hat{\mathbf{U}}^{\top}-\mathbf{U}\mathbf{U}^{\top})\mathbf{M}\mathbf{U}\mathbf{U}^{\top} + \hat{\mathbf{U}}\hat{\mathbf{U}}^{\top}\mathbf{M}( \hat{\mathbf{U}}\hat{\mathbf{U}}^{\top}-\mathbf{U}\mathbf{U}^{\top})+ \hat{\mathbf{U}}\hat{\mathbf{U}}^{\top}\mathbf{Z}\hat{\mathbf{U}}\hat{\mathbf{U}}^{\top}\\
			&=\sum_{k\geq2} \mathcal{S}_{\mathbf{M},k}(\mathbf{Z})\mathbf{M}\mathbf{U}\mathbf{U}^{\top} + \hat{\mathbf{U}}\hat{\mathbf{U}}^{\top}\mathbf{M}\sum_{k\geq2} \mathcal{S}_{\mathbf{M},k}(\mathbf{Z})+\mathbf{U}_{\perp}\mathbf{U}_{\perp}^{\top}\mathbf{Z}\mathbf{U}\mathbf{U}^{\top} \\
			&{~~~} +\hat{\mathbf{U}}\hat{\mathbf{U}}^{\top} \mathbf{U}\mathbf{U}^{\top}\mathbf{Z}\mathbf{U}_{\perp}\mathbf{U}_{\perp}^{\top}+\hat{\mathbf{U}}\hat{\mathbf{U}}^{\top}\mathbf{Z}\hat{\mathbf{U}}\hat{\mathbf{U}}^{\top},
		\end{split}
	\end{equation*}
	where the last equation follows from Theorem~\ref{tecthm:pertbation-expansion} and $\mathcal{S}_{\mathbf{M},1}(\mathbf{Z})\mathbf{M}= \mathbf{U}_{\perp}\mathbf{U}_{\perp}^{\top}\mathbf{Z}\mathbf{U}\mathbf{U}^{\top}$, $\M\calS_{\M,1}(\Z)=\U\U^{\top}\Z\U_{\perp}\U_{\perp}^{\top}$. Then consider the latter three terms of the above equations,
	\begin{align*}
		&{~~~}\mathbf{U}_{\perp}\mathbf{U}_{\perp}^{\top}\mathbf{Z}\mathbf{U}\mathbf{U}^{\top}+\hat{\mathbf{U}}\hat{\mathbf{U}}^{\top} \mathbf{U}\mathbf{U}^{\top}\mathbf{Z}\mathbf{U}_{\perp}\mathbf{U}_{\perp}^{\top}+\hat{\mathbf{U}}\hat{\mathbf{U}}^{\top}\mathbf{Z}\hat{\mathbf{U}}\hat{\mathbf{U}}^{\top}\\
		&=\U_{\perp}\U_{\perp}^{\top}\Z\U\U^{\top}+\U\U^{\top}\Z\U_{\perp}\U_{\perp}^{\top}+\U\U^{\top}\Z\U\U^{\top} \\
		&{~~~}+ (\hat{\mathbf{U}}\hat{\mathbf{U}}^{\top}-\mathbf{U}\mathbf{U}^{\top}) \mathbf{U}\mathbf{U}^{\top}\mathbf{Z}\mathbf{U}_{\perp}\mathbf{U}_{\perp}^{\top}+(\hat{\mathbf{U}}\hat{\mathbf{U}}^{\top}\mathbf{Z}\hat{\mathbf{U}}\hat{\mathbf{U}}^{\top}-\mathbf{U}\mathbf{U}^{\top}\mathbf{Z}\mathbf{U}\mathbf{U}^{\top})\\
		&=\mathbf{Z}- \mathbf{U}_{\perp}\mathbf{U}_{\perp}^{\top}\mathbf{Z}\mathbf{U}_{\perp}\mathbf{U}_{\perp}^{\top}\\
		&{~~~}+ (\hat{\mathbf{U}}\hat{\mathbf{U}}^{\top}-\mathbf{U}\mathbf{U}^{\top}) \mathbf{U}\mathbf{U}^{\top}\mathbf{Z}\mathbf{U}_{\perp}\mathbf{U}_{\perp}^{\top}+(\hat{\mathbf{U}}\hat{\mathbf{U}}^{\top}\mathbf{Z}\hat{\mathbf{U}}\hat{\mathbf{U}}^{\top}-\mathbf{U}\mathbf{U}^{\top}\mathbf{Z}\mathbf{U}\mathbf{U}^{\top}),
	\end{align*}
	where the last equation is due to $\U\U^{\top}+\U_{\perp}\U_{\perp}^{\top}=\I$ and $\Z=(\U\U^{\top}+\U_{\perp}\U_{\perp}^{\top})\Z(\U\U^{\top}+\U_{\perp}\U_{\perp}^{\top})$. Hence, we get the expansion of $\textrm{SVD}_r(\hat{\M})$,
	\begin{align*}
		&{~~~~}\operatorname{SVD}_{r}(\hat{\mathbf{M}})-\mathbf{M}\\
		&=\sum_{k\geq2} \mathcal{S}_{\mathbf{M},k}(\mathbf{Z})\mathbf{M}\mathbf{U}\mathbf{U}^{\top} + \hat{\mathbf{U}}\hat{\mathbf{U}}^{\top}\mathbf{M}\sum_{k\geq2} \mathcal{S}_{\mathbf{M},k}(\mathbf{Z})+\mathbf{Z}- \mathbf{U}_{\perp}\mathbf{U}_{\perp}^{\top}\mathbf{Z}\mathbf{U}_{\perp}\mathbf{U}_{\perp}^{\top}\\
		&{~~~}+ (\hat{\mathbf{U}}\hat{\mathbf{U}}^{\top}-\mathbf{U}\mathbf{U}^{\top}) \mathbf{U}\mathbf{U}^{\top}\mathbf{Z}\mathbf{U}_{\perp}\mathbf{U}_{\perp}^{\top}+(\hat{\mathbf{U}}\hat{\mathbf{U}}^{\top}\mathbf{Z}\hat{\mathbf{U}}\hat{\mathbf{U}}^{\top}-\mathbf{U}\mathbf{U}^{\top}\mathbf{Z}\mathbf{U}\mathbf{U}^{\top}).
	\end{align*}
	Take Frobenius norm on each side of the above equation and it leads to
	\begin{equation}
		\begin{split}
			&{~~~~}\Vert \operatorname{SVD}_{r}(\hat{\mathbf{M}})-\mathbf{M}\Vert_{\mathrm{F}}\\
			&\leq \underbrace{\Vert \sum_{k\geq2} \mathcal{S}_{\mathbf{M},k}(\mathbf{Z})\mathbf{M}\mathbf{U}\mathbf{U}^{\top}\Vert_{\mathrm{F}}}_{A_1} +\underbrace{\Vert \hat{\mathbf{U}}\hat{\mathbf{U}}^{\top}\mathbf{M}\sum_{k\geq2} \mathcal{S}_{\mathbf{M},k}(\mathbf{Z})\Vert_{\mathrm{F}}}_{A_2}+\underbrace{\Vert\mathbf{Z}- \mathbf{U}_{\perp}\mathbf{U}_{\perp}^{\top}\mathbf{Z}\mathbf{U}_{\perp}\mathbf{U}_{\perp}^{\top}\Vert_{\mathrm{F}}}_{A_3}\\
			&{~~~}+ \underbrace{\Vert(\hat{\mathbf{U}}\hat{\mathbf{U}}^{\top}-\mathbf{U}\mathbf{U}^{\top}) \mathbf{U}\mathbf{U}^{\top}\mathbf{Z}\mathbf{U}_{\perp}\mathbf{U}_{\perp}^{\top}\Vert_{\mathrm{F}}}_{A_4}+\underbrace{\Vert\hat{\mathbf{U}}\hat{\mathbf{U}}^{\top}\mathbf{Z}\hat{\mathbf{U}}\hat{\mathbf{U}}^{\top}-\mathbf{U}\mathbf{U}^{\top}\mathbf{Z}\mathbf{U}\mathbf{U}^{\top}\Vert_{\mathrm{F}}}_{A_5}.
		\end{split}
		\label{eq:perturbation-svd}
	\end{equation}
	Note that by Equation~\eqref{eq:perturbation-sumk>2}, we have upper bound of $A_1$ and $A_2$,
	\begin{align*}
		A_1\vee A_2\leq \Vert \sum_{k\geq2} \mathcal{S}_{\mathbf{M},k}(\mathbf{Z})\mathbf{M}\Vert_{\mathrm{F}} \vee \Vert \mathbf{M}\sum_{k\geq2} \mathcal{S}_{\mathbf{M},k}(\mathbf{Z})\Vert_{\mathrm{F}}\leq\frac{8\Vert \mathbf{Z}\Vert_{\mathrm{F}}\Vert\mathbf{Z}\Vert}{\vert\lambda_{r}\vert}.
	\end{align*}
	Besides, it has $\Z-\U_{\perp}\U_{\perp}^{\top}\Z\U_{\perp}\U_{\perp}^{\top}= \U_{\perp}\U_{\perp}^{\top}\Z\U\U^{\top}+\U\U^{\top}\Z\U_{\perp}\U_{\perp}^{\top}+\U\U^{\top}\Z\U\U^{\top} $ and $\fro{\Z}^2=\fro{(\U\U^{\top}+\U_{\perp}\U_{\perp}^{\top})\Z(\U\U^{\top}+\U_{\perp}\U_{\perp}^{\top})}^2=\fro{\U\U^{\top}\Z\U\U^{\top}}^2+\fro{\U_\perp\U_\perp^{\top}\Z\U\U^{\top}}^2+\fro{\U\U^{\top}\Z\U_\perp\U_\perp^{\top}}^2+\fro{\U_\perp\U_\perp^{\top}\Z\U_\perp\U_\perp^{\top}}^2$ and it leads to \begin{align*}
		A_3^2=\fro{\U\U^{\top}\Z\U\U^{\top}}^2+\fro{\U_\perp\U_\perp^{\top}\Z\U\U^{\top}}^2+\fro{\U\U^{\top}\Z\U_\perp\U_\perp^{\top}}^2\leq\fro{\Z}^2,
	\end{align*}
	which shows $A_3\leq\fro{\Z}$. With Equation~\eqref{eq:perturb-eigenvector}, $A_4$ could be upper bounded by \begin{align*}
		A_4\leq\fro{\hat{\U}\hat{\U}^{\top}-\U\U^\top}\op{\Z}\leq \frac{8\Vert \mathbf{Z}\Vert_{\mathrm{F}}\Vert\mathbf{Z}\Vert}{\vert\lambda_{r}\vert}.
	\end{align*} Note that with triangular inequality and in a similar fashion, it has \begin{align*}
		A_5\leq \fro{\hat{\U}\hat{\U}^{\top}\Z(\hat{\U}\hat{\U}^{\top}-\U\U^{\top})}+\fro{(\hat{\U}\hat{\U}^{\top}-\U\U^{\top})\Z\U\U^{\top}}\leq \frac{16\Vert \mathbf{Z}\Vert_{\mathrm{F}}\Vert\mathbf{Z}\Vert}{\vert\lambda_{r}\vert}.
	\end{align*}
	Thus, Equation~\eqref{eq:perturbation-svd} could be further upper bounded by
	\begin{align*}
		\Vert \operatorname{SVD}_{r}(\hat{\mathbf{M}})-\mathbf{M}\Vert_{\mathrm{F}}\leq \fro{\Z}+40\frac{\Vert \mathbf{Z}\Vert_{\mathrm{F}}\Vert\mathbf{Z}\Vert}{\vert\lambda_{r}\vert}= \fro{\hat{\M}-\M}+40\frac{\fro{\hat{\M}-\M}\op{\hat{\M}-\M}}{|\lambda_{r}|}.
	\end{align*}
	Likewise, one has the bound of the operator norm \begin{equation*}
		\Vert \operatorname{SVD}_{r}(\hat{\mathbf{M}})-\mathbf{M}^{*}\Vert\leq\op{\hat{\M}-\M}+40\frac{\op{\hat{\M}-\M}^2}{|\lambda_{r}|}.
	\end{equation*}
	The proof completes.
\end{proof}
Then we are ready to prove Lemma~\ref{teclem:perturbation}, the assymetric case.
\begin{proof}[\textbf{Proof of Lemma~\ref{teclem:perturbation}}]
	To apply Lemma~\ref{teclem:symmetric perturbation}, first construct symmetrization of $\mathbf{M}$ and $\hat{\mathbf{M}}$, $$\mathbf{Y}:=\left(\begin{matrix}
		0&\mathbf{M}\\
		\mathbf{M}^{\top}&0
	\end{matrix}\right)\in\RR^{(d_1+d_2)\times (d_1+d_2)},\,\hat{\mathbf{Y}}:=\left(\begin{matrix}
		0&\hat{\mathbf{M}}\\
		\hat{\mathbf{M}}^{\top}&0
	\end{matrix}\right)\in\RR^{(d_1+d_2)\times (d_1+d_2)}.$$
	Denote the perturbation matrix as $\mathbf{Z}:=\hat{\mathbf{M}}-\mathbf{M}$ and similarly define
	$$\hat{\mathbf{Z}}:=\left(\begin{matrix}
		0&\mathbf{Z}\\
		\mathbf{Z}^{\top}&0
	\end{matrix}\right)=\left(\begin{matrix}
		0&\hat{\mathbf{M}}-\mathbf{M}\\
		\hat{\mathbf{M}}^{\top}-\mathbf{M}^{\top}&0
	\end{matrix}\right).$$
	Note that $\operatorname{rank}(\mathbf{Y})=2\operatorname{rank}(\M)=2r$. First analyze SVD of matrix $\Y$ and $\hat{\Y}$.  Notice that $$\frac{1}{\sqrt{2}}\left(\begin{matrix}
		\mathbf{U}&\mathbf{U}\\
		\mathbf{V}&-\mathbf{V}
	\end{matrix}\right)\in\mathbb{R}^{2d\times 2r}$$ contains orthonormal columns.
	Besides, it has
	\begin{align*}
		\frac{1}{\sqrt{2}}\left(\begin{matrix}
			\mathbf{U}&\mathbf{U}\\
			\mathbf{V}&-\mathbf{V}
		\end{matrix}\right)\left(\begin{matrix}
			\bSigma&0\\
			0&-\bSigma
		\end{matrix}\right)\frac{1}{\sqrt{2}}\left(\begin{matrix}
			\mathbf{U}&\mathbf{U}\\
			\mathbf{V}&-\mathbf{V}
		\end{matrix}\right)^{\top}=\left(\begin{matrix}
			0&\U\bSigma\V^\top\\
			\V\bSigma\U^{\top}&0
		\end{matrix}\right)&=\left(\begin{matrix}
			0&\M\\
			\M^{\top}&0
		\end{matrix}\right)=\Y,
	\end{align*}
	by which we obtain SVD of $\Y$. Without loss of generality, suppose $d_1\geq d_2$ and $\hat{\M}=\tilde{\U}\tilde{\bSigma}\tilde{\V}^{\top}$ is full SVD of $\hat{\M}$, with $\tilde{\U}\in\RR^{d_1\times d_2}$, $\tilde\bSigma=\text{diag}(\tilde{\sigma}_1,\dots, \tilde{\sigma}_{d_2})\in\RR^{d_2\times d_2}$, $\tilde{\V}\in\RR^{d_2\times d_2}$, where $\tilde{\sigma}_1\geq\cdots\geq\tilde{\sigma}_{d_2}\geq0$. Thus, leading $r$ components of $\hat{\M}$ is $\operatorname{SVD}_r(\hat{\M})=\hat{\U}\hat{\bSigma}\hat{\V}^{\top}$, where $\hat{\U}$ and $\hat{\V}$ are first $r$ columns of full matrix $\tilde{\U}$ and $\tilde{\V}$ respectively and $\hat{\bSigma}=\text{diag}(\tilde{\sigma}_1,\dots,\tilde{\sigma}_r)$.
	
	Notice that since $\hat{\Y}$ is symmetrization for $\hat{\M}$, its full SVD is
	\begin{align}
		\hat{\Y}=\frac{1}{\sqrt{2}}\left(\begin{matrix}
			\tilde{\U}&\tilde{\U}\\
			\tilde{\V}&-\tilde{\V}
		\end{matrix}\right)\left(\begin{matrix}
			\tilde{\bSigma}&0\\
			0&-\tilde{\bSigma}
		\end{matrix}\right)\frac{1}{\sqrt{2}}\left(\begin{matrix}
			\tilde{\U}&\tilde{\U}\\
			\tilde{\V}&-\tilde{\V}
		\end{matrix}\right)^{\top},
		\label{eq:perturbation-svdY}
	\end{align}
	which has eigenvalues $\tilde{\sigma}_1,-\tilde{\sigma}_1,\dots,\tilde{\sigma}_{d_2},-\tilde{\sigma}_{d_2}$. Thus, largest $2r$ eigenvalues of $\hat{\Y}$ in absolute are $\tilde{\sigma}_1,-\tilde{\sigma}_1,\dots,\tilde{\sigma}_{r},-\tilde{\sigma}_{r}$. Thus, since $\hat{\U}$, $\hat{\V}$ are first $r$ columns of $\tilde\U$, $\tilde\V$ respectively, Equation~\eqref{eq:perturbation-svdY} infers
	\begin{align*}
		\frac{1}{\sqrt{2}}\left(\begin{matrix}
			\hat{\mathbf{U}}&\hat{\mathbf{U}}\\
			\hat{\mathbf{V}}&-\hat{\mathbf{V}}
		\end{matrix}\right)^{\top}\hat{\mathbf{Y}}\frac{1}{\sqrt{2}}\left(\begin{matrix}
			\hat{\mathbf{U}}&\hat{\mathbf{U}}\\
			\hat{\mathbf{V}}&-\hat{\mathbf{V}}
		\end{matrix}\right)=\left(\begin{matrix}
			\hat\bSigma&0\\
			0&-\hat\bSigma
		\end{matrix}\right),
	\end{align*}which means $\operatorname{SVD}_{2r}(\hat{\Y})$ is \begin{align*}
		\operatorname{SVD}_{2r}(\hat{\Y})=\frac{1}{\sqrt{2}}\left(\begin{matrix}
			\hat{\mathbf{U}}&\hat{\mathbf{U}}\\
			\hat{\mathbf{V}}&-\hat{\mathbf{V}}
		\end{matrix}\right)\left(\begin{matrix}
			\hat\bSigma&0\\
			0&-\hat\bSigma
		\end{matrix}\right)
		\frac{1}{\sqrt{2}}\left(\begin{matrix}
			\hat{\mathbf{U}}&\hat{\mathbf{U}}\\
			\hat{\mathbf{V}}&-\hat{\mathbf{V}}
		\end{matrix}\right)^{\top}&=\left(\begin{matrix}
			0&\hat\U\hat\bSigma\hat\V^\top\\
			\hat\V\hat\bSigma\hat\U^{\top}&0
		\end{matrix}\right) \\
		&=\left(\begin{matrix}
			0&\operatorname{SVD}_r(\hat{\M})\\
			\operatorname{SVD}_r(\hat{\M})^{\top}&0
		\end{matrix}\right),
	\end{align*}
	and $\frac{1}{\sqrt{2}}\left(\begin{matrix}
		\hat{\mathbf{U}}&\hat{\mathbf{U}}\\
		\hat{\mathbf{V}}&-\hat{\mathbf{V}}
	\end{matrix}\right) $ contains leadning $2r$ eigenvectors of $\hat{\Y}$. Thus, we have \begin{align*}
		\operatorname{SVD}_{2r}(\hat{\Y})-\Y=\left(\begin{matrix}
			0&\operatorname{SVD}_r(\hat{\M})-\M\\
			\operatorname{SVD}_r(\hat{\M})^{\top}-\M^{\top}&0
		\end{matrix}\right).
	\end{align*}
	It infers \begin{align}
		\fro{\operatorname{SVD}_{2r}(\hat{\Y})-\Y }=\sqrt{2}\fro{\operatorname{SVD}_r(\hat{\M})-\M}.
		\label{eq:perturbation-YM}
	\end{align}
	Then apply Lemma~\ref{teclem:symmetric perturbation} to the rank $2r$ symmetric matrix $\Y$ and the symmetric perturbed matrix $\hat{\Y}$, \begin{equation}
		\fro{ \operatorname{SVD}_{2r}(\hat{\mathbf{Y}})-\mathbf{Y}^{*}}\leq\fro{\hat{\Y}-\Y}+40\frac{\fro{\hat{\Y}-\Y}\op{\hat{\Y}-\Y}}{\sigma_{r}},
		\label{eq26}
	\end{equation}
	which uses $|\lambda_{2r}(\Y)|=\sigma_{r}$. Note that definition of $\Y$ and $\hat{\Y}$ implies $\fro{\hat{\Y}-\Y}=\sqrt{2}\fro{\hat{\M}-\M}$ and $\op{\hat{\Y}-\Y}=\op{\hat{\M}-\M}$, respectively. Then, Equation~\eqref{eq:perturbation-YM}, Equation~\eqref{eq26} infer, \begin{align*}
		\fro{\operatorname{SVD}_r(\hat{\M})-\M}\leq \fro{\hat{\M}-\M}+40\frac{\fro{\hat{\M}-\M}\op{\hat{\M}-\M}}{\sigma_{r}}.
	\end{align*}
	Notice that proof of operator norm version $\Vert \operatorname{SVD}_{r}(\hat{\mathbf{M}})-\mathbf{M}\Vert\leq \op{\hat{\M}-\M} + 40\frac{\op{\hat{\M}-\M}^2}{\sigma_{r}}$ is similar to the Frobinius norm case and hence it is skipped.
	
	Apply the perturbated eigenvector results of Lemma~\ref{teclem:symmetric perturbation} to the symmetric rank $2r$ matrix $\Y$ and symmetric perturbed matrix $\hat{\Y}$, \begin{equation*}
		\bigg\Vert \frac{1}{\sqrt{2}}\left(\begin{matrix}
			\hat{\mathbf{U}}&\hat{\mathbf{U}}\\
			\hat{\mathbf{V}}&-\hat{\mathbf{V}}
		\end{matrix}\right)\frac{1}{\sqrt{2}}\left(\begin{matrix}
			\hat{\mathbf{U}}&\hat{\mathbf{U}}\\
			\hat{\mathbf{V}}&-\hat{\mathbf{V}}
		\end{matrix}\right)^{\top} - \frac{1}{\sqrt{2}}\left(\begin{matrix}
			\mathbf{U}&\mathbf{U}\\
			\mathbf{V}&-\mathbf{V}
		\end{matrix}\right)\frac{1}{\sqrt{2}}\left(\begin{matrix}
			\mathbf{U}&\mathbf{U}\\
			\mathbf{V}&-\mathbf{V}
		\end{matrix}\right)^{\top}\bigg\Vert\leq \frac{8\op{\hat{\Y}-\Y}}{\sigma_{r}},
	\end{equation*}
	which could be simplified as \begin{align*}
		\op{\left(\begin{matrix}
				\hat{\U}\hat{\U}^{\top}&0\\
				0&\hat{\V}\hat{\V}^{\top}
			\end{matrix}\right)-\left(\begin{matrix}
				\U\U^{\top}&0\\
				0&\V\V^{\top}
			\end{matrix}\right)}\leq\frac{8\op{\hat{\Y}-\Y}}{\sigma_{r}}.
	\end{align*}
	Then by $\op{\hat{\Y}-\Y}=\op{\hat{\M}-\M}$ and block matrix properties, it arrives at \begin{equation*}
		\Vert \hat{\mathbf{U}}\hat{\mathbf{U}}^{\top} -\mathbf{U}\mathbf{U}^{\top}\Vert\leq\frac{8}{\sigma_{r}}\Vert \hat{\mathbf{M}}-\mathbf{M}\Vert,\quad
		\Vert \hat{\mathbf{V}}\hat{\mathbf{V}}^{\top} -\mathbf{U}\mathbf{U}^{\top}\Vert\leq \frac{8}{\sigma_{r}}\Vert \hat{\mathbf{M}}-\mathbf{M}\Vert.
	\end{equation*}
	
\end{proof}

\section{Proof of Matrix Recovery Empirical Process Proposition~\ref{thm:empirical process}}
\label{proof:thm:empirical}
\begin{proof}
	The proof follows from Theorem~\ref{tecthm:orlicz norm empirical}. Recall $\MM_r:=\{\M\in\RR^{d_1\times d_2}: \rank(\M)\leq r\}$ is the set of matrices with rank bounded by $r$. Consider 
	\begin{align*}
		&~~~~Z:= \sup_{\substack{\Delta\M\in\MM_r \\ \M\in\RR^{d_1\times d_2} }} \left|f(\M+\Delta\M)-f(\M) - \EE(f(\M+\Delta\M)-f(\M))\right|\cdot\fro{\Delta\M}^{-1}.
	\end{align*}
	Note that $f(\M)= \sum_{i=1}^n\rho(\inp{\X_i}{\M}-Y_i)$ and then we have
	\begin{align*}
		\EE Z &\leq 2\EE\sup_{\substack{\Delta\M\in\MM_r \\ \M\in\RR^{d_1\times d_2} }} \left|\sum_{i=1}^n\wt\epsilon_i\cdot\big(\rho(\inp{\X_i}{\M+\Delta\M}-Y_i) - \rho(\inp{\X_i}{\M}-Y_i)\big)\right|\cdot\fro{\Delta\M}^{-1}\notag\\
		&\leq 4\tilde L\cdot\EE \sup_{\M\in\MM_r}\left|\sum_{i=1}^n\wt\epsilon_i\cdot\inp{\X_i}{\Delta\M}\right|\cdot\fro{\Delta\M}^{-1}\notag\\
		&\leq 4\tilde L\cdot\EE\|\sum_{i=1}^n\wt\epsilon_i\X_i\|_{\mathrm{F,r}}
		\leq 4\tilde L\cdot\sqrt{r}\EE\|\sum_{i=1}^n\wt\epsilon_i\X_i\|,
	\end{align*}
	where $\{\wt\epsilon_{i}\}_{i=1}^n$ is Rademacher sequence independent of $\X_1,\ldots,\X_n$. The first inequality is from Theorem \ref{Symmetrization of Expectation} and  the second inequality is from Theorem \ref{Contraction Theorem}. Notice that $\text{vec}\left(\sum_{i=1}^n\wt\epsilon_i\X_i\right)\sim N(\boldsymbol{0}, \sum_{i=1}^{n}\bSigma_i)$ and $\op{\sum_{i=1}^{n}\bSigma_i}\leq n\ku$.
	Lemma~\ref{teclem:randmatrxnorm} implies $\EE\|\sum_{i=1}^n\wt\epsilon_i\X_i\|\leq C_1\sqrt{nd_1\ku}$ for some absolute constant $C_1>0$. Thus we have $$\EE Z\leq C_1\tilde L\sqrt{nd_1r\ku}.$$
	For simplicity, denote $f_i(\M,\Delta\M):=\vert \rho(Y_{i}-\langle \mathbf{X}_{i}, \M+\Delta\M\rangle) - \rho(Y_{i} - \langle \mathbf{X}_{i}, \M\rangle)\vert\cdot\fro{\Delta\M}^{-1}$. Then we have
	\begin{align*}
		&{~~~}\Vert\max_{i=1,\ldots,n}\sup_{\substack{\Delta\M\in\MM_r \\ \M\in\RR^{d_1\times d_2} }} |f_i(\M,\Delta\M)-\EE f_i(\M,\Delta\M)| \Vert_{\Psi_{1}}\\
		&\leq 2\log n	\max_{i=1,\ldots,n}\Vert\sup_{\substack{\Delta\M\in\MM_r \\ \M\in\RR^{d_1\times d_2} }} |f_i(\M+\Delta\M)|\Vert_{\Psi_{1}}+2\log n\max_{i=1,\ldots,n}\EE\sup_{\substack{\Delta\M\in\MM_r \\ \M\in\RR^{d_1\times d_2} }} |f_i(\M+\Delta\M)|,
	\end{align*}
	which uses inequality for maximum of sub-exponential variables. Moreover, for each $i=1,\dots,n$, it has
	\begin{align*}
		&~~~~\Vert\sup_{\substack{\Delta\M\in\MM_r \\ \M\in\RR^{d_1\times d_2} }} |f_i(\M+\Delta\M)|\Vert_{\Psi_{1}}\\
		&=\Vert\sup_{\substack{\Delta\M\in\MM_r \\ \M\in\RR^{d_1\times d_2} }} \vert \rho(Y_{i}-\langle \mathbf{X}_{i}, \M+\Delta\M\rangle) - \rho(Y_{i} - \langle \mathbf{X}_{i}, \mathbf{M}\rangle)\vert\cdot\fro{\Delta\M}^{-1}\Vert_{\Psi_{1}}\\
		&\leq \tilde{L} \Vert\sup_{\Delta\M\in\MM_r} \vert \langle \mathbf{X}_{i},\Delta\M\rangle\vert\cdot\fro{\Delta\M}^{-1}\Vert_{\Psi_{1}}\\
		&\leq \tilde{L}\Vert \|\X_i\|_{\mathrm {F,r}}\Vert_{\Psi_{1}}\leq \tilde{L}\sqrt{r}\Vert\op{\X_i}\Vert_{\Psi_{1}}
		\leq C_2\tilde{L}\sqrt{d_1r\ku},
	\end{align*}
	for some absolute constant $C_2>0$ and the last line is from definition of partial Frobenius norm Lemma~\ref{teclem:partial F norm} and random multivariate Gaussian matrix norm Lemma~\ref{teclem:randmatrxnorm}. Similarly, we have
	\begin{align*}
		\EE\sup_{\substack{\Delta\M\in\MM_r \\ \M\in\RR^{d_1\times d_2} }}|f_i(\M)|&=\EE\sup_{\substack{\Delta\M\in\MM_r \\ \M\in\RR^{d_1\times d_2} }}\vert \rho(Y_{i}-\langle \mathbf{X}_{i}, \M+\Delta\M\rangle) - \rho(Y_{i} - \langle \mathbf{X}_{i}, \mathbf{M}\rangle)\vert\cdot\fro{\Delta\M}^{-1}\\
		&\leq \tilde{L}\cdot\EE\Vert\X_i\Vert_{\mathrm{F,r}}\leq C_3\tilde{L}\sqrt{d_1r\ku}.
	\end{align*} Combine the above three equations and then it has
	\begin{align*}
		\Vert\max_{i=1,\ldots,n}\sup_{\M\in\MM_r} |f_i(\M)-\EE f_i(\M)| \Vert_{\Psi_{1}}\leq C_4\tilde{L}\sqrt{d_1r\ku}\log n.
	\end{align*}
	Also, note that $\inp{\X_i}{\Delta\M}\sim N(0,\text{vec}(\Delta\M)^{\top}\bSigma_i\text{vec}(\Delta\M))$ and $\text{vec}(\M-\M^*)^{\top}\bSigma_i\text{vec}(\M-\M^*)\leq\ku\fro{\M-\M^*}^2$. Then we have
	\begin{align*}
		&{~~~~}\mathbb{E}\big[\big(\rho(Y_{i}-\langle \mathbf{X}_{i}, \M+\Delta\M\rangle) - \rho(Y_{i} - \langle \mathbf{X}_{i}, \mathbf{M}\rangle)\big)^2\big]/\fro{ \Delta\M}^{2}\\
		&\leq \tilde{L}^{2}\mathbb{E}\langle \mathbf{X}_{i}, \Delta\M\rangle^2/\fro{ \Delta\M}^{2}\\
		&\leq \ku\tilde{L}^{2}.
	\end{align*}
	Invoke Theorem \ref{tecthm:orlicz norm empirical} and Remark~\ref{rm:empiricalprocess}, and take $\alpha=1$, $\eta=1$, $\delta=0.5$ and there exists some constant $C>0$, such that 
	\begin{align*}
		\mathbb{P}(Z&\geq C_{2}\tilde{L}\sqrt{\ku nd_1r}+t)\leq \exp \left(-\frac{t^{2}}{3 n\ku\tilde{L}^{2}}\right)+3 \exp \left(-\frac{t}{C_5\tilde{L}\sqrt{\ku d_1r}\log n}\right)
	\end{align*}
	holds for any $t>0$.
	Take $t=C\sqrt{nd_1r\ku}\tilde{L}$ and then we have 
	\begin{equation*}
		\mathbb{P}(Z\geq C\tilde{L}\sqrt{nd_1r\ku})\leq \exp\left(-\frac{C^{2}d_1r}{3}\right)+3\exp\left(-\frac{\sqrt{n}}{\log n}\right),
	\end{equation*}
	which completes the proof.
\end{proof}

\begin{theorem}[Symmetrization of Expectations, \citep{van1996weak}]\label{Symmetrization of Expectation} 
	Consider $\mathbf{X}_{1},\mathbf{X}_{2},\cdots,\mathbf{X}_{n}$ independent matrices in $\chi$ and let $\mathcal{F}$ be a class of real-valued functions on $\chi$. Let $\tilde{\varepsilon}_{1},\cdots,\tilde{\varepsilon}_{n}$ be a Rademacher sequence independent of $\mathbf{X}_{1},\mathbf{X}_{2},\cdots,\mathbf{X}_{n}$, then
	\begin{equation}
		\mathbb{E}\big[\sup_{f\in\mathcal{F}}\big{|} \sum_{i=1}^{n} (f(\mathbf{X}_{i}) - \mathbb{E}f(\mathbf{X}_{i}))\big{|}\big]\leq 2\mathbb{E}\big[\sup_{f\in\mathcal{F}} \big{|} \sum_{i=1}^{n} \tilde{\varepsilon}_{i}f(\mathbf{X}_{i})\big{|}\big]
	\end{equation}
\end{theorem}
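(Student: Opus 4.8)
The plan is to establish the symmetrization inequality by the classical \emph{ghost sample} device, which replaces the unknown means $\EE f(\mathbf{X}_i)$ by an independent copy of the data and then introduces the Rademacher signs via a distributional symmetry argument. First I would enlarge the probability space by a second, independent sample $\mathbf{X}_1',\dots,\mathbf{X}_n'$ (the ghost sample), drawn with the same law as $\mathbf{X}_1,\dots,\mathbf{X}_n$ and independent of both the original data and of the Rademacher sequence $\tilde\varepsilon_1,\dots,\tilde\varepsilon_n$. Writing $\EE'$ for the conditional expectation over the ghost sample given $\mathbf{X}_1,\dots,\mathbf{X}_n$, the key identity is $\EE f(\mathbf{X}_i)=\EE' f(\mathbf{X}_i')$, so each centered summand is represented as $f(\mathbf{X}_i)-\EE f(\mathbf{X}_i)=\EE'\big(f(\mathbf{X}_i)-f(\mathbf{X}_i')\big)$.

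Second, I would move the supremum outside the ghost-sample expectation. Since $u\mapsto\sup_{f\in\mathcal{F}}|u|$ is convex, Jensen's inequality applied to $\EE'$ gives $\sup_{f}\big|\sum_i(f(\mathbf{X}_i)-\EE f(\mathbf{X}_i))\big|=\sup_f\big|\EE'\sum_i(f(\mathbf{X}_i)-f(\mathbf{X}_i'))\big|\le\EE'\sup_f\big|\sum_i(f(\mathbf{X}_i)-f(\mathbf{X}_i'))\big|$; taking expectations over the original sample and invoking the tower property bounds the left-hand side of the claim by $\EE\sup_f|\sum_i(f(\mathbf{X}_i)-f(\mathbf{X}_i'))|$.

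Third comes the symmetrization proper: because $\mathbf{X}_i$ and $\mathbf{X}_i'$ are identically distributed and independent, the vector $\big(f(\mathbf{X}_i)-f(\mathbf{X}_i')\big)_{i\le n}$ is symmetric, i.e.\ invariant in law under coordinatewise sign flips, and in particular has the same distribution as $\big(\tilde\varepsilon_i(f(\mathbf{X}_i)-f(\mathbf{X}_i'))\big)_{i\le n}$ for the independent Rademacher signs. Hence the expected supremum is unchanged when the signs are inserted, and a final triangle inequality splits $\sum_i\tilde\varepsilon_i(f(\mathbf{X}_i)-f(\mathbf{X}_i'))$ into the contributions of $\mathbf{X}_i$ and of $\mathbf{X}_i'$; since these two terms have identical expected suprema, the factor $2$ in the statement emerges.

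The main technical obstacle I anticipate is not the inequality chain but the \emph{measurability} of the suprema $\sup_{f\in\mathcal{F}}|\cdot|$ over a possibly uncountable class $\mathcal{F}$, which is exactly why \citep{van1996weak} phrases everything through outer expectations; the Jensen and tower steps above must then be read as statements about outer integrals, and the exchange of $\EE$ and $\EE'$ requires the outer-expectation Fubini theorem. In the application to this paper the relevant class is indexed by a continuous parameter $\Delta\M$, so I would either invoke the outer-expectation version directly or first reduce to a countable dense sub-class using continuity of $\Delta\M\mapsto\langle\mathbf{X}_i,\Delta\M\rangle$, after which the elementary argument goes through verbatim.
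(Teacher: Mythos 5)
Your proposal is correct, and it is essentially the canonical proof: the paper itself states this theorem without proof, citing it as a known result from \citep{van1996weak}, and your ghost-sample construction, Jensen step for the conditional expectation, coordinatewise sign-flip invariance, and final triangle inequality reproduce exactly the argument of Lemma 2.3.1 in that reference (including the factor of $2$ arising from splitting the symmetrized sum). Your remark on measurability via outer expectations, or alternatively reduction to a countable dense subclass using continuity of $\Delta\M\mapsto\langle\X_i,\Delta\M\rangle$, correctly identifies and resolves the only genuine technical subtlety in applying the result to the uncountable classes used in this paper.
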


\begin{theorem}[Contraction Theorem, \citep{ludoux1991probability}]\label{Contraction Theorem}
	
	Consider the non-random elements $x_{1}, \ldots, x_{n}$ of $\chi$. Let $\mathcal{F}$ be a class of real-valued functions on $\chi$. Consider the Lipschitz continuous functions $\rho_{i}: \mathbb{R} \rightarrow \mathbb{R}$ with Lipschitz constant $L$, i.e.
	$$
	\left|\rho_{i}(\mu)-\rho_{i}(\tilde{\mu})\right| \leq L|\mu-\tilde{\mu}|, \text { for all } \mu, \tilde{\mu} \in \mathbb{R}
	$$
	Let $\tilde{\varepsilon}_{1}, \ldots, \tilde{\varepsilon}_{n}$ be a Rademacher sequence $.$ Then for any function $f^{*}: \chi \rightarrow \mathbb{R}$, we have
	
	\begin{equation}
		\mathbb{E}\left[\sup _{f \in \mathcal{F}}\left|\sum_{i=1}^{n} \tilde{\varepsilon}_{i}\left\{\rho_{i}\left(f\left(x_{i}\right)\right)-\rho_{i}\left(f^{*}\left(x_{i}\right)\right)\right\}\right|\right] \leq 2 \mathbb{E}\left[L\sup_{f \in \mathcal{F}} \mid \sum_{i=1}^{n} \tilde{\varepsilon}_{i}\left(f\left(x_{i}\right)-f^{*}\left(x_{i}\right)\right)\mid \right]
	\end{equation}
\end{theorem}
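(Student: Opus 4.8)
The plan is to recognize the stated inequality as the classical Ledoux--Talagrand contraction (comparison) principle and to reconstruct its standard one-coordinate-at-a-time swapping proof. First I would linearize the problem over $\RR^n$: writing $t_i := f(x_i)$ and $t_i^\ast := f^\ast(x_i)$, introduce the index set $T := \{(f(x_1)-f^\ast(x_1),\ldots,f(x_n)-f^\ast(x_n)) : f\in\mathcal F\}\subseteq\RR^n$ together with the centered maps $\phi_i(u) := L^{-1}\big(\rho_i(u+t_i^\ast)-\rho_i(t_i^\ast)\big)$. By hypothesis each $\phi_i$ is a contraction (i.e. $1$-Lipschitz) with $\phi_i(0)=0$, and $\rho_i(f(x_i))-\rho_i(f^\ast(x_i)) = L\,\phi_i(t_i)$ for every $t\in T$. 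Hence, after factoring out $L$, the claim is equivalent to
\begin{equation*}
	\mathbb{E}\sup_{t\in T}\Big|\sum_{i=1}^n\tilde\varepsilon_i\phi_i(t_i)\Big| \;\le\; 2\,\mathbb{E}\sup_{t\in T}\Big|\sum_{i=1}^n\tilde\varepsilon_i t_i\Big|.
\end{equation*}
Since $\phi_i(0)=0$, adjoining the point $0$ to $T$ changes neither side (both processes vanish there), so I may assume $0\in T$.

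The factor of $2$ and the outer absolute value will be disposed of by a de-symmetrization step. Because $0\in T$, both $\sup_{t\in T}\sum_i\tilde\varepsilon_i\phi_i(t_i)$ and the supremum of its negation are nonnegative, so pointwise in $\tilde\varepsilon$ one has $\sup_t|\cdot| = \max\big(\sup_t(\cdot),\,\sup_t(-\cdot)\big)\le \sup_t(\cdot)+\sup_t(-\cdot)$; taking expectations and invoking the sign symmetry $\tilde\varepsilon_i\overset{d}{=}-\tilde\varepsilon_i$ of the Rademacher sequence gives $\mathbb{E}\sup_t|\sum_i\tilde\varepsilon_i\phi_i(t_i)|\le 2\,\mathbb{E}\sup_t\sum_i\tilde\varepsilon_i\phi_i(t_i)$, and likewise $\mathbb{E}\sup_t\sum_i\tilde\varepsilon_i t_i \le \mathbb{E}\sup_t|\sum_i\tilde\varepsilon_i t_i|$. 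It therefore suffices to establish the \emph{one-sided} comparison $\mathbb{E}\sup_t\sum_i\tilde\varepsilon_i\phi_i(t_i) \le \mathbb{E}\sup_t\sum_i\tilde\varepsilon_i t_i$.

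For the one-sided bound I would replace the contractions one coordinate at a time. Conditioning on $\tilde\varepsilon_2,\ldots,\tilde\varepsilon_n$ and writing $g(t):=\sum_{i\ge2}\tilde\varepsilon_i\phi_i(t_i)$, averaging over $\tilde\varepsilon_1$ yields
\begin{equation*}
	\mathbb{E}_{\tilde\varepsilon_1}\sup_{t\in T}\big[\tilde\varepsilon_1\phi_1(t_1)+g(t)\big] = \tfrac12\sup_{t,s\in T}\big[\phi_1(t_1)-\phi_1(s_1)+g(t)+g(s)\big].
\end{equation*}
The contraction property gives $\phi_1(t_1)-\phi_1(s_1)\le|t_1-s_1|=\max(t_1-s_1,\,s_1-t_1)$, and since the target expression $t_1-s_1+g(t)+g(s)$ is carried into $s_1-t_1+g(s)+g(t)$ under the exchange $(t,s)\mapsto(s,t)$, each of the two cases is dominated by $\sup_{t,s}[t_1-s_1+g(t)+g(s)]$. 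Thus the conditional average is at most $\mathbb{E}_{\tilde\varepsilon_1}\sup_t[\tilde\varepsilon_1 t_1+g(t)]$, and the tower property replaces $\phi_1$ by the identity. Iterating over $i=1,\ldots,n$---at each step folding the already-replaced linear terms and the remaining contractions into the auxiliary function $g$---yields the one-sided inequality, and re-inserting the factor $L$ closes the argument.

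I expect the single-coordinate swap to be the crux: the delicate point is the case analysis showing that $\max(t_1-s_1,\,s_1-t_1)+g(t)+g(s)$ is controlled by the symmetric supremum $\sup_{t,s}[t_1-s_1+g(t)+g(s)]$, which relies precisely on the invariance of the joint supremum under exchanging the two dummy vectors. Finiteness and measurability of the suprema (guaranteed by boundedness of $T$, i.e. of $\mathcal F$ on $\{x_i\}$) must also be verified to justify the conditioning and the interchange of $\sup$ with $\mathbb{E}_{\tilde\varepsilon_1}$.
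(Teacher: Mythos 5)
The paper offers no proof of this statement at all---it is quoted verbatim as a known result from the Ledoux--Talagrand reference---so there is no internal argument to compare against, only the classical one. Your proposal correctly reconstructs exactly that classical proof: the reduction to $1$-Lipschitz maps $\phi_i$ with $\phi_i(0)=0$, adjoining $0$ to the index set $T$ and using sign symmetry of the Rademacher vector to pass to one-sided suprema at the cost of the factor $2$, and the coordinate-by-coordinate swapping argument (whose crux, as you note, is that the joint supremum $\sup_{t,s}[t_1-s_1+g(t)+g(s)]$ is invariant under exchanging $t$ and $s$, which absorbs both branches of $|t_1-s_1|$) giving the one-sided comparison with constant $1$; this is the standard textbook derivation of the contraction principle, and your bookkeeping of where $\phi_i(0)=0$ versus mere Lipschitzness is used is accurate.
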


\begin{theorem}[Tail inequality for suprema of empirical process \citep{adamczak2008tail}]
	Let $\mathbf{X}_{1}, \ldots, \mathbf{X}_{n}$ be independent random variables with values in a measurable space $(\mathcal{S}, \mathcal{B})$ and let $\mathcal{F}$ be a countable class of measurable functions $f: \mathcal{S} \rightarrow \mathbb{R}$. Assume that for every $f \in \mathcal{F}$ and every $i, \mathbb{E} f\left(\mathbf{X}_{i}\right)=0$ and for some $\alpha \in(0,1]$ and all $i,\left\|\sup _{\mathcal{F}}\left|f\left(\mathbf{X}_{i}\right)\right|\right\|_{\Psi_{\alpha}}<\infty .$ Let
	$$
	Z=\sup _{f \in \mathcal{F}}\left|\sum_{i=1}^{n} f\left(\mathbf{X}_{i}\right)\right|
	$$
	Define moreover
	$$
	\sigma^{2}=\sup _{f \in \mathcal{F}} \sum_{i=1}^{n} \mathbb{E} f\left(\mathbf{X}_{i}\right)^{2}
	$$
	Then, for all $0<\eta<1$ and $\delta>0$, there exists a constant $C=C(\alpha, \eta, \delta)$, such that for all $t \geq 0$
	\begin{equation*}
		\begin{split}
			\mathbb{P}(Z&\geq(1+\eta) \mathbb{E}Z+t) \\
			& \leq \exp \left(-\frac{t^{2}}{2(1+\delta) \sigma^{2}}\right)+3 \exp \left(-\left(\frac{t}{C\left\|\max _{i} \sup _{f \in \mathcal{F}}\left|f\left(\mathbf{X}_{i}\right)\right|\right\|_{\Psi_{\alpha}}}\right)^{\alpha}\right)
		\end{split}
	\end{equation*}
	and
	$$
	\begin{aligned}
		\mathbb{P}(Z&\leq(1-\eta) \mathbb{E} Z-t) \\
		& \leq \exp \left(-\frac{t^{2}}{2(1+\delta) \sigma^{2}}\right)+3 \exp \left(-\left(\frac{t}{C\left\|\max _{i} \sup _{f \in \mathcal{F}}\left|f\left(\mathbf{X}_{i}\right)\right|\right\|_{\Psi_{\alpha}}}\right)^{\alpha}\right)
	\end{aligned}
	$$
	\label{tecthm:orlicz norm empirical}
\end{theorem}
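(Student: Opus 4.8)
The final statement is the Adamczak tail bound for suprema of \emph{unbounded} empirical processes, and the plan is to reduce it to the classical Talagrand concentration inequality for \emph{bounded} empirical processes (in the sharp Bousquet / Klein--Rio form) by truncating at a level tuned to the $\psi_\alpha$ structure of the envelope. The two terms in the conclusion are not accidental: the Gaussian term $\exp(-t^2/(2(1+\delta)\sigma^2))$ will come from applying Talagrand's inequality to the truncated, bounded process, while the term $3\exp(-(t/C\|\max_i\sup_f|f(\mathbf X_i)|\|_{\Psi_\alpha})^\alpha)$ will come from the probabilistic cost of truncation, namely the indices whose envelope exceeds the threshold. I treat only the upper tail; the lower tail is identical after replacing $\mathcal F$ by $-\mathcal F$.

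\textbf{Truncation and the bounded part.} Write $F(x)=\sup_{f\in\mathcal F}|f(x)|$ for the envelope and fix a level $u>0$, to be chosen at the end as a constant multiple of $t$. Split each summand according to $\{F(\mathbf X_i)\le u\}$ versus its complement:
$$
\sum_{i=1}^n f(\mathbf X_i)=\sum_{i=1}^n f(\mathbf X_i)\mathbbm{1}\{F(\mathbf X_i)\le u\}+\sum_{i=1}^n f(\mathbf X_i)\mathbbm{1}\{F(\mathbf X_i)> u\},
$$
so that $Z\le Z^{\le}+Z^{>}$. Because truncation destroys the mean-zero property, I re-center the first sum, using $\EE f(\mathbf X_i)=0$ to write $\EE[f(\mathbf X_i)\mathbbm{1}\{F\le u\}]=-\EE[f(\mathbf X_i)\mathbbm{1}\{F> u\}]$, and fold the re-centering term (governed by the envelope tail) into the analysis of $Z^{>}$. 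The re-centered process $Z^{\le}$ has summands bounded by $2u$, variance proxy at most $\sigma^2$, and mean at most $\EE Z$, so Talagrand's inequality yields, for suitable $c_\eta>0$,
$$
\PP\big(Z^{\le}\ge(1+\eta)\EE Z+s\big)\le\exp\Big(-\tfrac{s^2}{2(1+\delta)\sigma^2}\Big)+\exp\Big(-\tfrac{c_\eta s}{u}\Big).
$$
The $(1+\eta)$ and $(1+\delta)$ slack here absorbs the gap between $\EE Z^{\le}$ (resp.\ its variance) and $\EE Z$ (resp.\ $\sigma^2$) introduced by truncation, which is exactly why the theorem carries these factors instead of exact constants.

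\textbf{Tail part and balancing.} For $Z^{>}$ I bound $Z^{>}\le\sum_i F(\mathbf X_i)\mathbbm{1}\{F(\mathbf X_i)>u\}$ and control it through the $\Psi_\alpha$ hypothesis, with $b:=\|\max_i\sup_f|f(\mathbf X_i)|\|_{\Psi_\alpha}$. Two quantities must be handled: the number of exceedances $N_u=\sum_i\mathbbm{1}\{F(\mathbf X_i)>u\}$, a sum of Bernoullis with $\PP(F(\mathbf X_i)>u)\le 2\exp(-(u/b)^\alpha)$, and the sizes of the exceedances themselves. A Fuk--Nagaev-type estimate (equivalently, the $\Psi_\alpha$ tail of $\max_i F(\mathbf X_i)$) shows that once $u$ is a large enough multiple of $t$, one has $\PP(Z^{>}\ge s)\le 3\exp(-(s/Cb)^\alpha)$. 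Choosing $u=\kappa t$ with $\kappa=\kappa(\alpha,\eta,\delta)$ makes the linear term $\exp(-c_\eta s/u)$ of the previous display dominated by this $\Psi_\alpha$ term, and combining the two bounds at $s=t$ produces the stated inequality.

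\textbf{Main obstacle.} The crux is the tail step: the truncation cost must be shown to contribute \emph{exactly} a $\Psi_\alpha$-type term $\exp(-(t/Cb)^\alpha)$ driven by the \emph{maximal} summand $\max_i\sup_f|f(\mathbf X_i)|$, rather than a cruder bound involving $\sum_i\|F(\mathbf X_i)\|_{\Psi_\alpha}$, which would be far too large for the inequality to be useful. This forces the Hoffmann--Jørgensen / Fuk--Nagaev machinery for sums of independent heavy-tailed terms, together with careful bookkeeping so that the constant $C$ depends only on $(\alpha,\eta,\delta)$. Getting the interplay between $N_u$ and the exceedance magnitudes right, while leaving the Gaussian regime (small $t$) untouched, is the delicate part of the argument.
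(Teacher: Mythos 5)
First, a point of calibration: the paper does not prove this statement at all. Theorem~\ref{tecthm:orlicz norm empirical} is quoted verbatim from \cite{adamczak2008tail} and used purely as an off-the-shelf tool (together with Remark~\ref{rm:empiricalprocess}) inside the proofs of Proposition~\ref{thm:empirical process} and Proposition~\ref{prop:emp:vec}. So there is no internal proof to compare yours against; the only meaningful comparison is with the original argument in the cited reference.

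Measured against that, your outline follows essentially the same route as Adamczak's proof: truncate the envelope at a level tied to $t$, apply Bousquet/Klein--Rio concentration to the re-centered bounded part (which is exactly where the $(1+\eta)$ and $(1+\delta)$ slack gets spent), and convert the truncation cost into a $\Psi_{\alpha}$ term driven by the maximal summand. Your diagnosis of the crux is also correct. What the proposal does not yet contain is a proof of precisely that crux: the claim that $\PP(Z^{>}\geq s)\leq 3\exp\left(-(s/Cb)^{\alpha}\right)$ once $u=\kappa t$ is ``a large enough multiple of $t$'' is asserted, not derived, and the naive routes genuinely fail. Bounding your re-centering term $\sum_{i}\EE\left[F(\mathbf{X}_i)\mathbbm{1}\{F(\mathbf{X}_i)>u\}\right]$, or the exceedance sum itself, by $n$ times an individual $\Psi_{\alpha}$ tail loses a factor of $n$ that cannot be absorbed when $t$ is of order $b$; this is exactly why Hoffmann--J{\o}rgensen's inequality (comparing the $\Psi_{\alpha}$ norm of a sum of independent nonnegative variables with its expectation plus the $\Psi_{\alpha}$ norm of its maximum) is indispensable, and it must be invoked quantitatively with constants tracked so that $C=C(\alpha,\eta,\delta)$ only. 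A helpful simplification you should exploit: for $t\lesssim b(\log 3)^{1/\alpha}$ the stated bound is trivial because the second term exceeds one, so the balancing argument only needs to work for $t$ above a constant multiple of $b$, rather than ``leaving the Gaussian regime untouched.'' In short: the skeleton is faithful to the known proof, but the Fuk--Nagaev/Hoffmann--J{\o}rgensen step and the constant bookkeeping --- the actual content of Adamczak's theorem --- remain to be carried out.
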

\begin{remark}
	Notice that here we require $\mathbb{E}f(\mathbf{X}_{i})=0$ and when $\mathbb{E}f(\mathbf{X}_{i})\neq 0$, $\mathbf{Z}$ should be $$
	Z=\sup _{f \in \mathcal{F}}\left|\sum_{i=1}^{n} f\left(\mathbf{X}_{i}\right)- \mathbb{E}f(\mathbf{X}_{i})\right|,
	$$
	and tail inequality of $Z$ would be \begin{equation*}
		\begin{split}
			\mathbb{P}(Z&\geq(1+\eta) \mathbb{E} Z+t) \\
			& \leq \exp \left(-\frac{t^{2}}{2(1+\delta) \sigma^{2}}\right)+3 \exp \left(-\left(\frac{t}{C\left\|\max _{i} \sup _{f \in \mathcal{F}}\left|f\left(\mathbf{X}_{i}\right)-\EE f(\X_i)\right|\right\|_{\Psi_{\alpha}}}\right)^{\alpha}\right)
		\end{split}
	\end{equation*}
	\label{rm:empiricalprocess}
\end{remark}

\section{Proof of Outlier Case}\label{proof:outlier}
\subsection*{Proof of Theorem~\ref{thm:vec:sparse:outlier}}
We only need to prove the following two-phase regularity properties, with which we could obtain the convergence dynamics similarly to Section~\ref{proof:tec:vec}.
\begin{lemma}
	\label{lem:vec:sparse:outlier}
	Suppose Assumptions~\ref{assump:sensing operators:vec} holds and \ref{assump:heavy-tailed} holds for inliers. There exist absolute constants $C_1,C_2,C_3,c_0>0$ such that if $n\geq C_1\ku\kl^{-1}\tilde{s}\log(2d/\tilde{s})$ and $\epsilon\leq\frac{\sqrt{\kl}}{4\sqrt{\kl}+\sqrt{\ku}}$, then with probability over $1-2\exp\big(-c_0\tilde{s}\log \big(2d/\tilde{s})\big)-3\exp\big(-((1-\epsilon)n\log(2d/\tilde{s}))^{1/2}\log^{-1} (1-\epsilon)n\big)$, we have
	\begin{enumerate}[(1)]
		\item for all $\Bbeta\in\left\{\Bbeta\in\RR^{d}:\; \ltwo{\Bbeta-\Bbeta^*}\geq 8\kl^{-1/2}\gamma,\ |\text{supp}(\Bbeta)|\leq\tilde{s}\right\}$ and for all sub-gradient $\G\in\partial f(\Bbeta)$, 
		\begin{align*}
			f(\Bbeta)-f(\Bbeta^*)\geq \frac{n}{5}\kl^{1/2}\ltwo{\Bbeta-\Bbeta^*}\quad {\rm and}\quad \ltwo{\calP_{\Omega\cup\Pi\cup\Omega^*}(\G)}\leq n \ku^{1/2},
		\end{align*}
		where $\Omega:=\text{supp}(\Bbeta)$ and $\Pi:=\text{supp}\big(\calH_{\tilde{s}}(\calP_{\Omega^{\rm c}}(\G))\big)$;
		
		\item for all 
		\begin{align*}
			\Bbeta\in\left\{\Bbeta\in\RR^{d}:\ C_2\ku^{1/2}\kl^{-1}\cdot b_0\max\left\{\big(\tilde{s}/n\cdot\log(2d/\tilde{s})\big)^{1/2},\epsilon\right\}\leq\ltwo{\Bbeta-\Bbeta^*}\leq8\kl^{-1/2}\gamma,|\text{supp}(\Bbeta)|\leq\tilde{s}\right\}
		\end{align*}
		and for all sub-gradient $\G\in\partial f(\Bbeta)$, 
		\begin{align*}
			f(\Bbeta)-f(\Bbeta^*)\geq \frac{n\kl}{12b_0}\ltwo{\Bbeta-\Bbeta^*}^2\quad {\rm and}\quad \ltwo{\calP_{\Omega\cup\Pi\cup\Omega^*}(\G)}\leq C_3\frac{n\ku}{b_1}\ltwo{\Bbeta-\Bbeta^*},
		\end{align*}
		where $\Omega:=\text{supp}(\Bbeta)$ and $\Pi:=\text{supp}\big(\calH_{\tilde{s}}(\calP_{\Omega^{\rm c}}(\G))\big)$.
	\end{enumerate}
\end{lemma}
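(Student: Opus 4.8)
The plan is to split the objective along the partition $\{1,\dots,n\}=\calI\cup\calO$, writing $f(\Bbeta)=f_{\calI}(\Bbeta)+f_{\calO}(\Bbeta)$ with $f_{\calI}(\Bbeta):=\sum_{i\in\calI}|Y_i-\langle\Bbeta,\X_i\rangle|$ and $f_{\calO}$ the analogous sum over the outlier indices, and then to argue that the inlier part alone already supplies the two-phase regularity of \reflm{lem:vec:sparse} (with $(1-\epsilon)n$ in place of $n$), while the outlier part can only perturb the sharpness by a controllable amount that is \emph{linear} in $\|\Bbeta-\Bbeta^*\|_2$. Since $\{\xi_i\}_{i\in\calI}$ obey Assumption~\ref{assump:heavy-tailed} and all covariates obey Assumption~\ref{assump:sensing operators:vec}, rerunning the proof of \reflm{lem:vec:sparse} verbatim on the index set $\calI$ yields, on an event of probability at least $1-\exp(-c_0\tilde{s}\log(2d/\tilde{s}))-3\exp(-((1-\epsilon)n\log(2d/\tilde{s}))^{1/2}\log^{-1}(1-\epsilon)n)$, the inlier sharpness bounds $f_{\calI}(\Bbeta)-f_{\calI}(\Bbeta^*)\geq\frac{(1-\epsilon)n}{4}\kl^{1/2}\ltwo{\Delta}$ (phase one) and $\geq\frac{(1-\epsilon)n\kl}{6b_0}\ltwo{\Delta}^2$ (phase two), where I write $\Delta:=\Bbeta-\Bbeta^*$, which is $(\tilde{s}+s)$-sparse. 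This already accounts for the $(1-\epsilon)n$ inside the second failure-probability exponent.

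Second, I would control the outlier term. Because $\rho(x)=|x|$ is $1$-Lipschitz, for outliers $|Y_i-\langle\Bbeta,\X_i\rangle|-|Y_i-\langle\Bbeta^*,\X_i\rangle|\geq-|\langle\Delta,\X_i\rangle|$, so $f(\Bbeta)-f(\Bbeta^*)\geq \big(f_{\calI}(\Bbeta)-f_{\calI}(\Bbeta^*)\big)-\sum_{i\in\calO}|\langle\Delta,\X_i\rangle|$, and the key quantity to bound uniformly is $\Xi:=\sup_{|\calO|\leq\lceil\epsilon n\rceil}\sup_{|\text{supp}(\Delta)|\leq 2\tilde{s}}\ltwo{\Delta}^{-1}\sum_{i\in\calO}|\langle\Delta,\X_i\rangle|$. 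The crucial observation is that $\sum_{i\in\calO}|\langle\Delta,\X_i\rangle|=\max_{\s\in\{\pm1\}^{\calO}}\big\langle\Delta,\calP_{\text{supp}(\Delta)}\big(\sum_{i\in\calO}s_i\X_i\big)\big\rangle\leq\ltwo{\Delta}\max_{\s}\ltwo{\calP_{\text{supp}(\Delta)}(\sum_{i\in\calO}s_i\X_i)}$, where for each fixed $\s$ the vector $\sum_{i\in\calO}s_i\X_i\sim N(\boldsymbol{0},\sum_{i\in\calO}\bSigma_i)$ has coordinatewise variance at most $\lceil\epsilon n\rceil\ku$. Bounding the restricted-support supremum of this Gaussian by the top order-statistic estimate of Lemma~\ref{teclem:maxsums}/Corollary~\ref{teccol:maxsums} and then taking a union bound over the at most $2^{\lceil\epsilon n\rceil}$ sign patterns forces the order-statistic budget up by an additive $\epsilon n$, giving $\max_{\s}\ltwo{\calP(\cdot)}^2\lesssim(\tilde{s}\log(2d/\tilde{s})+\epsilon n)\,\epsilon n\,\ku$ with probability at least $1-\exp(-c_0\tilde{s}\log(2d/\tilde{s}))$; since $\sqrt{(\tilde{s}\log(2d/\tilde{s})+\epsilon n)\epsilon n\ku}\leq\sqrt{\epsilon n\ku\,\tilde{s}\log(2d/\tilde{s})}+\epsilon n\sqrt{\ku}$, this produces the clean estimate $\Xi\leq C\big(\epsilon n\sqrt{\ku}+\sqrt{\epsilon n\ku\,\tilde{s}\log(2d/\tilde{s})}\big)$. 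It is precisely the $2^{\lceil\epsilon n\rceil}$ factor — rather than a worst-case order statistic of a fixed Gaussian — that generates the clean $\epsilon n\sqrt{\ku}$ term with no spurious $\sqrt{\log(1/\epsilon)}$, and this is the source of the extra factor $2$ in the stated failure probability.

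Third, I would assemble the two phases. In phase one I subtract $\Xi\ltwo{\Delta}$ from the inlier sharpness; using $n\geq C_1\ku\kl^{-1}\tilde{s}\log(2d/\tilde{s})$ to absorb $\sqrt{\epsilon n\ku\,\tilde{s}\log(2d/\tilde{s})}$ into $\epsilon n\sqrt{\ku}$, the corruption budget $\epsilon\leq\kl^{1/2}(4\kl^{1/2}+\ku^{1/2})^{-1}$ keeps the outlier perturbation $\lesssim\epsilon n\ku^{1/2}\ltwo{\Delta}$ a strict fraction of the inlier term, degrading the sharpness constant only from $\tfrac14$ to $\tfrac15$ and yielding $f(\Bbeta)-f(\Bbeta^*)\geq\frac{n}{5}\kl^{1/2}\ltwo{\Delta}$. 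In phase two the inlier lower bound $\frac{(1-\epsilon)n\kl}{6b_0}\ltwo{\Delta}^2$ is quadratic while $\Xi\ltwo{\Delta}$ is linear, so imposing the enlarged floor $\ltwo{\Delta}\geq C_2\ku^{1/2}\kl^{-1}b_0\max\{(\tilde{s}n^{-1}\log(2d/\tilde{s}))^{1/2},\epsilon\}$ makes the quadratic term dominate both the empirical-process deviation (handled exactly as in \reflm{lem:vec:sparse} via Proposition~\ref{prop:emp:vec}) and $\Xi\ltwo{\Delta}$, leaving $f(\Bbeta)-f(\Bbeta^*)\geq\frac{n\kl}{12b_0}\ltwo{\Delta}^2$; the $\epsilon$ inside the $\max$ is exactly the scale at which the quadratic floor overtakes $\epsilon n\sqrt{\ku}\ltwo{\Delta}$. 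For both sub-gradient bounds I would mimic the non-outlier argument: since $|\cdot|$ is $1$-Lipschitz on all $n$ indices, the sub-gradient inequality $\ltwo{\calP_{\Omega\cup\Pi\cup\Omega^*}(\G)}^2\leq f(\Bbeta+\calP_{\Omega\cup\Pi\cup\Omega^*}(\G))-f(\Bbeta)$ together with the full-sample upper bounds of Corollary~\ref{cor:l1expecation noiseless} and Lemma~\ref{teclem:vec:l1exp}, which are insensitive to how the responses were generated, gives $\ltwo{\calP_{\Omega\cup\Pi\cup\Omega^*}(\G)}\leq n\sqrt{\ku}$ in phase one and $\leq C_3 nb_1^{-1}\ku\ltwo{\Delta}$ in phase two, so the corruption does not degrade the sub-gradient control.

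I expect the main obstacle to be the uniform outlier bound $\Xi$ of the second step: securing the linear-in-$\epsilon$ rate $\epsilon n\sqrt{\ku}$ without any logarithmic inflation requires the sign-pattern union bound to interlock correctly with the restricted-support order-statistic estimate, and one must check that it is this $\epsilon n\sqrt{\ku}$ term, and not the naive worst-case bound $\epsilon n\sqrt{\ku\log(1/\epsilon)}$, that governs the sharpness. Once Lemma~\ref{lem:vec:sparse:outlier} is in hand, propagating it through the two-phase contraction — via the tight hard-thresholding bound of Theorem~\ref{tecthm:vec-perturb} — to recover the $\max\{(\tilde{s}\log(2d/\tilde{s})/n)^{1/2},\epsilon\}$ error rate of Theorem~\ref{thm:vec:sparse:outlier} parallels the proof of Theorem~\ref{thm:vec:sparse} with no essentially new difficulty.
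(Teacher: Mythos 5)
Your high-level architecture --- split $f=f_{\calI}+f_{\calO}$, rerun Lemma~\ref{lem:vec:sparse} on the inliers, and absorb an outlier perturbation that is linear in $\ltwo{\Bbeta-\Bbeta^*}$ using the $\epsilon$ budget in phase one and the enlarged floor $\max\{\cdot,\epsilon\}$ in phase two --- is exactly the paper's. The genuine problem is your second step. You define $\Xi$ with a supremum over all outlier sets $|\calO|\leq\lceil\epsilon n\rceil$, but your union bound only covers the $2^{\lceil\epsilon n\rceil}$ sign patterns for a \emph{given} $\calO$; to justify the supremum over $\calO$ you would also have to union over the $\binom{n}{\lceil\epsilon n\rceil}$ possible sets, which adds roughly $\epsilon n\log(e/\epsilon)$ to the exponent and inflates the bound to $\epsilon n\sqrt{\ku\log(e/\epsilon)}$. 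Worse, the clean bound you claim is simply false for that supremum: already in dimension one with $\bSigma_i=1$, choosing $\calO$ to be the indices of the $\lceil\epsilon n\rceil$ largest $|X_i|$ gives $\sum_{i\in\calO}|X_i|\asymp\epsilon n\sqrt{\log(1/\epsilon)}$, so the logarithmic inflation is unavoidable for a data-dependent $\calO$. Fortunately the lemma does not need this uniformity: in the paper's model $\calO$ is a fixed index set, the corrupted responses never enter the bound $|f_{\calO}(\Bbeta)-f_{\calO}(\Bbeta^*)|\leq\sum_{i\in\calO}|\langle\Bbeta-\Bbeta^*,\X_i\rangle|$, and the paper obtains the linear-in-$\epsilon$ term directly from the mean, $\EE\sum_{i\in\calO}|\langle\Delta,\X_i\rangle|\leq\epsilon n\sqrt{2\ku/\pi}\,\ltwo{\Delta}$ (folded Gaussian, Lemma~\ref{teclem:l1expectation}), plus a deviation term $C\sqrt{\epsilon n\tilde{s}\ku\log(2d/\tilde{s})}\,\ltwo{\Delta}$ from a sparse Gaussian-process event $\bcalE_o$; it is this second event, not any sign-pattern count, that produces the factor $2$ in the stated failure probability. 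So the repair is to drop the supremum over $\calO$ (or condition on it), after which either your sign-pattern argument or, more simply, the mean-plus-deviation argument closes the step.

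A second, smaller gap: your sub-gradient treatment invokes Lemma~\ref{teclem:vec:l1exp} as being ``insensitive to how the responses were generated.'' It is not --- its upper bound uses the noise density condition $h_{\xi}\leq b_1^{-1}$, which the arbitrarily corrupted responses do not satisfy. For the outlier indices in phase two you must instead fall back on the triangle inequality, $f_{\calO}(\Bbeta+t\V)-f_{\calO}(\Bbeta)\leq t\sum_{i\in\calO}|\langle\V,\X_i\rangle|\lesssim t\,\epsilon n\sqrt{\ku}\,\ltwo{\V}$, which leaves an extra additive $\epsilon n\sqrt{\ku}$ in the quadratic inequality for $\ltwo{\calP_{\Omega\cup\Pi\cup\Omega^*}(\G)}$; this is absorbed into $C_3 n\ku b_1^{-1}\ltwo{\Bbeta-\Bbeta^*}$ precisely because the phase-two region enforces $\ltwo{\Bbeta-\Bbeta^*}\geq C_2\ku^{1/2}\kl^{-1}b_0\epsilon$ and $b_0\geq b_1$. (Your phase-one sub-gradient bound is fine, since Corollary~\ref{cor:l1expecation noiseless} involves only the covariates.)
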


\begin{proof}[Proof of Lemma~\ref{lem:vec:sparse:outlier}]
	The difference with Lemma~\ref{lem:vec:sparse} is the appearance of outliers and we shall treat inliers, outliers differently. Denote \begin{align*}
		f_i(\Bbeta):=\sum_{i\in\calI}\left|Y_i-\inp{\X_i}{\Bbeta}\right|,\quad f_o(\Bbeta):=\sum_{i\in\calO}\left|Y_i-\inp{\X_i}{\Bbeta}\right|.
	\end{align*}
	Notice that we have $f(\Bbeta)=f_i(\Bbeta)+f_o(\Bbeta)$. We shall assume the event 
	\begin{align*}
		\bcalE_i&:=\left\{\sup_{\Bbeta\in\RR^d,\; |\text{supp}(\Delta\Bbeta)|\leq 3\tilde{s}}\left|f_i(\Bbeta+\Delta\Bbeta)-f_i(\Bbeta)-\EE\left[f_i(\Bbeta+\Delta\Bbeta)-f_i(\Bbeta) \right]\right|\cdot\ltwo{\Delta\Bbeta}^{-1}\right.\\
		&~~~~~~~~~~~~~~~~\left.\leq C\sqrt{(1-\eps)n\tilde{s}\ku\log(2d/\tilde{s})}\right\}
	\end{align*}
	holds. Proposition~\ref{prop:emp:vec} proves $\PP(\bcalE_i)\geq 1-\exp(-\frac{C^2\tilde{s}\log(2d/\tilde{s})}{3})-3\exp\left(-\frac{\sqrt{(1-\epsilon)n\log(2d/\tilde{s})}}{\log (1-\epsilon)n}\right)$. Also, we assume the event \begin{align*}
		\bcalE_o:=\left\{\sup_{|\text{supp}(\Delta\Bbeta)|\leq 3\tilde{s}}\sum_{i\in\calO}\bigg||\inp{\X_i}{\Delta\Bbeta}|-\EE|\inp{\X_i}{\Delta\Bbeta}|\bigg|\cdot \ltwo{\Delta\Bbeta}^{-1}\leq C\sqrt{\epsilon n\tilde{s}\ku\log(2d/\tilde{s})}\right\}
	\end{align*} holds and epsilon net theory or Gaussian process theory shows $\PP(\bcalE_o)\geq 1-\exp(-\frac{C^2\tilde{s}\log(2d/\tilde{s})}{3})$.
	\paragraph*{Phase One}
	First, show $\mu$-sharpness. Notice that $$f(\Bbeta)-f(\Bbeta^*)=f_i(\Bbeta)-f_i(\Bbeta^*)+f_o(\Bbeta)-f_o(\Bbeta^*).$$
	Same as Section~\ref{proof:tec:vec}, under event $\bcalE_i$, we have $$f_i(\Bbeta)-f_i(\Bbeta^*)\geq (1-\epsilon)n\sqrt{\kl\pi/2}\ltwo{\Bbeta-\Bbeta^*}-C\sqrt{(1-\epsilon) n\tilde{s}\ku\log(2d/\tilde{s})}\ltwo{\Bbeta-\Bbeta^*}-2(1-\epsilon)n\gamma.$$
	At the same time, under event $\bcalE_o$, we have
	\begin{align*}
		\big|f_o(\Bbeta)-f_o(\Bbeta^*)\big|&=\big|\sum_{i\in\calO}\left|Y_i-\inp{\X_i}{\Bbeta}\right|-\sum_{i\in\calO}\left|Y_i-\inp{\X_i}{\Bbeta^*}\right|\big|\\
		&\leq\sum_{i\in\calO}\left|\inp{\X_i}{\Bbeta-\Bbeta^*}\right|\\
		&\leq \EE\sum_{i\in\calO}\left|\inp{\X_i}{\Bbeta-\Bbeta^*}\right|+C\sqrt{\epsilon n\tilde{s}\ku\log(2d/\tilde{s})}\ltwo{\Bbeta-\Bbeta^*}\\
		&\leq \epsilon n\sqrt{\frac{2}{\pi}\ku}\ltwo{\Bbeta-\Bbeta^*}+C\sqrt{\epsilon n\tilde{s}\ku\log(2d/\tilde{s})}\ltwo{\Bbeta-\Bbeta^*},
	\end{align*}
	where the last line follows from Lemma \ref{teclem:l1expectation}. Thus, we have lower bound of $f(\M)-f(\M^*)$,
	\begin{align*}
		&~~~f(\Bbeta)-f(\Bbeta^*)\\
		&\geq f_i(\Bbeta)-f_i(\Bbeta^*)-\left|f_o(\Bbeta)-f_o(\Bbeta^*)\right|\\
		&\geq (1-\epsilon)n\sqrt{\frac{2}{\pi}\kl}\ltwo{\Bbeta-\Bbeta^*}-C\sqrt{(1-\epsilon) n\tilde{s}\ku\log(2d/\tilde{s})}\ltwo{\Bbeta-\Bbeta^*}\\
		&{~~~}- \epsilon n\sqrt{\frac{2}{\pi}\ku}\ltwo{\Bbeta-\Bbeta^*}-C\sqrt{\epsilon n\tilde{s}\ku\log(2d/\tilde{s})}\ltwo{\Bbeta-\Bbeta^*}-2(1-\epsilon)n\gamma\\
		&\geq \frac{n}{5}\sqrt{\kl}\ltwo{\Bbeta-\Bbeta^*},
	\end{align*}
	where the last line is due to $\epsilon\leq\sqrt{\kl}/(\sqrt{\kl}+4\sqrt{\ku})\leq 0.2$, $1-\epsilon\geq0.8$, $n\geq C_1\ku\kl^{-1}\tilde{s}\log(2d/\tilde{s})$ and phase one region constraint $\ltwo{\Bbeta-\Bbeta^*}\geq 8\kl^{-1/2}\gamma$. We could prove upper bound of $\ltwo{\calP_{\Omega\cup\Pi\cup\Omega^*}(\G)}$ in a similar fashion,
	$$\ltwo{\calP_{\Omega\cup\Pi\cup\Omega^*}(\G)}\leq n\sqrt{\ku}.$$
	
	\paragraph*{Phase Two} First consider lower bound of $f(\Bbeta)-f(\Bbeta^*)$. Follow Section~\ref{proof:tec:vec} and we obtain when $C_2\sqrt{\frac{\ku}{\kl^2} }b_0\max\{\sqrt{\frac{\tilde{s}\log(2d/\tilde{s})}{n}},\epsilon \}\leq\ltwo{\Bbeta-\Bbeta^*}\leq 8\sqrt{\kl^{-1}}\gamma$, \begin{align*}
		f_i(\Bbeta)-f_i(\Bbeta^*)\geq (1-\epsilon)\frac{n}{12b_0}\kl\ltwo{\Bbeta-\Bbeta^*}^2.
	\end{align*}
	Besides, same as phase one analyses, under event $\bcalE_o$, it has
	\begin{align*}
		\big|f_o(\Bbeta)-f_o(\Bbeta^*)\big|
		\leq \epsilon n\sqrt{\frac{2}{\pi}\ku}\ltwo{\Bbeta-\Bbeta^*}+C\sqrt{\epsilon n\tilde{s}\ku\log(2d/\tilde{s})}\ltwo{\Bbeta-\Bbeta^*}.
	\end{align*}
	Thus, the above two equations lead to
	\begin{align*}
		f(\Bbeta)-f(\Bbeta^*)&=f_i(\Bbeta)-f_i(\Bbeta^*)+f_o(\Bbeta)-f_o(\Bbeta^*)\\
		&\geq f_i(\Bbeta)-f_i(\Bbeta^*)-\left|f_o(\Bbeta)-f_o(\Bbeta^*)\right|\\
		&\geq (1-\epsilon)\frac{n}{12b_0}\kl\ltwo{\Bbeta-\Bbeta^*}^2-\epsilon n\sqrt{\frac{2}{\pi}\ku}\ltwo{\Bbeta-\Bbeta^*}-C\sqrt{\epsilon n\tilde{s}\ku\log(2d/\tilde{s})}\ltwo{\Bbeta-\Bbeta^*}\\
		&\geq \frac{n}{C_3b_0}\kl\ltwo{\Bbeta-\Bbeta^*}^2,
	\end{align*}
	where the last line uses $\ltwo{\Bbeta-\Bbeta^*}\geq C_2\sqrt{\frac{\ku}{\kl^2} }b_0\max\{\sqrt{\frac{\tilde{s}\log(2d/\tilde{s})}{n}},\epsilon \}$. Similar to Section~\ref{proof:tec:vec}, we have
	\begin{align*}
		\ltwo{\calP_{\Omega\cup\Pi\cup\Omega^*}(\G)}\leq C_4\ku\frac{n}{b_1}\ltwo{\Bbeta-\Bbeta^*}.
	\end{align*}
\end{proof}
\subsection*{Proof of Theorem~\ref{thm:heavytail-l1:outlier}} We only need to prove the following regularity properties.
\begin{lemma}\label{lem:heavytail-l1:outlier}
	Assume $\{\xi_i\}_{i\in\calI}$ and $\{{\rm vec}(\X_i)\}_{i=1}^n$ satisfy Assumptions~\ref{assump:heavy-tailed} and \ref{assump:sensing operators:vec}, respectively.  There exist absolute constants $C_1, C_2, C_3,c_1>0$ such that if $n\geq C_1rd_1\ku\kl^{-1}$, $\epsilon\leq \sqrt{\kl}/(4\sqrt{\kl}+\sqrt{\ku})$ and $\BB_1$, $\BB_2$ are given by
	\begin{align*}
		&\BB_1:=\left\{\M\in\MM_r: \|\M-\M^{\ast}\|_{\rm F}\geq 8\sqrt{\kl^{-1}}\gamma\right\},\\
		&\BB_2:=\left\{\M\in\MM_r: C_2b_0\cdot\sqrt{\frac{\ku}{\kl^2} }\cdot\max\left\{\sqrt{\frac{d_1r}{n}},\epsilon\right\} \leq\|\M-\M^{\ast}\|_{\rm F}< 8\sqrt{\kl^{-1}}\gamma\right\},
	\end{align*}
	then with probability at least $1-2\exp(-c_1rd_1)-3\exp(-\sqrt{n}/\log n)$, the absolute loss $f(\M)=\sum_{i=1}^n|Y_i-\langle \M, \X_i\rangle|$ satisfies Condition~\ref{assump:two-phase}:
	\begin{enumerate}[(1)]
		\item the rank-$r$ restricted \textbf{two-phase sharpness} with respect to $\M^{\ast}$,
		$$
		f(\M)-f(\M^{\ast})\geq 
		\begin{cases}
			\frac{n}{4}\sqrt{\kl} \|\M-\M^{\ast}\|_{\rm F}, & \textrm{ for }\ \  \M\in\BB_1; \\
			\frac{n}{12b_0}\kl \|\M-\M^{\ast}\|_{\rm F}^2, & \textrm{ for }\ \  \M\in\BB_2;
		\end{cases}
		$$
		
		\item the rank-r restricted \textbf{two-phase sub-gradient bound} with respect to $\M^{\ast}$,
		$$
		\|\G\|_{\rm F,  r}\leq 
		\begin{cases}
			2n\sqrt{\ku}, & \textrm{ for }\ \ \M\in\BB_1;\\
			C_3nb_1^{-1}\ku\|\M-\M^{\ast}\|_{\rm F},& \textrm{ for }\ \ \M\in\BB_2,
		\end{cases}
		$$
		where $\G\in\partial f(\M)$ is any sub-gradient.
	\end{enumerate}
\end{lemma}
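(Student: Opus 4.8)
The plan is to mirror the proof of the sparse analogue Lemma~\ref{lem:vec:sparse:outlier} while importing the heavy-tailed low-rank machinery from Lemma~\ref{lem:heavytail-l1}. I would begin by splitting the objective according to inliers and outliers, writing $f(\M)=f_i(\M)+f_o(\M)$ with $f_i(\M):=\sum_{i\in\calI}|Y_i-\inp{\X_i}{\M}|$ and $f_o(\M):=\sum_{i\in\calO}|Y_i-\inp{\X_i}{\M}|$. Since $(\X_i,Y_i)$ obeys the linear model with heavy-tailed noise for $i\in\calI$, the inlier part $f_i$ is precisely the objective analyzed in Lemma~\ref{lem:heavytail-l1}, only with effective sample size $(1-\epsilon)n$. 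Thus the conditional-expectation computation using the density lower bound $h_\xi\ge b_0^{-1}$ on an $O(\sigma_i)$ window and the empirical-process concentration of Proposition~\ref{thm:empirical process} transfer essentially verbatim, yielding $f_i(\M)-f_i(\M^*)\gtrsim(1-\epsilon)n\sqrt{\kl}\fro{\M-\M^*}$ on $\BB_1$ and $f_i(\M)-f_i(\M^*)\gtrsim(1-\epsilon)\tfrac{n}{b_0}\kl\fro{\M-\M^*}^2$ on $\BB_2$.

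Second, I would control the outlier contribution. Because $Y_i$ is arbitrary for $i\in\calO$, one cannot bound $f_o(\M)-f_o(\M^*)$ through its expectation; instead I exploit the $1$-Lipschitz property of the absolute value to obtain $|f_o(\M)-f_o(\M^*)|\le\sum_{i\in\calO}|\inp{\X_i}{\M-\M^*}|$. I would then establish an auxiliary event $\bcalE_o$ asserting that $\big|\sum_{i\in\calO}|\inp{\X_i}{\Delta\M}|-\EE\sum_{i\in\calO}|\inp{\X_i}{\Delta\M}|\big|\lesssim\sqrt{\epsilon n d_1 r\ku}\,\fro{\Delta\M}$ uniformly over rank-$\le r$ directions $\Delta\M$, via the same symmetrization, contraction, and $\epsilon$-net arguments underlying Proposition~\ref{thm:empirical process}. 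Combined with the folded-Gaussian expectation $\EE\sum_{i\in\calO}|\inp{\X_i}{\Delta\M}|\le\epsilon n\sqrt{2\ku/\pi}\,\fro{\Delta\M}$ from Lemma~\ref{teclem:l1expectation}, this gives $|f_o(\M)-f_o(\M^*)|\lesssim\big(\epsilon n\sqrt{\ku}+\sqrt{\epsilon nd_1r\ku}\big)\fro{\M-\M^*}$.

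Third, I would assemble the two-phase sharpness. On $\BB_1$ the inlier bound is linear with slope $\asymp(1-\epsilon)n\sqrt{\kl}$, while the outlier deficit is linear with slope $\asymp\epsilon n\sqrt{\ku}$; the hypothesis $\epsilon\le\sqrt{\kl}/(4\sqrt{\kl}+\sqrt{\ku})$, together with $n\gtrsim\ku\kl^{-1}d_1r$ absorbing the fluctuation term, guarantees the inlier slope dominates, delivering $\muc=\tfrac{n}{4}\sqrt{\kl}$. On $\BB_2$ the inlier bound is quadratic whereas the outlier deficit stays linear; for the quadratic to win I need $\fro{\M-\M^*}\gtrsim b_0\epsilon\sqrt{\ku}/\kl$, which is exactly why the statistical radius is enlarged to $\taus\asymp b_0\sqrt{\ku/\kl^2}\max\{\sqrt{d_1r/n},\epsilon\}$ (the $\sqrt{\epsilon d_1r/n}$ floor from the fluctuation term is dominated by the already-present $\sqrt{d_1r/n}$ requirement since $\epsilon\le1$), yielding $\mus=\tfrac{n}{12b_0}\kl$. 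The sub-gradient upper bounds are then obtained as in Lemma~\ref{lem:heavytail-l1}, testing against $\M_1=\M+\tfrac{b_1}{2n\ku}\text{SVD}_r(\G)$, bounding $f(\M_1)-f(\M)$ from above by its inlier expectation plus the outlier Lipschitz term $\sum_{i\in\calO}|\inp{\X_i}{\M_1-\M}|$, and solving the resulting quadratic in $\fror{\G}$; the phase-two radius again ensures the outlier contribution is lower order, so $\Lc=2n\sqrt{\ku}$ and $\Ls=C_3nb_1^{-1}\ku$ survive.

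The main obstacle I anticipate is the phase-two bookkeeping. Unlike phase one, where a linear sharpness dominates a linear corruption for every sufficiently small $\epsilon$, the quadratic sharpness on $\BB_2$ only overcomes the linear outlier perturbation once $\fro{\M-\M^*}$ exceeds an $\epsilon$-dependent floor. Tracking the absolute constants so that the surviving sharpness constant is exactly $\tfrac{n}{12b_0}\kl$, that the matching $\taus$ carries the stated $\max\{\sqrt{d_1r/n},\epsilon\}$ form, and that the outlier sub-gradient term is simultaneously subsumed, is the delicate step; everything else is a faithful transcription of the inlier analysis of Lemma~\ref{lem:heavytail-l1} with $n$ replaced by $(1-\epsilon)n$.
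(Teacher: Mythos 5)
Your proposal is correct and coincides with the paper's own (sketched) argument: the paper proves this lemma exactly by splitting $f=f_i+f_o$ over inliers and outliers, reusing the heavy-tailed two-phase analysis of Lemma~\ref{lem:heavytail-l1} for $f_i$ with effective sample size $(1-\epsilon)n$, and controlling $|f_o(\M)-f_o(\M^*)|$ through the Lipschitz bound $\sum_{i\in\calO}|\inp{\X_i}{\M-\M^*}|$ with its expectation-plus-fluctuation estimate over rank-constrained directions, which is precisely the outlier analysis of Lemma~\ref{lem:vec:sparse:outlier}. Your dominance bookkeeping --- the $\epsilon$ hypothesis absorbing the linear outlier term in phase one, the enlarged radius $\max\{\sqrt{d_1r/n},\epsilon\}$ making the quadratic inlier sharpness win in phase two, and the same mechanism keeping $\Lc$ and $\Ls$ intact in the sub-gradient bounds --- is exactly the argument the paper carries out in the sparse analogue and declares to transfer verbatim here.
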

Proof of Lemma~\ref{lem:heavytail-l1:outlier} would be a combination of absolute loss low-rank study in Lemma~\ref{lem:heavytail-l1} and outlier analyses in Lemma~\ref{lem:vec:sparse:outlier}. We also study loss function of inliers and outliers seperately,
\begin{align*}
	f_i(\M):=\sum_{i\in\calI}\left|Y_i-\inp{\X_i}{\M}\right|,\quad f_o(\M):=\sum_{i\in\calO}\left|Y_i-\inp{\X_i}{\M}\right|.
\end{align*}
Hence, we skip the proof.

\end{document}